\theoremstyle{plain}
\newtheorem{thm}{Theorem}[section]
\newtheorem{prop}[thm]{Proposition}
\newtheorem{cor}[thm]{Corollary}
\newtheorem{lem}[thm]{Lemma}
\theoremstyle{definition}
\newtheorem{defi}[thm]{Definition}
\theoremstyle{remark}
\newtheorem*{rk}{Remark}
\newcommand\mrm[1]{\ensuremath{\mathrm{#1}}}
\newcommand{\wt}[1]{\widetilde{#1}}
\newcommand{\R}{\mathbb{R}}
\newcommand{\OO}{\varnothing}
\newcommand*{\I}{\mathbbm{1}}
\newcommand\pd{\partial}
\DeclareMathOperator{\dist}{\mathrm{dist}}
\DeclareMathOperator{\Div}{\mathrm{div}}
\newcommand*\lap{\mathop{}\!\mathbin{\triangle}}
\newcommand*\DD{\mathop{}\!\mathbin{\mathrm{D}}}
\providecommand\given{} 
\newcommand\SetSymbol[1][]{\nonscript\:#1\vert \allowbreak \nonscript\: \mathopen{}}
\DeclarePairedDelimiterX\Set[1]\{\}{ \renewcommand\given{\SetSymbol[\delimsize]} #1 }
\newcommand{\ini}[1]{\qty(#1)_{\mathrm{I}}}
\newcommand{\itn}[1]{{#1}^{(n)}}
\newcommand{\itm}[1]{{#1}^{(n+1)}}
\newcommand{\vv}{{\vb{v}}}
\newcommand{\vh}{{\vb{h}}}
\newcommand{\vbu}{{\vb{u}}}
\newcommand{\vw}{{\vb{w}}}
\newcommand{\vnu}{{\vb*{\nu}}}
\newcommand{\vmu}{{\vb*{\mu}}}
\newcommand{\vf}{{\vb{f}}}
\newcommand{\vg}{{\vb{g}}}
\newcommand{\vJ}{{\vb{J}}}
\newcommand{\vj}{{\vb{j}}}
\newcommand{\veta}{{\vb*{\eta}}}
\newcommand{\vxi}{{\vb*{\xi}}}
\newcommand{\vV}{{\vb{V}}}
\newcommand{\vH}{{\vb{H}}}
\newcommand{\vW}{{\vb{W}}}
\newcommand{\Om}{{\Omega}}
\newcommand{\Gt}{{\Gamma_t}}
\newcommand{\vn}{{\vb{N}}}
\newcommand{\ii}{\ensuremath{I\!\!I}}
\newcommand{\II}{{\vb{I\!I}}}
\newcommand{\Gs}{{\Gamma_\ast}}
\newcommand{\kk}{{\frac{3}{2}k}}
\newcommand{\vom}{{\vb*{\omega}}}
\newcommand{\gt}{{\gamma_\Gt}}
\newcommand{\OGt}{{\Omega\setminus\Gt}}
\newcommand{\OGs}{{\Omega\setminus\Gs}}
\newcommand{\X}{\mathcal{X}}
\newcommand{\id}{\mathrm{Id}}
\newcommand{\ls}{\Lambda_*}
\newcommand{\gmgm}{{\gamma_\Gamma}}
\newcommand{\K}{{\mathfrak{K}}}
\newcommand{\ka}{{\varkappa_a}}
\newcommand{\F}{{\vb{F}}}
\newcommand{\LL}{{\mathscr{L}}}
\newcommand{\f}{{\mathfrak{f}}}
\newcommand{\g}{{\mathfrak{g}}}
\newcommand{\baf}{\bar{\f}}
\newcommand{\E}{\mathfrak{E}}
\newcommand{\h}{{\mathcal{H}}}
\newcommand{\n}{{\mathcal{N}}}
\newcommand{\bn}{\bar{\n}}
\newcommand{\wn}{\wt{\n}}
\newcommand{\wnh}{{\wn^{\frac{1}{2}}}}
\newcommand{\opa}{{\mathcal{A}}}
\newcommand{\opr}{{\mathcal{R}}}
\newcommand{\opA}{{\mathscr{A}}}
\newcommand{\opR}{{\mathscr{R}}}
\newcommand{\opC}{{\mathscr{C}}}
\newcommand{\opB}{{\mathscr{B}}}
\newcommand{\Pb}{{\mathbb{P}}}
\newcommand{\opb}{{\mathbf{B}}}
\newcommand{\opf}{{\mathbf{F}}}
\newcommand{\opg}{{\mathbf{G}}}
\newcommand{\opF}{{\mathscr{F}}}
\newcommand{\opG}{{\mathscr{G}}}
\newcommand{\opd}{{\mathcal{D}}}
\newcommand{\tdot}{{\textbf{:}}}
\newcommand{\Dt}{{\mathbb{D}_t}}
\newcommand{\Dtl}{{\mathbb{D}_{t_\lambda}}}
\newcommand{\Dtb}{{\mathbb{D}_{\bar{t}}}}
\newcommand{\Dts}{{\mathbb{D}_{t\ast}}}
\newcommand{\Dbt}{{\mathbb{D}_{\beta}}}
\newcommand{\less}{\lesssim_{\Lambda_*}}
\newcommand{\lesq}{\lesssim_{Q}}
\numberwithin{equation}{section}
\begin{document}
	
	\title[Current-Vortex Sheet Problems]{Local Well-posedness of the Incompressible Current-Vortex Sheet Problems}
	
	\date{\today}
	
	\author{Sicheng Liu}
	\address{The Institute of Mathematical Sciences, the Chinese University of Hong Kong, Shatin, NT, Hong Kong SAR, China}
%	\curraddr{Department of Mathematics, the Faculty of Science and Technology, University of Macau, Taipa, Macau SAR, China}
	\email[Corresponding author]{scliu@link.cuhk.edu.hk}
	\thanks{This is part of the Ph.D. thesis of the first author written under the guidance of the second author at the Institute of Mathematical Sciences, the Chinese University of Hong Kong. This research is supported in part by Zheng Ge Ru Foundation, Hong Kong RGC Earmarked Research Grants CUHK-14301421, CUHK-14300819, CUHK-14302819, CUHK-14300917, and the key project of NSFC (Grant No. 12131010).}
	
	\author{Zhouping Xin}
	\address{The Institute of Mathematical Sciences, the Chinese University of Hong Kong, Shatin, NT, Hong Kong SAR, China}
	\email{zpxin@ims.cuhk.edu.hk}
	
	\subjclass[2020]{76B03 (76W05, 76B47, 35Q35, 76E25)}
	
	\keywords{current-vortex sheet, local well-posedness, surface tension, the Syrovatskij condition}
	
	\begin{abstract}
		We prove the local well-posedness of the incompressible current-vortex sheet problems in standard Sobolev spaces under the surface tension or the Syrovatskij condition, which shows that both capillary forces and large tangential magnetic fields can stabilize the motion of current-vortex sheets. Furthermore, under the Syrovatskij condition, the vanishing surface tension limit is established for the motion of current-vortex sheets. These results hold without assuming the interface separating the two plasmas being a graph.
	\end{abstract}
	
	\maketitle
	
	\tableofcontents
	
	\section{Introduction}
	\subsection{Formulations of the problems}
	We consider the free interface problems for ideal incompressible magnetohydrodynamics (MHD) equations, which describe the motions of two plasmas separating by a free interface (current-vortex sheet problems). If we denote by $ \Om_t^\pm \subset \R^3 $ the fluid domains at time $ t $ occupied by two kinds of plasmas respectively, the ideal incompressible MHD system can be written as
	\begin{subnumcases}
		{(\mathrm{MHD})\quad}
		\pd_t \vv_\pm + \qty(\vv_\pm \cdot \grad) \vv_\pm + \dfrac{1}{\rho_\pm} \grad p^\pm = \qty(\vh_\pm \cdot \grad) \vh_\pm &in $ \Omega_t^\pm $, \label{eqn Dtv} \\
		\pd_t \vh_\pm + (\vv_\pm \cdot \grad) \vh_\pm = (\vh_\pm \cdot \grad) \vv_\pm &in $ \Omega_t^\pm $, \label{eqn Dth} \\
		\div \vv_\pm = 0 = \div \vh_\pm &in $ \Omega_t^\pm $; \label{divfree}
	\end{subnumcases}
	here $\rho_\pm, \vv_\pm, \vh_\pm, p^\pm$ are the densities, velocities, magnetic fields and effective pressures for the two plasmas respectively (c.f. \cite{Landau-Lifshitz-Vol8} or \cite{book_Davidson}).
	The boundary conditions are:
	\begin{subnumcases}
		{(\mathrm{BC}) \quad}
		\vv_+ \cdot \vn_+ = \vv_- \cdot \vn_+ =: \theta &on $ \Gt $, \label{eqn bdry v} \\
		\llbracket p \rrbracket \coloneqq p^+ - p^- = \alpha^2 \kappa_+ &on $ \Gt $, \label{jump p} \\
		\vh_+ \cdot \vn_+ = \vh_- \cdot \vn_+ = 0 &on $ \Gt $,  \label{bdry mag}\\
		\vv_- \cdot \wt{\vn} = 0 &on $ \pd \Omega $, \label{bc out v}\\
		\vh_- \cdot \wt{\vn} = 0 &on $ \pd \Om $ \label{bc out h},
	\end{subnumcases}
	where $ \kappa_+ $ is the mean curvature of $ \Gt $ with respect to $ \vn_+ $, and $ 0 \le \alpha \le 1$ is a non-negative constant representing the surface tension coefficient.
	
	Here $ \Omega \subset \R^3 $ is a bounded domain with a fixed boundary $ \pd \Om $, and $ \Om = \Om_t^+ \cup \Gt \cup \Om_t^- $, $ \Gt = \pd \Om_t^+ $ is the moving interface with normal speed $ \theta $, and $ \pd\Om_t^- = \pd \Om \cup \Gt $. Denote by $ \vn_+ $ the outward unit normal of $ \pd\Om_t^+ = \Gt $, and $ \wt{\vn} $ the outward unit normal of $ \pd\Om $. 
	Assume further that $ \Gt \subset \Om $, $ \Gt \cap \pd \Om = \OO $, and $ \Gt $ separates $ \Om $ into two disjoint simply-connected domains $ \Om_t^\pm $. 	
	
	\begin{center}
		\begin{tikzpicture}[use Hobby shortcut, yscale=0.8]
			\path
			(5,0) coordinate (a0)
			(0,3) coordinate (a1)
			(-5,0) coordinate (a2)
			(0,-3) coordinate (a3);
			\draw[closed,ultra thick] (a0) .. (a1) .. (a2) .. (a3);
			\node at (4.5,0) {\LARGE $ \Omega $};
			
			\path
			(0,1) coordinate (b0)
			(2,1.5) coordinate (b1)
			(3,0) coordinate (b2)
			(0,-1.5) coordinate (b3)
			(-3,0) coordinate (b4)
			(-2,1.5) coordinate (b5);
			\draw[red,closed] (b0) .. (b1) .. (b2) .. (b3) .. (b4) .. (b5);
			\node[red] at (0, 1.5) {\LARGE$ \Gamma_t $};
			\node[blue] at (0,0) {\LARGE$ \Omega_t^+ $};
			\node[blue] at (0,-2.3) {\LARGE $ \Omega_t^- $};
		\end{tikzpicture}
	\end{center}
	
	The equations \eqref{eqn Dtv} are the Euler equations in hydrodynamics, for which the Lorentz forces serve as the exterior body forces acting on the plasmas. Note that the displacement currents are neglected, due to the fact that the scale of the plasma velocities is much less than the speed of light. The equations \eqref{eqn Dth} are the combination of Faraday's Law and Ohm's Law, and \eqref{divfree} are the incompressibility of the plasmas and Gauss's Law for magnetism. The boundary condition \eqref{eqn bdry v} is also known as the kinematic boundary condition, which means that the free interface evolves with the two plasmas. \eqref{jump p} is derived from the balance of momentum between two sides of the interface, and \eqref{bdry mag} follows from the Gauss's Law for magnetism and physical characters of the materials. \eqref{bc out v} means that the outer plasma cannot penetrate the solid boundary, and \eqref{bc out h} follows from the assumption that the solid boundary is a perfect conductor.
	
	\subsection{Physical background}
	The motion of electrically conductive fluids (e.g., plasma, liquid metals, salt water, and electrolytes) under the influence of magnetic fields is governed by the MHD systems. The corresponding mathematical theories have numerous significant applications (e.g., drug targeting, earthquakes, sensors, and astrophysics). One of the fundamental differences between MHD and hydrodynamics is that the magnetic fields can induce currents in a moving conductive fluid, and these currents in turn polarize the fluid and change the magnetic and velocity fields in a reciprocal manner. The set of equations is a combination of those in fluid dynamics and electrodynamics, and these equations must be solved concurrently (c.f. \cite{Landau-Lifshitz-Vol8,book_Davidson}). Mathematically speaking, the effect of the magnetic field is governed by the Maxwell equations and acts as a Lorentz force on the Euler system for the plasma, which can induce many nontrivial interactions and lead to rich phenomena.
	
	The current-vortex sheet problems describe the plasma motion in a domain whose boundary evolves with the plasma itself. Such issues are significant not only because they describe numerous physical phenomena thus have significant applications in science and technology, but also since such studies give rise to profound and challenging theoretical interdisciplinary problems involving partial differential equations, differential geometry, analysis, mathematical physics, and dynamical systems.
	
	\subsection{Previous works}
	In the absence of magnetic fields, the equations are reduced to the incompressible Euler system. The free boundary problems in hydrodynamics have garnered considerable interest from the mathematical community. Although water waves are very universal in reality, from which one can see a vast diversity of phenomena, the corresponding mathematical theories are still in their infancy, because the full equations describing the motion of the waves are famously difficult to handle due to the free boundary and intrinsic nonlinearity. We refer to the works by Wu \cite{Wu1997,Wu1999}, Alazard-Burq-Zuily \cite{Alazard-Burq-Zuily2014} for the local well-posedness of irrotational water wave problems. When the vorticity of the fluid flow is non-zero, one can refer to Christodoulou-Lindblad \cite{Christodoulou-Lindblad2000}, Lindblad \cite{Lindblad2003,Lindblad2005}, Coutand-Shkoller \cite{Coutand-Shkoller2007}, Cheng-Coutand-Shkoller \cite{Cheng-Coutand-Shkoller2008}, Zhang-Zhang \cite{Zhang-Zhang2008}, Shatah-Zeng \cite{Shatah-Zeng2008-Geo,Shatah-Zeng2008-vortex,Shatah-Zeng2011} for the local well-posedness of the water wave and vortex sheet problems.
	
	In contrast to the long history of the study on the water wave problems, the free-interface problems for ideal MHD equations have been studied only in recent decades. Owing to the strong coupling of the magnetic and velocity fields, it is necessary to deal with multiple hyperbolic systems simultaneously, making it difficult to establish the nonlinear well-posedness theories. In particular, how magnetic fields affect the dynamics of a plasma is an important issue. As most of fluids are electrically conductive and magnetic fields are ubiquitous, the MHD model is certainly an important physical one with similar significance as the Euler or Navier-Stokes ones. When the effect of magnetic fields is not negligible, it is significant to study the dynamics of conducting fluids. Here are some representative works on the free interface problems for the ideal incompressible MHD.
	
	A current-vortex sheet is a hypersurface evolving with the conductive fluids, along which the magnetic and velocity fields possess tangential jumps. This sort of problems explain the motion of two conducting fluids with a free interface separating them. Around the middle of the twentieth century, Syrovatskij \cite{Syrovatskij} and Axford \cite{axford1962note} discovered the stability requirements for the planer incompressible current-vortex sheets and demonstrated that magnetic fields have a stabilizing influence on the plasma dynamics. The Syrovatskij stability conditions are (see Landau-Lifshitz \cite[\textsection~71]{Landau-Lifshitz-Vol8}):
	\begin{equation}\label{Syr 1"}
		\rho_+ \abs{\vh_+}^2 + \rho_- \abs{\vh_-}^2 > \frac{\rho_+ \rho_-}{\rho_+ + \rho_-}\abs{\llbracket\vv\rrbracket}^2,
	\end{equation}
	\begin{equation}\label{Syr 2"}
		\qty(\rho_+ + \rho_-)\abs{\vh_+ \cp \vh_-}^2 \ge \rho_+ \abs{\vh_+ \cp \llbracket\vv\rrbracket}^2 + \rho_-\abs{\vh_-\cp\llbracket\vv\rrbracket}^2,
	\end{equation}
	where $ \llbracket\vv\rrbracket \coloneqq \vv_+ - \vv_- $ is the velocity jump.  If the current-vortex sheet is assumed to be the graph of a function, there are some studies on the dynamics:
	Trakhinin \cite{Trakinin2005} proved the a priori estimate for the linearized equations under a strong stability condition:
	\begin{equation}\label{strong stabilibty"}
		\abs{\vh_+ \cp \vh_-} > \max\qty{\abs{\vh_+ \cp \llbracket\vv\rrbracket}, \abs{\vh_-\cp\llbracket\vv\rrbracket}}.
	\end{equation}
	Coulombel-Morando-Secchi-Trebeschi \cite{Coulombel-Morando-Secchi-Trebischi2012} showed the a priori estimate without loss of derivatives for the non-linear problem under \eqref{strong stabilibty"}. If the original Syrovatskij condition \eqref{Syr 2"} were replaced by the following strict one: 
	\begin{equation}\label{Syro 3"}
		\qty(\rho_+ + \rho_-)\abs{\vh_+ \cp \vh_-}^2 > \rho_+ \abs{\vh_+ \cp \llbracket\vv\rrbracket}^2 + \rho_-\abs{\vh_-\cp\llbracket\vv\rrbracket}^2,
	\end{equation}
	from which \eqref{Syr 1"} follows, Morando-Trakhinin-Trebeschi \cite{Morando-Trakhinin-Trebeschi2008} derived the a priori estimates for the linearized system with loss of derivatives. The nonlinear local well-posedness result under \eqref{Syro 3"} was first proven by Sun-Wang-Zhang \cite{Sun-Wang-Zhang2018}. The above works demonstrate that the strict Syrovatskij condition \eqref{Syro 3"} indeed has a nonlinear stabilizing effect on the free interface (at least for a graph surface in a short time period), in contrast to the Kelvin-Helmholtz instability for pure-fluid vortex-sheet issues due to the lack of surface tension (c.f. \cite{Ebin1988} and \cite[Chapters~9 \& 11]{Majda-Bertozzi2002} for more detailed discussions). Recently, the methods in \cite{Sun-Wang-Zhang2018} were also applied to the study for the case with surface tension, see \cite{Li-Li2022}. 
	
	For the plasma-vacuum interface problems, if the magnetic field is parallel to the free boundary and the one in the vacuum is vanishing, we refer to Hao-Luo \cite{Hao-Luo2014,Hao-Luo2021}, Gu-Wang \cite{Gu-Wang2019}, and Gu-Luo-Zhang \cite{Gu-Luo-Zhang2021,Gu-Luo-Zhang22} for the local well-posedness. If the magnetic field in the vacuum is nontrivial, one can see Mordando-Trakhinin-Trebeschi \cite{Morando-Trakhinin-Trebeschi2014}, and Sun-Wang-Zhang \cite{Sun-Wang-Zhang2019} for the local well-posedness under a stability condition. Hao-Luo \cite{Hao-Luo2020} also showed the ill-posedness for the plasma-vacuum problems without the Rayleigh-Taylor sign condition, as indicated by Ebin \cite{Ebin1987} for the pure fluid-vacuum case. Concerning the global well-posedness for free-boundary incompressible inviscid MHD equations, Wang-Xin \cite{Wang-Xin2021} established it for both the plasma-vacuum and the plasma-plasma problems. 
	
	Although these advances are significant, all of the aforementioned nonlinear local well-posedness results for MHD problems were founded on a crucial premise that the free interface is a graph. However, in reality, the moving surface cannot be represented simply by a graph in many significant cases. To remove these limitations seems quite challenging, even for the pure fluid problems (c.f. \cite{Coutand-Shkoller2007, Shatah-Zeng2011}). Using the partition of unity to characterize the general interface appears feasible, but the analysis of these transition maps is rather involved due to the intense interactions between the plasmas in different local charts. In view of the strong coupling of the magnetic and velocity fields (one direct consequence of which is that the vorticity transport formula will change), MHD problems must be analyzed with greater care than the pure fluid ones. For example, one of the difficulties is that the estimates of the velocity and magnetic fields must be derived simultaneously, which is much more complex than in the case for pure fluids. More significantly, the strategies on the local dynamic motion of a general current-vortex sheet will be indispensable to the study of long-time dynamics, particularly the finite-time formation of splash singularities from a generic perturbation of a current-vortex sheet (even of a graph type).
	
	This paper is to establish the nonlinear local well-posedness for the current-vortex sheet problems in standard Sobolev spaces, without the graph assumption on the free interface. Namely, we show that for more general physical models, both the capillary forces and large tangential magnetic fields (the Syrovatskij condition) can stabilize the motion of current-vortex sheets. In particular, our results can be applied to study the dynamics of free interfaces with turning-over points, and may be useful to construct splash singularities. 
	\section{Main Results}
	
	For convenience, we shall use the notation $ f \coloneqq f_+ \I_{\Om_t^+} + f_- \I_{\Om_t^-} $ to represent for functions $ f_\pm : \Om_t^\pm \to \R $.
	
	\subsection{The stabilization effect of the surface tension}
	
	If there exists surface tension on the free interface, the following local well-posedness result holds:
	\begin{thm}[$ \alpha = 1$ case]\label{thm s.t.}
		Suppose that $ \Om \subset \R^3 $ is a bounded domain with a $ C^1 \cap H^{\kk + 1}$ boundary, and $ k \ge 2 $ is an integer.
		Given the initial hypersurface $ \Gamma_0 \in H^{\kk+1} $ and two solenoidal vector fields $ \vv_0, \vh_0 \in H^{\kk}(\Om\setminus\Gamma_0) $, if $ \Gamma_0 $ separates $ \Om $ into two disjoint simply-connected parts, then there exists a constant $ T > 0 $ so that the current-vortex sheet problem (MHD)-(BC) has a solution in the space:
		\begin{equation*}
			\Gt \in C^0\qty([0, T]; H^{\kk+1}) \qand \vv, \vh \in C^0\qty([0, T]; H^{\kk}(\OGt)).
		\end{equation*}
		Furthermore, if $ k \ge 3 $, the solution is unique and it depends on the initial data continuously, i.e. the problem (MHD)-(BC) is locally well-posed.
	\end{thm}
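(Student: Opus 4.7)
The plan is to construct the solution via a Picard-type iteration on a fixed reference domain, closed by an energy method in which the capillary jump condition supplies the top-order regularization. Since $\Gt$ is not assumed to be a graph, I first fix a smooth reference hypersurface $\Gs$ close to $\Gamma_0$ and describe $\Gt$ as the image of $\Gs$ under a map of the form $\Phi(t,\cdot) = \id + \eta(t,\cdot)\,\vn_{\Gs}$, with $\eta$ a scalar height function. Extending $\Phi(t,\cdot)$ to a diffeomorphism of $\Om$ fixing $\pd\Om$ by harmonic extension pulls the problem back to the time-independent domain $\OGs$, and reduces the evolution to coupled equations for $(\vv,\vh,\eta)$ in standard Sobolev spaces on $\OGs$ and $\Gs$ respectively.

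Given an iterate $(\eta^{(n)},\vv^{(n)},\vh^{(n)})$, I would transport $\vv^{(n+1)}$ and $\vh^{(n+1)}$ along $\vv^{(n)}$ using the linearized forms of \eqref{eqn Dtv}--\eqref{eqn Dth}, recover the pressure $p^{(n+1)}$ from the elliptic transmission problem obtained by taking $\Div$ of \eqref{eqn Dtv} with jump condition $\llbracket p \rrbracket = \alpha^2 \kappa[\eta^{(n)}]$, and update $\eta^{(n+1)}$ from the kinematic condition \eqref{eqn bdry v}. The central functional for closing the estimates is comparable to
\[
    E(t) \sim \|\vv\|_{H^{\kk}(\OGt)}^2 + \|\vh\|_{H^{\kk}(\OGt)}^2 + \alpha^2 \|\Gt\|_{H^{\kk+1}}^2 + \text{(lower order)},
\]
in which the capillary term controls the interface at one full derivative above the fluid fields. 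On differentiating $E$ in time, the top-order piece arising from $(\vv\cdot\nabla)\vv + \rho^{-1}\nabla p$ produces a boundary term on $\Gt$ involving $\vn_+\cdot \Dt\vv$ paired with $\kappa$; the jump condition $\llbracket p\rrbracket = \alpha^2\kappa$, together with integration by parts on $\Gt$, converts the worst part of this contribution into a positive quadratic form in $\nabla_{\Gt}\kappa$ that is absorbed into $\pd_t$ of the capillary energy. The magnetic terms are handled in parallel using $\vh\cdot\vn_+ = 0$ and the transport structure of \eqref{eqn Dth}. Combined with elliptic estimates for the transmission problem determining $p$, this yields a closed Gronwall-type inequality for $E$ on a short interval $[0, T]$.

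With uniform high-regularity bounds in hand, the iterates would be shown to be Cauchy in a norm two derivatives below the a priori estimate; interpolation then upgrades convergence to $C^0([0,T]; H^{\kk})$ for the fields and $C^0([0,T]; H^{\kk+1})$ for the interface. For $k \ge 3$, the same differencing argument, now with extra regularity to absorb the Lipschitz-in-coefficients terms, gives uniqueness and continuous dependence on the data. The principal obstacle will be the combination of non-graph geometry and the coupling between $\vv$ and $\vh$: every commutator between the material derivative, the tangential gradient along $\Gt$, and the pullback to $\OGs$ must be controlled in a coordinate-invariant fashion, and the simultaneous top-order estimates for $\vv$ and $\vh$ must preserve the delicate algebraic structure (present in the graph case of \cite{Sun-Wang-Zhang2018, Li-Li2022}) that makes surface tension alone sufficient to close the energy at the regularity stated in the theorem.
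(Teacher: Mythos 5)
Your approach is genuinely different from the paper's. You iterate on $(\eta, \vv, \vh)$ in the Lagrangian/ALE spirit and close the estimate with a physical energy; the paper instead iterates on $(\ka, \vom_*, \vj_*)$---a modified mean curvature $\ka = \kappa_+\circ\Phi_\Gamma + a^2\gamma_\Gamma$ together with the pulled-back vorticity and current---recovering $\vv, \vh$ at each step from div-curl systems and encoding the interface through the quasi-linear wave-type equation \eqref{eqn Dt2 kappa+} for $\kappa_+$, in which surface tension appears as the third-order elliptic operator $\opa=-\lap_\Gt\wt{\n}$ dominating the second-order Kelvin--Helmholtz operator $\opr(\Gt,\vw)$. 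Either framing could in principle avoid a graph assumption, but they distribute the technical burden very differently, and the paper's geometric route has the advantage that the competing stabilizing and destabilizing effects appear as explicit operators with definite signs.

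There are two places where your sketch has a real gap rather than a detail to be supplied. First, when $\pd_t E$ is computed for $E \sim \|(\vv,\vh)\|_{H^{\kk}(\OGt)}^2 + \alpha^2\|\Gt\|_{H^{\kk+1}}^2$, the capillary boundary term is not the only top-order boundary contribution: the tangential velocity jump $\llbracket\vv\rrbracket$ and the Lorentz terms $(\vh\cdot\grad)\vh$ produce second-order contributions with no favorable sign (these are precisely the paper's $\opr(\Gt,\vw)$ and $\opr(\Gt,\vh_\pm)$ terms in \eqref{eqn Dt2 kappa+}), and showing that $\alpha^2$ times the third-order surface-tension term controls them at exactly the stated regularity $(\Gt,\vv,\vh)\in H^{\kk+1}\times H^{\kk}\times H^{\kk}$ is the heart of the theorem. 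The paper achieves this by measuring the energy with powers of $\opd=(-\wt{\n}^{1/2}\lap_\Gt\wt{\n}^{1/2})^{1/2}$ as in \eqref{eqn E_l}; your proposal would need to exhibit the analogous algebraic structure explicitly rather than assert it. Second, transporting $\vv^{(n+1)},\vh^{(n+1)}$ along $\vv^{(n)}$ with $p^{(n+1)}$ taken from the transmission problem does not keep the iterates solenoidal: taking the divergence of your linearized equations produces source terms of the form $\tr(\grad\vv^{(n)}\grad\vh^{(n+1)})-\tr(\grad\vh^{(n)}\grad\vv^{(n+1)})$ that cancel only at the fixed point, not along the iteration, so $\div\vh^{(n+1)}$ and hence $\div\vv^{(n+1)}$ drift. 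The paper avoids this entirely by iterating on the vorticity and current, for which the divergence-free condition propagates along characteristics, and by inserting a Leray projection when reconstructing $(\bar{\vom},\bar{\vj})$, see \eqref{def bar vom vj}.
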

	
	\subsection{The stabilization effect of the Syrovatskij condition}
	In the absence of surface tension, we show that the Syrovatskij condition \eqref{Syro 3"} can stabilize the motion of the current-vortex sheet, at least for a short time period, without the graph assumption on the interface.
	
	Due to the hairy ball theorem, \eqref{Syro 3"} cannot hold on a hypersurface homeomorphic to a sphere. Thereby, we assume that $ \Om = \mathbb{T}^2 \times (-1, 1) $, and $ \Gamma_t $ is a $ C^1 \cap H^2 $ hypersurface diffeomorphic to $ \mathbb{T}^2 $ (e.g. a surface with shape "$ \mho $" or "$ Z $", or a portion of sea waves), which separates $ \Omega $ into the upper and the lower parts. 
	\begin{center}
		\begin{tikzpicture}
			\path
			(0,0) coordinate (a0)
			(4,0) coordinate (a1)
			(4,4) coordinate (a2)
			(0,4) coordinate (a3);
			\draw[dashed, thick] (a0) -- (a3);
			\draw[dashed, thick] (a1) -- (a2);
			\draw[ultra thick] (a0) -- (a1);
			\draw[ultra thick] (a2) -- (a3);
			
			\path;
			\draw[red,use Hobby shortcut] (0,2) .. (0.25,2) .. (1,2.5) .. (2,3) .. (3,2.5) .. (2,2) .. (1.5,1) .. (2,1.2) .. (3.5,2) .. (4,2);
			
			\node[blue] at (3.5,1) {\Large $ \Omega^-_t $};
			\node[blue] at (0.7,3) {\Large $ \Omega^+_t $};
			\node[red] at (2.5, 2.5) {\Large $ \Gamma_t $};
			\node at (4.5,2) {\Large $ \Omega $};
			\node at (5.5,0.2) {\Large $ \mathbb{T}^2 \times \{-1\} $};
			\node at (5.5,3.8) {\Large $ \mathbb{T}^2 \times \{+1\} $};
		\end{tikzpicture}
	\end{center}
	Accordingly, the boundary conditions \eqref{eqn bdry v}-\eqref{bc out h} are modified to
	\begin{equation}
		(\mathrm{BC'})\quad	\begin{cases*}
			\vv_+ \cdot \vn_+ = \vv_- \cdot \vn_+ =: \theta &on $ \Gt $,\\
			\llbracket p \rrbracket \coloneqq p^+ - p^- = \alpha^2\kappa_+ &on $ \Gt $, \\
			\vh_+ \cdot \vn_+ = \vh_- \cdot \vn_+ = 0 &on $ \Gt $,\\
			\vv_\pm \cdot \wt{\vn}_\pm = 0 &on $ \mathbb{T}^2 \times\{\pm1\} $,\\
			\vh_\pm \cdot \wt{\vn}_\pm = 0 &on $ \mathbb{T}^2 \times\{\pm1\} $,
		\end{cases*}
	\end{equation}
	where $ \wt{\vn}_\pm \equiv \pm \vb{e}_3 $, are the outward unit normals of $ \mathbb{T}^2 \times \{\pm1\} $.
	
	By Lemma \ref{lem syro equiv 1}, the strict Syrovatskij condition \eqref{Syro 3"} implies
	\begin{equation*}
		\begin{split}
			0 < \Upsilon\qty(\vh_{\pm}, \llbracket\vv\rrbracket) &\coloneqq \inf_{\substack{\vb{a} \in \mathrm{T}\Gamma_t; \\ \abs{\vb{a}}=1}} \inf_{z \in \Gamma_t} \frac{\rho_+}{\rho_+ + \rho_-}\abs{\vb{a} \vdot \vh_+(z)}^2 + \frac{\rho_-}{\rho_+ + \rho_-}\abs{\vb{a} \vdot \vh_-(z)}^2 \\
			&\hspace{6em}  - \frac{\rho_+ \rho_-}{(\rho_+ + \rho_-)^2}\abs{\vb{a} \vdot \llbracket\vv\rrbracket(z)}^2.
		\end{split}
	\end{equation*}
	
	We prove the following two theorems under the Syrovatskij condition \eqref{Syro 3"}:
	
	\begin{thm}[$ \alpha = 0 $ case]\label{thm alpha=0 case}
		Let $ k \ge 3 $ be an integer and $ \Om \coloneqq \mathbb{T}^2 \times (-1, 1)$. Suppose that $ \Gamma_0 $ is an $ H^{\kk+\frac{1}{2}} $ hypersurface diffeomorphic to $ \mathbb{T}^2 $ separating $ \Om $ into two parts (the upper one and the lower one). Assume that $ \vv_0, \vh_0 \in H^{\kk}(\Om\setminus\Gamma_0) $ are two $ H^{\kk} $ solenoidal vector fields satisfying
		\begin{equation*}
			\begin{split}
				\Upsilon\qty(\vh_{0\pm}, \llbracket\vv_0\rrbracket) \ge 2\mathfrak{s}_0 > 0.
			\end{split}
		\end{equation*}
		If $ \Gamma_0 $ does not touch the top or the bottom boundary, namely, for some positive constant $ c_0 $,
		\begin{equation*}
			\dist\qty(\Gamma_0, \mathbb{T}^2 \times \{\pm1\}) \ge 2c_0 > 0;
		\end{equation*}
		then there exists a constant $ T > 0 $, so that the current-vortex sheet problem (MHD)-(BC') admits a unique solution in the space
		\begin{equation*}
			\Gt \in C^0\qty([0, T]; H^{\kk+\frac{1}{2}}) \qand \vv, \vh \in C^0\qty([0, T]; H^{\kk}(\OGt)).
		\end{equation*}
		Furthermore, for $0 \le t \le T$, the solution $ (\Gt, \vv, \vh) $ satisfies
		\begin{equation*}
			\Upsilon\qty(\vh_\pm, \llbracket\vv\rrbracket) \ge \mathfrak{s}_0 \qand \dist\qty(\Gt, \mathbb{T}^2 \times \{\pm1\}) \ge c_0,
		\end{equation*}
		and it depends on the initial data continuously. That is, the problem (MHD)-(BC') is locally well-posed.
	\end{thm}
	
	Furthermore, the following result on the vanishing surface tension limit holds:
	\begin{thm}[$ \alpha \to 0 $ limit]\label{thm vanishing surf limt}
		Assume that $ 0 \le \alpha \le 1,  k \ge 3 $, and $ \Om = \mathbb{T}^2 \times (-1, 1) $. Take initial data $ \Gamma_0 \in H^{\kk+1} $ diffeomorphic to $ \mathbb{T}^2 $ with $ \dist(\Gamma_0, \mathbb{T}^2\times\{\pm1\}) \ge 2 c_0 > 0 $ and solenoidal vector fields $ \vv_{0\pm}, \vh_{0\pm} \in H^{\kk}(\Om_0^\pm)$ satisfying $ \Upsilon\qty(\vh_{0\pm}, \llbracket\vv_0\rrbracket) \ge 2\mathfrak{s}_0 > 0 $. Then, there is a constant $ T > 0 $, independent of $ \alpha $, so that the problem, (MHD)-(BC'), is well-posed for $ t \in [0, T] $. Furthermore, as $ \alpha \to 0 $, by passing to a subsequence, the solution to (MHD)-(BC') with surface tension converges weakly to a solution to (MHD)-(BC') with $ \alpha = 0 $ in the space $ \Gt \in H^{\kk+\frac{1}{2}} $ and $ \vv_\pm, \vh_\pm \in H^{\kk}(\Om_t^\pm) $. 
	\end{thm}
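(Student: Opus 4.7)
The plan is to (i) establish a priori estimates for the problem (MHD)-(BC') with $\alpha \in [0, 1]$ that are uniform in $\alpha$, guaranteeing a common lifespan $T > 0$, and then (ii) use a weak compactness argument to pass to the limit $\alpha \to 0$, identifying the limit as a solution of the zero-surface-tension problem. For each fixed $\alpha > 0$ the local existence and uniqueness is already provided by Theorem \ref{thm s.t.}, so only the uniformity of the lifespan and the limit-passage need to be established; uniqueness of the limit is automatic from Theorem \ref{thm alpha=0 case}.

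\textbf{Uniform energy.} The Syrovatskij condition yields a half-derivative stabilization of the free interface that is independent of $\alpha$, while the surface tension supplies an extra full-derivative regularization with prefactor $\alpha^2$. I would therefore work with a combined energy of the shape
\begin{equation*}
E_\alpha(t) \coloneqq \|\vv(t)\|_{H^{\kk}(\OGt)}^2 + \|\vh(t)\|_{H^{\kk}(\OGt)}^2 + \|\Gt\|_{H^{\kk+\frac{1}{2}}}^2 + \alpha^2 \|\Gt\|_{H^{\kk+1}}^2,
\end{equation*}
so that the $\alpha$-independent part reproduces the energy from Theorem \ref{thm alpha=0 case} and the $\alpha^2$-weighted piece absorbs the additional regularity of $\Gt$ coming from surface tension. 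The goal is a differential inequality $\frac{\mathrm{d}}{\mathrm{d}t} E_\alpha \le P(E_\alpha)$ whose polynomial $P$ depends on $\mathfrak{s}_0$ and $c_0$ but not on $\alpha$. A continuity argument then fixes a lifespan $T$ depending only on $E_\alpha(0)$, $\mathfrak{s}_0$, $c_0$, along with the preservation of $\Upsilon(\vh_\pm, \llbracket \vv \rrbracket) \ge \mathfrak{s}_0$ and $\dist(\Gt, \mathbb{T}^2 \times \{\pm 1\}) \ge c_0$ on $[0,T]$.

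\textbf{Main obstacle.} The hardest step will be closing the top-order energy estimate uniformly in $\alpha$, where both stabilization mechanisms act simultaneously. The Syrovatskij mechanism produces positivity at order $1/2$ in $\Gt$ through an operator built from the tangential fields $\vh_\pm \vdot \nabla_\Gt$, $\llbracket \vv \rrbracket \vdot \nabla_\Gt$ and the Dirichlet-to-Neumann map, while surface tension contributes an order $1$ positive Laplace--Beltrami-type term with weight $\alpha^2$. The principal analytic obstacle is to show that when one differentiates the system to top order, the commutators coupling these two mechanisms are genuinely lower order and do not reintroduce $\alpha^{-1}$-type singular weights; equivalently, the $\alpha^2 \kappa_+$ contribution to the pressure jump \eqref{jump p} must be shown to produce, upon integration by parts against the right top-order multiplier, the non-negative term $\alpha^2 \|\Gt\|_{H^{\kk+1}}^2$ together with remainders controlled by $P(E_\alpha)$. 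A careful Hodge-type decomposition of the pressure into Syrovatskij, Taylor-sign, and surface-tension contributions will likely be needed, together with commutator estimates that treat $\alpha$ as an external parameter in $[0,1]$ with no positive lower bound.

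\textbf{Passage to the limit.} Given the uniform bound $\sup_{[0,T]} E_\alpha(t) \le M$, Banach--Alaoglu produces a subsequence $\alpha_j \to 0$ such that $\Gt^{\alpha_j} \to \Gt$ weakly-$*$ in $L^\infty([0,T]; H^{\kk+\frac{1}{2}})$ and $\vv^{\alpha_j}, \vh^{\alpha_j} \to \vv, \vh$ weakly-$*$ in $L^\infty([0,T]; H^{\kk}(\OGt))$. The kinematic boundary condition together with equations \eqref{eqn Dtv}--\eqref{eqn Dth} yields uniform bounds on the time derivatives in weaker norms, and an Aubin--Lions/Ascoli argument upgrades the convergence to strong convergence in $C^0([0,T]; H^{s})$ for every $s$ strictly smaller than $\kk+\frac{1}{2}$ and $\kk$ respectively, which is enough to pass to the limit in the quadratic nonlinearities of (MHD). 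For the pressure jump, the uniform bound $\alpha_j \|\Gt^{\alpha_j}\|_{H^{\kk+1}} \le \sqrt{M}$ gives
\begin{equation*}
\|\alpha_j^2 \kappa_+^{\alpha_j}\|_{H^{\kk - 1}} \le C\, \alpha_j \cdot \alpha_j \|\Gt^{\alpha_j}\|_{H^{\kk+1}} \longrightarrow 0,
\end{equation*}
so the limiting interface satisfies $\llbracket p \rrbracket = 0$ on $\Gt$. The divergence-free conditions and the fixed-boundary conditions on $\mathbb{T}^2 \times \{\pm 1\}$ are preserved under weak convergence, so the limit is a solution of (MHD)-(BC') with $\alpha = 0$ in the regularity class of Theorem \ref{thm alpha=0 case}, completing the proof.
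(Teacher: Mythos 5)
Your proposal takes essentially the same route as the paper: derive a uniform-in-$\alpha$ differential inequality for a combined energy whose $\alpha$-independent part encodes the Syrovatskij stabilization (at $H^{\kk+\frac{1}{2}}$-level on the interface) and whose $\alpha^2$-weighted part encodes the extra regularity supplied by surface tension (at $H^{\kk+1}$-level), then extract a weakly convergent subsequence; the paper's energy $\mathcal{E}_1 = \abs{\Dtb\kappa_+}^2_{H^{\kk-\frac{5}{2}}} + \alpha^2\abs{\kappa_+}^2_{H^{\kk-1}} + \abs{\kappa_+}^2_{H^{\kk-\frac{3}{2}}}$ is precisely your $\|\Gt\|^2_{H^{\kk+\frac{1}{2}}} + \alpha^2\|\Gt\|^2_{H^{\kk+1}}$ rephrased through $\kappa_+$, augmented by the material-derivative term needed for the wave-type structure of \eqref{eqn Dt2 kappa+}. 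Your compactness passage (which the paper leaves implicit) is the standard one, and the estimate $\alpha_j^2\|\kappa_+^{\alpha_j}\|_{H^{\kk-1}}\lesssim\alpha_j\sqrt{M}\to 0$ correctly identifies why the pressure jump vanishes in the limit; the one small slip is the mention of a Taylor-sign contribution in the pressure decomposition, which plays no role in this problem — the paper's decomposition \eqref{decomp p} splits the pressure into velocity, magnetic, surface-tension, and boundary parts, with the Syrovatskij mechanism entering through $\opR$, not through a Taylor-sign condition.
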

	
	\subsection{Main ideas}
	Inspired by the works of Shatah-Zeng \cite{Shatah-Zeng2011}, we choose a geometric approach to analyze the problems. First of all, a reference hypersurface $ \Gs $ diffeomorphic to the initial one is taken, and one may choose a transversal vector field  $ \vnu $ defined on the reference hypersurface of the same regularity and close to the unit normal in the $ C^1 $-topology. Therefore, any hypersurface near the reference one can be expressed uniquely by the height function defined on $ \Gs $ via:
	\begin{equation*}
		\Phi_\Gamma (p) = p + \gamma_\Gamma (p) \vnu(p) \qq{for} p \in \Gs.
	\end{equation*}
	Every hypersurface $ \Gamma $ in a small $ C^1 $-neighborhood of $ \Gs $ is associated to a unique function $ \gamma_\Gamma $, and the mean curvature $ \kappa $ of $ \Gamma $ can be expressed by $ \gamma_\Gamma $. Conversely, by taking an auxiliary constant $ a > 0 $ determined by $ \Gs $ and $ \vnu $, the height function $ \gamma_\Gamma $ can be determined uniquely by the function $ \kappa \circ \Phi_\Gamma + a^2 \gamma_\Gamma : \Gs \to \R $, whose leading order term is the mean curvature $ \kappa $ of $ \Gamma $. Hence, the analysis of the evolution equation for the mean curvature $ \kappa $ can determine the evolution of the free hypersurface, which is the crucial part of the settlement of such free interface problems. 
	
	On the other hand, any vector field defined in a simply-connected domain is uniquely determined by its divergence, curl, and normal projection on the boundary (c.f. \cite{Cheng-Shkoller2017}). The evolution equation for the free interface yields the normal part of the velocity on the boundary, and the normal projection of the magnetic field is zero (for the current-vortex sheet problems considered here); thus, the incompressibility of the plasma ($ \div \vv = 0 $) and Gauss's law for magnetism ($ \div \vh = 0 $) imply that one only needs to determine the current ($ \vj \coloneqq \curl \vh $) and vorticity ($ \vom \coloneqq \curl \vv $). The evolution equations for $ \vom $ and $ \vj $ are given respectively as
	\begin{equation*}
		\begin{cases*}
			\pd_t \vom + (\vv \vdot \grad)\vom - (\vh \vdot \grad)\vj = (\vom \vdot \grad)\vv - (\vj \vdot \grad) \vh, \\
			\pd_t\vj + (\vv \vdot \grad)\vj - (\vh \vdot \grad) \vom = (\vj \vdot \grad)\vv - (\vom \vdot \grad) \vh - 2\tr(\grad \vv \cp \grad \vh).
		\end{cases*}
	\end{equation*}
	Since $ \vh $ is parallel to the free hypersurface, as indicated in \cite{Sun-Wang-Zhang2018}, it follows that $ \vv \pm \vh $ are also evolution velocities of the interface and the fluid domain. Thus, one may use characteristic methods to transform the above equations into a system of ordinary differential equations, whose well-posedness is standard to obtain (see \cite{Caflisch-Klapper_Steele97}).
	
	Once the evolution of the free interfaces, currents and vorticities are known, the original problem can be resolved via working on the div-curl systems.
	
	Such an approach can not only resolve the free interface problems for general interfaces (in particular, with no graph assumptions) but also help to understand various stability conditions more clearly. Indeed, it will be seen in the evolution equations for the mean curvatures that the surface tension corresponds to a third order positive differential operator on the free interface, which serves as a stabilizer for the surface motion; the strict Syrovatskij condition \eqref{Syro 3"} corresponds to a second order positive differential operator. Thus, concerning the stabilization effect, surface tensions are stronger than the strict Syrovatskij condition. Moreover, due to the existence of the unstable term in the evolution equations for the mean curvature (which is a second order differential operator resulting in the Kelvin-Helmholtz instability for the vortex sheet problems), the small tangential magnetic fields (the Syrovatskij conditions can be understood as a largeness assumption) cannot stabilize the current-vortex sheet in the absence of surface tension.
	
	\subsection{Structure of the paper}
	In \textsection~\ref{sec prelimi}, we introduce some geometric relations and some analytical tools. \textsection~\ref{sec reform} - \textsection~\ref{sec nonlinear} are devoted to the proof of Theorem~\ref{thm s.t.}. More precisely, in \textsection~\ref{sec reform}, we rewrite the current-vortex sheet problems in a geometric manner, and derive the corresponding evolution equations. In \textsection~\ref{sec linear}, we study the uniform linear estimates for the linearized systems; and in \textsection~\ref{sec nonlinear} we consider the nonlinear problems and show the local well-posedness of the original current-vortex sheet ones. \textsection~\ref{sec syro} is devoted to the proof of Theorem~\ref{thm alpha=0 case} and \ref{thm vanishing surf limt}. In the Appendices, we prove two technical lemmas.
	
	\section{Preliminaries}\label{sec prelimi}
	\subsection{Geometry of hypersurfaces}
	
	For a family of  hypersurfaces $ \Gt \subset \R^d $ evolving with the velocity $ \vv : \Gt \to \R^d $, consider the local charts for the initial hypersurface $ \F : U \to \Gamma_0 \subset \R^d $. Assume that $ \Xi_t $ is the flow map induced by $ \vv $, then one can take a coordinate map of $ \Gt $ as $ \F(t) \coloneqq \Xi_t \circ \F : U \to \Gt $. Standard geometric arguments (c.f. \cite{Ecker2004}) give that the coordinate tangent vectors $ \pd_i \F (t, z) : U \to \R^{d}, (1 \le i \le d-1) $ form a natural basis of the tangent space $ \mrm{T}_p\Gt $ at $ p = \F(t, z) \in \Gt $ for each $ z \in U $. The submanifold metric of $ \Gt \subset \mathbb{R}^d $ is given by
	\begin{equation*}
		g_{ij} = \F_{,i} \vdot \F_{,j}
	\end{equation*}
	for $ 1 \le i, j \le d-1 $, where $ f_{,i} $ represents $ \pd_i f $ for any function $ f : U \to \R $. The inverse metric is defined to be
	\begin{equation*}
		(g^{ij}) \coloneqq (g_{ij})^{-1}, 
	\end{equation*} 
	and the area element of $ \Gt $ is 
	\begin{equation*}
		\sqrt{g} = \sqrt{\det(g_{ij})}.
	\end{equation*}
	Furthermore, there is a natural Riemannian connection on $ \Gt $, whose Christoffel symbols are given by
	\begin{equation*}
		\Gamma_{ij}^k \coloneqq g^{kl} \F_{,ij} \vdot \F_{,l}  =  \frac{1}{2}g^{kl}\qty(g_{jl,i} + g_{il,j} - g_{ij,l}),
	\end{equation*}
	where the summation convention for tensors has been used.
	For a tangent vector $ \vb{X} \in \mrm{T}\Gt $, one can write
	\begin{equation*}
		\vb{X} = X^i \F_{,i} = g^{ij}X_j\F_{,i},
	\end{equation*}
	where $ X_j \coloneqq \vb{X} \vdot \F_{,j} $. The covariant derivative of $ \vb{X} $ is defined to be
	\begin{equation*}
		\qty(\DD^\Gt_i X)^j  \coloneqq X^j_{,i} + \Gamma_{ik}^j X^k,
	\end{equation*}
	Hence, the divergence of $ \vb{X} $ on $\Gt$ is defined to be
	\begin{equation}\label{def bdry div}
		\Div_\Gt \vb{X} \coloneqq  g^{ij} \vb{X}_{,i} \vdot \vb{F}_{,j}.
	\end{equation}
	One can also extend (\ref{def bdry div}) to all vector fields defined on $ \Gt $, not necessarily being tangential. For a function $ h : \Gt \to \R $, the tangential gradient is defined by
	\begin{equation}
		\grad_{\Gt} h = g^{ij} h_{,i} \F_{,j},
	\end{equation}
	and the Laplace-Beltrami operator on $ \Gt $ is given by
	\begin{equation}
		\lap_\Gt h = \Div_\Gt \grad_\Gt h = g^{ij}\qty(h_{,ij} - \Gamma_{ij}^k h_{,k}) = \frac{1}{\sqrt{g}}\pd_i \qty(\sqrt{g} g^{ij} h_{,j}).
	\end{equation}
	For a general vector field $ \vb{Y} : \Gt \to \R^d $, the notation $ (\DD\vb{Y})^\top $  represents a (0,2)-tensor on $ \Gt $ (here $ \DD $ is the covariant derivative on $ \R^d $, and $ \top $ is the tangential projection):
	\begin{equation}
		\qty[(\DD\vb{Y})^\top]_{ij} \coloneqq \vb{Y}_{,i} \vdot \F_{,j},
	\end{equation}
	so
	\begin{equation*}
		\Div_\Gt \vb{Y} = \tr[(\DD\vb{Y})^\top].
	\end{equation*}
	
	Denote by $ \vn : \Gt \to \mathbb{S}^{d-1} $ the unit normal vector field of $ \Gt $, i.e.
	\begin{equation*}
		\vn \vdot \F_{,i} = 0
	\end{equation*}
	for $ 1 \le i \le d-1 $. Since $ \vn \vdot \vn \equiv 1 $, it is clear that
	\begin{equation*}
		\vn_{,i} \vdot \vn = 0,
	\end{equation*}
	namely, $ \vn_{,i} \in \mrm{T}\Gt $ for all $ 1 \le i \le d-1 $. The second fundamental form $ \II $ of $ \Gt $ is a (0, 2)-tensor defined by
	\begin{equation}\label{def II}
		\ii_{ij} \coloneqq \vn_{,i} \vdot \F_{,j} = - \vn \vdot \F_{,ij}.
	\end{equation}
	The mean curvature is defined to be the trace of $ \II $, i.e.
	\begin{equation}\label{eqn kp div N}
		\kappa \coloneqq \tr(\II) = \ii_{ij}g^{ij} = g^{ij} \vn_{,i} \vdot \F_{,j} = \Div_\Gt \vn.
	\end{equation}
	Here we mention several useful identities, whose calculations can be found in \cite[Appendix~A]{Ecker2004}:
	\begin{equation*}
		\lap_\Gt \ii_{ij} = \kappa_{;ij} + \kappa \ii_{i}^k \ii_{kj}  - \abs{\II}^2 \ii_{ij},
	\end{equation*}
	namely,
	\begin{equation}\label{simons' identity}
		\lap_\Gamma \II = \qty(\DD_\Gamma)^2 \kappa + (\kappa \II - \abs{\II}^2 \vb{I}) \vdot \II,
	\end{equation}
	which is called Simons' identity. Here the dot product of tensors is defined to be
	\begin{equation*}
		\vb{C} \coloneqq \vb{A} \cdot \vb{B} \qc C_{ij} \equiv A_i^k B_{kj} = A_{il}B_{kj}g^{lk}.
	\end{equation*}
	Furthermore, it follows from the Codazzi equation that
	\begin{equation}\label{lap vn}
		\lap_\Gt \vn = - \abs{\II}^2 \vn + \grad_\Gt{\kappa}.
	\end{equation}
	
	\begin{rk}
		Derivatives of functions and vector fields on hypersurfaces can also be defined in terms of the projections from $ \R^d $ onto the tangent space of $ \Gt $. In particular, for a function $ f $ and a vector field $ \vb{X} $ defined in a neighborhood of $ \Gt \subset \R^d $, the tangential gradient of $ f $ is
		\begin{equation}
			\grad_\Gt f = \grad f - (\vn \vdot \grad f) \vn,
		\end{equation}
		where $ \grad f $ is the gradient in $ \R^d $; and the tangential divergence of $ \vb{X} $ is given by
		\begin{equation}\label{eqn div Gt div Rd}
			\Div_\Gt \vb{X} = \Div_{\R^d} \vb{X} - \vn \vdot \DD_{\vn} \vb{X},
		\end{equation}
		where $ \DD $ is the covariant derivative in $ \R^d $. The above definitions are identical to the intrinsic ones given earlier.
		The Laplace-Beltrami operator can be calculated in an equivalent way by:
		\begin{equation}\label{lap gt f alt}
			\begin{split}
				\lap_\Gt f &= \Div_\Gt \grad_\Gt f = \Div_\Gt \grad f - \Div_\Gt \qty[(\vn\vdot\grad f)\vn] \\
				&= \lap_{\R^d} f - \DD^2 f(\vn, \vn) - \kappa \vn \vdot \grad f,
			\end{split}
		\end{equation}
		for any $ C^2 $ function defined in a neighborhood of $ \Gt \subset \R^d $.
	\end{rk}
	
	Next, we shall derive the evolution equations. For the evolution of $ \vn $, it holds that
	\begin{equation*}
		0 \equiv \pd_t \qty(\vn \vdot \F_{,i}) = \pd_t\vn \vdot \F_{,i} + \vn \vdot \vv_{,i},
	\end{equation*} 
	which, together with the fact that $ \vn $ has unit length, implies that
	\begin{equation}\label{eqn pdt n}
		\pd_t \vn = - g^{ij} \qty(\vn \vdot \vv_{,j}) \F_{,i},
	\end{equation}
	in other words,
	\begin{equation}\label{eqn dt n}
		\Dt \vn = - \qty[(\grad\vv)^* \vdot\vn]^\top,
	\end{equation}
	here $\Dt$ is the material derivative along the trajectory of $\vv$.
	For the metric tensor, observe that
	\begin{equation}\label{dt g ij}
		\pd_t g_{ij} = \vv_{,i} \vdot \F_{,j} + \vv_{,j} \vdot \F_{,i} =: 2A_{ij}.
	\end{equation}
	One can check that $ \vb{A} $ is a tensor on $ \Gt $.  In fact, 
	\begin{equation}
		\vb{A} = (\mrm{Def} \, \vv^\top) + v^\perp\II,
	\end{equation}
	where "$\mrm{Def}$" represents the deformation tensor on $ \Gt $. In particular, the material derivative of the area element is:
	\begin{equation*}
		\begin{split}
			\dv{t}(\sqrt{\det(g_{ij})}) = \frac{1}{2} \sqrt{\det(g_{ij})} g^{kl} \pd_t (g_{kl}) = (\Div_\Gt \vv) \sqrt{\det(g_{ij})},
		\end{split}
	\end{equation*}
	i.e., 
	\begin{equation}\label{eqn Dt dS}
		\Dt \dd{S_t} = \Div_\Gt \vv \dd{S_t}.
	\end{equation}
	The evolution equation for the second fundamental form is
	\begin{equation}\label{eqn dt II}
		\pd_t \ii_{ij} = - \vn \vdot (\vv_{,ij} - \Gamma_{ij}^k \vv_{,k}),
	\end{equation}
	in particular,
	\begin{equation}\label{eqn dt II tan}
		(\Dt\II)^\top = - \DD^\top \qty[(\DD\vv)^*\vn]^\top - \II \vdot \qty(\DD\vv)^\top.
	\end{equation}
	The evolution of $ \kappa $ is given by
	\begin{equation}\label{eqn dt kappa}
		\begin{split}
			\Dt \kappa \coloneqq\, &\pd_t(\kappa \circ\F) = \pd_t\qty(\ii_{ij}g^{ij}) \\
			=\, &\vn\vdot\qty[g^{ij}\qty(-\vv_{,ij}+\Gamma_{ij}^k\vv_{,k})] - 2\ii_{ij}{A}^{ij} \\
			=\, &-\vn \vdot \lap_\Gt \vv - 2 \ip{\II}{\vb{A}},
		\end{split}
	\end{equation}
	where $ \ip{\cdot}{\cdot} $ is the standard inner product of tensors defined by
	\begin{equation*}
		\ip{\vb{A}}{\vb{B}} \coloneqq A_{ij} B^{ij} = A_{ij} B_{kl} g^{ik} g^{jl}.
	\end{equation*}
	The second order evolution equation is:
	\begin{equation}
		\begin{split}
			\Dt^2\kappa =\, &-\vn\vdot\lap_\Gt(\Dt\vv)+ 2\vn\vdot(\grad\vv)\vdot(\lap_\Gt\vv)^\top + 4 \ip{\vb{A}}{\vn\vdot\qty(\DD_\Gt)^2\vv} -\kappa \abs{\qty[(\grad\vv)^*\vdot\vn]^\top}^2 \\ &+ 4 \ip{\II \vdot\vb{A} + \vb{A}\vdot\II}{\vb{A}} -2\ip{\II}{\DD_\Gt \vv \vdot \DD_\Gt \vv} - 2\ip{\II}{[\DD(\Dt\vv)]^\top}.
			\end{split}
	\end{equation}
	Here we explain some terms appeared in the last expressions. If one assumes that
	\begin{equation*}
		\vn \equiv N^\alpha \vb{e}_{(\alpha)} \qand \vv \equiv v^\alpha \vb{e}_{(\alpha)},
	\end{equation*}
	for which $ \vb{e}_{(\alpha)} (1 \le \alpha \le d) $ is an orthonormal basis of $ \R^d $, then
	\begin{equation*}
		\vn \vdot (\grad \vv) \vdot (\lap_\Gt \vv)^\top = \sum_{\alpha} N^\alpha \grad v^\alpha \vdot  (\lap_\Gt \vv)^\top \qc
		\vn \vdot (\DD_\Gt)^2 \vv = \sum_{\alpha} N^\alpha (\DD_\Gt)^2 v^\alpha,
	\end{equation*}
	and
	\begin{equation*}
		\DD_\Gt \vv \vdot \DD_\Gt \vv = \sum_{\alpha} \DD_\Gt v^\alpha \otimes \DD_\Gt v^\alpha.
	\end{equation*}
	By using the identity \eqref{lap vn}, one can derive an alternate formula:
	\begin{equation}\label{eqn Dt2 kappa alt}
		\begin{split}
			\Dt^2 \kappa =\,  &-\lap_\Gt(\vn\vdot\Dt\vv) - \abs{\II}^2 (\vn\vdot\Dt\vv) + \grad_\Gt\kappa \vdot\Dt\vv + 2\vn\vdot(\grad\vv)\vdot(\lap_\Gt\vv)^\top \\
			&+ 4 \ip{\vb{A}}{\vn\vdot\qty(\DD_\Gt)^2\vv}
			-\kappa \abs{\qty[(\grad\vv)^*\vdot\vn]^\top}^2 -2\ip{\II}{\DD_\Gt \vv \vdot \DD_\Gt \vv}  + 4 \ip{\II \vdot\vb{A} + \vb{A}\vdot\II}{\vb{A}}.
		\end{split}
	\end{equation}
	
	\subsection{Reference hypersurface}
	
	Let $ k $ be an integer with $ k \ge 2 $, and $ \Gs \subset \Om $ a compact reference hypersurface without boundary separating $ \Om $ into two disjoint simply-connected domains $ \Om_*^\pm $. Assume that $ \Gs $  is of Sobolev class $ H^{\kk + 1} $. Denote by $ \vn_{*+} $ the outward unit normal of $ \pd \Om_*^+ = \Gs $ and $ \vn_{*-} \coloneqq - \vn_{*+} $ the outward unit normal of $ \Gs \subset \pd \Om_*^- $. Let $ \II_{*\pm} $ be the second fundamental form of $ \Gs $ with respect to $ \vn_{*\pm} $, and $ \kappa_{*\pm} $ the corresponding mean curvature.
	
	As in \cite{Shatah-Zeng2011}, we shall consider the evolution of hypersurfaces in a tubular neighborhood of $ \Gs $. Although it is natural to take normal bundle coordinates of $ \Gs $ in classical geometric arguments, it would be better not to do so. Indeed, if $ \Gs $ is of finite regularity, $ \vn_* $ has one less derivatives than $ \Gs $, hence one needs to take another transversal vector field to obtain the Fermi coordinates of the same regularity as that of $ \Gs $. For example, one can take a unit vector field $ \vb*{\nu} \in H^{\kk+1} (\Gs; \mathbb{R}^{2})$ for which $ \vnu \cdot \vn_{*+} \ge 9/10 $ by mollifying $ \vn_* $.
	
	It follows from the implicit function theorem that there exists a constant $ \delta_0 > 0 $ depending on $ \Gs $ and $ \vnu $ so that 
	\begin{equation*}
		\begin{split}
			\varphi: \, &\Gs \times (-\delta_0, \delta_0) \to \R^3 \\
			&(p, \gamma) \mapsto p + \gamma \vnu
		\end{split}
	\end{equation*}
	is an $ H^{\kk + 1} $ diffeomorphism onto a neighborhood of $ \Gs $. Therefore, each hypersurface $ \Gamma $ close to $ \Gs $ in the $ C^1 $ topology is associated to a unique height function $ \gamma_\Gamma : \Gs \to \R $ so that
	\begin{equation}
		\Phi_\Gamma (p) \coloneqq p + \gamma_\Gamma(p)\vnu(p)
	\end{equation}
	is a diffeomorphism from $ \Gs $ to $ \Gamma $. Thus, one can use the function $ \gamma_\Gamma $ to represent the hypersurface $ \Gamma $.
	
	\begin{defi}\label{def Lambda}
		For $ \delta>0 $ and $ \frac{1+3}{2} < s \le 1+\kk $, define $ \Lambda(\Gs, s, \delta) $ to be the collection of all hypersurfaces $ \Gamma $ close to $ \Gs $, whose associated coordinate functions $ \gamma_\Gamma $ satisfy $ \abs{\gamma_\Gamma}_{H^s(\Gs)} < \delta $.
	\end{defi}
	
	As $ s > \frac{3-1}{2} + 1 $ implies $ H^s(\Gs) \hookrightarrow C^1(\Gs) $, $ \delta \ll 1 $ yields that each $ \Gamma \in \Lambda $ also separates $ \Om $ into two disjoint simply-connected domains.
	
	\subsection{Recovering a hypersurface from its mean curvature}
	Here, we characterize the moving hypersurface by its mean curvature $ \kappa_+ \coloneqq \tr \II_+ $. Recall that the second fundamental form is defined by
	\begin{equation}\label{second fundamental form}
		\II_+ (\vb*{\tau}) \coloneqq \DD_{\vb*{\tau}} \vn_+ \qfor \vb*{\tau} \in \mrm{T}\Gamma.
	\end{equation}
	For an $ H^s $ hypersurface $ \Gamma \in \Lambda(\Gs, s, \delta_0) $ with $ s> 2 $, the unit normal $ \vn_+ $ has the same regularity as $ \grad \gamma_\Gamma $. Then the mapping from $ \gamma_\Gamma \in \H{s} $ to the mean curvature $ \kappa_+ \circ \Phi_\Gamma \in \H{s-2} $ is smooth. 
	
	In order to establish a bijection between them, one may consider a modification
	\begin{equation}\label{def ka}
		\K [\gmgm](p) \equiv \ka (p) \coloneqq \kappa_+ \circ \Phi_\Gamma (p) + a^2 \gmgm (p) \qfor p \in \Gs,
	\end{equation}
	where $ a $ is a parameter depending only on $ \Gs $ and $ \vnu $ (c.f. \cite{Shatah-Zeng2011}).
	
	For a small constant $ \delta_0 > 0 $, define
	\begin{equation}\label{def lambda*}
		\Lambda_* \coloneqq \Lambda \qty(\Gs, \kk - \frac{1}{2}, \delta_0).
	\end{equation}
	Then, the following lemma holds:
	\begin{lem}\label{est ii}
		For $ \Gamma \in \Lambda_* $ with $ \kappa_+ \in H^s(\Gamma) $, $ \kk - \frac{5}{2} \le s \le \kk - 1 $, the following estimate holds:
		\begin{equation}
			\abs{\vn_+}_{H^{s+1}(\Gamma)} + \abs{\II_+}_{H^s(\Gamma)} \le C_* \qty(1 + \abs{\kappa_+}_{H^s(\Gamma)}),
		\end{equation}
		for some constant $ C_* $ depending only on $ \Lambda_* $.
	\end{lem}
	The proof of the lemma follows from the bootstrap arguments and Simons' identity \eqref{simons' identity}.
	For the details, one can refer to \cite[p. 719]{Shatah-Zeng2008-Geo}.
	
	If $ \Lambda_*  $ is regarded as an open subset of $ \H{\kk-\frac{1}{2}} $, then $ \K $ is a $ C^3$-morphism from $ \Lambda_* \subset \H{\kk - \frac{1}{2}} $ to $ \H{\kk -\frac{5}{2}} $. Furthermore, by taking $ a \gg 1 $, one may deduce from the positivity of $ \eval{(\fdv*{{\K}}{\gamma_\Gamma})}_{\Gs}^{} $ that $ \K $ is actually a $ C^3 $ diffeomorphism, and the following proposition holds (c.f. \cite[Lemma 2.2]{Shatah-Zeng2011}):
	
	\begin{prop}\label{prop K}
		There are positive constants $ C_*, \delta_0, \delta_1, a_0 $ depending only on $ \Gs $ and $ \vnu $ such that for $ a \ge a_0 $, $ \K $ is a $ C^3 $ diffeomorphism from $ \Lambda_* \subset \H{\kk-\frac{1}{2}} $ to $ \H{\kk-\frac{5}{2}} $. Denote by
		\begin{equation*}
			B_{\delta_1} \coloneqq \Set*{\ka \given \abs{\ka - \kappa_{*+}}_{\H{\kk-\frac{5}{2}}} < \delta_1},
		\end{equation*}
		where $ \kappa_{*+} $ is the mean curvature of $ \Gs $ with respect to $ \vn_{*+} $, then
		\begin{equation*}
			\abs{\K^{-1}}_{C^3\qty(B_{\delta_1}; \H{\kk-\frac{1}{2}})} \le C_*.
		\end{equation*}
		Furthermore, if $ \ka \in B_{\delta_1} \cap \H{s-2}$ with $ \kk-\frac{1}{2} \le s \le \kk +1 $, then $ \gmgm,  \Phi_\Gamma \in \H{s}$, and for $ \max\qty{s'-2, -s} \le s'' \le s' \le s $, it holds that
		\begin{equation}\label{var est K^-1}
			\abs{\var{\K^{-1}}(\ka)}_{\LL\qty(\H{s''}; \H{s'})} \le C_* a^{s'-s''-2} \qty(1+ \abs{\ka}_{\H{s-2}}),
		\end{equation}
		where $\var{\K^{-1}}$ is the functional variation of $\K^{-1}$.		  		
	\end{prop}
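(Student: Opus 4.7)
The plan is to view $\K$ as a quasilinear second-order elliptic operator on the closed manifold $\Gs$, made coercive by the $+a^2$ term, and then to invoke the inverse function theorem together with an elliptic bootstrap. In local charts $\F:U\to\Gs$, the parametrization $\Phi_\Gamma\circ\F(x)=\F(x)+\gamma_\Gamma(\F(x))\vnu(\F(x))$ lets one write the induced metric, unit normal, and second fundamental form of $\Gamma$, and hence $\kappa_+\circ\Phi_\Gamma$, as explicit rational functions of $\gamma_\Gamma,\grad\gamma_\Gamma,\grad^2\gamma_\Gamma$, with coefficients polynomial in the fixed data $\vn_{*+},\vnu,g_{*ij}$. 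The denominators involve only the Jacobian of $\Phi_\Gamma$, which stays bounded away from zero provided $\abs{\gamma_\Gamma}_{C^1}$ is small; since $H^{\kk-\frac{1}{2}}(\Gs)\hookrightarrow C^{1,\alpha}$, this is automatic on $\Lambda_*$ once $\delta_0$ is small. Sobolev algebra and Moser-type product estimates then give that $\K:\Lambda_*\subset H^{\kk-\frac{1}{2}}\to H^{\kk-\frac{5}{2}}$ is $C^3$-Fr\'echet.

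The decisive computation is the linearization of $\K$ at $\gamma_\Gamma=0$. Differentiating the graph mean-curvature expression along $\vnu$ (equivalently, invoking the classical first variation formula for mean curvature and accounting for the obliqueness of $\vnu$) one finds
\[
\delta\K\big|_{0}\eta \;=\; L_{\Gs}\eta + a^2\eta,
\]
where $L_{\Gs}=c(p)(-\Delta_{\Gs})+(\text{first- and zeroth-order})$ is a self-adjoint elliptic operator on $\Gs$ whose coefficients depend polynomially on $\II_{*+},\vnu,\vn_{*+}$ and $c(p)>0$ comes from the conformal factor $(\vnu\cdot\vn_{*+})^{-2}\ge (10/9)^2$. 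Because $L_{\Gs}$ is uniformly elliptic and bounded below, choosing $a\ge a_0$ sufficiently large makes $\delta\K|_0$ strictly positive on $L^2(\Gs)$, hence an isomorphism $H^s(\Gs)\to H^{s-2}(\Gs)$ for every $s$ in the relevant range, by standard elliptic theory on the closed manifold. The inverse function theorem then produces an $H^{\kk-\frac{1}{2}}$-neighborhood of $0$ which is mapped $C^3$-diffeomorphically onto an $H^{\kk-\frac{5}{2}}$-neighborhood of $\kappa_{*+}$; shrinking if necessary yields the constants $\delta_0,\delta_1$ and the bound $\abs{\K^{-1}}_{C^3(B_{\delta_1};H^{\kk-\frac{1}{2}})}\le C_*$.

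For higher regularity and the variation estimate \eqref{var est K^-1}, I would fix $\ka\in B_{\delta_1}\cap H^{s-2}$ with $\kk-\frac{1}{2}\le s\le \kk+1$ and view $\K(\gamma_\Gamma)=\ka$ as a quasilinear elliptic equation whose leading part is $-\Delta_{\Gs}\gamma_\Gamma+a^2\gamma_\Gamma$. Iterating elliptic regularity with Moser-type tame estimates raises $\gamma_\Gamma$ from $H^{\kk-\frac{1}{2}}$ to $H^s$. For \eqref{var est K^-1}, at a general $\ka$ the linearization $\delta\K(\ka)$ is again of the form $L_\gamma+a^2$ with coefficients controlled tamely by $\abs{\ka}_{H^{s-2}}$; inverting it is a resolvent estimate for an elliptic operator with an $a^2$ spectral shift, and the factor $a^{s'-s''-2}$ is the familiar scaling of $(L+a^2)^{-1}$ between Sobolev scales -- equal to $a^0$ at full elliptic gain $s'-s''=2$ and to $a^{-2}$ when $s'=s''$. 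The main obstacle is the linearization step: extracting the correct principal symbol of $\delta\K|_0$ in the oblique $\vnu$-frame rather than the normal frame, and establishing coercivity for $a\ge a_0$ while $\vnu$ carries only the same Sobolev regularity as $\Gs$; once this ellipticity-with-coercivity is in hand, the remaining arguments are essentially formal.
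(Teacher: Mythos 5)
Your proposal takes essentially the same route as the paper, which defers the proof of this proposition to \cite[Lemma 2.2]{Shatah-Zeng2011}: expand $\kappa_+\circ\Phi_\Gamma$ as a quasilinear second-order expression in $\gamma_\Gamma$ and its derivatives, verify $C^3$-Fr\'echet regularity by Sobolev product estimates, show the linearization is an isomorphism by choosing $a$ large so that $a^2$ dominates the bounded-below elliptic operator $L_\Gs$, apply the inverse function theorem, and then bootstrap for higher-$s$ regularity and the tame variation bound. The interpolation intuition for the exponent $a^{s'-s''-2}$ is also the right one.

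Two small imprecisions, neither fatal: the coefficient of the leading $-\Delta_\Gs$ term produced by an oblique variation $\eta\vnu$ is $\vnu\cdot\vn_{*+}$, not $(\vnu\cdot\vn_{*+})^{-2}$ (the normal component of the variation field is $\eta(\vnu\cdot\vn_{*+})$, and the tangential piece only contributes first-order terms); and since $\abs{\vnu}=1$ one has $9/10\le\vnu\cdot\vn_{*+}\le 1$, so the stated lower bound $(10/9)^2$ on the inverse square is actually an upper bound. What matters for the argument — that the principal coefficient is bounded uniformly away from zero and infinity on $\Lambda_*$ — is unaffected. You are also right to flag as the main obstacle the fact that $\vn_{*+}$ and $\II_{*+}$ carry one fewer derivative than $\Gs$, so "standard elliptic theory on the closed manifold" has to be read as elliptic theory with limited-regularity coefficients, valid only on the finite range of $s$ the statement actually claims; this is precisely where the product estimates of Lemma \ref{lem product est} (rather than genuine Sobolev-algebra structure) must be used, since $\H{\kk-\frac{5}{2}}$ fails to be an algebra when $k=2$.
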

	
	\subsection{Harmonic coordinates and Dirichlet-Neumann operators}\label{sec harmonic coord}
	
	For a given hypersurface $$ \Gamma \in \Lambda(\Gs, s, \delta) $$, define a map $ \mathcal{X}_\Gamma^\pm : \Om_*^\pm \to \Om_\Gamma^\pm $ by
	\begin{equation}
		\begin{cases*}
			\lap_y \X_\Gamma^\pm = 0 &for $ y \in \Om_*^\pm $, \\
			\X_\Gamma^\pm (z) = \Phi_\Gamma(z) &for $ z \in \Gamma_* $, \\
			\X_\Gamma^- (z') = z' &for $ z' \in \pd \Omega $.
		\end{cases*}
	\end{equation}
	Then, it is clear that
	\begin{equation}
		\norm{\nabla \X_\Gamma^\pm - \id|_{\Om_*^\pm}}_{H^{s-\frac{1}{2}}(\Om_*^\pm)} \le C \abs{\gamma_\Gamma}_{H^s(\Gs)} < C\delta,
	\end{equation}
	where $ C > 0 $ is uniform in $ \Gamma \in \Lambda(\Gs, s, \delta) $. Thus there is a constant $ \delta_0 >0 $ determined by $ \Gs $ and $ \vnu $, for which $ \X_\Gamma^\pm $ are diffeomorphisms from $ \Om_*^\pm $ to $ \Om^\pm_\Gamma $ respectively, whenever $ \delta \le \delta_0 $.
	
	With the notations in \eqref{def lambda*}, we list some basic inequalities, whose proofs are standard (c.f. \cite{Shatah-Zeng2008-Geo,Bahouri-Chemin-Danchin2011}).
	\begin{lem}\label{lem composition harm coordi}
		Suppose that $ \Gamma \in \ls $. Then there are constants $ C_1, C_2 > 0 $, depending on $ \ls $, so that
		\begin{enumerate}
			\item If $ u_\pm \in H^\sigma (\Omega_\Gamma^\pm) $ for $ \sigma \in \qty[-\kk, \kk] $, then 
			\begin{equation*}
				\dfrac{1}{C_1} \norm{u_\pm}_{H^\sigma(\Om_\Gamma^\pm)} \le \norm{u_\pm \circ \X_\Gamma^\pm}_{H^\sigma(\Om_*^\pm)} \le C_1  \norm{u_\pm}_{H^\sigma(\Om_\Gamma^\pm)}.
			\end{equation*}
			\item If $ f \in H^s (\Gamma) $ for $ s \in \qty[\frac{1}{2}-\kk, \kk-\frac{1}{2}] $, then 
			\begin{equation*}
				\dfrac{1}{C_2} \abs{f}_{H^{s}(\Gamma)} \le \abs{f\circ\Phi_\Gamma}_{\H{s}} \le C_2 \abs{f }_{H^{s}(\Gamma)}.
			\end{equation*}
		\end{enumerate}
	\end{lem}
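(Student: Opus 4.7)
The plan is to prove this standard equivalence of Sobolev norms under composition with the coordinate maps $\X_\Gamma^\pm$ and $\Phi_\Gamma$, whose regularity is controlled uniformly over the class $\ls$. I would proceed in three stages: record the regularity of the maps, handle non-negative integer orders via the chain rule plus Moser-type products, then extend to fractional and negative orders via interpolation and duality.

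First, since $\Gamma \in \ls$, the height function satisfies $\abs{\gmgm}_{H^{\kk-\frac{1}{2}}(\Gs)} < \delta_0$, so $\Phi_\Gamma = \id + \gmgm \vnu$ is an $H^{\kk-\frac{1}{2}}$-diffeomorphism from $\Gs$ onto $\Gamma$. Standard elliptic regularity applied to the Dirichlet problem defining $\X_\Gamma^\pm$ gains half a derivative in the interior, so $\X_\Gamma^\pm - \id \in H^{\kk}(\Om_*^\pm)$ with norm bounded by $\abs{\gmgm}_{H^{\kk-\frac{1}{2}}}$. Because $k\ge 2$ gives $\kk - 1 > \frac{3}{2}$, Sobolev embedding places $\nabla \X_\Gamma^\pm$ in $C^{0,\alpha}$ close to the identity in $L^\infty$, so that $\X_\Gamma^\pm$ is a bi-Lipschitz $C^1$-diffeomorphism whose inverse has comparable Sobolev regularity.

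For non-negative integer $\sigma \in [0, \kk]$, I would expand $\nabla^\sigma(u_\pm \circ \X_\Gamma^\pm)$ by the Fa\`a di Bruno formula into sums of products of derivatives of $\X_\Gamma^\pm$ of order $\le \sigma$ and a derivative of $u_\pm$ of order $\le \sigma$. Each term is controlled in $L^2$ by the Moser-type inequality
\begin{equation*}
	\norm{fg}_{H^m} \lesssim \norm{f}_{H^m}\norm{g}_{L^\infty} + \norm{f}_{L^\infty}\norm{g}_{H^m},
\end{equation*}
using the embedding $H^{\kk-1} \hookrightarrow L^\infty$ for the lower-order factors. This yields $\norm{u_\pm \circ \X_\Gamma^\pm}_{H^\sigma(\Om_*^\pm)} \le C_1 \norm{u_\pm}_{H^\sigma(\Om_\Gamma^\pm)}$, and the reverse inequality follows by running the same argument on $(\X_\Gamma^\pm)^{-1}$. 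Fractional $\sigma \in (0, \kk)$ is filled in by interpolation between consecutive integer endpoints, and negative $\sigma \in [-\kk, 0)$ is obtained by duality through the change-of-variables identity
\begin{equation*}
	\int_{\Om_\Gamma^\pm} u_\pm \varphi \, dx = \int_{\Om_*^\pm} (u_\pm \circ \X_\Gamma^\pm)(\varphi \circ \X_\Gamma^\pm)\abs{\det \nabla\X_\Gamma^\pm}\, dy,
\end{equation*}
combined with a Sobolev-multiplier estimate on the Jacobian factor, which is uniformly bounded over $\ls$. The second statement for $\Phi_\Gamma : \Gs \to \Gamma$ follows from the same scheme carried out in local coordinate charts on $\Gs$ (which has regularity $H^{\kk + 1}$); the narrower range $s \in [\frac{1}{2}-\kk, \kk-\frac{1}{2}]$ reflects the half-derivative loss of $\Phi_\Gamma$ compared to $\X_\Gamma^\pm$. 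The main obstacle is the top endpoint $\sigma = \kk$ (respectively $s = \kk - \frac{1}{2}$), where one factor in the chain-rule expansion saturates the regularity of the diffeomorphism itself: here the Moser / Kato-Ponce product estimates must be applied with care so that every resulting product still lies in the subcritical algebra regime, and this is precisely what pins down the admissible intervals stated in the lemma.
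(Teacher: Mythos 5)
The paper does not give a proof of this lemma; it notes the argument is "standard" and refers to \cite{Shatah-Zeng2008-Geo,Bahouri-Chemin-Danchin2011}. Your proposal correctly reconstructs that standard argument (elliptic gain of half a derivative for $\X_\Gamma^\pm$, Fa\`a di Bruno plus Moser/Kato--Ponce products for integer orders, interpolation for fractional orders, and duality via the change-of-variables Jacobian for negative orders), and your remark that the endpoint $\sigma=\kk$ is the place where the products saturate the regularity is the right thing to flag.
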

	
	\begin{lem}\label{lem product est}
		Assume that $ \Gamma \in \Lambda_* $. Then there are constants $ C_1, C_2 > 0 $ determined by $ \Lambda_* $ such that
		\begin{enumerate}
			\item For $ u_\pm \in H^{\sigma_1}(\Om_\Gamma^\pm)  $, $ w_\pm \in H^{\sigma_2}(\Om_\Gamma^\pm)$ and $ \sigma_1 \le \sigma_2 $,
			\begin{equation*}
				\begin{split}
					\norm{u_\pm \cdot w_\pm}_{H^{\sigma_1 + \sigma_2 - \frac{3}{2}}(\Om_\Gamma^\pm)} \le C_1  \norm{u_\pm}_{H^{\sigma_1}(\Om_\Gamma^\pm)} \norm{w}_{H^{\sigma_2}(\Om_\Gamma^\pm)} \qif \sigma_2 < \dfrac{3}{2} \qc 0 <  \sigma_1 + \sigma_2 \le \kk.
				\end{split}
			\end{equation*}
			\begin{equation*}
				\begin{split}
					\norm{u_\pm \cdot w_\pm}_{H^{\sigma_1}(\Om_\Gamma^\pm)} \le C_1 \norm{u_\pm}_{H^{\sigma_1}(\Om_\Gamma^\pm)} \norm{w_\pm}_{H^{\sigma_2}(\Om_\Gamma^\pm)}  \qif \frac{3}{2} < \sigma_2 \le \kk \qc \sigma_1 + \sigma_2 > 0.
				\end{split}
			\end{equation*}
			
			\item For $ f \in H^{s_1}(\Gamma) $, $ g \in H^{s_2}(\Gamma) $ and $ s_1 \le s_2 $, 
			\begin{equation*}
				\abs{fg}_{H^{s_1+s_2-\frac{2}{2}}(\Gamma)} \le C_2 \abs{f}_{H^{s_1}(\Gamma)} \abs{g}_{H^{s_2}(\Gamma)} \qif s_2 < 1 \qc 0 \le s_1 + s_2 \le \kk-\frac{1}{2}.
			\end{equation*}
			\begin{equation*}
				\abs{fg}_{H^{s_1}(\Gamma)} \le C_2 \abs{f}_{H^{s_1}(\Gamma)} \abs{g}_{H^{s_2}(\Gamma)}\qif 1 < s_2 \le \kk-\frac{1}{2} \qc s_1 + s_2 > 0.
			\end{equation*}
		\end{enumerate}
	\end{lem}
	
	For any smooth function $ f $ defined on $ \Gamma \in \Lambda_* $, denote by $ \h_\pm f $ the harmonic extensions to $ \Om_\Gamma^\pm $, namely
	\begin{equation}\label{harmonic ext +}
		\begin{cases*}
			\lap \h_+ f = 0 \qfor x \in \Om_\Gamma^+, \\
			\h_+ f = f \qfor x \in \Gamma,
		\end{cases*}
	\end{equation}
	and
	\begin{equation}\label{harmonic ext 2}
		\begin{cases*}
			\lap \h_- f = 0 \qfor x \in \Om_\Gamma^-, \\
			\h_- f = f \qfor x \in \Gamma, \\
			\DD_{\wt{\vn}} \h_- f = 0 \qfor x \in \pd\Om.
		\end{cases*}
	\end{equation}
	The Dirichlet-Neumann operators are defined to be 
	\begin{equation}\label{def DN op}
		\n_\pm f \coloneqq \vn_\pm \vdot (\grad \h_\pm f)|_{\Gamma}^{}.
	\end{equation}
	
	Assume that $ \Gamma \in \Lambda_* \subset H^{\kk-\frac{1}{2}} $ and $ \frac{3}{2}-\kk \le s \le \kk-\frac{1}{2} $. The Dirichlet-Neumann operators $ \n_\pm : H^{s}(\Gamma) \to H^{s-1}(\Gamma) $ satisfy the following properties (c.f. \cite[pp. 738-741]{Shatah-Zeng2008-Geo}):
	\begin{enumerate}[1.]
		\item $ \n_\pm $ are self-adjoint on $ L^2(\Gamma) $ with compact resolvents;
		\item $\ker(\n_\pm) = \qty{\mathrm{const.}}$;
		\item There is a constant $ C_* > 0 $ uniform in $ \Gamma \in \Lambda_* $  so that
		\begin{equation*}
			C_*\abs{f}_{H^{s}(\Gamma)} \ge \abs{\n_\pm(f)}_{H^{s-1}(\Gamma)} \ge \frac{1}{C_*}\abs{f}_{H^{s}(\Gamma)},
		\end{equation*} 
		for any $ f $ satisfying $ \int_\Gamma f \dd{S} = 0 $;
		\item For $ \frac{1}{2}-\kk \le s_1 \le \kk-\frac{1}{2} $, there is a constant $ C_* $ determined by $ \Lambda_* $ so that
		\begin{equation*}
			\dfrac{1}{C_*} \qty(\mathrm{I}-\lap_\Gamma)^{\frac{s_1}{2}} \le \qty(\mathrm{I}+\n_\pm)^{s_1} \le C_*\qty(\mathrm{I}-\lap_\Gamma)^{\frac{s_1}{2}},
		\end{equation*}
		i.e., the norms on $ H^{s_1}(\Gamma) $ defined by interpolating $ \qty(\mathrm{I}-\lap_\Gamma)^\frac{1}{2} $ and $ \qty(\mathrm{I}+\n_\pm) $ are equivalent;
		\item For $ \frac{1}{2}-\kk \le s_2 \le \kk-\frac{3}{2} $,
		\begin{equation*}
			(\n_\pm)^{-1} : H^{s_2}_{0}(\Gamma) \to H^{s_2+1}_{0}(\Gamma),
		\end{equation*}
		\begin{equation*}
			H^{s_2}_{0}(\Gamma) = \Set*{f\in H^{s_2}(\Gamma) \given \int_\Gamma f \dd{S} = 0 }
		\end{equation*}
		are well-defined and bounded uniformly in $ \Gamma \in \Lambda_* $.
	\end{enumerate}
	
	\begin{rk}
		We shall denote by $ (\n_\pm)^{-1} \coloneqq (\n_\pm)^{-1} \circ\mathcal{P} $ for simplicity, where
		\begin{equation*}
			\mathcal{P}f \coloneqq f - \fint_\Gamma f \dd{S} \equiv f - \ev{f}.
		\end{equation*}
		is the projection into mean-free functions on $ \Gamma $.
	\end{rk}
	
	The following notations will be used later:
	\begin{equation}\label{def operator n}
		\bn \coloneqq \dfrac{1}{\rho_+} \n_+ + \dfrac{1}{\rho_-} \n_-,
	\end{equation}
	\begin{equation}\label{def operator n tilde}
		\wt{\n} \coloneqq \qty(\dfrac{1}{\rho_+} \n_+) \bn^{-1} \qty(\dfrac{1}{\rho_-}\n_-) = \qty[\qty(\dfrac{1}{\rho_+}\n_+)^{-1} + \qty(\dfrac{1}{\rho_-} \n_-)^{-1}]^{-1}.
	\end{equation}
	
	At the end of this subsection, we state an important lemma (c.f. \cite[p. 863]{Shatah-Zeng2008-vortex}):
	\begin{lem}\label{lem lap-n}
		Suppose that $ \Gamma \in \Lambda_* $ with $ \kappa \in H^{\kk-\frac{3}{2}}(\Gamma) $. Then for $ \frac{1}{2} - \kk \le s \le \kk-\frac{1}{2} $, one has
		\begin{equation}
			\abs{\qty(-\lap_\Gamma)^{\frac{1}{2}} - \n_\pm}_{\LL\qty(H^s(\Gamma))} \le C_* \qty(1+\abs{\kappa}_{H^{\kk-\frac{3}{2}}(\Gamma)}),
		\end{equation}
		where the constant $ C_*>0 $ is uniform in $ \Gamma \in \Lambda_* $.
	\end{lem}
	
	\subsection{Commutator estimates}
	For vector fields (not necessarily solenoidal) $ \vv_\pm(t) : \Om_t^\pm \to \R^3 $, denote by
	\begin{equation*}
		\Dt_\pm \coloneqq \pd_t + \DD_{\vv_\pm}.
	\end{equation*}
	Then one has the following lemma:
	\begin{lem}\label{Dt comm est lemma}
		Suppose that $ \Gt \in \Lambda_* $, and $ \vv_\pm \in H^{\kk}(\Om_t^\pm) $ are the evolution velocities of $ \Om_t^\pm $, so $ \vv_\pm $ are both evolution velocities of $ \Gt $. Let $ f (t, x) : \Gt \to \R $ and $ h (t, x) : \OGt \to \R $ be two functions. Then the following commutator estimates hold:
		\begin{enumerate}[I.]
			\item For $ 1 \le s \le \kk $, $\norm{\comm{\Dt_\pm}{\h_\pm}f}_{H^s(\Om_t^\pm)} \lesssim_{\Lambda_*} \abs{f}_{H^{s-\frac{1}{2}}(\Gt)}\cdot \norm{\vv_\pm}_{H^{\kk}(\Om_t^\pm)}$;
			
			\item For $ 1 \le s \le \kk $, $\norm{\comm{\Dt_\pm}{\lap_\pm^{-1}}h_\pm}_{H^s(\Om_t^\pm)} \lesssim_{\Lambda_*} \norm{h_\pm}_{H^{s-2}(\Om_t^\pm)} \cdot \norm{\vv_\pm}_{H^\kk(\Om_t^\pm)} $;
	
			\item For $ -\frac{1}{2} \le s \le \kk- \frac{3}{2} $, $\abs{\comm{\Dt_\pm}{\n_\pm}f}_{H^s(\Gt)} \lesssim_{\Lambda_*} \abs{f}_{H^{s+1}(\Gt)} \cdot \abs{\vv_\pm}_{H^{\kk-\frac{1}{2}}(\Gt)} $;
			
			\item For $ \frac{1}{2} \le s \le \kk - \frac{1}{2} $, $\abs{\comm{\Dt_\pm}{\n_\pm^{-1}}f}_{H^s(\Gt)} \lesssim_{\Lambda_*} \abs{f}_{H^{s-1}(\Gt)} \cdot \abs{\vv_\pm}_{H^{\kk-\frac{1}{2}}(\Gt)}$;
			
			\item For $ -2 \le s \le \kk - \frac{5}{2} $, $\abs{\comm{\Dt_\pm}{\lap_\Gt}f}_{H^s(\Gt)} \lesssim_{\Lambda_*} \abs{f}_{H^{s+2}(\Gt)} \cdot \abs {\vv_\pm}_{H^{\kk-\frac{1}{2}}(\Gt)} $;
			
			\item For $ 0 \le s \le \kk-1 $, $ \norm{\comm{\Dt_\pm}{\DD}h_\pm}_{H^s(\Om_t^\pm)} \less \norm{h_\pm}_{H^{s+1}(\Om_t^\pm)} \cdot \norm{\vv_\pm}_{H^\kk(\Om_t^\pm)}$.
		\end{enumerate}
	\end{lem}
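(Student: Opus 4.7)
The plan is to prove the six estimates in a cascade, beginning with the harmonic extension commutator (I), using it to derive the Dirichlet--Neumann commutators (III) and (IV), and treating (II), (V), (VI) more directly.

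For \textbf{I}, I would compute the PDE satisfied by $\comm{\Dt_\pm}{\h_\pm}f = \Dt_\pm \h_\pm f - \h_\pm \Dt_\pm f$. Since $\h_\pm f$ is harmonic and equals $f$ on $\Gt$, and since $\vv_\pm$ is an evolution velocity of $\Gt$, the boundary value of $\Dt_\pm \h_\pm f$ on $\Gt$ is just $\Dt_\pm f$ (and similarly the Neumann condition is preserved on $\pd\Om$ in the minus case, since $\vv_-$ is tangential to $\pd\Om$). Hence the commutator satisfies
\begin{equation*}
	\begin{cases*}
		\lap \comm{\Dt_\pm}{\h_\pm} f = \comm{\lap}{\vv_\pm\cdot\grad}\h_\pm f = \lap\vv_\pm \cdot \grad\h_\pm f + 2\grad\vv_\pm : \grad^2 \h_\pm f &in $\Om_t^\pm$,\\
		\comm{\Dt_\pm}{\h_\pm}f = 0 &on $\Gt$,
	\end{cases*}
\end{equation*}
plus the homogeneous Neumann condition on $\pd\Om$ when applicable. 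Standard elliptic regularity on $\OGt$ (uniform in $\ls$ by Lemma \ref{lem composition harm coordi} and the harmonic coordinates) then gives $\norm{\comm{\Dt_\pm}{\h_\pm}f}_{H^s(\Om_t^\pm)} \lesssim \norm{\text{RHS}}_{H^{s-2}(\Om_t^\pm)}$, and the product estimates of Lemma \ref{lem product est} together with $\norm{\h_\pm f}_{H^{s}(\Om_t^\pm)} \lesssim \abs{f}_{H^{s-\frac{1}{2}}(\Gt)}$ close the bound. Part \textbf{II} is essentially the same: $\comm{\Dt_\pm}{\lap_\pm^{-1}}h$ satisfies $\lap(\comm{\Dt_\pm}{\lap_\pm^{-1}}h) = \comm{\lap}{\Dt_\pm}\lap_\pm^{-1}h$ with zero boundary data, and elliptic regularity finishes.

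For \textbf{III}, I would use $\n_\pm f = \vn_\pm \cdot \grad \h_\pm f|_{\Gt}$ to expand
\begin{equation*}
	\comm{\Dt_\pm}{\n_\pm}f = \qty(\Dt_\pm \vn_\pm)\cdot\grad\h_\pm f\big|_\Gt - \vn_\pm\cdot \comm{\grad}{\vv_\pm\cdot\grad}\h_\pm f\big|_\Gt + \vn_\pm \cdot \grad\comm{\Dt_\pm}{\h_\pm}f\big|_\Gt,
\end{equation*}
using the formula \eqref{eqn dt n} for $\Dt \vn_\pm$. Each surface term is then estimated via a trace inequality, the product estimates, and (for the last term) part \textbf{I}, choosing regularity one-half higher in the interior to absorb the trace. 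Part \textbf{IV} follows by the identity
\begin{equation*}
	\comm{\Dt_\pm}{\n_\pm^{-1}}f = - \n_\pm^{-1} \comm{\Dt_\pm}{\n_\pm} \n_\pm^{-1} f
\end{equation*}
(modulo the mean-value projection $\mathcal{P}$, whose time derivative is harmless by \eqref{eqn Dt dS}), and then plugging in \textbf{III} together with the boundedness of $\n_\pm^{-1}$ on mean-free spaces.

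For \textbf{V} I would write $\lap_\Gt f = g^{ij}(f_{,ij}-\Gamma^k_{ij}f_{,k})$ in a local chart and differentiate; the resulting commutator is a second-order operator whose coefficients involve $\pd_t g^{ij}$ and $\pd_t \Gamma^k_{ij}$, controlled by $\DD_\Gt\vv$ via \eqref{dt g ij}, and this closes using Lemma \ref{lem product est} and $\abs{\vv_\pm}_{H^{\kk-\frac{1}{2}}(\Gt)}$ coming from the trace of $\vv_\pm \in H^\kk(\Om_t^\pm)$. Part \textbf{VI} is the simplest: $\comm{\Dt_\pm}{\DD}h = -\DD\vv_\pm \cdot \DD h$ pointwise in $\OGt$, so the estimate reduces immediately to the product estimate.

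The main obstacle, as in \cite{Shatah-Zeng2008-Geo,Shatah-Zeng2011}, is bookkeeping: each estimate is sharp at the endpoints of the allowed range of $s$, so I must be careful to route every factor through the right $H^\sigma$ space (exploiting the higher regularity of $\vv_\pm \in H^\kk$ versus $f$ or $h$, and the half-derivative gain of the trace map) so that product estimates remain in the admissible range $\sigma_1 + \sigma_2 \le \kk$ of Lemma \ref{lem product est}, and so that the cascade \textbf{I} $\to$ \textbf{III} $\to$ \textbf{IV} does not lose derivatives. The delicate case is \textbf{III} at $s = \kk - \frac{3}{2}$, where after invoking \textbf{I} one lands in $H^{\kk-1}(\Om_t^\pm)$ and must take a trace of a gradient; this forces the use of part \textbf{I} at $s=\kk$, which is exactly the endpoint it allows.
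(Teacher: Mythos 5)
Your overall strategy — characterizing each commutator by an elliptic boundary value problem or a pointwise formula and then estimating via elliptic regularity, trace, and the product estimates of Lemma~\ref{lem product est} — is exactly the route of \cite[pp.~709--710]{Shatah-Zeng2008-Geo} that the paper cites, and the cascade $\textbf{I}\Rightarrow\textbf{III}\Rightarrow\textbf{IV}$ together with the direct treatment of \textbf{II}, \textbf{V}, \textbf{VI} is the right architecture. But there are two gaps. First, in \textbf{I} (and likewise \textbf{II}) for the minus region you claim the Neumann condition on $\pd\Om$ is preserved; it is not. Using $\vv_-\vdot\wt{\vn}=0$ and $\DD_{\wt{\vn}}\h_-f=0$ on $\pd\Om$, one computes
\begin{equation*}
\DD_{\wt{\vn}}\comm{\Dt_-}{\h_-}f\,\Big|_{\pd\Om} = \wt{\vn}\vdot\qty(\DD\vv_-)\vdot\grad^\top\h_-f - \wt{\II}\qty(\vv_-,\,\grad^\top\h_-f),
\end{equation*}
which is generally nonzero ($\wt{\II}$ being the second fundamental form of $\pd\Om$). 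This is a lower-order boundary source, controlled by $\norm{\vv_-}_{H^{\kk}(\Om_t^-)}\abs{f}_{H^{s-\frac{1}{2}}(\Gt)}$, so the conclusion still holds, but the elliptic estimate has to be run with inhomogeneous Neumann data, and this step must appear in the argument.

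Second, and more substantively, the cascade $\textbf{I}\Rightarrow\textbf{III}$ as you have set it up yields a bound with the \emph{interior} norm $\norm{\vv_\pm}_{H^{\kk}(\Om_t^\pm)}$, whereas \textbf{III} (and hence \textbf{IV}) asserts the strictly sharper \emph{trace} norm $\abs{\vv_\pm}_{H^{\kk-\frac{1}{2}}(\Gt)}$, which is not implied by the interior one. The repair is to note that $\comm{\Dt_\pm}{\n_\pm}$ acting on $f:\Gt\to\R$ is intrinsic: the surface material derivative of a surface quantity depends only on $\vv_\pm|_{\Gt}$, not on the interior extension. You should therefore replace $\vv_\pm$ by $\h_\pm\qty(\vv_\pm|_{\Gt})$ throughout the decomposition — in particular before invoking \textbf{I} — and then close with $\norm{\h_\pm\qty(\vv_\pm|_{\Gt})}_{H^{\kk}(\Om_t^\pm)} \lesssim_{\Lambda_*} \abs{\vv_\pm}_{H^{\kk-\frac{1}{2}}(\Gt)}$. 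Without this substitution, your argument proves a genuinely weaker statement than claimed; with both repairs it becomes a complete proof along the lines of \cite{Shatah-Zeng2008-Geo}.
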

	The proof of Lemma \ref{Dt comm est lemma} follows from the identities introduced in \cite[pp. 709-710]{Shatah-Zeng2008-Geo} and standard product estimates.
	
	\subsection{Div-curl systems}\label{sec div-cul system}
	In this subsection, we list some basic results on div-curl systems (c.f. \cite{Cheng-Shkoller2017} for details):
	\begin{thm}\label{thm div-curl}
		Suppose that $ U $ is a bounded domain in $ \R^3 $ for which $\pd U \in H^{\kk-\frac{1}{2}} $. Given $ \vb{f}, g \in H^{l-1}(U) $ with $ \div \vb{f} = 0 $ and $ h \in H^{l-\frac{1}{2}}(\pd U) $, consider the system:
		\begin{subnumcases}{}\label{div-curl eqn}
			\curl \vbu = \vb{f} &in $ U $,\\
			\div \vbu = g &in $ U $, \\
			\vbu \vdot \vn = h &on $ \pd U $.
		\end{subnumcases}
		If on each connected component $ \Gamma $ of $ \pd U $, one has that
		\begin{equation}\label{compatibility div-curl}
			\int_\Gamma \vf \vdot \vn \dd{S} = 0,
		\end{equation}
		and the following compatibility condition holds true:
		\begin{equation}
			\int_{\pd U} h \dd{S} = \int_U g \dd{x},
		\end{equation}
		then for $ 1 \le l \le \kk-1 $, there is a solution $ \vbu \in H^l(U) $  such that
		\begin{equation}
			\norm{\vbu}_{H^l(U)} \le C\qty(\abs{\pd U}_{H^{\kk-\frac{1}{2}}}) \cdot \qty(\norm{\vf}_{H^{l-1}(U)} + \norm{g}_{H^{l-1}(U)} + \abs{h}_{H^{l-\frac{1}{2}}(\pd U)}).
		\end{equation}
		The solution is unique whenever $ U $ is simply-connected.
	\end{thm}
	\begin{rk}
		If $ \vb{f} = \curl \vbu $ for some vector field $ \vbu $, then (\ref{compatibility div-curl}) holds naturally (see \cite[Remark 1.2]{Cheng-Shkoller2017}).
	\end{rk}
	
	\section{Reformulation of the Problem}\label{sec reform}
	
	\subsection{Velocity fields on the interface}
	
	Since the interface $ \Gt $ separates two plasmas, and $ \vv_\pm $ have the same normal components on $ \Gt $, it is natural to consider the evolution of $ \kappa_+ $ with respect to some weighted velocity
	\begin{equation}
		\vbu_\lambda \coloneqq \lambda \vv_+ + (1-\lambda) \vv_-
	\end{equation}
	for some $ 0 \le \lambda \le 1 $.
	
	Denote by
	\begin{equation*}
		\Dtl \coloneqq \pd_t + \DD_{\vbu_\lambda},
	\end{equation*}
	then
	\begin{equation*}
		\Dtl \vbu_\lambda = \qty(\pd_t + \lambda \DD_{\vv_+} + (1-\lambda)\DD_{\vv_-}) \qty(\lambda \vv_+ + (1-\lambda) \vv_-).
	\end{equation*}
	In view of (\ref{eqn Dtv}), it holds that
	\begin{equation*}
		\begin{split}
			\Dtl \vbu_\lambda &= \lambda^2 \qty(-\dfrac{1}{\rho_+} \grad p^+ + \DD_{\vh_+} \vh_+) + (1-\lambda)^2 \qty(-\dfrac{1}{\rho_-}\grad p^- + \DD_{\vh_-} \vh_-) \\
			&\quad\, + \lambda(1-\lambda) (\Dt_+ \vv_- + \Dt_- \vv_+).
		\end{split}
	\end{equation*}
	Since
	\begin{equation*}
		\begin{split}
			\Dt_+ \vv_- = \pd_t \vv_- + \DD_{\vv_+} \vv_- = \DD_{\vv_+ - \vv_-} \vv_- - \dfrac{1}{\rho_-} \grad p^- + \DD_{\vh_-} \vh_-,
		\end{split}
	\end{equation*}
	and
	\begin{equation*}
		\Dt_- \vv_{+} = \DD_{\vv_- - \vv_+} \vv_+ - \dfrac{1}{\rho_+} \grad p^+ + \DD_{\vh_+} \vh_+,
	\end{equation*}
	one may write that
	\begin{equation*}
		\Dt_+ \vv_- + \Dt_- \vv_+ = - \DD_{\vw} \vw - \qty(\dfrac{1}{\rho_+} \grad p^+ + \dfrac{1}{\rho_-} \grad p^- ) + \DD_{\vh_+} \vh_+ + \DD_{\vh_-} \vh_-,
	\end{equation*} 
	where $ \vw \in \mathrm{T} \Gt $ is defined to be
	\begin{equation}
		\vw \equiv \llbracket \vv \rrbracket \coloneqq \vv_+ - \vv_-.
	\end{equation}
	Therefore,
	\begin{equation}\label{eqn dtl ulambda}
		\begin{split}
			\Dtl \vbu_\lambda  = - \dfrac{\lambda}{\rho_+} \grad p^+ - \dfrac{1-\lambda}{\rho_-}\grad p^-   + \lambda \DD_{\vh_+}\vh_+ + (1-\lambda) \DD_{\vh_-} \vh_- - \lambda(1-\lambda) \DD_\vw \vw.
		\end{split}
	\end{equation}
	
	Next, we introduce a useful decomposition of the pressure:
	\begin{equation}\label{decomp p}
		\dfrac{1}{\rho_\pm} p^\pm = p^\pm_{\vv,\vv} - p^\pm_{\vh, \vh} + \alpha^2 p^\pm_\kappa + p^\pm_b.
	\end{equation}
	Here $ p_{\vb{a},\vb{b}}^\pm $ are the solutions to the following elliptic problems respectively:
	\begin{equation}\label{def p_ab^+}
		\begin{cases*}
			\lap p^+_{\vb{a}, \vb{b}} = - \tr(\DD\vb{a}_+ \vdot \DD\vb{b}_+) \qfor x \in\Omega_t^+,\\
			p^+_{\vb{a}, \vb{b}} = 0 \qfor x \in \Gt;
		\end{cases*}
	\end{equation}
	\begin{equation}\label{def p_ab^-}
		\begin{cases*}
			\lap p_{\vb{a}, \vb{b}}^- = - \tr(\DD\vb{a}_- \vdot \DD\vb{b}_-) \qfor x \in \Om_t^-, \\
			p_{\vb{a}, \vb{b}}^- = 0 \qfor x \in \Gt, \\
			\DD_{\wt{\vn}} p_{\vb{a}, \vb{b}}^- =  \wt{\II}(\vb{a}_-, \vb{b}_-) \qfor x \in \pd\Om,
		\end{cases*}
	\end{equation}
	where $ \vb{a} = \vb{a}_+ \I_{\Om_t^+} + \vb{a}_- \I_{\Om_t^-} $ and $ \vb{b} = \vb{b}_+ \I_{\Om_t^+} + \vb{b}_-\I_{\Om_t^-} $ are solenoidal vector fields satisfying $ \vb{a}_- \vdot \wt{\vn} = 0 = \vb{b}_- \vdot \wt{\vn}  $ on $ \pd \Omega $.
	$ p_\kappa $ and $ p_b $ are given respectively by (c.f. \cite{Shatah-Zeng2011}):
	\begin{equation}\label{def p_kappa}
		p_\kappa^\pm \coloneqq \dfrac{1}{\rho_+ \rho_-} \h_\pm \bn^{-1} \n_\mp \kappa_\pm, \qand p_b^\pm \coloneqq \dfrac{1}{\rho_\pm}\h_\pm \mathfrak{p},
	\end{equation}
	where $ \mathfrak{p} $ is a function defined on $ \Gamma_t $ whose expression will be determined later.
	
	With this decomposition, it is routine to check that $ p_{\vv,\vv}^\pm = p_{\vh, \vh}^\pm = 0 $ , $ \rho_+ p_b^+ = \rho_- p_b^- $, and $ \rho_+ p_\kappa^+ - \rho_- p_\kappa^- = \kappa_+ $ hold simultaneously on $ \Gt $. Namely, (\ref{jump p}) is satisfied automatically.
	
	Next, we will derive the explicit formula of $ \mathfrak{p} $ by using 
	(\ref{eqn Dtv}), (\ref{eqn bdry v}) and (\ref{bdry mag}).
	Indeed, multiplying (\ref{eqn Dtv}) by $ \vn_+ $, one has
	\begin{equation*}
		\vn_+ \vdot	\Dt_+ \vv_+ + \dfrac{1}{\rho_+} \DD_{\vn_+} p^+ = \vn_+ \vdot \DD_{\vh_+} \vh_+,
	\end{equation*}
	which implies that
	\begin{equation*}
		\begin{split}
			\pd_t \theta + \dfrac{1}{\rho_+} \DD_{\vn_+} p^+  =\vv_+ \vdot \Dt_+ \vn_+ - \DD_{\vv_+}\theta + \vn_+ \vdot \DD_{\vh_+} \vh_+.
		\end{split}
	\end{equation*}
	It follows from  (\ref{bdry mag}), \eqref{def II} and \eqref{eqn dt n} that
	\begin{equation*}
		\begin{split}
			\pd_t \theta + \dfrac{1}{\rho_+} \DD_{\vn_+} p^+ 
			&= - \vn_+ \vdot (\DD \vv_+) \vdot \vv_+^\top - \DD_{\vv_+}\theta + \vn_+ \vdot \DD_{\vh_+} \vh_+ \\
			&= - \DD_{\vv_+}\theta -\DD_{\vv_+^\top} \theta + \II_+ \qty(\vv_+^\top, \vv_+^\top) - \II_+ \qty(\vh_+, \vh_+).
		\end{split}
	\end{equation*}
	Similarly,
	\begin{equation*}
		\begin{split}
			-\pd_t \theta + \dfrac{1}{\rho_-} \DD_{\vn_-}p^-
			=\DD_{\vv_-}\theta + \DD_{\vv_-^\top} \theta + \II_- \qty(\vv_-^\top, \vv_-^\top) - \II_- \qty(\vh_-, \vh_-).
		\end{split} 
	\end{equation*}
	Due to the conventions that $ \vn_+ + \vn_- = \vb{0} $, $ \II_+ + \II_- = \vb{0} $, and the relation that $ (\vv_+ - \vv_-) \in \mathrm{T}\Gt $, summing the above two equations yields
	\begin{equation*}
		\begin{split}
			\dfrac{1}{\rho_+} \DD_{\vn_+} p^+ + \dfrac{1}{\rho_-} \DD_{\vn_-} p^- = &- \qty[2\DD_{\vv_+^\top}\theta - \II_+\qty(\vv_+^\top, \vv_+^\top) + \II_+\qty(\vh_+, \vh_+)] \\
			&+ \qty[2\DD_{\vv_-^\top}\theta - \II_+\qty(\vv_-^\top, \vv_-^\top) + \II_+\qty(\vh_-, \vh_-)].
		\end{split}
	\end{equation*}
	According to the decomposition (\ref{decomp p}) of $ p^\pm $ and the relation that
	\begin{equation*}
		\begin{split}
			\DD_{\vn_+} p_\kappa^+ + \DD_{\vn_-} p_\kappa^- 
			= \qty(\dfrac{1}{\rho_+}\n_+)\bar{\n}^{-1}\qty(\dfrac{1}{\rho_-}\n_-)\kappa_+ +\qty(\dfrac{1}{\rho_-}\n_-) \bar{\n}^{-1}\qty(\dfrac{1}{\rho_+}\n_+)\kappa_- 
			= 0,
		\end{split}
	\end{equation*}
	it holds that
	\begin{equation*}
		\begin{split}
			 \dfrac{1}{\rho_+} \DD_{\vn_+} p^+ + \dfrac{1}{\rho_-} \DD_{\vn_-} p^- 
			= \DD_{\vn_+} \qty(p_{\vv,\vv}^+ - p_{\vh,\vh}^+ - p_{\vv,\vv}^- + p_{\vh,\vh}^-)  + \qty(\dfrac{1}{\rho_+}\n_+ + \dfrac{1}{\rho_-} \n_-) \mathfrak{p}.
		\end{split}
	\end{equation*}
	Therefore, one gets the formula
	\begin{equation}\label{def g pm}
		\begin{split}
			\bn \mathfrak{p} = &- \qty[2\DD_{\vv_+^\top}\theta - \II_+\qty(\vv_+^\top, \vv_+^\top) + \II_+\qty(\vh_+, \vh_+) + \DD_{\vn_+}\qty(p_{\vv, \vv}^+ - p_{\vh, \vh}^+)] \\
			&+ \qty[2\DD_{\vv_-^\top}\theta - \II_+\qty(\vv_-^\top, \vv_-^\top) + \II_+\qty(\vh_-, \vh_-) + \DD_{\vn_+}\qty(p_{\vv,\vv}^- - p_{\vh, \vh}^-)] \\
			=: &-g^+ + g^-,
		\end{split}
	\end{equation}
	namely,
	\begin{equation}\label{def frak p}
		\mathfrak{p} = \bn^{-1}(-g^+ + g^-).
	\end{equation}
	In conclusion, if $ \qty(\Gt, \vv, \vh) $ is a solution to (MHD)-(BC) with $ \Gt \in \Lambda_* $, $ \Gt \in H^{\kk+1} $ and $ \vv, \vh \in H^{\kk}(\OGt) $, then the following estimate holds:
	\begin{equation}\label{est Dtl ulam}
		\abs{\Dtl \vbu_\lambda}_{H^{\kk-2}(\Gt)} \le Q_\lambda\qty(\alpha\abs{\ka}_{\H{\kk-1}}, \abs{\ka}_{\H{\kk-\frac{3}{2}}}, \norm{\vv}_{H^{\kk}(\OGt)}, \norm{\vh}_{H^{\kk}(\OGt)}), 
	\end{equation}
	where $ Q_\lambda $ is a generic polynomial depending only on $ \Lambda_* $ and $ \lambda $.
	(Indeed, for $ \frac{1}{2} < s \le \kk - 2 $, one has $ \abs{\grad p_\kappa}_{H^{s}(\Gt)} \lesssim_{\Lambda_*} \abs{\kappa}_{H^{s+1}(\Gt)} $,
	so the best estimate on $ (\Dtl\vbu_\lambda)|_\Gt $ is its $ H^{\kk-2} $ norm.)
	\begin{rk}
		The following formula holds as long as $ \vv_\pm $ are the evolution velocities of $ \Om_t^\pm $ (which is, in particular, independent of the MHD problems):
		\begin{equation}\label{int g^+ - g^-}
			\int_\Gt g^+ - g^- \dd{S} = -\int_{\OGt} \qty(\div \vv)^2 \dd{x}.
		\end{equation}
		So \eqref{def g pm} makes sense whenever $ \vv_\pm $ are both solenoidal. Indeed,
		\begin{equation*}			
			\begin{split}
				&\hspace{-1.5em}\int_\Gt g^+ - g^- \dd{S_t} \\
				=\, &\int_\Gt \qty(\Dt_+ + \DD_{\vv_+^\top}) (\vv_+ \vdot \vn_+)  - \II_+\qty(\vv_+^\top, \vv_+^\top) + \II_+\qty(\vh_+, \vh_+) + \DD_{\vn_+}\qty(p_{\vv, \vv}^+ - p_{\vh, \vh}^+) \dd{S_t} \\
				&+\int_\Gt \qty(\Dt_- + \DD_{\vv_-^\top})(\vv_- \vdot \vn_-)  - \II_-\qty(\vv_-^\top, \vv_-^\top) + \II_-\qty(\vh_-, \vh_-) + \DD_{\vn_-}\qty(p_{\vv,\vv}^- - p_{\vh, \vh}^-) \dd{S_t} \\
				=\, &\int_{\Om_t^+} \Dt_+ \qty(\div\vv_+) \dd{x} +\int_{\Om_t^-} \Dt_- \qty(\div\vv_-) \dd{x} \\
				=\, &\dv{t}(\int_{\Om_t^+} \qty(\div\vv_+) \dd{x}) - \int_{\Om_t^+} \qty(\div \vv_+)^2 \dd{x} +\dv{t}(\int_{\Om_t^-}\qty(\div \vv_-)\dd{x}) - \int_{\Om_t^-} \qty(\div\vv_-)^2 \dd{x} \\
				=\, &-\int_{\OGt} \qty(\div\vv)^2 \dd{x}.
			\end{split}
		\end{equation*}
	\end{rk}
	
	\subsection{Transformation of the velocity}\label{sec var v_*}
	As stated in the preliminary, a vector field defined in a bounded simply-connected domain is determined by its divergence, curl and appropriate boundary conditions. Since both the velocity and magnetic fields are solenoidal, they are uniquely determined by the vorticities, currents and boundary conditions.
	
	Therefore, denote by
	\begin{equation}
		\vom_{*\pm} \coloneqq \qty(\curl \vv_\pm) \circ \X_\Gt^\pm,
	\end{equation}
	then the velocity field $ \vv $ can be uniquely determined by $ \ka $, $ \theta $ and $ \vom_* $ via solving the following div-curl problems:
	\begin{equation}\label{sys div-curl v}
		\begin{cases*}
			\div \vv_\pm = 0 &in $\Om_t^\pm$, \\
			\curl \vv_\pm = \vom_{*\pm} \circ (\X_\Gt^\pm)^{-1} &in $\Om_t^\pm$, \\
			\vv_\pm \vdot \vn_+ = \theta &on $\Gt$, \\
			\vv_- \vdot \wt{\vn} = 0 &on $\pd\Om$.
		\end{cases*}
	\end{equation}
	
	Next, for a function $ f : \Gt \to \R $, it is natural to pull back $ \Dtl f $ to $ \Gs $ via $ \Phi_\Gt $, namely, one needs to look for a vector field $ \vbu_{\lambda *} : \Gs \to \R^3 $ so that
	\begin{equation}\label{pull back Dt to Gs}
		\Dtl_* \qty(f \circ \Phi_\Gt) = \qty(\Dtl f) \circ \Phi_\Gt,
	\end{equation}
	where
	\begin{equation}
		\Dtl_* \coloneqq \pd_t + \DD_{\vbu_{\lambda *}}.
	\end{equation}
	It is necessary that
	\begin{equation*}
		\qty(\DD f)\circ \Phi_\Gt \vdot \qty(\pd_t \Phi_\Gt + \DD\Phi_\Gt \vdot \vbu_{\lambda *}) = \qty(\DD f)\circ \Phi_\Gt \vdot \vbu_\lambda \circ \Phi_\Gt.
	\end{equation*}
	On the other hand, it suffices to define
	\begin{equation}\label{def u lambda *}
		\begin{split}
			\vbu_{\lambda *} \coloneqq  \qty(\DD\Phi_\Gt)^{-1}\qty(\vbu_\lambda \circ\Phi_\Gt - \pd_t\Phi_\Gt) 
			=  \qty(\DD\Phi_\Gt)^{-1}\qty[\vbu_\lambda \circ\Phi_\Gt - (\pd_t \gamma_\Gt)\vnu ].
		\end{split}
	\end{equation} 
	Since
	\begin{equation*}
		\theta = \qty(\pd_t \gamma_\Gt \vnu)\circ\qty(\Phi_\Gt)^{-1} \vdot \vn_+,
	\end{equation*}
	one has
	\begin{equation*}
		\qty[\vbu_\lambda - \qty(\pd_t \gamma_\Gt \vnu)\circ\qty(\Phi_\Gt)^{-1} ] \vdot \vn_+ = 0,
	\end{equation*}
	i.e., $ \qty[\vbu_\lambda - \qty(\pd_t \gamma_\Gt \vnu)\circ\qty(\Phi_\Gt)^{-1} ] \in \mrm{T}\Gt $ and  $ \vbu_{\lambda*} \in \mrm{T}\Gs $.
	
	\subsubsection*{Variational estimates}
	
	In order to compute the variation of $ \vbu_{\lambda*} $, one can assume that $ \ka $ and $ \vom_* $ depend on a parameter $ \beta $. By (\ref{def u lambda *}), it suffices to compute $ \pd_\beta \vv_{\pm*} $. Applying $ \pdv*{\beta} $ to the identity
	\begin{equation*}
		\qty(\DD \Phi_\Gt) \vdot \vv_{\pm*} = \vv_\pm \circ \Phi_\Gt - (\pd_t\gamma_\Gt)\vnu,
	\end{equation*}
	one has
	\begin{equation*}
		\DD\qty(\pd_\beta \gt \vnu) \vdot \vv_{\pm*} + (\DD\Phi_\Gt) \vdot \pd_\beta\vv_{\pm*} = \pd_\beta\qty(\vv_\pm \circ \Phi_\Gt) - \qty(\pd^2_{t\beta}\gt)\vnu,
	\end{equation*}
	for which
	\begin{equation*}
		\pd_\beta (\vv_\pm \circ \Phi_\Gt) = \qty(\pd_\beta \vv_\pm + \DD_{(\pd_\beta \gt \vnu)\circ (\Phi_\Gt)^{-1}} \vv_\pm) \circ \Phi_\Gt.
	\end{equation*}
	Denote by $ \vb*{\mu} \coloneqq (\pd_\beta \gt \vnu)\circ (\Phi_\Gt)^{-1}  $ and use the notation
	\begin{equation}\label{def Dbt}
		\Dbt \coloneqq \pd_\beta + \DD_{\vb*{\mu}}.
	\end{equation}
	Then
	\begin{equation}\label{eqn pd beta v+*}
		\pd_\beta \vv_{\pm*} = (\DD \Phi_\Gt)^{-1} \vdot \qty[\qty(\Dbt\vv_{\pm})\circ\Phi_\Gt - \qty(\pd^2_{t\beta}\gt)\vnu -\DD(\pd_\beta\gt\vnu)\vdot\vv_{\pm*} ].
	\end{equation}
	In particular, $ \qty[(\DD\Phi_\Gt) \vdot \pd_\beta\vv_{\pm*}] \circ \Phi_\Gt^{-1} \vdot \vn_+ \equiv 0 $, so $ \pd_\beta \vv_{\pm*} \in \mrm{T}\Gs $.
	
	Applying $\Dbt$ to \eqref{sys div-curl v} and utilizing the commutator estimates together with the div-curl estimates, one can derive that for $ \frac{1}{2} \le \sigma \le \kk -\frac{3}{2} $,
	\begin{equation}\label{est pd beta v+*}
		\begin{split}
			\abs{\pd_\beta \vv_{\pm*}}_{\H{\sigma}} \lesssim_{\Lambda_*} &\abs{\pd_\beta \gt}_{\H{\sigma + 1}} \qty(\norm{\vom_{*\pm}}_{H^{\kk-1}(\Om_*^\pm)} + \abs{\pd_t \gt }_{\H{\kk-\frac{1}{2}}}) \\
			&+ \norm{\pd_\beta\vom_{*\pm}}_{H^{\sigma-\frac{1}{2}}(\Om_*^\pm)} + \abs{\pd^2_{t\beta}\gt}_{\H{\sigma}}.
		\end{split}
	\end{equation}
	Recalling that $ \gt = \K^{-1}(\ka) $, one can obtain that
	\begin{equation}\label{eqn pd gt}
		\pd_\beta \gt = \var{\K^{-1}}(\ka) [\pd_\beta\ka],
	\end{equation}
	and
	\begin{equation}\label{eqn pd2 gt}
		\pd^2_{t\beta}\gt = \var{\K^{-1}}(\ka)[\pd^2_{t\beta}\ka] +  \var[2]{\K^{-1}}(\ka)[\pd_t\ka, \pd_\beta\ka].
	\end{equation}
	In conclusion, the linear relations imply the existence of six linear operators $ \opb_\pm(\ka) $, $ \opf_\pm(\ka) $ and $ \opg_\pm(\ka, \pd_t\ka, \vom_{*\pm}) $ whose ranges are all in $ \mrm{T}\Gs $, so that
	\begin{equation}\label{eqn pd beta v}
		\pd_\beta \vv_{\pm*} = \opb_\pm(\ka)\pd^2_{t\beta}\ka + \opf_\pm(\ka)\pd_\beta\vom_{*\pm} + \opg_\pm(\ka, \pd_t \ka, \vom_{*\pm})\pd_\beta\ka.
	\end{equation}
	Moreover, the following lemma holds, whose proof will be given in the Appendix:
	\begin{lem}\label{lem3.1}
		Suppose that $ a \ge a_0 $ and $ \ka \in B_{\delta_1} \subset \H{\kk-\frac{5}{2}} $, where $ a_0 $ and $ B_{\delta_1} $ are given in Proposition \ref{prop K}. If   
		$ s'-2 \le s'' \le s' \le \kk-\frac{3}{2} $, $ s' \ge \frac{1}{2}  $, and $ \frac{1}{2} \le s \le \kk-\frac{3}{2} $, then the following estimates hold:
		\begin{equation}\label{est B}
			\abs{\opb_\pm(\ka)}_{\LL\qty(\H{s''}; H^{s'}(\Gs; \mrm{T}\Gs)) } \le C_* a^{s'-s''-2},
		\end{equation}
		\begin{equation}\label{est var B}
			\abs{\var\opb_\pm(\ka)}_{\LL\qty[\H{\kk-\frac{5}{2}}; \LL\qty(\H{s-2}; \H{s})]} \le C_*,
		\end{equation}
		\begin{equation}\label{est F}
			\abs{\opf_\pm(\ka)}_{\LL\qty(H^{s-\frac{1}{2}}(\Om_*^\pm); \H{s})} \le C_*,
		\end{equation}
		and
		\begin{equation}\label{est var F}
			\abs{\var{\opf_\pm}(\ka)}_{\LL\qty[\H{\kk-\frac{5}{2}};  \LL\qty(H^{s-\frac{1}{2}}(\Om_*^\pm); \H{s})]} \le C_*,
		\end{equation}
		where $ C_* $ is a constant depending only on $ \Lambda_{*} $. 
		
		Moreover, if $ \ka \in B_{\delta_1} \cap \H{\kk-\frac{3}{2}} $, $ \pd_t \ka \in \H{\kk-\frac{5}{2}} $, and $ \vom_* \in H^{\kk-1}(\Om\setminus\Gs) $, then for $  \sigma'-2 \le \sigma'' \le \sigma' \le \kk + \frac{1}{2} $, $ \sigma' \ge \frac{1}{2} $, and $s$ given above, it holds that
		\begin{equation}\label{est B version2}
			\abs{\opb_\pm(\ka)}_{\LL\qty[\H{\sigma''}; \H{\sigma'}]} \le a^{\sigma'-\sigma''-2} Q\qty(\abs{\ka}_{\H{\kk-\frac{3}{2}}}),
		\end{equation}
		\begin{equation}\label{est G}
			\begin{split}
				\abs{\opg_\pm(\ka, \pd_t\ka, \vom_{*\pm})}_{\LL\qty[\H{s-1}; \H{s}]}  \le Q\qty(\abs{\pd_t\ka}_{\H{\kk-\frac{5}{2}}}, \norm{\vom_{*\pm}}_{H^{\kk-1}(\Om_*^\pm)}),
			\end{split}
		\end{equation}
		and for $ \frac{1}{2} \le \sigma \le \kk-\frac{5}{2} $,
		\begin{equation}\label{est var G}
			\begin{split}
				&\hspace{-2em}\abs{\var{\opg_\pm}(\ka, \pd_t\ka, \vom_{*\pm})}_{\LL\qty[\H{\sigma-1}\times \H{\sigma-2}\times H^{\sigma-\frac{1}{2}}(\Om_*^\pm); \LL\qty(\H{\sigma-1}; \H{\sigma}) ]} \\
				\le\, &Q\qty(\abs{\pd_t\ka}_{\H{\kk-\frac{5}{2}}}, \norm{\vom_{*\pm}}_{H^{\kk-1}(\Om_*^\pm)}),
			\end{split}
		\end{equation}
		where $ Q $ is a generic polynomial depending only on $ \Lambda_* $.
	\end{lem}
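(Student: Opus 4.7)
The plan is to read off explicit expressions for the three operators from \eqref{eqn pd beta v+*} by substituting the identities \eqref{eqn pd gt}--\eqref{eqn pd2 gt} for $\pd_\beta\gt$ and $\pd^2_{t\beta}\gt$, and grouping the result according to the three independent quantities $\pd^2_{t\beta}\ka$, $\pd_\beta\vom_{*\pm}$, and $\pd_\beta\ka$. Each resulting operator will then be bounded by combining Proposition \ref{prop K} (for $\K^{-1}$ and its variations), the commutator estimates of Lemma \ref{Dt comm est lemma}, the div-curl estimates of Theorem \ref{thm div-curl}, and the product estimates of Lemma \ref{lem product est}.

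The operator $\opb_\pm$ is the cleanest: the only contribution of $\pd^2_{t\beta}\ka$ in \eqref{eqn pd beta v+*} enters through the first term of \eqref{eqn pd2 gt}, giving $\opb_\pm(\ka)\xi = -(\DD\Phi_\Gt)^{-1}\bigl(\var{\K^{-1}}(\ka)[\xi]\bigr)\vnu$. The bounds \eqref{est B} and \eqref{est B version2} then follow directly from \eqref{var est K^-1}, with the $a^{s'-s''-2}$ gain coming entirely from the $a$-dependence built into $\var{\K^{-1}}$; the outer factor $(\DD\Phi_\Gt)^{-1}$ is uniformly bounded on the relevant scales by Lemma \ref{lem composition harm coordi} since $\Gt\in\Lambda_*$. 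The variation \eqref{est var B} follows by differentiating this expression once more in $\ka$ and invoking the $C^3$ regularity of $\K^{-1}$ on $B_{\delta_1}$.

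For $\opf_\pm$ one freezes $\pd_\beta\gt=0=\pd^2_{t\beta}\gt$, so only the $\Dbt\vv_\pm$ piece of \eqref{eqn pd beta v+*} survives, with transport velocity $\vmu\equiv 0$. Applying $\pd_\beta$ to the div-curl system \eqref{sys div-curl v} produces a div-curl problem for $\pd_\beta\vv_\pm$ whose curl source is $\pd_\beta\vom_{*\pm}\circ(\X_\Gt^\pm)^{-1}$ and whose divergence and boundary data vanish; Theorem \ref{thm div-curl} then yields \eqref{est F}, and \eqref{est var F} follows by differentiating the elliptic representation in $\ka$. The operator $\opg_\pm$ collects everything else: the second-variation term $\var[2]{\K^{-1}}(\ka)[\pd_t\ka,\cdot]$ in \eqref{eqn pd2 gt}, the piece of $\Dbt\vv_\pm$ driven by the genuine transport $\vmu=(\pd_\beta\gt\vnu)\circ\Phi_\Gt^{-1}$, and the geometric correction $(\DD\Phi_\Gt)^{-1}\DD(\pd_\beta\gt\vnu)\vv_{\pm*}$. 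After writing $\pd_\beta\gt=\var{\K^{-1}}(\ka)[\pd_\beta\ka]$, the dependence on $\pd_\beta\ka$ is linear while $\pd_t\ka$ and $\vom_{*\pm}$ enter only as coefficients, which explains the polynomial dependence in \eqref{est G}. The commutators generated by applying $\Dbt$ with nontrivial $\vmu$ to the div-curl system are controlled by Lemma \ref{Dt comm est lemma} followed by Theorem \ref{thm div-curl}; the variation \eqref{est var G} is obtained by one more $\ka$-differentiation together with the $C^3$ regularity of $\K^{-1}$.

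The main technical obstacle is the bookkeeping inside the $\opg_\pm$ block: the restricted ranges $\tfrac{1}{2}\le s\le\kk-\tfrac{3}{2}$ and $\tfrac{1}{2}\le\sigma\le\kk-\tfrac{5}{2}$ come precisely from combining the borderline product estimates of Lemma \ref{lem product est} with the regularities $\pd_t\ka\in\H{\kk-\frac{5}{2}}$ and $\vom_{*\pm}\in H^{\kk-1}(\Om_*^\pm)$, and one must apply the sharp exponent at each product in order not to lose half a derivative. Once this bookkeeping is done, all estimates reduce to routine applications of the cited lemmas, and the uniformity in $\Lambda_*$ is inherited from the uniform bounds already built into Proposition \ref{prop K} and Lemma \ref{lem composition harm coordi}.
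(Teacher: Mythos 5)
Your reading of $\opf_\pm$ and $\opg_\pm$ is consistent with the paper's decomposition, but your explicit formula for $\opb_\pm$ is wrong, and the error is not cosmetic.

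You write that the only place $\pd^2_{t\beta}\ka$ enters \eqref{eqn pd beta v+*} is through $\var{\K^{-1}}(\ka)[\pd^2_{t\beta}\ka]\vnu$ inside $-(\pd^2_{t\beta}\gt)\vnu$, giving $\opb_\pm(\ka)\xi = -(\DD\Phi_\Gt)^{-1}\bigl(\var{\K^{-1}}(\ka)[\xi]\bigr)\vnu$. But $\Dbt\vv_\pm$ \emph{also} depends on $\pd^2_{t\beta}\ka$: differentiating the div--curl system \eqref{sys div-curl v} in $\beta$ shows that the normal boundary data for $\Dbt\vv_\pm$ on $\Gt$ is exactly $\vn_+ \cdot \vb{b}_\pm(\ka,\pd_t\ka,\cdot)$, and the second piece of $\vb{b}_\pm$ contains $\var[2]{\K^{-1}}(\ka)[\pd_t\ka,\pd_\beta\ka]\vnu$ together with the $\pd^2_{t\beta}\gt$--part, which carries $\var{\K^{-1}}(\ka)[\pd^2_{t\beta}\ka]\vnu$ as well. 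The linear-in-$\pd^2_{t\beta}\ka$ content of $\Dbt\vv_\pm$ is therefore a nontrivial curl-free, divergence-free field $\vw_{3\pm}$ determined by a div--curl problem with boundary data $\vn_+\cdot\vw_{3\pm} = \bigl[\var{\K^{-1}}(\ka)[h]\vnu\bigr]\circ(\Phi_\Gt)^{-1}\cdot\vn_+$, and the correct identity is
\[
\opb_\pm(\ka)h = (\DD\Phi_\Gt)^{-1}\Bigl\{\vw_{3\pm}|_\Gt\circ\Phi_\Gt - \var{\K^{-1}}(\ka)[h]\,\vnu\Bigr\}.
\]
Your version drops the $\vw_{3\pm}$ term entirely. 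This is a genuine gap, not a bookkeeping slip: since $\vnu\cdot\vn_+ \ge 9/10 > 0$, the quantity $-\var{\K^{-1}}(\ka)[h]\vnu$ has a nonzero normal component on $\Gt$, so your claimed formula does not even land in $\mathrm{T}\Gs$, contradicting the fact (established right after \eqref{eqn pd beta v+*}) that $\pd_\beta\vv_{\pm*} \in \mathrm{T}\Gs$. The $\vw_{3\pm}$ contribution is precisely what cancels the normal component and hence cannot be discarded. Its presence also matters quantitatively: the $a^{s'-s''-2}$ gain in \eqref{est B} must be propagated through the div--curl solve (via Theorem \ref{thm div-curl} and Proposition \ref{prop K}), not just read off from $\var{\K^{-1}}$ alone. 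Once this term is restored, the remaining estimates follow as you describe and match the paper's strategy of decomposing $\Dbt\vv = \vw_1 + \vw_2 + \vw_3$ by separate div--curl problems and then differentiating once more in $\ka$ for the variational bounds.
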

	
	\subsection{Evolution of the mean curvature}\label{sectoin evolution kappa}
	Suppose that $ (\Gt, \vv, \vh) $ is a solution to the interface problem (MHD)-(BC) for $ 0 \le t \le T $ with $ \Gt \in \Lambda_* $, $ \Gt \in H^{\kk + 1} $ and $ \vv(t), \vh(t) \in H^{\kk}(\Om \setminus \Gt) $. The hypersurface $ \Gt $ is uniquely determined by the function $ \ka(t) : \Gs \to \R $, whose leading order term is $ \kappa_+ $. Then, it is natural to consider the evolution equation for $ \kappa_+ $ under a weighted velocity $ \vbu_\lambda $. Plugging $\vbu_\lambda$ into \eqref{eqn Dt2 kappa alt} yields
	\begin{equation}\label{eqn dt2 k+}
		\begin{split}
			\Dtl^{\!2} \kappa_+ = &-\lap_\Gt \qty(\Dtl \vbu_\lambda \vdot \vn_+) + \Dtl\vbu_\lambda \vdot \lap_\Gt \vn_+ + 2\vn_+ \vdot (\grad\vbu_\lambda) \vdot (\lap_\Gt \vbu_\lambda)^\top \\
			&+4 \ip{\vb{A}_\lambda}{\vn_+ \vdot (\DD_\Gt)^2\vbu_\lambda} - \kappa_+ \abs{(\grad\vbu_\lambda)^*\vdot\vn_+}^2 \\
			&- 2\ip{\II_+}{\DD_\Gt\vbu_\lambda \vdot \DD_\Gt\vbu_\lambda}
			+4 \ip{\II_+\vdot\vb{A}_\lambda + \vb{A}_\lambda\vdot\II_+}{\vb{A}_\lambda},
		\end{split}
	\end{equation}
	where \begin{equation*}
		\vb{A}_\lambda \coloneqq \dfrac{1}{2}\qty{(\DD\vbu_\lambda)^\top + \qty[(\DD\vbu_\lambda)^\top]^*}.
	\end{equation*}
	Denoting by $ Q_\lambda $ a generic polynomial determined only by $ \Lambda_*$ and $ \lambda $, one gets from \eqref{est Dtl ulam}, the product estimates, and Lemma \ref{est ii} that
	\begin{equation}
			\begin{split}
			&\abs{\Dtl^{\!2}\kappa_+ + \lap_\Gt \qty(\Dtl \vbu_\lambda \vdot \vn_+)}_{H^{\kk-\frac{5}{2}}(\Gt)} \\
			&\quad \le \begin{cases*}
			Q_\lambda \qty(\abs{\ka}_{\H{\kk-1}}, \norm{(\vv, \vh)}_{H^{\kk}(\Om\setminus\Gt)}) \qif k = 2, \\ Q_\lambda \qty(\alpha\abs{\ka}_{\H{\kk-1}}, \abs{\ka}_{\H{\kk-\frac{3}{2}}}, \norm{(\vv, \vh)}_{H^{\kk}(\Om\setminus\Gt)}) \qif k \ge 3. 
			\end{cases*} 
			\end{split}
	\end{equation}
	Since $ \vw \equiv \llbracket \vv \rrbracket \coloneqq \vv_+ - \vv_- \in \mrm{T}\Gt $, it is routine to calculate
	\begin{equation}
		\begin{split}
			\vn_+ \vdot \Dtl \vbu_\lambda =\, & \dfrac{1}{\rho_-}\DD_{\vn_-}p^- - \lambda \qty(\dfrac{1}{\rho_+}\DD_{\vn_+}p^+ + \dfrac{1}{\rho_-}\DD_{\vn_-}p^-) \\
			&+ \vn_+ \vdot \qty[\lambda \DD_{\vh_+}\vh_+ + (1-\lambda) \DD_{\vh_-} \vh_- - \lambda(1-\lambda) \DD_\vw \vw] \\
			= &-\alpha^2\wt{\n}\kappa_+  + \qty(\dfrac{\lambda}{\rho_+}\n_+ - \dfrac{1-\lambda}{\rho_-}\n_-)\bn^{-1}\qty(g^+-g^-) \\
			&-\DD_{\vn_+}\qty[\lambda\qty(p_{\vv,\vv}^+ - p_{\vh,\vh}^+) + (1-\lambda)\qty(p_{\vv,\vv}^- - p_{\vh,\vh}^-) ] \\
			&-\lambda \II_+ \qty(\vh_+, \vh_+) - (1-\lambda)\II_+(\vh_-, \vh_-) + \lambda(1-\lambda)\II_+(\vw, \vw).			
		\end{split}
	\end{equation}
	In order to control the $ H^{\kk-\frac{1}{2}} $ norm of $ \qty(\dfrac{\lambda}{\rho_+}\n_+ - \dfrac{1-\lambda}{\rho_-}\n_-)\bn^{-1}\qty(g^+-g^-) $,
	it suffices to have 
	\begin{equation}\label{est lam n+ - n-}
		\abs{\qty(\dfrac{\lambda}{\rho_+}\n_+ - \dfrac{1-\lambda}{\rho_-}\n_-)}_{\LL\qty(H^s(\Gt); H^s(\Gt))} \le Q \qty(\abs{\ka}_{\H{\kk-\frac{3}{2}}})
	\end{equation}
	for $ \frac{1}{2} - \kk \le s \le \kk - \frac{1}{2} $ and some generic polynomial $ Q $ determined by $ \Lambda_* $. Thanks to Lemma~\ref{lem lap-n}, \eqref{est lam n+ - n-} holds as long as $ \lambda $ satisfies
	\begin{equation}
		\dfrac{\lambda}{\rho_+} = \dfrac{1-\lambda}{\rho_-} \Longleftrightarrow \lambda = \dfrac{\rho_+}{\rho_+ + \rho_-}.
	\end{equation}
	
	Denote by
	\begin{equation}\label{def vbu}
		\vbu \coloneqq \dfrac{\rho_+}{\rho_+ + \rho_-} \vv_+ + \dfrac{\rho_-}{\rho_+ + \rho_-} \vv_-, \qand \Dtb \coloneqq \pd_t + \DD_\vbu.
	\end{equation}
	Then $ \vbu \vdot \vn_+ = \theta $ and
	\begin{equation*}
		\begin{split}
			\vn_+ \vdot \Dtb \vbu =\, &- \alpha^2\wt{\n}\kappa_+ + \dfrac{\rho_+ \rho_-}{\qty(\rho_+ + \rho_-)^{2}} \II_{+}(\vw, \vw) \\
			&- \dfrac{\rho_+}{\rho_+ + \rho_-}\II_+ \qty(\vh_+, \vh_+) - \dfrac{\rho_-}{\rho_+ + \rho_-} \II_+\qty(\vh_-, \vh_-) + \mathfrak{r}_0,
		\end{split}
	\end{equation*}
	where
	\begin{equation}\label{def frak[r_0]}
		\begin{split}
			\mathfrak{r}_0 \coloneqq \dfrac{1}{\rho_+ \rho_-}\qty(\n_+ - \n_-)\bn^{-1}\qty(g^+ - g^-) - \DD_{\vn_+}\qty[\dfrac{\rho_+}{\rho_+ + \rho_-}\qty(p_{\vv,\vv}^+ - p_{\vh,\vh}^+) + \dfrac{\rho_-}{\rho_+ + \rho_-}\qty(p_{\vv,\vv}^- - p_{\vh,\vh}^-)]
		\end{split}
	\end{equation}
	satisfies
	\begin{equation*}
		\abs{\mathfrak{r}_0}_{H^{\kk-\frac{1}{2}}(\Gt)} \le Q \qty(\abs{\ka}_{\H{\kk-\frac{3}{2}}}, \norm{\vv}_{H^{\kk}(\Om\setminus\Gt)}, \norm{\vh}_{H^{\kk} (\Om\setminus\Gt)}).
	\end{equation*}
	
	Define the following two operators:
	\begin{equation}\label{def op a}
		\opa(\Gamma) \coloneqq - \lap_{\Gamma} \wt{\n}
	\end{equation}
	and 
	\begin{equation}\label{def op r}
		\opr\qty(\Gamma, \vb{J}) \coloneqq \qty(\DD_\Gamma)_{\vb{J}} \qty(\DD_\Gamma)_{\vb{J}}
	\end{equation}
	for a vector field $ \vb{J} \in \mrm{T}\Gamma $.
	It follows from Lemma \ref{Dt comm est lemma} that
	\begin{equation}\label{comm lap ii R}
		\abs{\lap_\Gt\qty[\II_+(\vb{J}, \vb{J})] - \opr(\Gt, \vb{J})\kappa_+}_{H^{\kk-\frac{5}{2}}(\Gt)} \le  Q\qty(\abs{\ka}_{\H{\kk-\frac{3}{2}}}) \abs{\vb{J}}^2_{H^{\kk-\frac{1}{2}}(\Gt)},
	\end{equation}
	where $ \vJ \in \mrm{T}\Gt $ is an $ H^{\kk-\frac{1}{2}} $ tangential vector field and $ Q $ is a generic polynomial determined by $ \Lambda_* $.
%	Indeed, it follows from Simons' identity (\ref{simons' identity}) that
%	\begin{equation}\label{formula lap ii - R k}
%		\begin{split}
%			&\hspace{-1em}\lap_\Gt\qty[\II(\vb{J}, \vb{J})] - \opr(\Gt, \vb{J})\kappa \\
%			=\, &(\lap_\Gt\II)(\vJ, \vJ) - \ip{\II}{\lap_\Gt(\vJ \otimes \vJ)} + 2 \ip{\grad^\top \II}{\grad^\top (\vJ \otimes \vJ)} - \qty(\DD_\Gt)_{\vb{J}}\qty(\DD_\Gt)_{\vb{J}} \kappa \\
%			=\, &(\DD_\Gt)^2_{(\vJ, \vJ)}\kappa + \kappa (\II \vdot \II)(\vJ, \vJ) - \abs{\II}^2 \II(\vJ, \vJ) - \ip{\II}{\lap_\Gt(\vJ \otimes \vJ)} \\
%			&+2\ip{\grad^\top\II}{\grad^\top (\vJ \otimes \vJ)} - \qty(\DD_\Gt)_{\vb{J}} \qty(\DD_\Gt)_{\vb{J}} \kappa,
%		\end{split}
%	\end{equation}
%	so (\ref{comm lap ii R}) follows from the identity:
%	\begin{equation}\label{formula d^2 k}		
%		(\DD_\Gt)^2_{(\vJ, \vJ)}\kappa - \qty(\DD_\Gt)_{\vb{J}}\qty(\DD_\Gt)_{\vb{J}}\kappa 
%		= - (\DD_\Gt)_{(\DD_\Gt)_\vJ \vJ}\kappa,		
%	\end{equation}
%	Lemma \ref{est ii} and product estimates.
	
	Thus, by using the following notations:
	\begin{equation}\label{eqn mathfrak[R]_0}
		\begin{split}
			\mathfrak{R}_0 \coloneqq\, &\va{\mathfrak{a}} \vdot \lap_\Gt\vn_+ + 2 \vn_+ \vdot (\grad\vbu) \vdot (\lap_\Gt \vbu)^\top + 4\ip{\vb{A}}{\vn_+ \vdot (\DD_\Gt)^2\vbu}\\
			&-\kappa_+ \abs{(\grad\vbu)^*\vdot\vn_+}^2 - 2\ip{\II_+}{\DD_\Gt\vbu \vdot \DD_\Gt\vbu} + 4\ip{\II_+ \vdot \vb{A} + \vb{A} \vdot \II_+}{\vb{A}} \\
			&+ \dfrac{\rho_+\rho_-}{(\rho_+ + \rho_-)^2}\qty{\opr(\Gt, \vw)\kappa_+ - \lap_{\Gt}[\II_+(\vw, \vw)]} \\
			&+ \dfrac{\rho_+}{\rho_+ + \rho_-}\qty{\lap_\Gt\qty[\II_+(\vh_+, \vh_+)] - \opr(\Gt, \vh_+)\kappa_+} \\
			&+ \dfrac{\rho_-}{\rho_+ + \rho_-}\qty{\lap_\Gt\qty[\II_+(\vh_-, \vh_-)] - \opr(\Gt, \vh_-)\kappa_+} \\
			&- \lap_\Gt\mathfrak{r}_0,
		\end{split}
	\end{equation}
	with
	\begin{equation}\label{eqn va a}
		\begin{split}
			\va{\mathfrak{a}} \coloneqq \dfrac{1}{\rho_+ + \rho_-} \qty(\grad p^+ + \grad p^-) + \dfrac{\rho_+ \rho_-}{\qty(\rho_+ + \rho_-)^2}\DD_{\vw}\vw - \dfrac{\rho_+}{\rho_+ + \rho_-}\DD_{\vh_+}\vh_+ - \dfrac{\rho_-}{\rho_+ + \rho_-}\DD_{\vh_-}\vh_-,
		\end{split}
	\end{equation}
	and $ p $ given by (\ref{decomp p}), (\ref{def g pm}) and \eqref{def frak p}, one can obtain the following lemma:
	\begin{lem}
		There exists a generic polynomial $ Q $ depending only on $ \Lambda_* $, so that for any solution to (MHD)-(BC) for $ t \in [0, T] $, $ k \ge 2 $, $ \Gt \in \Lambda_* \cap H^{\kk + 1} $ and $ \vv, \vh \in H^{\kk}(\Om \setminus \Gt) $, the mean curvature $ \kappa_+ $ satisfies the equation
		\begin{equation}\label{eqn Dt2 kappa+}
			\begin{split}
				\Dtb^2 \kappa_+ &+ \alpha^2 \opa(\Gt)\kappa_+ + \dfrac{\rho_+\rho_-}{\qty(\rho_+ + \rho_-)^2}\opr(\Gt, \vw)\kappa_+ \\ &-\dfrac{\rho_+}{\rho_+ + \rho_-}\opr(\Gt, \vh_+)\kappa_+ - \dfrac{\rho_-}{\rho_+ + \rho_-}\opr(\Gt, \vh_-)\kappa_+ = \mathfrak{R}_0,
			\end{split}
		\end{equation}
		where $ \Dtb $ is defined in \eqref{def vbu}, and $ \mathfrak{R}_0 : \Gt \to \R$ satisfies
		\begin{equation}\label{est frak R_0}
			\abs{\mathfrak{R}_0}_{H^{\kk-\frac{5}{2}}(\Gt)} \le Q \qty(\abs{\ka}_{\H{\kk-1}}, \norm{\vv}_{H^{\kk}(\Om\setminus\Gt)}, \norm{\vh}_{H^{\kk} (\Om\setminus\Gt)}). 
		\end{equation} 
		Furthermore, if $ k \ge 3 $, the estimate of $ \mathfrak{R}_0 $ can be refined to
		\begin{equation}\label{est frak R_0'}
			\abs{\mathfrak{R}_0}_{H^{\kk-\frac{5}{2}}(\Gt)} \le Q \qty(\alpha\abs{\ka}_{\H{\kk-1}}, \abs{\ka}_{\H{\kk-\frac{3}{2}}} \norm{\vv}_{H^{\kk}(\Om\setminus\Gt)}, \norm{\vh}_{H^{\kk} (\Om\setminus\Gt)}). \tag{\ref{est frak R_0}'}
		\end{equation}
	\end{lem}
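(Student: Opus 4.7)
\medskip
\noindent\textbf{Proof proposal.} The strategy is simply to combine the geometric evolution formula \eqref{eqn dt2 k+} for $\Dtl^{\!2}\kappa_+$ with the physical identity for $\vn_+\cdot\Dtb\vbu$ obtained from (MHD) and the pressure decomposition \eqref{decomp p}--\eqref{def frak p}, and then to reorganize the resulting expression by pulling the surface Laplacian inside the second fundamental form terms using the commutator identity \eqref{comm lap ii R}. Concretely, first I would specialize \eqref{eqn dt2 k+} to $\lambda=\rho_+/(\rho_+ + \rho_-)$, so that $\vbu_\lambda = \vbu$ and $\Dtl = \Dtb$ as in \eqref{def vbu}; this is exactly the weight that turns the pressure contribution $\bigl(\tfrac{\lambda}{\rho_+}\n_+-\tfrac{1-\lambda}{\rho_-}\n_-\bigr)\bn^{-1}(g^+-g^-)$ into a smoothing term via Lemma \ref{lem lap-n}, so that this piece can be absorbed into $\mathfrak{r}_0$ defined in \eqref{def frak[r_0]}.

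Next, I would compute $\vn_+\cdot \Dtb\vbu$ by taking the normal component of \eqref{eqn dtl ulambda} with $\lambda=\rho_+/(\rho_++\rho_-)$ and using the decomposition \eqref{decomp p}, the definitions \eqref{def p_kappa} of $p^\pm_\kappa$ together with $\wt{\n}$ in \eqref{def operator n tilde}, and the boundary identity $\DD_{\vn_+}p_\kappa^++\DD_{\vn_-}p_\kappa^-=0$ already derived in the text. This yields
\begin{equation*}
  \vn_+\cdot\Dtb\vbu = -\alpha^2\wt{\n}\kappa_+ + \tfrac{\rho_+\rho_-}{(\rho_++\rho_-)^2}\II_+(\vw,\vw) - \tfrac{\rho_+}{\rho_++\rho_-}\II_+(\vh_+,\vh_+) - \tfrac{\rho_-}{\rho_++\rho_-}\II_+(\vh_-,\vh_-) + \mathfrak{r}_0.
\end{equation*}
Plugging this into the leading term $-\lap_\Gt(\vn_+\cdot\Dtb\vbu)$ of \eqref{eqn dt2 k+} produces $\alpha^2\opa(\Gt)\kappa_+$ from the surface tension piece (by the definition \eqref{def op a}) and, after invoking \eqref{comm lap ii R}, produces the three Reynolds-type operators $\opr(\Gt,\vw)$, $\opr(\Gt,\vh_+)$, $\opr(\Gt,\vh_-)$ acting on $\kappa_+$ with the correct signs and weights prescribed in \eqref{eqn Dt2 kappa+}. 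Every term not of this form—namely the second line of \eqref{eqn dt2 k+}, the piece $\Dtl\vbu\cdot\lap_\Gt\vn_+$ rewritten via \eqref{eqn va a} (after substituting $\Dtl\vbu_\lambda$ from \eqref{eqn dtl ulambda}), the commutator remainders from \eqref{comm lap ii R}, and $-\lap_\Gt\mathfrak{r}_0$—goes into $\mathfrak{R}_0$ as listed in \eqref{eqn mathfrak[R]_0}. Writing out $\Dtl\vbu_\lambda$ at $\lambda=\rho_+/(\rho_++\rho_-)$ matches exactly the definition \eqref{eqn va a} of $\va{\mathfrak{a}}$, so the identity \eqref{eqn Dt2 kappa+} is established by pure substitution.

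For the estimate of $\mathfrak{R}_0$, I would bound each summand of \eqref{eqn mathfrak[R]_0} in $H^{\kk-\frac{5}{2}}(\Gt)$ using the product estimates of Lemma \ref{lem product est}, the regularity of $\vn_+$ and $\II_+$ provided by Lemma \ref{est ii}, and standard elliptic estimates for $p^\pm_{\vv,\vv}$, $p^\pm_{\vh,\vh}$, $p^\pm_b$. The first block of terms (arising from \eqref{eqn dt2 k+}) is manifestly quadratic in $\DD\vbu$ together with at most two surface derivatives of $\vbu$ and one of $\II_+$, so each factor is controlled by $\norm{\vv}_{H^{\kk}(\Om\setminus\Gt)}$ and $|\ka|_{\H{\kk-1}}$; the three Simons-type commutator pieces are controlled by \eqref{comm lap ii R}; and $\lap_\Gt\mathfrak{r}_0$ is bounded using the Dirichlet-Neumann calculus (boundedness of $\bn^{-1}$, items 3--5 after \eqref{def DN op}) together with the $H^{\kk-\frac{1}{2}}$ bound on $\mathfrak{r}_0$ already stated in the text. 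This yields \eqref{est frak R_0}; for the refined estimate \eqref{est frak R_0'}, I would note that when $k\ge 3$ the Sobolev embedding $H^{\kk-\frac{3}{2}}(\Gt)\hookrightarrow L^\infty$ holds with room to spare, so one only needs the $\alpha$-weight on the top-order seminorm $|\ka|_{\H{\kk-1}}$ produced by the pressure term $\alpha^2\grad p_\kappa$ in $\va{\mathfrak{a}}$, while all other occurrences of $\ka$ appear at strictly lower order and can be estimated by $|\ka|_{\H{\kk-\frac{3}{2}}}$.

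The main technical obstacle is the pressure term $\lap_\Gt\mathfrak{r}_0$: both summands in \eqref{def frak[r_0]} involve boundary traces of harmonic quantities, and one needs to see that the dangerous top-order contribution from $(\n_+-\n_-)\bn^{-1}$ is tame because the symbol difference is of order zero modulo lower-order terms (Lemma \ref{lem lap-n}), and that $\grad p^\pm_{\vv,\vv}$, $\grad p^\pm_{\vh,\vh}$ gain half a derivative on the boundary from the elliptic regularity of \eqref{def p_ab^+}--\eqref{def p_ab^-}; combined with the trace and product estimates, this forces the count to close at precisely $H^{\kk-\frac{5}{2}}(\Gt)$, and separating the $\alpha^2$-factor in the refined estimate requires carefully tracking where $\grad p_\kappa$ enters $\va{\mathfrak{a}}$.
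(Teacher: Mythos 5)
Your proposal reproduces the paper's own derivation: specializing the geometric evolution identity \eqref{eqn dt2 k+} to $\lambda=\rho_+/(\rho_++\rho_-)$, computing $\vn_+\cdot\Dtb\vbu$ from the pressure decomposition \eqref{decomp p}--\eqref{def frak p} with the $\wt{\n}\kappa_+$ and $\II_+$ terms isolated and $\mathfrak{r}_0$ collecting the smoothing remainder, then commuting $\lap_\Gt$ past $\II_+(\cdot,\cdot)$ via \eqref{comm lap ii R} to produce $\opa$ and the three $\opr$ operators, with the residue grouped into $\mathfrak{R}_0$ as in \eqref{eqn mathfrak[R]_0} and estimated by the product estimates, Lemma~\ref{est ii}, Dirichlet--Neumann calculus, and elliptic regularity for $p^\pm_{\vb a,\vb b}$. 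Your observation that the $\alpha$-weight in \eqref{est frak R_0'} arises solely from $\alpha^2\grad p_\kappa$ while all other dependence on $\ka$ closes at $\H{\kk-\frac{3}{2}}$ when $k\ge 3$ is precisely the reasoning the paper gives after \eqref{est Dtl ulam}; the proposal is correct and essentially identical to the paper's argument.
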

	
	\subsection{Evolution of the modified mean curvature}\label{sec evo ka}
	
	In order to compute the evolution equation for $ \ka $, it is convenient to pull back $ \opa $ and $ \opr $ to $ \Gs $. More precisely, for $ \Gamma \in \Lambda_* $, $ \vb{J}_* \in \mrm{T}\Gs $, and $ f : \Gs \to \R $, define:
	\begin{equation}\label{def opA}
		\opA(\ka)f \coloneqq \qty[\opa(\Gamma)(f \circ \Phi_\Gamma^{-1})]\circ\Phi_\Gamma,
	\end{equation}
	and
	\begin{equation}\label{def opR}
		\opR(\ka, \vb{J}_*)f \coloneqq \qty[\opr(\Gamma, \vb{J})(f\circ\Phi_\Gamma^{-1})]\circ\Phi_\Gamma,
	\end{equation}
	where $ \vb{J} \coloneqq \mrm{T}\Phi_\Gamma (\vb{J}_*)  = \qty[(\DD\Phi_\Gamma) \vdot \vb{J}_*] \circ (\Phi_\Gamma)^{-1} \in \mrm{T}\Gamma$.
	Furthermore, the following lemma holds (c.f. \cite{Shatah-Zeng2011}):
	\begin{lem}\label{lem 3.3}
		There are positive constants $ C_*, \delta_1 $ depending on $ \Lambda_* $, so that for $ \ka \in B_{\delta_1} \subset \H{\kk-\frac{5}{2}} $, $ \vb{J}_* \in \H{\kk-\frac{1}{2}} $, $ 2 \le s \le \kk - \frac{1}{2} $, $ 1 \le \sigma \le \kk-\frac{1}{2} $, and $ 2 \le s_1 \le \kk - \frac{1}{2} $, the following estimates hold:
		\begin{equation}\label{est opA}
			\abs{\opA(\ka)}_{\LL\qty[\H{s}; \H{s-3}]} \le C_*,
		\end{equation}
		\begin{equation}\label{est opR}
			\abs{\opR(\ka, \vJ_*)}_{\LL\qty[\H{\sigma}; \H{\sigma-2}]} \le C_* \abs{\vJ_*}_{\H{\kk-\frac{1}{2}}}^2,
		\end{equation}
		and
		\begin{equation}\label{est var opA}
			\abs{\var{\opA}(\ka)}_{\LL\qty[\H{\kk-\frac{5}{2}}; \LL\qty(\H{s_1}; \H{s_1-3})]} \le C_*.
		\end{equation}
		Furthermore, if $ k \ge 3 $, it holds for $ 2 \le s_2 \le \kk-1 $ that
		\begin{equation}\label{est var opR}
			\begin{split}
				\abs{\var{\opR}(\ka, \vJ_*)}_{\LL\qty[\H{\kk-\frac{5}{2}}\times\H{\kk-2}; \LL\qty(\H{s_2}; \H{s_2-2})]} 
				\le C_*\qty(1 +\abs{\vJ_*}^2_{\H{\kk-\frac{1}{2}}}).
			\end{split}
		\end{equation}
	\end{lem}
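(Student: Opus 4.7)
The strategy is to establish the estimates first for the operators $\opa(\Gamma)$ and $\opr(\Gamma,\vJ)$ acting on $\Gamma$, and then transport them to $\Gs$ via the diffeomorphism $\Phi_\Gamma$ using Lemma \ref{lem composition harm coordi}(2). The entire dependence on $\ka$ is channeled through the height function $\gmgm=\K^{-1}(\ka)$, which lies in $\H{\kk-\frac{1}{2}}$ and whose derivatives with respect to $\ka$ are controlled by Proposition \ref{prop K}, cf.\ \eqref{var est K^-1}. All geometric quantities on $\Gamma$---the metric $g_{ij}$, the Christoffel symbols $\Gamma^k_{ij}$, the unit normal $\vn_+$, and the second fundamental form $\II_+$---are smooth functionals of $\gmgm$ and hence of $\ka$.

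For the boundedness \eqref{est opA}, I would combine properties $(3)$--$(5)$ of the Dirichlet--Neumann operators listed in \textsection\ref{sec harmonic coord} with the definition \eqref{def operator n tilde} to see that $\wn: H^s(\Gamma) \to H^{s-1}(\Gamma)$ is bounded uniformly in $\Gamma \in \ls$. The Laplace--Beltrami operator, written as $g^{-\frac{1}{2}}\pd_i\bigl(g^{\frac{1}{2}}g^{ij}\pd_j\,\cdot\,\bigr)$, loses two further derivatives, and the coefficients are controlled by the $\H{\kk-\frac{1}{2}}$-norm of $\gmgm$ via Lemma \ref{lem product est}. Pulling back through $\Phi_\Gamma$ with Lemma \ref{lem composition harm coordi}(2) yields \eqref{est opA}. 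The estimate \eqref{est opR} is more elementary: expanding $\opr(\Gamma,\vJ)f = J^iJ^k f_{,ik}+J^k J^i_{,k} f_{,i}$ shows it is a second-order differential operator whose coefficients are polynomial in $J^i$, $\DD_\Gamma\vJ$, $g^{ij}$, $\Gamma^k_{ij}$. Since $\vJ_* \in \H{\kk-\frac{1}{2}}$ pushes forward to $\vJ \in H^{\kk-\frac{1}{2}}(\Gamma)$ and $\kk-\frac{1}{2} > \frac{3}{2}$, Lemma \ref{lem product est} furnishes the bilinear bound.

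For the variational estimates \eqref{est var opA} and \eqref{est var opR}, I would apply the chain rule. For \eqref{est var opA}, one rewrites a variation of $\opA(\ka)$ as a variation of $\opa(\Gamma)$ with respect to $\gmgm$ composed with the pullback by $\Phi_\Gamma$, and uses \eqref{var est K^-1} to bound the resulting variation $\delta\gmgm$ in $\H{\kk-\frac{1}{2}}$. The variation of $\lap_\Gamma$ arises from variations of $g^{ij}$ and $\sqrt{g}$, which are smooth polynomial expressions in $\DD\gmgm$ and $\vnu$; the variation of each Dirichlet--Neumann operator is obtained by differentiating the elliptic boundary-value problems \eqref{harmonic ext +}--\eqref{harmonic ext 2} in the boundary shape. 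Both contributions preserve the differential order of the original operator and pick up an $L^\infty$-factor from $\delta\gmgm$. For \eqref{est var opR}, the variation splits into a part varying $\vJ_*$---producing $2(\DD_\Gamma)_{\mrm{T}\Phi_\Gamma\cdot\delta\vJ_*}(\DD_\Gamma)_\vJ$ plus lower-order terms, linear in $\delta\vJ_*$ and linear in $\vJ$---and a part varying $\ka$, acting on the coefficients $g^{ij},\Gamma^k_{ij}$ and on $\vJ=\mrm{T}\Phi_\Gamma(\vJ_*)$ (quadratic in $\vJ_*$). Combined with Lemma \ref{lem product est}, this accounts for the factor $\bigl(1+\abs{\vJ_*}^2_{\H{\kk-\frac{1}{2}}}\bigr)$.

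The main obstacle is tracking derivative counts carefully. For \eqref{est var opA}, since $\opa$ is of order $3$ and $\wn$ involves the inverse $\bn^{-1}$, one must verify that $\delta\wn$ stays at the claimed regularity by combining property $(5)$ with the $L^\infty$-control $\delta\gmgm \in \H{\kk-\frac{1}{2}}\hookrightarrow L^\infty$ furnished by \eqref{var est K^-1} for any $\delta\ka \in \H{\kk-\frac{5}{2}}$, valid since $\kk-\frac{1}{2}>1$ for $k \ge 2$. For \eqref{est var opR}, the requirement $k\ge 3$ is forced by a term of the shape $J^k(\delta J^i)_{,k} f_{,i}$ arising from the $\vJ_*$-variation: here $\delta\vJ_* \in \H{\kk-2}$ yields $(\delta J^i)_{,k} \in H^{\kk-3}(\Gamma)$, and the product rule in Lemma \ref{lem product est}(2) requires $\kk-3 > 1$, i.e., $k \ge 3$.
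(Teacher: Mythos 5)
Your treatment of \eqref{est opA} and \eqref{est opR} coincides with the paper's (both follow ``from the definitions'' using the DN-operator properties and Lemma~\ref{lem product est}). For the variational estimates \eqref{est var opA}--\eqref{est var opR}, however, you take a genuinely different route: a direct shape-derivative computation, differentiating $g^{ij}$, $\sqrt{g}$, the Christoffel symbols, and the elliptic problems \eqref{harmonic ext +}--\eqref{harmonic ext 2} in $\gmgm$, then pushing through with product estimates. The paper instead introduces a one-parameter family $\{\Gt\}_t\subset\ls$ with evolution velocity $\vv=(\pd_t\gt\vnu)\circ\X_\Gt^{-1}$ and notes that since $\Dt(f\circ\Phi_\Gt^{-1})=0$ for fixed $f:\Gs\to\R$, the variation of $\opA(\ka)f$ collapses to a single commutator $-\comm{\Dt}{\lap_\Gt\wn}(f\circ\Phi_\Gt^{-1})$ pulled back to $\Gs$; the $\opR$-variation reduces similarly to $\comm{\Dt}{\DD_\vJ}\phi=\DD_{(\Dt\vJ-\DD_\vJ\vv)}\phi$. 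Both commutators are then dispatched by Lemma~\ref{Dt comm est lemma} wholesale. This is markedly shorter: your route would in effect re-derive the commutator bounds already packaged in Lemma~\ref{Dt comm est lemma} (in particular the shape derivative of $\n_\pm$, whose actual form \eqref{comm formula Dt n} involves $\DD\vmu$, $\lap\vmu$, $\DD^2\h_+ f$, and curvature terms, not merely ``an $L^\infty$-factor from $\delta\gmgm$'' as you write --- the bound closes because $\delta\gmgm\in\H{\kk-\frac{1}{2}}$ retains usable regularity after differentiation, not because of an $L^\infty$ gain). Also be aware that a naive shape-derivative of $\opA(\ka)f=[\opa(\Gamma)(f\circ\Phi_\Gamma^{-1})]\circ\Phi_\Gamma$ produces three separate contributions (coefficient variation, inner chart, outer chart), which the material-derivative framing combines into one commutator automatically; your sketch should make that bookkeeping explicit. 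Your identification of the $k\ge3$ threshold in \eqref{est var opR} via the term $J^k(\delta J^i)_{,k}f_{,i}$ is a legitimate way to see the constraint and yields the same integer cutoff as the paper's requirement $\kk-2>1$ on $\abs{\Dt\vJ-\DD_\vJ\vv}_{H^{\kk-2}(\Gt)}$.
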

	\begin{proof}
		\eqref{est opA} and \eqref{est opR} follow from the definitions. As for the variational estimates, suppose that $ \{\Gt\}_{t} \subset \Lambda_* $ is a family of hypersurfaces parameterized by $ t $. Then by considering the velocity of the moving hypersurface:
		\begin{equation*}
			\vv \coloneqq \qty(\pd_t\gt \nu) \circ \X_\Gt^{-1}
		\end{equation*}
		and the material derivative
		\begin{equation*}
			\Dt \coloneqq \pd_t + \DD_{\vv},
		\end{equation*}
		one gets
		\begin{equation*}
			\begin{split}
				\pdv{t} \opA(\ka)f = \Dt \qty[-\lap_{\Gt}\wt{\n}(f\circ\Phi_\Gt^{-1})]\circ\Phi_\Gt = -\qty{\comm{\Dt}{\lap_\Gt\wt{\n}}(f\circ\Phi_\Gt^{-1})}\circ\Phi_\Gt.
			\end{split}
		\end{equation*}
		Thus \eqref{est var opA} follows from Lemma \ref{Dt comm est lemma}.
		
		As for \eqref{est var opR}, one may observe that
		\begin{equation*}
			\pdv{t}\opR(\ka,\vJ_*)f = \Dt \qty[\DD_{\vJ}\DD_\vJ (f\circ\Phi_\Gt^{-1})]\circ\Phi_\Gt
		\end{equation*}
		and for $ \phi : \Gt \to \R $,
		\begin{equation*}
			\comm{\Dt}{\DD_\vJ}\phi = \DD_{(\Dt\vJ - \DD_\vJ \vv)}\phi.
		\end{equation*}
		Hence for $ 0 \le s' \le \kk -2 $ (here $ k \ge 3 $), it holds that
		\begin{equation*}
			\begin{split}
				\abs{\comm{\Dt}{\DD_\vJ}\phi}_{H^{s'}(\Gt)} &\less \abs{\DD\phi \cdot (\Dt\vJ - \DD_\vJ \vv)}_{H^{s'}(\Gt)} \\
				&\less \abs{\phi}_{H^{s'+1}(\Gt)} \cdot \abs{\Dt\vJ - \DD_\vJ\vv}_{H^{\kk-2}(\Gt)},
			\end{split}
		\end{equation*}
		which, together with Lemma \ref{Dt comm est lemma}, implies (\ref{est var opR}).
	\end{proof}
	
	Next, we shall derive the evolution equation for $ \ka $. First, define a vector field $ \vW : \Gt \to \R^3 $ by
	\begin{equation}\label{def vW}
		\begin{split}
			\vW &\coloneqq \Dtb \vbu + \dfrac{1}{\rho_+ + \rho_-} \qty(\grad q^+ + \grad q^-) + \dfrac{\rho_+ \rho_-}{\qty(\rho_+ + \rho_-)^2}\DD_{\vw}\vw  - \dfrac{\rho_+}{\rho_+ + \rho_-}\DD_{\vh_+}\vh_+ - \dfrac{\rho_-}{\rho_+ + \rho_-}\DD_{\vh_-}\vh_- \\
			&\equiv \Dtb \vbu + \va{\mathfrak{b}},
		\end{split}
	\end{equation}
	for which
	\begin{gather*}
		q^\pm \coloneqq \rho_\pm \qty( p^\pm_{\vv, \vv} + p^\pm_{\vh, \vh}) \pm \alpha^2 \dfrac{1}{\rho_\mp}\h_\pm\bar{\n}^{-1}\n_\mp\kappa_+ + \h_\pm \bar{\n}^{-1}\mathfrak{q}, \\
		\mathfrak{q} \coloneqq -g^+ + g^- - \frac{1}{\abs{\Gt}}\int_{\OGt} \qty(\div \vv)^2 \dd{x}, \\
		g^{\pm} \coloneqq 2 \DD_{\vv_\pm^\top}\theta - \II_+\qty(\vv_\pm^\top, \vv_\pm^\top) + \II_+\qty(\vh_\pm^\top, \vh_\pm^\top) + \DD_{\vn_+}\qty(p_{\vv, \vv}^\pm - p_{\vh, \vh}^\pm).
	\end{gather*}
	Thus, $ \vW \equiv 0 $ if $ \qty(\Gt, \vv, \vh) $ is a solution to (MHD)-(BC). 
	
	Substituting
	\begin{equation*}
		\begin{split}
			\Dtb \vbu = \vW  - \dfrac{1}{\rho_+ + \rho_-} \qty(\grad q^+ + \grad q^-) - \dfrac{\rho_+ \rho_-}{\qty(\rho_+ + \rho_-)^2}\DD_{\vw}\vw + \dfrac{\rho_+}{\rho_+ + \rho_-}\DD_{\vh_+}\vh_+ + \dfrac{\rho_-}{\rho_+ + \rho_-}\DD_{\vh_-}\vh_-
		\end{split}
	\end{equation*}
	into (\ref{eqn dt2 k+}) with $ \lambda = \rho_+/(\rho_+ + \rho_-) $, and pulling back it to $ \Gs $ via (\ref{pull back Dt to Gs}), one has
	\begin{equation}\label{eqn dt2 ka}
		\begin{split}
			&\Dts^{\!2}(\kappa_+ \circ \Phi_\Gt) + \alpha^2\opA(\ka)(\kappa_+\circ\Phi_\Gt) + \dfrac{\rho_+\rho_-}{\qty(\rho_+ + \rho_-)^2}\opR(\ka, \vw_*)(\kappa_+ \circ \Phi_\Gt)\\ &-\dfrac{\rho_+}{\rho_+ + \rho_-}\opR(\ka, \vh_{+*})(\kappa_+\circ\Phi_\Gt)
			- \dfrac{\rho_-}{\rho_+ + \rho_-}\opR(\ka, \vh_{-*})(\kappa_+ \circ \Phi_\Gt) \\ 
			&\quad=\qty{\mathfrak{R}_1  - \lap_\Gt \qty(\vW \vdot \vn_+) + \vW \vdot \lap_\Gt \vn_+} \circ\Phi_\Gt,
		\end{split}
	\end{equation}
	where
	\begin{equation*}
		\vbu_* = \dfrac{\rho_+}{\rho_+ + \rho_-}\vv_{+*} + \dfrac{\rho_-}{\rho_+ + \rho_-}\vv_{-*} \qc \Dts \coloneqq \pd_t + \DD_{\vbu_*},
	\end{equation*}
	\begin{equation*}
		\vh_{\pm*} \coloneqq (\DD\Phi_\Gt)^{-1}\vdot(\vh_\pm \circ \Phi_\Gt) \qc
		\vw_* \coloneqq (\DD\Phi_\Gt)^{-1}\vdot(\vw\circ\Phi_\Gt) = \vv_{+*} - \vv_{-*},
	\end{equation*}
	and
	\begin{equation}\label{def frak[R]_1}
		\begin{split}
			\mathfrak{R}_1 \coloneqq\, &-\va{\mathfrak{b}} \vdot \lap_\Gt\vn_+ + 2 \vn_+ \vdot (\grad\vbu) \vdot (\lap_\Gt \vbu)^\top + 4\ip{\vb{A}}{\vn_+ \vdot (\DD_\Gt)^2\vbu}\\
			&-\kappa_+ \abs{(\grad\vbu)^*\vdot\vn_+}^2 - 2\ip{\II_+}{\DD_\Gt\vbu \vdot \DD_\Gt\vbu} + 4\ip{\II_+ \vdot \vb{A} + \vb{A} \vdot \II_+}{\vb{A}} \\
			&+ \dfrac{\rho_+\rho_-}{(\rho_+ + \rho_-)^2}\qty{\opr(\Gt, \vw)\kappa_+ - \lap_{\Gt}[\II_+(\vw, \vw)]} \\
			&+ \dfrac{\rho_+}{\rho_+ + \rho_-}\qty{\lap_\Gt\qty[\II_+(\vh_+, \vh_+)] - \opr(\Gt, \vh_+)\kappa_+} \\
			&+ \dfrac{\rho_-}{\rho_+ + \rho_-}\qty{\lap_\Gt\qty[\II_+(\vh_-, \vh_-)] - \opr(\Gt, \vh_-)\kappa_+} \\
			&- \lap_\Gt\mathfrak{r}_0,
		\end{split}
	\end{equation}
	with $ \mathfrak{r}_0 $ given by (\ref{def frak[r_0]}).
	
	Due to the relation
	\begin{equation*}
		\Dts^{\!2} = \pd^2_{tt} + \DD_{\vbu_*}\DD_{\vbu_*} + 2 \DD_{\vbu_*}\pd_t + \DD_{\pd_t \vbu_*}
	\end{equation*}
	and (\ref{eqn pd beta v}), the term $ \pd_t \vbu_* $ involves $ \pd^2_{tt} \ka $, so (\ref{eqn dt2 ka}) is a nonlinear equation for $ \pd^2_{tt}\ka $. In order to get a equation which is linear for $ \pd^2_{tt}\ka $, one may drive from (\ref{eqn pd beta v}) that
	\begin{equation}\label{eqn pdt2 kappa+}
		\begin{split}
			&\pd^2_{tt}(\kappa_+ \circ \Phi_\Gt) + \opC_\alpha(\ka, \pd_t\ka, \vv_*, \vh_*)(\kappa_+ \circ \Phi_\Gt) \\
			&+ \grad^\top(\kappa_+ \circ \Phi_\Gt) \vdot \qty[\opb(\ka)\pd^2_{tt}\ka + \opf(\ka)\pd_t\vom_*+\opg(\ka, \pd_t\ka, \vom_*)\pd_t\ka] \\
			&\quad =\qty{\mathfrak{R}_1  - \lap_\Gt \qty(\vW \vdot \vn_+) + \vW \vdot \lap_\Gt \vn_+} \circ\Phi_\Gt,
		\end{split}
	\end{equation}
	where the following notations have been used:
	\begin{equation}
		\opb (\text{resp. } \opf \text{ or } \opg) \coloneqq \dfrac{\rho_+}{\rho_+ + \rho_-}\opb_+ (\text{resp. } \opf_+ \text{ or } \opg_+) + \dfrac{\rho_-}{\rho_+ + \rho_-}\opb_- (\text{resp. } \opf_- \text{ or } \opg_-),
	\end{equation}
	and for simplicity,
	\begin{equation}\label{def opC}
		\begin{split}
			\opC_\alpha(\ka, \pd_t\ka, \vv_*, \vh_*) \coloneqq\, &2\DD_{\vbu_*}\pd_t + \DD_{\vbu_*}\DD_{\vbu_*} +  \alpha^2\opA(\ka) + \dfrac{\rho_+\rho_-}{\qty(\rho_+ + \rho_-)^2}\opR(\ka, \vw_*) \\ &-\dfrac{\rho_+}{\rho_+ + \rho_-}\opR(\ka, \vh_{+*}) - \dfrac{\rho_-}{\rho_+ + \rho_-}\opR(\ka, \vh_{-*}).
		\end{split}
	\end{equation}
	
	Since $ \ka = \kappa_+ \circ \Phi_\Gt + a^2 \gt $, one also needs to calculate $ \pd^2_{tt}\gt $. Notice that for every evolution velocity $ \vv : \Gt \to \R^3 $ of $ \Gt $, it holds that
	\begin{equation*}
		\pd_t \gt \vnu \vdot (\vn_+ \circ \Phi_\Gt) = (\vv \vdot \vn_+) \circ \Phi_\Gt,
	\end{equation*}
	which, together with (\ref{eqn dt n}), implies
	\begin{equation*}
		\begin{split}
			(\pd^2_{tt}\gt\vnu)\circ (\Phi_\Gt)^{-1} \vdot \vn_+ =\, &\vn_+ \vdot \DD_{\qty[(\pd_t\gt \vnu)\circ(\Phi_\Gt)^{-1} - \vbu]}\qty[(\pd_t\gt\vnu)\circ(\Phi_\Gt)^{-1}]\\
			&+ \vn_+ \vdot \qty[\Dtb \vbu - \DD_{\vbu}\vbu + \DD_{(\pd_t\gt\vnu)\circ(\Phi_\Gt)^{-1}}\vbu],
		\end{split}
	\end{equation*}
	that is,
	\begin{equation}\label{eqn pd2 tt gt}		
		\begin{split}
			\pd^2_{tt}\gt =\, &\dfrac{(\vn_+ \circ \Phi_\Gt)}{\vnu \vdot (\vn_+ \circ \Phi_\Gt)} \vdot \qty[\qty(\Dtb\vbu)\circ\Phi_\Gt - \DD_{\vbu_*}\qty(\vbu\circ\Phi_\Gt + \pd_t\gt\vnu) ] \\
			=\, &\dfrac{(\vn_+ \circ \Phi_\Gt)}{\vnu \vdot (\vn_+ \circ \Phi_\Gt)} \vdot \qty[\qty(\vW - \va{\mathfrak{b}})\circ\Phi_\Gt - \DD_{\vbu_*}\qty(\vbu\circ\Phi_\Gt + \pd_t\gt\vnu) ].
		\end{split}
	\end{equation}
	In particular, $ \pd^2_{tt}\gt $ does not involve the term $ \pd^2_{tt}\ka $.
	
	Combining (\ref{def ka}) and (\ref{eqn pdt2 kappa+}) yields
	\begin{equation}\label{eqn pdt2 ka premitive}
		\begin{split}
			&\qty[\mrm{I} + \grad^\top(\kappa_+ \circ \Phi_\Gt) \vdot \opb(\ka) ]\pd^2_{tt}\ka + \opC_\alpha(\ka, \pd_t\ka, \vv_*, \vh_*) \ka \\ &+ \grad^\top(\kappa_+ \circ \Phi_\Gt) \vdot \qty[\opf(\ka)\pd_t\vom_* + \opg(\ka, \pd_t\ka, \vom_*)\pd_t\ka] \\
			&+ a^2 \dfrac{\vn_+ \circ \Phi_\Gt}{\vnu \vdot (\vn_+ \circ \Phi_\Gt)} \vdot \qty[\va{\mathfrak{b}} \circ \Phi_\Gt + \DD_{\vbu_*}\qty(\vbu\circ\Phi_\Gt + \pd_t\gt\vnu)]-a^2 \opC_\alpha(\ka, \pd_t\ka, \vv_*, \vh_*)\gt \\
			&\quad = \mathfrak{R}_1 \circ \Phi_\Gt + \qty[-\lap_\Gt \qty(\vW \vdot \vn_+) + \vW \vdot \lap_\Gt \vn_+ + a^2 \dfrac{\vW \vdot \vn_+}{\vn_+ \vdot (\vnu \circ \Phi_\Gt^{-1})}] \circ \Phi_\Gt.
		\end{split}
	\end{equation}
	Define a new operator:
	\begin{equation}\label{def opB}
		\opB(\ka) \coloneqq \grad^\top(\kappa_+ \circ \Phi_\Gt) \vdot \opb(\ka).
	\end{equation}
	It then follows from Lemma \ref{lem3.1} that
	\begin{equation}\label{est opB}
		\abs{\opB(\ka)}_{\LL\qty[\H{s''}; \H{s'}]} \less a^{s'-s''-2+\epsilon} \abs{\ka}_{\H{\kk-1}},
	\end{equation}
	for $ s'-2 \le s'' \le s' \le \kk - 2 $, $ s' \ge \frac{1}{2} $, and $ 0 < \epsilon \le s'' -s' +2 $. If $ k \ge 3 $, one may take $ \epsilon = 0 $, and it holds for $ \sigma'-2\le \sigma'' \le \sigma' \le \kk-\frac{5}{2} $, $ \sigma' \ge \frac{1}{2} $ that
	\begin{equation}\label{est opB'}
		\abs{\opB(\ka)}_{\LL\qty[\H{\sigma''}; \H{\sigma'}]} \less a^{\sigma'-\sigma''-2} \abs{\ka}_{\H{\kk-\frac{3}{2}}}. \tag{\ref{est opB}'}
	\end{equation}
	
	Letting $ s'=s'' $, $ 0 < \epsilon < \frac{1}{2} $ ($ \epsilon = 0 $ if $ k \ge 3 $) and $ a_0 $ large enough compared to $ \abs{\ka}_{\H{\kk-1}} $ (or $ a_0 $ large compared to $ \abs{\ka}_{\H{\kk-\frac{3}{2}}} $ if $ k \ge 3 $), one has
	\begin{equation}\label{est opcal b}
		\abs{\opB(\ka)}_{\LL(\H{s'})} \le \frac{1}{2} < 1,
	\end{equation}
	for $ \frac{1}{2} \le s' \le \kk-2 $ (or $ \frac{1}{2} \le s' \le \kk - \frac{5}{2} $ if $ k \ge 3 $). Namely, $ \qty[\mathrm{I}+\opB(\ka)] $ is an isomorphism on $ \H{s'} $. 
	Set
	\begin{equation}
		\vj \coloneqq \curl \vh \qand \vj_* \coloneqq \vj \circ \X_\Gt.
	\end{equation}
	Then $ \vh $ can be recovered from $ (\ka, \vj_*) $ by solving div-curl problems. Applying $ \qty[\mathrm{I}+\opB(\ka)]^{-1} $ to (\ref{eqn pdt2 ka premitive}), one can get the evolution equation for $ \ka $ as (which is, in particular, irrelevant to the MHD system):
	\begin{equation}\label{eqn pd2 tt ka}
		\begin{split}
			&\pd^2_{tt}\ka + \opC_\alpha(\ka, \pd_t\ka, \vv_*, \vh_*) \ka  - \opF(\ka)\pd_t\vom_* - \opG(\ka, \pd_t\ka, \vom_*, \vj_*) \\
			&\quad = \qty[\mathrm{I} + \opB(\ka)]^{-1}\qty{\qty[-\lap_\Gt \qty(\vW \vdot \vn_+) + \vW \vdot \lap_\Gt \vn_+ + a^2 \dfrac{\vW \vdot \vn_+}{\vn_+ \vdot (\vnu \circ \Phi_\Gt^{-1})}] \circ \Phi_\Gt}.
		\end{split}
	\end{equation}
	The operators $ \opF $ and $ \opG $ defined above satisfy the following lemma, whose proof is given in the Appendix:
	\begin{lem}\label{lem 3.4}
		Assume that $ a \ge a_0 $ and $ \ka \in B_{\delta_1} $ as in Proposition \ref{prop K}. For $ \frac{1}{2} \le s \le \kk-2 $ and $ \epsilon > 0 $, there are some positive constants $ C_* $ and generic polynomials $ Q $ determined by $ \Lambda_* $, so that
		\begin{equation}\label{est opF}
			\abs{\opF(\ka)}_{\LL\qty[H^{s+\epsilon-\frac{1}{2}}(\OGs); \H{s}]} \le C_* \abs{\ka}_{\H{\kk-1}},
		\end{equation}
		and
		\begin{equation}\label{est opG}
			\begin{split}
				&\hspace{-2em}\abs{\opG(\ka, \pd_t\ka, \vom_*, \vj_*)}_{\H{\kk-\frac{5}{2}}} \\ \le\, &a^2 Q\qty(\abs{\ka}_{\H{\kk-1}}, \abs{\pd_t\ka}_{\H{\kk-\frac{5}{2}}}, \norm{\vom_*}_{H^{\kk-1}(\OGs)}, \norm{\vj_*}_{H^{\kk-1}(\OGs)}).
			\end{split}
		\end{equation}
		Furthermore, if $ k \ge 3 $, for $ \frac{1}{2} \le \sigma \le \kk-\frac{5}{2} $, there hold
		\begin{equation}\label{est opF'}
			\abs{\opF(\ka)}_{\LL\qty[H^{\sigma-\frac{1}{2}}(\OGs); \H{\sigma}]} \le C_* \abs{\ka}_{\H{\kk-\frac{3}{2}}},
		\end{equation}
		\begin{equation}\label{est var opF}
			\abs{\var\opF(\ka)}_{\LL\qty[\H{\kk-\frac{5}{2}}; \LL\qty(H^{\kk-4}(\OGs); \H{\kk-4})]} \le C_*,
		\end{equation}
		\begin{equation}\label{est opG'}
			\begin{split}
				&\hspace{-1em}\abs{\opG(\ka, \pd_t\ka, \vom_*, \vj_*)}_{\H{\kk-\frac{5}{2}}} \\ &\le a^2 Q\qty(\alpha\abs{\ka}_{\H{\kk-1}}, \abs{\ka}_{\H{\kk-\frac{3}{2}}}, \abs{\pd_t\ka}_{\H{\kk-\frac{5}{2}}}, \norm{(\vom_*, \vj_*)}_{H^{\kk-1}(\OGs)}),
			\end{split}
		\end{equation}
		and
		\begin{equation}\label{est var opG}
			\begin{split}
				&\hspace{-2em}\abs{\var\opG}_{\LL\qty[\H{\kk-\frac{5}{2}}\times\H{\kk-4}\times H^{\kk-\frac{5}{2}}(\OGs) \times H^{\kk-\frac{5}{2}}(\OGs); \H{\kk-4}]} \\
				\le\, &a^2 Q\qty(\abs{\pd_t\ka}_{\H{\kk-\frac{5}{2}}}, \norm{\vom_*}_{H^{\kk-1}(\OGs)}, \norm{\vj_*}_{H^{\kk-1}(\OGs)}).
			\end{split}
		\end{equation}
	\end{lem}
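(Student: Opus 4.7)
The plan is to read off explicit formulas for $\opF$ and $\opG$ by comparing \eqref{eqn pdt2 ka premitive} and \eqref{eqn pd2 tt ka}, and then reduce each bound to results already established in Lemma~\ref{lem3.1} (estimates on $\opb_\pm,\opf_\pm,\opg_\pm$), Lemma~\ref{lem 3.3} (estimates on $\opA,\opR$), the multiplier control on $\grad^\top(\kappa_+\circ\Phi_\Gt)$ (obtained from Lemma~\ref{lem product est} combined with Lemma~\ref{est ii}), and the isomorphism bound \eqref{est opcal b} for $[\mathrm{I}+\opB(\ka)]^{-1}$. A direct identification yields
\[
\opF(\ka) = [\mathrm{I}+\opB(\ka)]^{-1}\bigl(\grad^\top(\kappa_+\circ\Phi_\Gt)\cdot\opf(\ka)\bigr),
\]
while $\opG$ gathers the remaining $(\pd_t\ka,\vom_*,\vj_*)$-dependent pieces: (i) the $\opg$-contribution, of the same structure as $\opF$ with $\opg$ in place of $\opf$; (ii) the correction $-\opB[\mathrm{I}+\opB]^{-1}\opC_\alpha\ka$ coming from rewriting $[\mathrm{I}+\opB]^{-1}\opC_\alpha\ka = \opC_\alpha\ka - \opB[\mathrm{I}+\opB]^{-1}\opC_\alpha\ka$ so that the clean $\opC_\alpha\ka$ term appears in \eqref{eqn pd2 tt ka}; and (iii) algebraic lower-order terms coming from $\pd^2_{tt}\gt$ via \eqref{eqn pd2 tt gt} together with $\DD_{\vbu_*}(\vbu\circ\Phi_\Gt+\pd_t\gt\vnu)$. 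In all of these, $\vv_*$ and $\vh_*$ are recovered from $(\ka,\vom_*)$ and $(\ka,\vj_*)$ by the div-curl systems of \textsection\ref{sec var v_*} (analogous systems for $\vh$), and standard div-curl bounds yield the needed norm controls.

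For the norm estimates \eqref{est opF}--\eqref{est opG'} I would simply compose the above three building blocks. The regularity loss $H^{s+\epsilon-\frac{1}{2}}(\OGs)\to\H{s}$ in \eqref{est opF} is inherited verbatim from \eqref{est F} through $\opf$; the multiplication by $\grad^\top(\kappa_+\circ\Phi_\Gt)\in H^{\kk-2}$ costs one factor $\abs{\ka}_{\H{\kk-1}}$ (or $\abs{\ka}_{\H{\kk-\frac{3}{2}}}$ when $k\ge 3$) via Lemma~\ref{lem product est}. The factor $a^2$ in \eqref{est opG}--\eqref{est opG'} originates from the $-a^2\opC_\alpha\gt$ and $a^2(\vn_+/\vnu\cdot\vn_+)\cdot[\cdots]$ terms in \eqref{eqn pdt2 ka premitive}. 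The polynomial dependence on $\abs{\pd_t\ka}_{\H{\kk-\frac{5}{2}}}$ and $\norm{\vom_*,\vj_*}_{H^{\kk-1}(\OGs)}$ is absorbed from \eqref{est G} and from the div-curl bounds recovering $\vv_*,\vh_*$.

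For the variational estimates \eqref{est var opF}--\eqref{est var opG} I would differentiate in $\ka$ and apply the product rule, using the crucial identity
\[
\var\bigl\{[\mathrm{I}+\opB(\ka)]^{-1}\bigr\} = -[\mathrm{I}+\opB(\ka)]^{-1}\,(\var\opB(\ka))\,[\mathrm{I}+\opB(\ka)]^{-1},
\]
together with $\var\opB(\ka) = \var(\grad^\top(\kappa_+\circ\Phi_\Gt))\cdot\opb(\ka) + \grad^\top(\kappa_+\circ\Phi_\Gt)\cdot\var\opb(\ka)$. The variation $\var(\grad^\top(\kappa_+\circ\Phi_\Gt))$ with respect to $\ka$ is controlled by Proposition~\ref{prop K}'s estimate \eqref{var est K^-1}. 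The bounds \eqref{est var B}, \eqref{est var F}, \eqref{est var G} from Lemma~\ref{lem3.1} and the variational bound \eqref{est var opA} from Lemma~\ref{lem 3.3} then provide the remaining ingredients, and a product-rule expansion produces the claimed inequalities.

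The main technical obstacle is the bookkeeping of Sobolev indices through these compositions, especially for \eqref{est var opG} where four argument spaces are mixed and the output must land in $\H{\kk-4}$. The tightest constraint is the $\pd_t\ka$-slot: through $\opg$ and through $\pd^2_{tt}\gt$ the derivatives on $\pd_t\ka$ can only afford one half-derivative of loss in $\kappa_+\circ\Phi_\Gt$, which is exactly what the $k\ge 3$ hypothesis supplies via the embedding $H^{\kk-\frac{3}{2}}\hookrightarrow C^1$. I would verify this term by term, always reserving the highest-order factor for the factor where Lemma~\ref{lem product est} demands it, and leaving the low-order multipliers to be absorbed through the $C^1$ embedding.
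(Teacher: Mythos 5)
Your decomposition is exactly the one the paper uses in its Appendix B proof: the identity $\opF(\ka)=[\mathrm{I}+\opB(\ka)]^{-1}\bigl(\grad^\top(\kappa_+\circ\Phi_\Gt)\cdot\opf(\ka)\bigr)$ matches \eqref{eqn opF}, the decomposition of $\opG$ matches \eqref{formula (I+B)^(-1)G}, and the reduction to Lemma~\ref{lem3.1}, Lemma~\ref{lem 3.3}, the product estimates, and the isomorphism bound \eqref{est opcal b} (together with the resolvent identity for $\var\{[\mathrm{I}+\opB]^{-1}\}$) is precisely how the paper proceeds. The one thing your sketch understates is that the bulk of the paper's argument is the explicit term-by-term control of $\var\mathfrak{R}_1$ and of the $\pd^2_{tt}\gt$ and $\va{\mathfrak{b}}$ contributions via the material derivative $\Dbt$ and the commutator bounds of Lemma~\ref{Dt comm est lemma}, which your concluding ``verify term by term'' remark correctly anticipates.
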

	
	\subsection{Evolution of the current and vorticity}\label{sec curr-vorticity eqn}
	We shall use the vorticity, current and the corresponding boundary conditions to recover the solenoidal vector fields $ \vv $ and $ \vh $ by solving the corresponding div-curl systems. Hence, it is necessary to consider the evolution of the vorticity and the current.
	
	Set
	\begin{equation*}
		\vom_\pm \coloneqq \curl \vv_\pm \qand
		\vj_\pm \coloneqq \curl \vh_\pm.
	\end{equation*}
	Then it follows from taking curl of the equations (\ref{eqn Dtv}) and (\ref{eqn Dth}) that
	\begin{equation}\label{eqn pdt vom}
		\pd_t \vom + (\vv \vdot \grad)\vom - (\vh \vdot \grad)\vj = (\vom \vdot \grad)\vv - (\vj \vdot \grad) \vh,
	\end{equation}
	and
	\begin{equation}\label{eqn pdt vj}
		\pd_t\vj + (\vv \vdot \grad)\vj - (\vh \vdot \grad) \vom = (\vj \vdot \grad)\vv - (\vom \vdot \grad) \vh - 2\tr(\grad \vv \cp \grad \vh),
	\end{equation}
	where in the Cartesian coordinate
	\begin{equation*}
		\tr(\grad \vv \cp \grad \vh ) = \sum_{l=1}^3 \grad v^l \cp \grad h^l.
	\end{equation*}

	\section{Linear Systems}\label{sec linear}
	In this section, we shall study the linearized systems deriving from (\ref{eqn pd2 tt ka}), (\ref{eqn pdt vom}) and (\ref{eqn pdt vj}). More precisely, the uniform estimates will be shown, from which the local well-posedness of the linear systems follows.
	
	\subsection{Linearized problem for the modified mean curvature}\label{sec linear ka}
	Suppose that $ \Gs \in H^{\kk+1} $ ($ k \ge 2 $) is a reference hypersurface, and $ \Lambda_* $, defined by (\ref{def lambda*}), satisfies all the properties given in the preliminary. Now, assume that there are a $ t $-parameterized family of hypersurfaces $ \Gt \in \Lambda_* $ and four tangential vector fields $ \vv_{\pm*}, \vh_{\pm*} : \Gs \to \mathrm{T}\Gs $ satisfying:
	\begin{equation}
		\ka \in C^0\qty{[0, T]; \H{\kk-1}} \cap C^1\qty{[0, T]; B_{\delta_1} \subset \H{\kk-\frac{5}{2}}}, \tag{H1}
	\end{equation}
	and
	\begin{equation}
		\vv_{\pm*}, \vh_{\pm*} \in C^0\qty{[0, T]; \H{\kk-\frac{1}{2}}} \cap C^1\qty{[0, T]; \H{\kk-2}}. \tag{H2}
	\end{equation}
	
	For the sake of convenience, suppose that there are positive constants $ L_0, L_1,  L_2 $ so that
	\begin{equation}
		\sup_{t\in[0, T]} \qty{\abs{\ka (t)}_{\H{\kk-1}},  \abs{\pd_t \ka(t)}_{\H{\kk-\frac{5}{2}}}, \abs{\qty(\vv_{\pm*}(t), \vh_{\pm*}(t))}_{\H{\kk-\frac{1}{2}}}} \le L_1,
	\end{equation}
	\begin{equation}
		\sup_{t\in [0, T]} \abs{\qty(\pd_t\vv_{\pm*}(t), \pd_t\vh_{\pm*}(t))}_{\H{\kk-2}} \le L_2,
	\end{equation}
	and
	\begin{equation}
		\abs{\qty(\vv_{+*}(0), \vv_{-*}(0) )}_{\H{\kk-2}} \le L_0.
	\end{equation}
	
	Using the following notations as in the previous section:
	\begin{equation*}
		\vw_* = \vv_{+*} - \vv_{-*}, \quad \vbu_{*} = \dfrac{\rho_+}{\rho_+ + \rho_-}\vv_{+*} + \dfrac{\rho_-}{\rho_+ + \rho_-}\vv_{-*},
	\end{equation*}
	we consider the linear initial value problem
	\begin{equation}\label{eqn linear 1}
		\begin{cases}
			\pd^2_{tt} \f + \opC(\ka, \pd_t\ka, \vv_*, \vh_*) \f = \g, \\
			\f(0) = \f_0, \quad \pd_t \f(0) = \f_1,
		\end{cases}
	\end{equation}
	where $ \f_0, \f_1, \g(t) : \Gs \to \R $ are three given functions, and $ \opC $ is given by:
	\begin{equation*}
		\begin{split}
			\opC(\ka, \pd_t\ka, \vv_*, \vh_*) \coloneqq\, &2\DD_{\vbu_*}\pd_t + \DD_{\vbu_*}\DD_{\vbu_*} +  \opA(\ka) + \dfrac{\rho_+\rho_-}{\qty(\rho_+ + \rho_-)^2}\opR(\ka, \vw_*) \\ &-\dfrac{\rho_+}{\rho_+ + \rho_-}\opR(\ka, \vh_{+*}) - \dfrac{\rho_-}{\rho_+ + \rho_-}\opR(\ka, \vh_{-*}),
		\end{split}
	\end{equation*}
	which is exactly \eqref{def opC} with $ \alpha = 1 $.

	We shall derive some uniform estimates, from which the uniqueness and continuous dependence on the initial data follow, while the existence can be obtained via the Galerkin approximations (or the standard semigroup theory as in \cite{Shatah-Zeng2011}).
	
	In order to derive the energy estimates for (\ref{eqn linear 1}), one notes first that $ \opA $ is the highest order spacial derivative term (3rd order). Then for an integer $ l \in [0, k-2] $, if one multiplies (\ref{eqn linear 1}) by 
	\begin{equation*}
		\det(\DD\Phi_\Gt) \cdot	\qty{\wn\qty[\qty(\opA^l \pd_t \f)\circ (\Phi_\Gt)^{-1}]}\circ\Phi_\Gt 
	\end{equation*}
	and integrates on $ \Gs $, the leading order terms will be obtained:
	\begin{equation*}
		\begin{split}
			\dfrac{1}{2}\dv{t} \int_\Gs &\pd_t \f \cdot \qty{\wn\qty[\qty(\opA^l \pd_t \f)\circ (\Phi_\Gt)^{-1}]}\circ\Phi_\Gt  \\
			&+ f \cdot \qty{\wn\qty[\qty(\opA^{1+l}  \f)\circ (\Phi_\Gt)^{-1}]}\circ\Phi_\Gt \cdot \det(\DD\Phi_\Gt) \dd{S_*}. 			
		\end{split}
	\end{equation*}
	The energy is defined as:
	\begin{equation}\label{eqn E_l}
		\begin{split}
			E_l(t, \f, \pd_t\f) \coloneqq \int_{\Gt}
			&\abs{\qty(-\wn^{\frac{1}{2}}\lap_\Gt \wn^{\frac{1}{2}})^{\frac{l}{2}} \wn^{\frac{1}{2}} \qty[\qty(\pd_t \f + \DD_{\vbu_*} \f) \circ \Phi_\Gt^{-1}] }^2 \\
			&+ \abs{\qty(-\wn^{\frac{1}{2}}\lap_\Gt  \wn^{\frac{1}{2}})^{\frac{l+1}{2}}\wn^{\frac{1}{2}} \qty(\f \circ \Phi_\Gt^{-1})}^2 \\
			&- \dfrac{\rho_+\rho_-}{\qty(\rho_+ + \rho_-)^2} \abs{\qty(-\wn^{\frac{1}{2}}\lap_\Gt  \wn^{\frac{1}{2}})^{\frac{l}{2}}\wn^{\frac{1}{2}} \qty[\qty(\DD_{\vw_*} \f) \circ\Phi_\Gt^{-1}]}^2 \\
			&+ \dfrac{\rho_+}{\rho_+ + \rho_-} \abs{\qty(-\wn^{\frac{1}{2}}\lap_\Gt \wn^{\frac{1}{2}})^{\frac{l}{2}}\wn^{\frac{1}{2}} \qty[\qty(\DD_{\vh_{+*}} \f) \circ\Phi_\Gt^{-1}] }^2 \\
			&+ \dfrac{\rho_-}{\rho_+ + \rho_-} \abs{\qty(-\wn^{\frac{1}{2}}\lap_\Gt \wn^{\frac{1}{2}})^{\frac{l}{2}}\wn^{\frac{1}{2}}  \qty[\qty(\DD_{\vh_{-*}} \f) \circ\Phi_\Gt^{-1}]}^2 \dd{S_t}.
		\end{split}
	\end{equation}
	\begin{lem}\label{lem est E_l}
		For any integer $ 0 \le  l \le k-2 $, and $ 0 \le t \le T $, it holds that
		\begin{equation}\label{est E_l}
			\begin{split}
				&\hspace{-2em}E_l (t, \f, \pd_t\f) - E_l (0, \f_0, \f_1) \\
				\le&Q(L_1, L_2)\int_0^t \qty(\abs{\f(s)}_{\H{\frac{3}{2}l + 2}} + \abs{\pd_t\f(s)}_{\H{\frac{3}{2}l + \frac{1}{2}}} + \abs{\g(s)}_{\H{\frac{3}{2}l + \frac{1}{2}}}) \times \\
				&\hspace{8em} \times \qty(\abs{\f(s)}_{\H{\frac{3}{2}l + 2}} + \abs{\pd_t\f(s)}_{\H{\frac{3}{2}l + \frac{1}{2}}}) \dd{s},
			\end{split}
		\end{equation}
		where $ Q $ is a generic polynomial determined by $ \Lambda_* $.
	\end{lem}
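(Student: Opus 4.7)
The plan is a weighted energy estimate. The first step is to reformulate \eqref{eqn linear 1} on $\Gt$ by setting $F := \f \circ \Phi_\Gt^{-1}$. Using \eqref{pull back Dt to Gs} and the expansion
\begin{equation*}
    \Dts^{\!2}\f = \pd_{tt}^2 \f + 2\DD_{\vbu_*}\pd_t\f + \DD_{\vbu_*}\DD_{\vbu_*}\f + \DD_{\pd_t\vbu_*}\f,
\end{equation*}
the equation becomes
\begin{equation*}
    \Dtb^{\!2} F + \opa(\Gt) F + \tfrac{\rho_+\rho_-}{(\rho_++\rho_-)^2}\opr(\Gt,\vw)F - \tfrac{\rho_+}{\rho_++\rho_-}\opr(\Gt,\vh_+)F - \tfrac{\rho_-}{\rho_++\rho_-}\opr(\Gt,\vh_-)F = \bigl(\g - \DD_{\pd_t\vbu_*}\f\bigr)\circ\Phi_\Gt^{-1},
\end{equation*}
where the correction $\DD_{\pd_t\vbu_*}\f$ is a lower-order term dominated by $L_2\,\abs{\f}_{\H{\frac{3l}{2}+\frac{3}{2}}}$ thanks to (H2) and Lemma \ref{lem product est}.

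Next, the leading operator $\opa(\Gt) = -\lap_\Gt \wn$ is self-adjoint and nonnegative with respect to the weighted inner product $\langle u,v\rangle_\wn := \int_\Gt u\,\wn v\,\dd{S_t}$. Since $T^l\wnh = \wnh\opa(\Gt)^l$ with $T = -\wnh\lap_\Gt\wnh$, the energy in \eqref{eqn E_l} can be rewritten as
\begin{equation*}
    E_l = \bigl\langle \Dtb F,\, \opa(\Gt)^l \Dtb F\bigr\rangle_\wn + \bigl\langle F,\, \opa(\Gt)^{l+1} F\bigr\rangle_\wn + \sum_{\vJ} c_\vJ\, \bigl\langle \DD_\vJ F,\, \opa(\Gt)^l \DD_\vJ F\bigr\rangle_\wn,
\end{equation*}
where $\vJ \in \{\vw, \vh_+, \vh_-\}$ with signed coefficients $c_\vJ$ as in \eqref{eqn E_l}. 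Pairing the pulled-back equation with $2\opa(\Gt)^l \Dtb F$ in the $\wn$-inner product, and integrating $\DD_\vJ$ by parts once in each $\opr$ term, the principal contribution $-2\langle \Dtb F,\, \opa(\Gt)^{l+1} F\rangle_\wn$ from $\opa(\Gt)F$ cancels exactly against the matching piece of $\dv{t}\langle F,\, \opa(\Gt)^{l+1} F\rangle_\wn$ produced by $\pd_t F = \Dtb F - \DD_\vbu F$. Analogous cancellations using $[\DD_\vJ,\opa(\Gt)^l]$ at the top order reproduce the time derivatives of the $\DD_\vJ F$ quadratic forms. The surviving forcing pairing $2\langle \Dtb F,\, \opa(\Gt)^l(\g\circ\Phi_\Gt^{-1})\rangle_\wn$ is bounded by $\abs{\g}_{\H{\frac{3l}{2}+\frac{1}{2}}} \abs{\pd_t\f}_{\H{\frac{3l}{2}+\frac{1}{2}}}$ by duality.

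All remaining contributions to $\dv{t} E_l$ split into three sources: (i) the time derivative of the area element $\dv{t}\dd{S_t} = (\Div_\Gt\vbu)\dd{S_t}$ from \eqref{eqn Dt dS}; (ii) the commutators $[\Dtb, \wn]$, $[\Dtb, \lap_\Gt]$, $[\Dtb, \DD_\vJ]$, $[\DD_\vJ,\opa(\Gt)^l]$ produced when the material derivative is swapped past the high-order weights; and (iii) the forcing correction $(\DD_{\pd_t\vbu_*}\f)\circ\Phi_\Gt^{-1}$. Each of these is strictly half a derivative lower order than the leading terms in $E_l$, and is bounded using Lemma \ref{Dt comm est lemma}, the mapping properties of $\wt\n$ and $\n_\pm$ recorded after \eqref{def operator n tilde}, and Lemma \ref{lem product est}; the relevant coefficient norms of $\ka, \pd_t\ka, \vv_{\pm *}, \vh_{\pm *}$ are controlled by $L_1, L_2$ via (H1)–(H2). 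The main obstacle is the commutator bookkeeping: because $\opa(\Gt)^l$ has order $3l$, every interaction with $\Dtb$, $\DD_\vJ$, or $\lap_\Gt$ must be expanded and one must verify, by pairing leading-order pieces, that no residual term requires regularity beyond $\abs{\f}_{\H{\frac{3l}{2}+2}}$, $\abs{\pd_t\f}_{\H{\frac{3l}{2}+\frac{1}{2}}}$, and $\abs{\g}_{\H{\frac{3l}{2}+\frac{1}{2}}}$. Integrating the resulting pointwise-in-$t$ inequality from $0$ to $t$ then yields \eqref{est E_l}.
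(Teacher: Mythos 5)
Your proposal reproduces the paper's own argument: pull back to $\Gt$, compute $\dv{t}E_l$ by the product rule, recognize the leading contribution as the $\wn$-weighted pairing of the pulled-back equation \eqref{eqn Dt2 baf} with $\opd^l\wnh(\Dt\baf+\DD_\vbu\baf)$, and absorb all remaining contributions (the time derivative of $\dd{S_t}$, the commutators with $\wn$, $\lap_\Gt$, $\DD_\vJ$, and the $\opr$ integration by parts) into the $Q(L_1,L_2)$ factor via Lemma~\ref{Dt comm est lemma} and the product estimates. A few small slips that do not alter the conclusion: the rewritten forcing should be $\g+\DD_{\pd_t\vbu_*}\f$, not $\g-\DD_{\pd_t\vbu_*}\f$; the claimed bound $L_2\,\abs{\f}_{\H{\frac{3}{2}l+\frac{3}{2}}}$ on this correction requires $\kk-2>1$ and so fails for $k=2$, where the valid product estimate only yields $L_2\,\abs{\f}_{\H{\frac{3}{2}l+\frac{7}{4}}}$ (compare the paper's \eqref{est I2 last}), which is still $\le L_2\,\abs{\f}_{\H{\frac{3}{2}l+2}}$ and hence acceptable; and the forcing pairing also contributes a factor $\abs{\f}_{\H{\frac{3}{2}l+\frac{3}{2}}}$ through the $\DD_{\vbu_*}\f$ part of $\Dtb F$, which your displayed bound omits but the statement of \eqref{est E_l} already accommodates.
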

	\begin{proof}
		Denote by 
		\begin{equation*}
			\Dt \coloneqq \pd_t + \DD_\vmu, \qq{with} \vmu \coloneqq (\pd_t\gt \vnu) \circ \Phi_\Gt^{-1} : \Gt \to \R^3 ,
		\end{equation*} 
		and for any function $ f : \Gs \to \R $ and vector field $ \vb{a}_*: \Gs \to \mathrm{T}\Gs $
		\begin{equation*}
			\bar{f} \coloneqq f \circ \Phi_\Gt^{-1} : \Gt \to \R, \quad \vb{a} \coloneqq \qty(\DD\Phi_\Gt \cdot \vb{a}_*) \circ \Phi_\Gt^{-1} : \Gt \to \mathrm{T}\Gt.
		\end{equation*} 
		Thus
		\begin{equation*}
			(\pd_t\f) \circ \Phi_\Gt^{-1} = \Dt \bar{\f}, \quad \qty(\DD_{\vb{a}_*}\f) \circ\Phi_\Gt^{-1} = \DD_{\vb{a}} \bar{\f}.
		\end{equation*}
		For simplicity, we will use the conventions that
		\begin{equation*}
			\abs{f}_{s} \coloneqq \abs{f}_{H^s(\Gt)},
		\end{equation*}
		and
		\begin{equation*}
			\mathfrak{u} \lesssim_{L_j} \mathfrak{v}
		\end{equation*}
		if there exists a constant $ C = C(\Lambda_*, L_j) $ such that $ \mathfrak{u} \le C \mathfrak{v} $.
		
		In order to commute $ \Dt $ with the differential operators, one observes first the facts that
		\begin{equation}\label{comm est dt wtn}
			\abs{\int_\Gt g \comm{\Dt}{\wt{\n}}f \dd{S_t}} \less \abs{\vmu}_{\kk-\frac{1}{2}} \abs{f}_{\frac{1}{2}}\abs{g}_{\frac{1}{2}},
		\end{equation}
		and
		\begin{equation}\label{comm est Dt lapGt}
			\abs{\int_\Gt g \comm{\Dt}{\lap_\Gt}f \dd{S_t}} \less \abs{\vmu}_{\kk-\frac{1}{2}}\abs{f}_{1}\abs{g}_{1}.
		\end{equation}
		Indeed, it follows from the properties of $ \n_+ $ that
		\begin{equation*}
			\begin{split}
				\int_\Gt g \comm{\Dt}{\n_+}f \dd{S_t} &= \int_\Gt \n_+^{-1}\n_+g \comm{\Dt}{\n_+}f \dd{S_t} + \qty(\int_\Gt g)\qty(\int_\Gt \comm{\Dt}{\n_+}f ) \\
				&= \int_\Gt \n_+^{\frac{1}{2}}g \cdot \n_+^{-\frac{1}{2}}\comm{\Dt}{\n_+}f \dd{S_t} + \qty(\int_\Gt g)\qty(\int_\Gt \comm{\Dt}{\n_+}f ),
			\end{split}
		\end{equation*}
		where the last equality follows from the self-adjointness of $ \n_+ $. It follows from the following commutator formula (c.f. \cite[p. 710]{Shatah-Zeng2008-Geo} for the derivation):
		\begin{equation}\label{comm formula Dt n}			
			\begin{split}
				\comm{\Dt}{\n_+}f = \DD_{\vn_+} \lap^{-1}\qty(2\DD\vmu \tdot \DD^2\h_+ f + \grad\h_+ f \vdot \lap\vmu)  - \grad\h_+ f \vdot \DD_{\vn_+} \vv - \DD_{\grad^\top f}\vmu \vdot \vn_+,
			\end{split}
		\end{equation}
		that
		\begin{equation*}
			\begin{split}
				\int_\Gt \comm{\Dt}{\n_+}f \dd{S_t} &= \int_\Gt f \comm{\Dt}{\n_+}1 + \qty(1\n_+ f)(-\Div_\Gt \vmu) \dd{S_t} \\
				&= - \int_\Gt \n_+(f) \cdot \Div_\Gt \vmu \dd{S_t} \\
				&= - \int_\Gt f \cdot \n_+(\Div_\Gt\vmu) \dd{S_t}.
			\end{split}
		\end{equation*}
		Thus, the above two relations imply that
		\begin{equation}\label{comm est Dt op n+}
			\abs{\int_\Gt g \comm{\Dt}{\n_+}f \dd{S_t}} \less \abs{\vmu}_{\kk-\frac{1}{2}} \abs{f}_{\frac{1}{2}}\abs{g}_{\frac{1}{2}}.
		\end{equation}
		As $ \wt{\n} $ is defined via \eqref{def operator n tilde}, the estimate \eqref{comm est dt wtn} follows from \eqref{comm est Dt op n+} and Lemma \ref{Dt comm est lemma}.
		The relation \eqref{comm est Dt lapGt} can be derived via the following formula (c.f. \cite[p. 710]{Shatah-Zeng2008-Geo} for the derivation):
		\begin{equation}\label{comm formula Dt lapGt}
			\comm{\Dt}{\lap_\Gt} = - 2 (\DD_\Gt)^2f\vdot (\DD\vmu)^\top - \grad^\top f \vdot \lap_\Gt \vmu + \kappa_+ \DD_{\grad^\top f} \vmu \vdot \vn_+,
		\end{equation}
		and the integration-by-parts on $ \Gt $.
		
		Now, it follows from the self-adjointness of $ \wt{\n} $ and $ (-\lap_\Gt) $ that
		\begin{equation}\label{eqn lin est decomp}
			\begin{split}
				&\int_\Gt \qty(-\wt{\n}^{\frac{1}{2}}\lap_\Gt\wt{\n}^\frac{1}{2})^{\frac{l}{2}}\wt{\n}^{\frac{1}{2}}\qty(\Dt\baf + \DD_\vbu\baf) \cdot \qty(-\wt{\n}^{\frac{1}{2}}\lap_\Gt\wt{\n}^\frac{1}{2})^{\frac{l}{2}}\wt{\n}^{\frac{1}{2}}\qty(\Dt\baf + \DD_\vbu\baf) \dd{S_t} \\
				&\quad = \int_\Gt \wt{\n}^{\frac{1}{2}}\qty(\Dt\baf + \DD_\vbu\baf) \wt{\n}^{\frac{1}{2}}(-\lap_\Gt)\wt{\n}^{\frac{1}{2}}\qty(-\wt{\n}^{\frac{1}{2}}\lap_\Gt\wt{\n}^\frac{1}{2})^{l-1} \wt{\n}^{\frac{1}{2}}(\Dt\baf + \DD_\vbu\baf) \dd{S_t} \\
				&\quad = \int_\Gt \wt{\n} (\Dt\baf + \DD_\vbu\baf) \qty(-\lap_\Gt\wt{\n})^{l}(\Dt\baf + \DD_\vbu\baf) \dd{S_t}.
			\end{split}
		\end{equation}
		
		Observing that
		\begin{equation}
			\Dt \dd{S_t} = \qty(\Div_\Gt \vmu) \dd{S_t},
		\end{equation}
		then, if $ l = 0 $, one can calculate that
		\begin{equation}\label{eqn l = 0 case}
			\begin{split}
				&\hspace{-2em}\dv{t}\int_{\Gt} (\Dt\baf + \DD_\vbu \baf) \cdot \wt{\n} (\Dt\baf + \DD_\vbu\baf) \dd{S_t} \\
				=\, &\int_\Gt \qty(\Dt^2\baf + \Dt\DD_\vbu\baf) \cdot \wt{\n} (\Dt\baf + \DD_\vbu\baf) \dd{S_t} \\
				&+ \int_{\Gt} (\Dt\baf + \DD_\vbu\baf) \cdot \wt{\n}\qty(\Dt^2\baf + \Dt\DD_\vbu \baf) \dd{S_t} \\
				&+ \int_{\Gt} (\Dt\baf + \DD_\vbu\baf) \cdot \comm{\Dt}{\wt{\n}}(\Dt\baf + \DD_\vbu\baf) \dd{S_t} \\
				&+ \int_\Gt (\Dt\baf + \DD_\vbu \baf) \cdot \wt{\n} (\Dt\baf + \DD_\vbu\baf) \cdot (\Div_\Gt\vmu) \dd{S_t}.
			\end{split}
		\end{equation}
		It follows from the self-adjointness of $ \wt{\n} $ and \eqref{comm est dt wtn} that
		\begin{equation}
			\begin{split}
				&\abs{\frac{1}{2}\dv{t}(\int_\Gt \abs{\wt{\n}^{\frac{1}{2}}(\Dt\baf + \DD_\vbu \baf)}^2 \dd{S_t}) - \qty(\int_\Gt \qty(\Dt^2\baf + \Dt\DD_\vbu\baf) \cdot \wt{\n} (\Dt\baf + \DD_\vbu\baf) \dd{S_t}) } \\
				&\quad \less \abs{\vmu}_{\kk-\frac{1}{2}} \cdot \qty(\abs{\pd_t\f}_{\H{\frac{1}{2}}}^2 + \abs{\f}_{\H{\frac{3}{2}}}^2)
			\end{split}
		\end{equation}
		If $ l = 1 $ (so $ k \ge 3 $, since $ l \le k-2 $), direct computations give that
		\begin{equation}\label{eqn l = 1 case}
			\begin{split}
				&\hspace{-2em}\dv{t} \int_\Gt \wt{\n}(\Dt\baf + \DD_\vbu\baf) \cdot (-\lap_\Gt)\wt{\n}(\Dt\baf + \DD_\vbu\baf) \dd{S_t} \\
				=\, &\int_\Gt \wt{\n}\qty(\Dt^2\baf + \DD_\vbu\baf) \cdot (-\lap_\Gt)\wt{\n}(\Dt\baf + \DD_\vbu\baf) \dd{S_t} \\
				&+ \int_\Gt \wt{\n}(\Dt\baf + \DD_\vbu\baf) \cdot (-\lap_\Gt)\wt{\n}\qty(\Dt^2\baf + \Dt\DD_\vbu\baf) \dd{S_t} \\
				&+ \int_\Gt \comm{\Dt}{\wt{\n}}(\Dt\baf + \DD_\vbu\baf) \cdot  (-\lap_\Gt)\wt{\n}(\Dt\baf + \DD_\vbu\baf) \dd{S_t} \\
				&+ \int_\Gt \wt{\n}(\Dt\baf + \DD_\vbu\baf) \cdot \comm{\Dt}{(-\lap_\Gt)} \wt{\n}(\Dt\baf + \DD_\vbu\baf) \dd{S_t} \\
				&+ \int_\Gt \wt{\n}(\Dt\baf + \DD_\vbu\baf) \cdot (-\lap_\Gt) \comm{\Dt}{\wt{\n}}(\Dt\baf + \DD_\vbu\baf) \dd{S_t} \\
				&+ \int_\Gt \wt{\n}(\Dt\baf + \DD_\vbu\baf) \cdot (-\lap_\Gt)\wt{\n}(\Dt\baf + \DD_\vbu\baf) \cdot (\Div_\Gt\vmu) \dd{S_t}.
			\end{split}
		\end{equation}
		Hence, one may derive from the self-adjointness of $ \wt{\n}, \lap_\Gt $ and the estimates \eqref{comm est dt wtn}-\eqref{comm est Dt lapGt} that
		\begin{equation}
			\begin{split}
				&\frac{1}{2}\dv{t} \int_{\Gt} \abs{\qty(-\wt{\n}^\frac{1}{2}\lap_\Gt\wt{\n}^\frac{1}{2})^\frac{1}{2}\wt{\n}^\frac{1}{2}(\Dt\baf + \DD_\vbu\baf)}^2 \dd{S_t} \\
				& \quad = \int_\Gt \wt{\n}\qty(\Dt^2\baf + \Dt\DD_\vbu\baf) \cdot (-\lap_\Gt)\wt{\n}(\Dt\baf + \DD_\vbu \baf) \dd{S_t} + r_0,
			\end{split}
		\end{equation}
		with the estimate:
		\begin{equation}
			\abs{r_0} \lesssim_{\Lambda_{*}, L_1} \abs{\vmu}_{\kk-\frac{1}{2}} \times \qty(\abs{\pd_t\f}_{\H{2}}^2 + \abs{\f}_{\H3}^2).
		\end{equation}
		Therefore, by using the following relations:
		\begin{equation*}
			\begin{split}
				&\int_\Gt \abs{\qty(-\wt{\n}^\frac{1}{2}\lap_\Gt\wt{\n}^\frac{1}{2})^\frac{2m+1}{2}\wt{\n}^\frac{1}{2}(\Dt\baf + \DD_\vbu \baf)}^2 \dd{S_t} \\
				&\quad = \int_\Gt \qty(-\lap_\Gt\wt{\n})^m\wt{\n}(\Dt\baf + \DD_\vbu\baf) \cdot \qty(-\lap_\Gt\wt{\n})^{m+1}(\Dt\baf + \DD_\vbu\baf) \dd{S_t},
			\end{split}
		\end{equation*}
		and
		\begin{equation*}
			\begin{split}
				&\int_\Gt \abs{\qty(-\wt{\n}^\frac{1}{2}\lap_\Gt\wt{\n}^\frac{1}{2})^\frac{2m}{2}\wt{\n}^\frac{1}{2}(\Dt\baf + \DD_\vbu \baf)}^2 \dd{S_t} \\
				&\quad = \int_\Gt \qty(-\lap_\Gt\wt{\n})^m\wt{\n}(\Dt\baf + \DD_\vbu\baf) \cdot \qty(-\lap_\Gt\wt{\n})^{m}(\Dt\baf + \DD_\vbu\baf) \dd{S_t},
			\end{split}
		\end{equation*}
		one can argue as the cases for $ l = 0 $ and for $ l = 1 $ to obtain the following estimate:
		
		\begin{equation}\label{est I_0}
			\begin{split}
				&\hspace{-2em}\abs{\dfrac{1}{2}\dv{t} \int_{\Gt} \abs{\qty(-\wt{\n}^\frac{1}{2}\lap_\Gt\wt{\n}^\frac{1}{2})^\frac{l}{2} \wn^{\frac{1}{2}}(\Dt \baf + \DD_\vbu\baf)}^2 \dd{S_t} - I } \\
				\lesssim_{\Lambda_{*}, L_1} &\abs{\vmu}_{\kk-\frac{1}{2}} \cdot \qty(\abs{\f}^2_{\H{\frac{3}{2}l + \frac{3}{2}}} + \abs{\pd_t\f}^2_{\H{\frac{3}{2}l + \frac{1}{2}}}),
			\end{split} 
		\end{equation}
		where
		\begin{equation}
			I \coloneqq \int_\Gt \qty(-\wt{\n}^\frac{1}{2}\lap_\Gt\wt{\n}^\frac{1}{2})^\frac{l}{2} \wn^{\frac{1}{2}} \qty(\Dt^2\baf + \Dt\DD_\vbu \baf) \cdot \qty(-\wt{\n}^\frac{1}{2}\lap_\Gt\wt{\n}^\frac{1}{2})^\frac{l}{2} \wn^{\frac{1}{2}} \qty(\Dt\baf + \DD_{\vbu}\baf) \dd{S_t}.
		\end{equation}
		For simplicity, set
		\begin{equation}
			\opd \coloneqq \qty(-\wt{\n}^\frac{1}{2}\lap_\Gt\wt{\n}^\frac{1}{2})^\frac{1}{2}.
		\end{equation}
		The equation in \eqref{eqn linear 1} is equivalent to
		\begin{equation}\label{eqn Dt2 baf}
			\begin{split}
				&\Dt^2 \baf  + 2 \DD_\vbu \Dt \baf + \DD_\vbu\DD_\vbu \baf + \qty(-\lap_\Gt\wt{\n})\baf + \frac{\rho_+ \rho_-}{(\rho_+ + \rho_-)^2}\DD_\vw\DD_\vw \baf \\
				&\quad- \frac{\rho_+}{\rho_+ + \rho_-}\DD_{\vh_+}\DD_{\vh_+}\baf - \frac{\rho_-}{\rho_+ + \rho_-}\DD_{\vh_-}\DD_{\vh_-}\baf = \bar{\g},
			\end{split}
		\end{equation}
		which implies
		\begin{equation}
			\begin{split}
				I = &\int_\Gt \opd^l \wn^{\frac{1}{2}} \qty(\Dt \baf + \DD_\vbu \baf) \cdot \opd^l \wn^{\frac{1}{2}} \bar{\g} \dd{S_t} \\
				&+\int_\Gt \opd^l \wn^{\frac{1}{2}} \qty(\Dt \DD_\vbu \baf - 2 \DD_\vbu \Dt \baf - \DD_\vbu \DD_\vbu \baf) \cdot \opd^l \wn^{\frac{1}{2}} \qty(\Dt \baf + \DD_\vbu \baf) \dd{S_t} \\
				&-\int_\Gt \opd^l \wn^{\frac{1}{2}} \qty(-\lap_\Gt\wt{\n} \baf) \cdot \opd^l \wn^{\frac{1}{2}} \qty(\Dt\baf + \DD_\vbu\baf) \dd{S_t} \\
				&-\dfrac{\rho_+ \rho_-}{\qty(\rho_+ + \rho_-)^2}\int_\Gt \opd^l\wnh \qty(\DD_\vw\DD_\vw\baf) \cdot \opd^l\wnh(\Dt\baf + \DD_\vbu \baf) \dd{S_t} \\
				&+\dfrac{\rho_+}{\rho_+ + \rho_-} \int_\Gt \opd^l\wnh \qty(\DD_{\vh_+}\DD_{\vh_+}\baf) \cdot \opd^l\wnh(\Dt\baf + \DD_\vbu \baf) \dd{S_t} \\
				&+\dfrac{\rho_-}{\rho_+ + \rho_-}\int_\Gt \opd^l\wnh \qty(\DD_{\vh_-}\DD_{\vh_-}\baf) \cdot \opd^l\wnh(\Dt\baf + \DD_\vbu \baf) \dd{S_t} \\
				=: &I_1 + I_2 + I_3 + I_4 + I_5 + I_6.
			\end{split}
		\end{equation}
		It is clear that
		\begin{equation}\label{est I_1}
			\abs{I_1} \lesssim_{L_1} \abs{\g}_{\H{\frac{3}{2}l + \frac{1}{2}}} \cdot \qty(\abs{\pd_t\f}_{\H{\frac{3}{2}l + \frac{1}{2}}} + \abs{\f}_{\H{\frac{3}{2}l + \frac{3}{2}}}).
		\end{equation}
		Note that for two functions $ \phi, \psi : \Gt \to \R $, the integration-by-parts formula is
		\begin{equation}\label{formula int by parts Gt}
			\int_\Gt - (\DD_\vbu \phi) \cdot \psi \dd{S_t} = \int_\Gt (\DD_\vbu\psi)\cdot\phi + \phi\psi(\Div_\Gt\vbu) \dd{S_t},
		\end{equation}
		since $ \vbu : \Gt \to \mathrm{T}\Gt $ is a tangential field and $ \int_\Gt \Div_\Gt \qty(\vbu \phi \psi) \dd{S_t} \equiv 0$.
		
		For $ I_2 $, observe that
		\begin{equation}\label{int DuDuf * Dtf}
			\begin{split}
				\int_\Gt \opd^l \wt{\n}^\frac{1}{2}(\DD_\vbu\DD_\vbu \baf) \cdot \opd^l \wt{\n}^\frac{1}{2}(\Dt\baf) \dd{S_t}  = \int_\Gt \wt{\n}(\DD_\vbu\DD_\vbu\baf) \cdot (-\lap_\Gt\wt{\n})^l (\Dt\baf) \dd{S_t}.
			\end{split}
		\end{equation}
		Thus, commuting $ \DD_\vbu $ with $ \wt{\n} $ and $ \lap_\Gt $ via the arguments as \eqref{eqn l = 0 case} and \eqref{eqn l = 1 case}, it is routine to derive that
		\begin{equation}\label{est DuDu f * Dt f}
			\begin{split}
				&\hspace{-1em}\abs{ \int_\Gt \opd^l \wnh (\DD_\vbu \DD_\vbu \baf) \cdot \opd^l \wnh (\Dt\baf) + \opd^l\wnh(\DD_\vbu\baf) \cdot \opd^l\wnh (\DD_\vbu\Dt \baf)  \dd{S_t}} \\
				&\lesssim_{L_1}  \abs{\f}_{\H{\frac{3}{2}l +\frac{3}{2}}} \cdot \abs{\pd_t \f}_{\H{\frac{3}{2}l+\frac{1}{2}}}.
			\end{split}
		\end{equation}		
		Similarly, it follows from \eqref{formula int by parts Gt} that
		\begin{equation}
			\begin{split}
				\abs{\int_\Gt \opd^l \wnh (\DD_\vbu \DD_\vbu \baf) \cdot \opd^l \wnh (\DD_\vbu\baf) \dd{S_t}} \lesssim_{L_1}\abs{\f}_{\H{\frac{3}{2}l+\frac{3}{2}}}^2,
			\end{split}
		\end{equation}
		and
		\begin{equation}
			\begin{split}
				\abs{\int_\Gt \opd^l \wnh (\DD_\vbu \Dt \baf) \cdot \opd^l \wnh (\Dt \baf) \dd{S_t} } \lesssim_{L_1} \abs{\pd_t \f}_{\H{\frac{3}{2}l + \frac{1}{2}}}^2.
			\end{split}
		\end{equation}
		Furthermore, observing that 
		\begin{equation}\label{est I2 last}
			\abs{\comm{\Dt}{\DD_\vbu}\baf}_{\frac{3}{2}l+\frac{1}{2}} = \abs{\DD_{\qty(\Dt\vbu-\DD_\vbu \vmu)} \baf}_{\frac{3}{2}l+\frac{1}{2}} \lesq \abs{\f}_{\H{\frac{3}{2}l + \frac{7}{4}}},
		\end{equation}
		one can deduce from (\ref{int DuDuf * Dtf})-(\ref{est I2 last}) that
		\begin{equation}\label{est I_2}
			\abs{I_2} \lesq \abs{\pd_t\f}_{\H{\frac{3}{2}l+\frac{1}{2}}}^2 + \abs{\f}_{\H{\frac{3}{2}l+2}}^2.
		\end{equation}
		
		Next, the estimate on $ I_3 $ can be derived via:
		\begin{equation}
			\begin{split}
				I_3 = &-\int_\Gt \opd^l \wnh \qty(-\lap_\Gt\wn)\baf \cdot \opd^l\wnh \qty(\Dt\baf + \DD_\vbu \baf) \dd{S_t} \\
				= &-\int_\Gt \wt{\n}\baf \cdot (-\lap_\Gt\wt{\n})^{l+1}(\Dt\baf + \DD_\vbu\baf) \dd{S_t},
			\end{split}
		\end{equation}
		which, together with the previous arguments, yield
		\begin{equation}\label{est I_3}
			\begin{split}
				\abs{I_3 + \dfrac{1}{2}\dv{t}\int_\Gt \abs{\opd^{l+1}\wnh\baf}^2 \dd{S_t}} 
				\lesq \abs{\f}_{\H{\frac{3}{2}l + 2}}^2.
			\end{split}
		\end{equation}
		
		As for $ I_4 $, in view of the relation
		\begin{equation}
			\comm{\DD_\vbu}{\DD_\vw} \baf = \DD_{\comm{\vbu}{\vw}} \baf,
		\end{equation}
		it follows from the integration-by-parts that
		\begin{equation}
			\begin{split}
				\abs{\int_\Gt \opd^l\wnh(\DD_\vw\DD_\vw\baf) \cdot \opd^l\wnh(\DD_\vbu\baf)\dd{S_t}} 
				\lesssim_{L_1} \abs{\f}_{\H{\frac{3}{2}l+\frac{3}{2}}}^2.
			\end{split}
		\end{equation}		
		The previous arguments can be used to show
		\begin{equation}
			\begin{split}
				&\hspace{-1em}\abs{ \int_\Gt \opd^l \wnh (\DD_\vw \DD_\vw \baf) \cdot \opd^l \wnh (\Dt\baf) + \opd^l\wnh(\DD_\vw\baf) \cdot \opd^l\wnh (\DD_\vw\Dt \baf)  \dd{S_t}} \\
				&\lesssim_{L_1}  \abs{\f}_{\H{\frac{3}{2}l +\frac{3}{2}}} \times \abs{\pd_t \f}_{\H{\frac{3}{2}l+\frac{1}{2}}},
			\end{split}
		\end{equation}
		and
		\begin{equation}\label{I_4 est end}
			\begin{split}
				&\hspace{-1em}\abs{\int_\Gt \opd^l \wnh (\DD_\vw \DD_\vw \baf) \cdot \opd^l \wnh (\Dt\baf) \dd{S_t } + \dfrac{1}{2}\dv{t} \int_\Gt \abs{\opd^l\wnh(\DD_\vw\baf)}^2 \dd{S_t}} \\
				&\lesq\abs{\f}_{\H{\frac{3}{2}l+\frac{3}{2}}} \times \qty(\abs{\f}_{\H{\frac{3}{2}l+2}} + \abs{\pd_t\f}_{\H{\frac{3}{2}l+\frac{1}{2}}}),
			\end{split}
		\end{equation}
		that is,
		\begin{equation}\label{est I_4}
			\begin{split}
				&\abs{I_4 - \dfrac{\rho_+\rho_-}{2\qty(\rho_+ + \rho_-)^2} \dv{t}\int_\Gt \abs{\opd^l\wnh(\DD_\vw\baf)}^2 \dd{S_t}} \\
				&\quad\lesq \abs{\f}_{\H{\frac{3}{2}l+\frac{3}{2}}} \times \qty(\abs{\f}_{\H{\frac{3}{2}l+2}} + \abs{\pd_t\f}_{\H{\frac{3}{2}l+\frac{1}{2}}}).
			\end{split}
		\end{equation}
		Since $ I_5 $ and $ I_6 $ have the same form as $ I_4 $, there hold
		\begin{equation}\label{est I_5}
			\begin{split}
				&\abs{I_5 + \dfrac{\rho_+}{2\qty(\rho_+ + \rho_-)} \dv{t}\int_\Gt \abs{\opd^l\wnh(\DD_{\vh_+}\baf)}^2 \dd{S_t} } \\
				&\quad\lesq \abs{\f}_{\H{\frac{3}{2}l+\frac{3}{2}}} \times \qty(\abs{\f}_{\H{\frac{3}{2}l+2}} + \abs{\pd_t\f}_{\H{\frac{3}{2}l+\frac{1}{2}}}),
			\end{split}
		\end{equation}
		and
		\begin{equation}\label{est I_6}
			\begin{split}
				&\abs{I_6 + \dfrac{\rho_-}{2\qty(\rho_+ + \rho_-)} \dv{t}\int_\Gt \abs{\opd^l\wnh(\DD_{\vh_-}\baf)}^2 \dd{S_t} } \\
				&\quad\lesq \abs{\f}_{\H{\frac{3}{2}l+\frac{3}{2}}} \times \qty(\abs{\f}_{\H{\frac{3}{2}l+2}} + \abs{\pd_t\f}_{\H{\frac{3}{2}l+\frac{1}{2}}}).
			\end{split}
		\end{equation}
		
		In conclusion, the combination of (\ref{eqn E_l}), (\ref{est I_0}), (\ref{est I_1}), (\ref{est I_2}), (\ref{est I_3}), (\ref{est I_4})-(\ref{est I_6}) gives that
		\begin{equation}
			\begin{split}
				\abs{\dv{t} E_l} \lesq &\qty(\abs{\f}_{\H{\frac{3}{2}l+2}} + \abs{\pd_t\f}_{\H{\frac{3}{2}l+\frac{1}{2}}}) \times \\
				&\quad\times \qty(\abs{\f}_{\H{\frac{3}{2}l+2}} + \abs{\pd_t\f}_{\H{\frac{3}{2}l+\frac{1}{2}}} + \abs{\g}_{\H{\frac{3}{2}l+\frac{1}{2}}}),
			\end{split}
		\end{equation}
		which completes the proof of Lemma \ref{lem est E_l}.
	\end{proof}
	
	Based on Lemma \ref{lem est E_l}, the following energy estimate for \eqref{eqn linear 1} holds:
	\begin{prop}
		There is a constant $ C_* > 0 $ determined by $ \Lambda_* $ so that for any integer $ 0 \le l \le k-2 $, and $ 0 \le t \le T $ ($ T \le C $ for some constant $ C = C(L_1, L_2) $), it holds that
		\begin{equation}\label{est linear eqn ka}
			\begin{split}
				&\hspace{-1em}\abs{\f(t)}_{\H{\frac{3}{2}l+2}}^2 + \abs{\pd_t\f(t)}_{\H{\frac{3}{2}l+\frac{1}{2}}}^2 \\
				\le\, &C_* \exp\qty\big[Q(L_1, L_2)t]\times \\
				& \times \qty{ \abs{\f_0}_{\H{\frac{3}{2}l+2}}^2 + \abs{\f_1}_{\H{\frac{3}{2}l+\frac{1}{2}}}^2 + (L_0)^{12} \abs{\f_0}_{\H{\frac{3}{2}l+\frac{1}{2}}}^2 + \int_0^t \abs{\g(t')}^2_{\H{\frac{3}{2}l+\frac{1}{2}}} \dd{t'} },
			\end{split}
		\end{equation}
		where $ Q $ is a generic polynomial determined by $ \Lambda_* $.
	\end{prop}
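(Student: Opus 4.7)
The plan is to bootstrap the differential inequality of Lemma \ref{lem est E_l} into the stated $L^\infty_t$ bound via a coercivity analysis of $E_l$ together with Gr\"onwall's inequality, and to handle the initial time by expressing $E_l(0)$ entirely in terms of $\f_0$, $\f_1$, and the size $L_0$ of $\vv_{\pm*}(0)$. The key auxiliary fact is that the second and third terms in \eqref{eqn E_l} are the highest-order positive contributions (giving the $\H{\frac{3}{2}l+\frac{1}{2}}$ control of $\pd_t\f + \DD_{\vbu_*}\f$ and the $\H{\frac{3}{2}l+2}$ control of $\f$, since the operator $-\wn^{\frac12}\lap_\Gt\wn^{\frac12}$ is of order three), while all the indefinite $\DD_{\vw_*}$, $\DD_{\vh_{\pm*}}$ terms involve only one spatial derivative of $\f$ and therefore have symbol of strictly lower order $\frac{3}{2}l+\frac{3}{2}$.

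First I would establish the coercivity
\begin{equation*}
E_l(t,\f,\pd_t\f) \;\ge\; c_*\bigl(\abs{\f}_{\H{\frac{3}{2}l+2}}^2 + \abs{\pd_t\f}_{\H{\frac{3}{2}l+\frac{1}{2}}}^2\bigr) \;-\; Q(L_1)\,\abs{\f}_{\H{\frac{3}{2}l+\frac{1}{2}}}^2,
\end{equation*}
using the equivalence of $(\mrm{I}+\n_\pm)^{s_1}$ with $(\mrm{I}-\lap_\Gt)^{s_1/2}$ together with Lemma \ref{lem lap-n} to trade $\wn$ for $(-\lap_\Gt)^{1/2}$, and the product estimates of Lemma \ref{lem product est} applied to $\DD_{\vw_*}\f$, $\DD_{\vh_{\pm*}}\f$. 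Each indefinite term is bounded by $L_1^2 \abs{\f}_{\H{\frac{3}{2}l+\frac{3}{2}}}^2$, which by interpolation is at most $\tfrac{c_*}{4}\abs{\f}_{\H{\frac{3}{2}l+2}}^2 + C(L_1)\abs{\f}_{\H{\frac{3}{2}l+\frac{1}{2}}}^2$, allowing them to be absorbed. The positive $\DD_{\vh_{\pm*}}\f$ terms may be dropped (they only help), and the $\pd_t\f$ kinetic term dominates $\pd_t\f + \DD_{\vbu_*}\f$ modulo a similar interpolation cost on $\DD_{\vbu_*}\f$.

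Next I would invoke Lemma \ref{lem est E_l}, write its right-hand side as $Q(L_1,L_2)\bigl(E_l + \abs{\f}_{\H{\frac{3}{2}l+\frac{1}{2}}}^2 + \abs{\g}_{\H{\frac{3}{2}l+\frac{1}{2}}}^2\bigr)$ via Cauchy--Schwarz and the coercivity, close the system by observing that $\tfrac{d}{dt}\abs{\f}_{\H{\frac{3}{2}l+\frac{1}{2}}}^2 \le Q(L_1)\bigl(\abs{\f}_{\H{\frac{3}{2}l+\frac{1}{2}}}^2 + \abs{\pd_t\f}_{\H{\frac{3}{2}l+\frac{1}{2}}}^2\bigr)$, and then apply the integral form of Gr\"onwall to produce the factor $\exp[Q(L_1,L_2)t]$. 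The initial energy bound requires estimating $E_l(0)$: the $\f_0$ and $\f_1$ pieces directly give the $\abs{\f_0}_{\H{\frac{3}{2}l+2}}^2$ and $\abs{\f_1}_{\H{\frac{3}{2}l+\frac{1}{2}}}^2$ contributions, while the cross terms $\DD_{\vbu_*(0)}\f_0$, $\DD_{\vw_*(0)}\f_0$, $\DD_{\vh_{\pm*}(0)}\f_0$ are controlled via Lemma \ref{lem product est} using only $\abs{\vv_{\pm*}(0)}_{\H{\kk-2}}\le L_0$, yielding a term of the shape $P(L_0)\,\abs{\f_0}_{\H{\frac{3}{2}l+\frac{1}{2}}}^2$; tracking the polynomial degree through the product and interpolation inequalities gives the stated $(L_0)^{12}$.

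The main obstacle is the coercivity of $E_l$: it is precisely at this point that one must verify that the $-\frac{\rho_+\rho_-}{(\rho_++\rho_-)^2}\abs{\cdots\DD_{\vw_*}\f}^2$ contribution, which encodes the Kelvin--Helmholtz instability, is of strictly lower order than the surface tension term so that interpolation with a small constant can absorb it. Once this absorption is set up (with the $L_1$-dependence collected into the final polynomial $Q$), the remaining manipulations---commuting $\pd_t$ past $\DD_{\vbu_*}$ to control $\abs{\pd_t\f}_{\H{\frac{3}{2}l+\frac{1}{2}}}^2$ from the kinetic piece of $E_l$, and applying scalar Gr\"onwall---are routine.
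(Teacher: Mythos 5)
Your overall strategy---coercivity of $E_l$ plus Gr\"onwall, with the initial-energy $E_l(0)$ estimated directly and the indefinite $\DD_{\vw_*}\f$, $\DD_{\vh_{\pm *}}\f$ pieces absorbed by interpolation---matches the paper's, and the structure of the closing Gr\"onwall argument (auxiliary ODE for $\abs{\f}_{\H{\frac32 l + \frac12}}^2$, substitution back) is what the paper does. However, there is a genuine gap in how you source the coefficient on the indefinite cross terms at general $t$, and this gap is precisely what makes the exponent $(L_0)^{12}$ appear.

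You bound the indefinite terms at time $t$ by $L_1^2\,\abs{\f}_{\H{\frac32 l + \frac32}}^2$, using the uniform bound $L_1$ on $\abs{\vv_{\pm*}(t)}_{\H{\kk-\frac12}}$. After interpolation this forces a residual of the form $C(L_1)\,\abs{\f(t)}_{\H{\frac32 l + \frac12}}^2$ on the right side of the coercivity inequality. When you propagate $\abs{\f(t)}_{\H{\frac32 l + \frac12}}^2 \lesssim \abs{\f_0}_{\H{\frac32 l + \frac12}}^2 + t\int_0^t\abs{\pd_t\f}^2$ and run Gr\"onwall, that $C(L_1)$ factor lands on $\abs{\f_0}_{\H{\frac32 l + \frac12}}^2$; your final estimate therefore has a $Q(L_1)$ coefficient on the initial-data remainder, not $(L_0)^{12}$. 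This is strictly weaker, and in fact it would \emph{not} close the iteration in \textsection~\ref{sec itetarion map}: there $L_1 \le Q(M_1)$ with $M_1$ chosen large (depending on $a$, $M_2$, $M_3$), while $L_0 \le Q(M_0, A)$ is small and fixed by the initial data, so one needs $M_1 \ge C_* e^{QT}[\cdots + (M_0)^{12} + \cdots]$, which works, whereas a $Q(M_1)$ term inside the bracket would force $M_1 \ge C_* Q(M_1)$, which is impossible for $M_1$ large.

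The fix the paper uses is to estimate the cross terms via the \emph{lower} norm $\abs{\vv_\pm(t)}_{\H{\kk-2}(\Gt)}$ (rather than $\H{\kk-\frac12}$), which is controllable in time: $\abs{\vv_\pm(t)}_{\kk-2} \lesssim \abs{\vv_{\pm*}(0)}_{\H{\kk-2}} + L_2\,t \lesssim L_0 + L_2 t$. Because $\kk-2$ sits at the critical exponent $1$ on the $2$-dimensional surface when $k=2$, the product estimate then requires $\abs{\f}_{\H{\frac32 l + \frac74}}$ rather than $\abs{\f}_{\H{\frac32 l + \frac32}}$; interpolating at exponent $\frac32 l + \frac74$ (i.e.\ $\theta = \frac56$ between $\frac32 l + \frac12$ and $\frac32 l + 2$) with Young's inequality at conjugate exponents $\frac65$, $6$ is what produces the twelfth power in $(L_0+L_2 t)^{12}$, whence $(L_0)^{12}$ after $T \le C(L_1, L_2)$. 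Your interpolation at $\frac32 l + \frac32$ would instead give a sixth power, but more to the point the product estimate at that index forces you back to the $\H{\kk-\frac12}$ norm of $\vv$, hence to $L_1$. You do correctly use $L_0$ in your $E_l(0)$ estimate, but that alone does not prevent the $Q(L_1)$ contamination coming through the coercivity at $t > 0$.
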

	
	\begin{proof}
		It is clear that for some $ C_* > 0 $, it holds that
		\begin{equation}\label{eqn linear energy est}
			\begin{split}
				&\abs{\baf}_{\frac{3}{2}l+2}^2 + \abs{\Dt\baf}_{\frac{3}{2}l+\frac{1}{2}}^2 \\
				&\le C_* E_l(t, \f, \pd_t\f) + C_* \qty(\abs{\int_\Gt \baf \dd{S_t}}^2 + \abs{\int_\Gt \Dt\baf \dd{S_t}}^2)  +C_* \qty(\abs{\DD_{\vw}\baf}_{\frac{3}{2}l + \frac{1}{2}}^2 + \abs{\DD_\vbu \baf}_{\frac{3}{2}l + \frac{1}{2}}^2) \\
				&\le C_* E_l(t, \f, \pd_t\f) + C_* \qty(\abs{\int_\Gt \baf \dd{S_t}}^2 + \abs{\int_\Gt \Dt\baf \dd{S_t}}^2)  +C_* \qty(\abs{\vv_{+}}_{\kk-2}^2 + \abs{\vv_{-}}_{\kk-2}^2) \abs{\baf}_{\frac{3}{2}l + \frac{7}{4}}^2.
			\end{split}
		\end{equation}
		For the last term above, it follows from the interpolation inequality that
		\begin{equation}
			\begin{split}
				\qty(\abs{\vv_{+}}_{\kk-2}^2 + \abs{\vv_{-}}_{\kk-2}^2) \abs{\baf}_{\frac{3}{2}l + \frac{7}{4}}^2  
				\le \frac{5}{6C_*} \abs{\baf}_{\frac{3}{2}l+2}^2 + \dfrac{(C_*)^5}{6}\qty(\abs{\vv_+}_{\kk-2}^{12} + \abs{\vv_-}_{\kk-2}^{12}) \abs{\baf}_{\frac{3}{2}l+\frac{1}{2}}^2,
			\end{split}
		\end{equation}
		with
		\begin{equation}
			\begin{split}
				\abs{\vv_\pm (t)}_{\kk-2} &\less \abs{\vv_{\pm*}(t)}_{\H{\kk-2}} \\
				&\less \abs{\vv_{\pm*}(0)}_{\H{\kk-2}} + L_2 t.
			\end{split}
		\end{equation}
		Now, observe that
		\begin{equation}
			\dv{t} \int_\Gt \baf(t) \dd{S_t} = \int_\Gt \Dt\baf + \baf \Div_\Gt \vmu \dd{S_t}.
		\end{equation}
		Thus
		\begin{equation}
			\begin{split}
				\abs{\int_\Gt \baf(t) \dd{S_t} - \int_{\Gamma_0} \baf(0) \dd{S_0} } 
				\lesssim_{L_1} \int_0^t \abs{\pd_t\f(t')}_{\L2} + \abs{\f(t')}_{\L2} \dd{t'}.
			\end{split}
		\end{equation}
		To deal with the integral of $ \Dt\baf $, one notes that
		\begin{equation}
			\dv{t} \int_\Gt \Dt\baf \dd{S_t} = \int_\Gt \Dt^2 \baf + \qty(\Dt \baf) \Div_\Gt \vmu \dd{S_t},
		\end{equation}
		and (\ref{eqn Dt2 baf}) implies that
		\begin{equation}
			\begin{split}
				\int_\Gt \Dt^2\baf \dd{S_t} = &-\int_\Gt \DD_\vbu \qty(2\Dt\baf + \DD_\vbu \baf ) \dd{S_t} +\int_\Gt \lap_\Gt \wn \baf \dd{S_t} \\
				&- \dfrac{\rho_+ \rho_-}{\qty(\rho_+ + \rho_-)^2} \int_\Gt \DD_\vw \DD_\vw \baf \dd{S_t} + \dfrac{\rho_+}{\rho_+ + \rho_-}\int_\Gt \DD_{\vh_+}\DD_{\vh_+} \baf \dd{S_t} \\
				&+\dfrac{\rho_-}{\rho_+ + \rho_-} \int_\Gt \DD_{\vh_-} \DD_{\vh_-} \baf \dd{S_t} + \int_\Gt \bar{\mathfrak{g}} \dd{S_t}.
			\end{split}
		\end{equation}
		Due to the fact that $ \pd \Gt = \OO $, one has
		\begin{equation}
			\int_\Gt \lap_\Gt \wn \baf \dd{S_t} \equiv 0,
		\end{equation}
		and
		\begin{equation}
			-\int_\Gt \DD_\vbu \Dt \baf \dd{S_t} = \int_\Gt \qty(\Dt \baf) \Div_\Gt \vbu \dd{S_t}.
		\end{equation}
		Thus
		\begin{equation}
			\abs{\dv{t}\int_\Gt \Dt\baf \dd{S_t} } \lesssim_{L_1} \abs{\Dt \baf}_0 + \abs{\baf}_1 + \abs{\bar{\g}}_0,
		\end{equation}
		and
		\begin{equation}\label{est Dt baf int}
			\begin{split}
				&\abs{\int_\Gt \qty(\Dt\baf)(t) \dd{S_t} - \int_{\Gamma_0} \qty(\Dt\baf)(0)\dd{S_0} }  \\ &\quad\lesssim_{L_1}  \int_0^t \abs{\pd_t\f(t')}_{\L2} + \abs{\f(t')}_{\H1} + \abs{\g(t')}_{\L2} \dd{t'}.
			\end{split}
		\end{equation}
		
		Combining (\ref{eqn linear energy est})-(\ref{est Dt baf int}), (\ref{est E_l}) and the following relation:
		\begin{equation}
			\abs{\baf(t)}_{\frac{3}{2}l+\frac{1}{2}} \less \abs{\f(0)}_{\H{\frac{3}{2}l+\frac{1}{2}}} + \int_0^t \abs{\pd_t\f(t')}_{\H{\frac{3}{2}l+\frac{1}{2}}} \dd{t'},
		\end{equation}
		one can get that
		\begin{equation}
			\begin{split}
				&\hspace{-3em}\abs{\baf(t)}_{\frac{3}{2}l+2}^2 + \abs{\Dt\baf(t)}_{\frac{3}{2}l+\frac{1}{2}}^2 \\
				\less &\abs{\f_0}_{\H{\frac{3}{2}l+2}}^2 + \abs{\f_1}^2_{\H{\frac{3}{2}l+\frac{1}{2}}} + \qty(L_0)^{12}\abs{\f_0}_{\H{\frac{3}{2}l+\frac{1}{2}}}^2 \\
				&+ \bar{Q} \cdot \int_0^t \abs{\f(t')}_{\H{\frac{3}{2}l+2}}^2 + \abs{\pd_t\f(t')}_{\H{\frac{3}{2}l+\frac{1}{2}}}^2 + \abs{\g(t')}_{\H{\frac{3}{2}l+\frac{1}{2}}}^2 \dd{t'} \\
				&+ \qty(\abs{\vv_{\pm*}(0)}_{\H{\kk-2}}^{12} + (L_2)^{12}t^{12})\cdot C(L_1)t \int_0^t \abs{\pd_t\f(t')}_{\H{\frac{3}{2}l+\frac{1}{2}}}^2 \dd{t'} \\
				&+C(L_1)t\int_0^t \abs{\pd_t\f(t')}_{\L2}^2 + \abs{\f(t')}_{\H1}^2 + \abs{\g(t')}_{\L2}^2 \dd{t'},
			\end{split}
		\end{equation}
		where $ \bar{Q} = \bar{Q}(L_1, L_2) $ is the generic polynomial in the previous lemma.
		If $ T \le Q_* (L_1, L_2) $, it follows that
		\begin{equation}
			\begin{split}
				&\hspace{-2em}\abs{\baf(t)}_{\frac{3}{2}l+2}^2 + \abs{\Dt\baf(t)}_{\frac{3}{2}l+\frac{1}{2}}^2 \\
				\le\,  &C_* \qty(\abs{\f_0}_{\H{\frac{3}{2}l+2}}^2 + \abs{\f_1}_{\H{\frac{3}{2}l+\frac{1}{2}}}^2 + (L_0)^{12}\abs{\f_0}_{\H{\frac{3}{2}l+\frac{1}{2}}}^2) \\
				&+ Q\int_0^t \abs{\f(t')}_{\H{\frac{3}{2}l+2}}^2 + \abs{\pd_t\f(t')}_{\H{\frac{3}{2}l+\frac{1}{2}}}^2 + \abs{\g(t')}_{\H{\frac{3}{2}l+\frac{1}{2}}}^2 \dd{t'},
			\end{split}
		\end{equation}
		where $ Q = Q(L_1, L_2) $ is a generic polynomial determined by $ \Lambda_* $. Hence (\ref{est linear eqn ka}) follows from Gronwall's inequality.
	\end{proof}
	Then the local well-posedness of \eqref{eqn linear 1} follows from this energy estimate:
	\begin{cor}
		For $ 0 \le l \le k-2 $, $ T \le C(L_1, L_2) $ and $ \g \in C^0 \qty([0, T]; H^{\frac{3}{2}l + \frac{1}{2}}(\Gs)) $, the problem \eqref{eqn linear 1} is well-posed in $ C^0\qty([0, T]; \H{\frac{3}{2}l+2}) \cap C^1\qty([0, T]; \H{\frac{3}{2}l+\frac{1}{2}}) $, and the estimate \eqref{est linear eqn ka} holds.
	\end{cor}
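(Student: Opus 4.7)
The plan is to extract uniqueness and continuous dependence directly from the a priori estimate \eqref{est linear eqn ka}, and to obtain existence through a Galerkin approximation whose uniform bounds come from the same estimate.

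First, I would treat uniqueness and continuous dependence together. Given two solutions $\f^{(1)}, \f^{(2)}$ to \eqref{eqn linear 1} with data $(\f_0^{(j)}, \f_1^{(j)}, \g^{(j)})$, by linearity their difference solves the same equation with data $(\f_0^{(1)}-\f_0^{(2)}, \f_1^{(1)}-\f_1^{(2)}, \g^{(1)}-\g^{(2)})$. Applying \eqref{est linear eqn ka} to this difference gives uniqueness when the data agree, and continuous dependence in the norm $\H{\frac{3}{2}l+2} \times \H{\frac{3}{2}l+\frac{1}{2}} \times L^2\qty([0,T]; \H{\frac{3}{2}l+\frac{1}{2}})$ otherwise. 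This step is essentially free because \eqref{est linear eqn ka} is already established for arbitrary sufficiently regular right-hand side.

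For existence, I would use a Galerkin approximation on a fixed basis of $L^2(\Gs)$, for instance the eigenbasis $\{e_j\}$ of $-\lap_{\Gs}$ with associated $L^2$-projections $P_N$ onto $V_N \coloneqq \mathrm{span}(e_1, \ldots, e_N)$. One seeks $\f^N \in C^2([0,T]; V_N)$ solving
\begin{equation*}
\pd_{tt}^2 \f^N + P_N \opC(\ka, \pd_t\ka, \vv_*, \vh_*) \f^N = P_N \g, \qquad \f^N(0) = P_N \f_0, \quad \pd_t\f^N(0) = P_N \f_1,
\end{equation*}
which is a linear ODE system with continuous time-dependent coefficients on the finite-dimensional space $V_N$, and hence globally solvable by Carathéodory's theorem. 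The crucial point is that running the energy argument of Lemma \ref{lem est E_l} on $\f^N$ yields the analogue of \eqref{est linear eqn ka} \emph{uniformly in $N$}: the symmetry of $P_N$ and the self-adjointness of $\wn$ and $-\lap_\Gt$ ensure that the leading-order cancellations in \eqref{eqn E_l} survive the projection, while the commutators of $P_N$ with $\wn$, $\lap_\Gt$, and the first-order transport operators $\DD_{\vbu_*}$, $\DD_{\vw_*}$, $\DD_{\vh_{\pm *}}$ produce errors of lower order that are absorbed exactly as in \eqref{est I_0}--\eqref{est I_6}. With the uniform bound in hand, weak-$*$ compactness delivers a limit $\f \in L^\infty\qty([0,T]; \H{\frac{3}{2}l+2})$ with $\pd_t\f \in L^\infty\qty([0,T]; \H{\frac{3}{2}l+\frac{1}{2}})$, and passing to the limit in the projected equation produces a distributional solution to \eqref{eqn linear 1}. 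Continuity in time in the strong topology is then recovered from the energy estimate applied to differences $\f^N - \f^M$, which shows the approximating sequence is Cauchy at the top regularity.

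The step I expect to be most delicate is arranging the energy identity for $\f^N$ so that the cancellations used to absorb the third-order operator $\opA(\ka) = -\lap_\Gt \wn$ in Lemma \ref{lem est E_l} are preserved after projection, since $P_N$ is defined on $\Gs$ whereas $\wn$ and $\lap_\Gt$ act on $\Gt$, so the commutators $[P_N, \wn]$ and $[P_N, \lap_\Gt]$ arise only after pulling back through $\Phi_\Gt$ and must be controlled by the generic polynomial $Q(L_1, L_2)$ without closing on a norm higher than $\H{\frac{3}{2}l+2}$. A cleaner alternative, as hinted by the authors' reference to \cite{Shatah-Zeng2011}, is to use Kato's theory for linear evolution equations with time-dependent generators: rewrite \eqref{eqn linear 1} as a first-order system for $(\f, \pd_t \f)$, identify the natural energy space from $E_l$, and verify Kato's hyperbolicity and compatibility hypotheses directly from Lemma \ref{lem est E_l} together with the regularity (H1)--(H2) assumed on $\ka, \pd_t\ka, \vv_{\pm *}, \vh_{\pm *}$; the existence and continuity of the evolution operator then follow from the abstract theory.
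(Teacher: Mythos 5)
Your proposal matches the paper's intended argument exactly: the paragraph introducing \eqref{eqn linear 1} states that uniqueness and continuous dependence follow from the uniform estimate \eqref{est linear eqn ka}, while existence is obtained via Galerkin approximation or the standard semigroup theory as in \cite{Shatah-Zeng2011}. Your caution about the commutators $[P_N, \wn]$ and $[P_N, \lap_\Gt]$ arising only after pulling back through $\Phi_\Gt$, and your preference for Kato's evolution-equation framework as the cleaner alternative, are both consistent with what the paper points to.
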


	\subsection{Linearized system for the current and vorticity}\label{sec linear current vortex}
	Assume still that $ \Gs \in H^{\kk+1}$ $ (k\ge2)$ is a reference hypersurface and consider a family of hypersurfaces $ \{\Gt\}_{0\le t \le T} \subset \Lambda_* $, for which each $ \Gt $ separates $ \Om $ into two disjoint simply-connected domains $ \Om_t^\pm $. Suppose that $ \vv_\pm(t), \vh_\pm(t) : \Om_t^\pm \to \R^3 $ are given vector fields satisfying
	\begin{equation}
		\begin{cases*}
			\div\vv_\pm = 0 = \div\vh_\pm &in $ \Om_t^\pm $, \\
			\vh_+ \vdot \vn_+ = 0 = \vh_- \vdot \vn_+ &on $ \Gt $, \\
			\vv_+ \vdot \vn_+ = \vn_+ \vdot \qty(\pd_t\gt\vnu)\circ\Phi_\Gt^{-1} = \vv_- \vdot \vn_+ &on $ \Gt $, \\
			\vv_- \vdot \wt{\vn} = 0 = \vh_- \vdot \wt{\vn} &on $ \pd\Om $.
		\end{cases*}
	\end{equation}
	Assume further that there is a constant $ \bar{L}_1 $ so that
	\begin{equation}
		\sup_{t \in [0, T]} \qty(\abs{\ka(t)}_{\H{\kk-\frac{3}{2}}}, \abs{\pd_t\ka(t)}_{\H{\kk-\frac{5}{2}}}, \norm{\qty(\vv_\pm(t), \vh_\pm(t))}_{H^{\kk}(\Om_t^\pm)}) \le \bar{L}_1.
	\end{equation}
	
	Consider the following linearized current-vorticity system in $ \Om \setminus \Gt $:
	\begin{equation}\label{eqn linear vom}
		\pd_t \wt{\vom} + (\vv\vdot\grad)\wt{\vom} - (\vh\vdot\grad)\wt{\vj} = (\wt{\vom}\vdot\grad)\vv - (\wt{\vj}\vdot\grad)\vh,
	\end{equation}
	\begin{equation}\label{eqn linear vj}
		\pd_t \wt{\vj} + (\vv\vdot\grad)\wt{\vj} - (\vh\vdot\grad)\wt{\vom} = (\wt{\vj}\vdot\grad)\vv - (\wt{\vom}\vdot\grad)\vh -2\tr(\grad\vv\cp \grad\vh).
	\end{equation}
	Set 
	\begin{equation}\label{def vxi veta}
		\vb*{\xi} \coloneqq \wt{\vom} - \wt{\vj}, \quad \vb*{\eta} \coloneqq \wt{\vom} + \wt{\vj}.
	\end{equation}
	Then
	\begin{equation}
		\pd_t\vxi  + \qty[(\vv+\vh)\vdot\grad]\vb*{\xi} = (\vb*{\xi}\vdot\grad)(\vv+\vh) + 2 \tr(\grad\vv\cp\grad\vh),
	\end{equation}
	\begin{equation}
		\pd_t \veta + \qty[(\vv-\vh)\vdot\grad]\veta = (\veta\vdot\grad)(\vv-\vh) - 2\tr(\grad\vv\cp\grad\vh).
	\end{equation}
	Define the flow map $ \mathcal{Y}^\pm $ by
	\begin{equation}
		\dv{t}\mathcal{Y}^\pm(t, y) = \qty(\vv_\pm - \vh_\pm) \qty(t, \mathcal{Y}^\pm(t, y)), \quad y \in \Om_0^\pm.
	\end{equation}
	As indicated in \cite{Sun-Wang-Zhang2018}, due to the fact that $ \vh_\pm \vdot \vn_+ \equiv 0 $, $ \mathcal{Y}^\pm(t) $ is a bijection from $ \Om_0^\pm $ to $ \Om_t^\pm $ for small time $ t $. Furthermore, the evolution equation for $ \veta $ can be rewritten as
	\begin{equation}
		\dv{t}(\veta\circ\mathcal{Y}) = \qty[(\veta\vdot\grad)(\vv-\vh)-2\tr(\grad\vv\cp\grad\vh)]\circ\mathcal{Y},
	\end{equation}
	or equivalently,
	\begin{equation}
		\dv{t}\qty[\qty(\DD\mathcal{Y})^{-1}\qty(\veta\circ\mathcal{Y})] = -2\tr(\grad\vv\cp\grad\vh)\circ\mathcal{Y},
	\end{equation}
	which is a linear ODE system. Thus, the local well-posedness follows routinely. Similarly, the evolution equation for $ \vxi $ is also locally well-posed on $ [0, T] $, with the life span $ T $ depending on $ \bar{L}_1 $. Furthermore, the following energy estimates hold:
	\begin{prop}\label{prop linear vom vj}
		For $ 0 \le t \le T $, it follows that
		\begin{equation}\label{est linear vom vj}
			\begin{split}
				&\hspace{-2em}\norm{\wt{\vom}_\pm(t)}_{H^{\kk-1}(\Om_t^\pm)}^2 + \norm{\wt{\vj}_\pm(t)}_{H^{\kk-1}(\Om_t^\pm)}^2 \\
				\le\, &\exp{Q\qty(\bar{L}_1)t} \qty(1+ \norm{\wt{\vom}_\pm(0)}_{H^{\kk-1}(\Om_0^\pm)}^2 + \norm{\wt{\vj}_\pm(0)}_{H^{\kk-1}(\Om_0^\pm)}^2),
			\end{split}
		\end{equation}
		where $ Q $ is a generic polynomial depending on $ \Lambda_* $.
	\end{prop}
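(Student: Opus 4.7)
The plan is to diagonalize the coupled linearized system \eqref{eqn linear vom}--\eqref{eqn linear vj} using the Elsässer-type variables $\vxi,\veta$ introduced in \eqref{def vxi veta}, which the excerpt already rewrites as two independent transport equations with convective velocities $\vv\pm\vh$. Because $\vh_\pm\cdot\vn_+=0$ on $\Gt$ and $\vh_-\cdot\wt{\vn}=0$ on $\pd\Om$, the vector fields $\vv\pm\vh$ carry the same normal trace as $\vv$ on all boundaries of $\Om_t^\pm$; in particular they are admissible evolution velocities of $\Om_t^\pm$. Hence, on a short time interval whose length depends only on $\bar{L}_1$, their flow maps $\mathcal{Y}^\pm$ (for $\vv-\vh$) and the analogous $\mathcal{Z}^\pm$ (for $\vv+\vh$) are $H^\kk$-diffeomorphisms from $\Om_0^\pm$ onto $\Om_t^\pm$, and $\DD\mathcal{Y}^\pm,\DD\mathcal{Z}^\pm$ together with their inverses are controlled in $L^\infty([0,T];H^{\kk-1}(\Om_0^\pm))$ by $Q(\bar{L}_1)$ by running Gronwall on the ODE $\pd_t\DD\mathcal{Y}^\pm=[\grad(\vv-\vh)\circ\mathcal{Y}^\pm]\cdot\DD\mathcal{Y}^\pm$ and its analog for $\mathcal{Z}^\pm$.

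Next, setting $w^\eta_\pm\coloneqq(\DD\mathcal{Y}^\pm)^{-1}(\veta_\pm\circ\mathcal{Y}^\pm)$ and $w^\xi_\pm\coloneqq(\DD\mathcal{Z}^\pm)^{-1}(\vxi_\pm\circ\mathcal{Z}^\pm)$, the exact cancellation of the vortex-stretching term with the $t$-derivative of the Jacobian factor (already used in the excerpt for $\veta$) produces the genuinely linear ODEs
\begin{equation*}
	\pdv{t} w^\eta_\pm = -2(\DD\mathcal{Y}^\pm)^{-1}\qty[\tr(\grad\vv\cp\grad\vh)\circ\mathcal{Y}^\pm],\qquad \pdv{t} w^\xi_\pm = +2(\DD\mathcal{Z}^\pm)^{-1}\qty[\tr(\grad\vv\cp\grad\vh)\circ\mathcal{Z}^\pm].
\end{equation*}
Taking the $H^{\kk-1}(\Om_0^\pm)$ norm of the right-hand sides and invoking the product estimate of Lemma~\ref{lem product est} (since $\vv,\vh\in H^{\kk}$ and $\kk-1>\tfrac{3}{2}$ for $k\ge 2$, one has $\grad\vv\cp\grad\vh\in H^{\kk-1}(\Om_t^\pm)$ with norm bounded by $Q(\bar L_1)$), together with the composition estimate of Lemma~\ref{lem composition harm coordi} applied to the $H^\kk$-diffeomorphisms $\mathcal{Y}^\pm,\mathcal{Z}^\pm$, bounds each source term by $Q(\bar{L}_1)$.

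Integrating the resulting differential inequality
\begin{equation*}
	\pdv{t}\qty(\norm{w^\eta_\pm}_{H^{\kk-1}(\Om_0^\pm)}^2+\norm{w^\xi_\pm}_{H^{\kk-1}(\Om_0^\pm)}^2)\le Q(\bar{L}_1)\qty(1+\norm{w^\eta_\pm}_{H^{\kk-1}}^2+\norm{w^\xi_\pm}_{H^{\kk-1}}^2)
\end{equation*}
by Gronwall over $[0,T]$ and then transferring back to Eulerian coordinates via $\veta_\pm=(\DD\mathcal{Y}^\pm\cdot w^\eta_\pm)\circ(\mathcal{Y}^\pm)^{-1}$, $\vxi_\pm=(\DD\mathcal{Z}^\pm\cdot w^\xi_\pm)\circ(\mathcal{Z}^\pm)^{-1}$ using the same product and composition estimates, and finally recovering $\wt{\vom}_\pm=\tfrac12(\veta_\pm+\vxi_\pm)$, $\wt{\vj}_\pm=\tfrac12(\veta_\pm-\vxi_\pm)$, yields \eqref{est linear vom vj}.

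The main obstacle will be that the target regularity $\kk-1$ is a half-integer when $k$ is odd, whereas the Lagrangian-derivative estimate sketched above is most transparent at integer orders; I would handle the fractional case either by interpolation between two consecutive integer bounds or by a tangential/normal decomposition near $\Gt$ and $\pd\Om$, exploiting that $(\vv\pm\vh)-\theta\vn_+$ is tangent to $\Gt$ so tangential derivatives along $\Gt$ commute compatibly with the flow. A minor secondary point is ensuring that $\mathcal{Y}^\pm$ and $\mathcal{Z}^\pm$ remain $H^\kk$-diffeomorphisms for $t\in[0,T]$ without degeneracy of $\DD\mathcal{Y}^\pm,\DD\mathcal{Z}^\pm$, which is routine given the $\bar{L}_1$ bound on $\vv\pm\vh$ in $H^{\kk}$.
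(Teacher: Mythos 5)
Your proof is correct, but it takes a genuinely different route from the paper's. The paper uses the Lagrangian reformulation only to establish existence of solutions to the linear system, and then proves the energy estimate directly in \emph{Eulerian} coordinates: it computes $\frac{1}{2}\dv{t}\int_{\Om_t^\pm}|\nabla^s\veta_\pm|^2\dd{x}$, absorbs the advection term via the Reynolds transport formula, and bounds the commutator $\comm{\nabla^s}{\DD_{(\vh_\pm-\vv_\pm)}}\veta_\pm$ and the vortex-stretching/source terms by $Q(\bar L_1)(1+\|\veta_\pm\|_{H^s}^2)$, followed by Gronwall; the analog for $\vxi_\pm$ is symmetric. Your proposal instead stays Lagrangian all the way: you conjugate $\veta_\pm,\vxi_\pm$ by the Jacobians of the flows of $\vv\mp\vh$, so the stretching terms cancel exactly, leaving forced ODEs whose right-hand sides are independent of the unknown, and then transfer back. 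What the Eulerian route buys is that you never need to propagate $H^{\kk-1}$ control on $\DD\mathcal{Y}^\pm,\DD\mathcal{Z}^\pm$ and their inverses, nor worry about Sobolev composition with general $H^{\kk}$ flow maps; what your route buys is that the linear ODE structure becomes manifest and the stretching cancellation is exact rather than absorbed by Gronwall.

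Two small points worth fixing. First, Lemma~\ref{lem composition harm coordi} as stated applies to the harmonic coordinate maps $\X_\Gamma^\pm$, not to arbitrary near-identity $H^{\kk}$-diffeomorphisms such as $\mathcal{Y}^\pm,\mathcal{Z}^\pm$; the composition estimate you need is standard for $H^{\kk}$ flow maps with $\DD\mathcal{Y}^\pm$ bounded away from degeneracy, but you should invoke that fact rather than cite the lemma. Second, your concern about half-integer $\kk-1$ is somewhat overstated: since $\kk-1\ge 2>\tfrac{3}{2}$ for $k\ge 2$, the space $H^{\kk-1}(\Om_t^\pm)$ is a Banach algebra stable under composition with $H^{\kk}$-diffeomorphisms regardless of whether $\kk-1$ is an integer, so no interpolation or tangential/normal splitting is required; the same issue arises in the paper's $\nabla^s$ computation and is handled by the usual Kato--Ponce fractional commutator machinery. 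With those two remarks your argument goes through and recovers \eqref{est linear vom vj} (in fact with slightly better, linear-in-$t$, growth for the Lagrangian unknowns before pulling back).
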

	\begin{proof}
		For $ 0 \le s \le \kk-1 $, observe that
		\begin{equation}
			\begin{split}
				&\hspace{-1em}\dfrac{1}{2}\dv{t}\int_{\Om_t^\pm} \abs{\nabla^s \veta_\pm}^2\dd{x} \\
				=\,& \int_{\Om_t^\pm} \nabla^s \veta_\pm \vdot \nabla^s\pd_t\veta_\pm \dd{x} + \dfrac{1}{2}\int_{\Om_t^\pm} \DD_{(\vv_\pm - \vh_\pm)} \abs{\nabla^s \veta_\pm}^2 \dd{x} \\
				=\, &\int_{\Om_t^\pm} \nabla^s \veta_\pm \vdot \nabla^s\qty[\DD_{(\vh_\pm-\vv_\pm)}\veta_\pm + \DD_{\veta_\pm}(\vv_\pm-\vh_\pm) - 2\tr(\grad\vv_\pm \cp \grad \vh_\pm) ] \dd{x} \\
				&+ \dfrac{1}{2}\int_{\Om_t^\pm} \DD_{(\vv_\pm - \vh_\pm)} \abs{\nabla^s \veta_\pm}^2 \dd{x} \\
				=\, &\int_{\Om_t^\pm} \nabla^s \veta_\pm \vdot \comm{\nabla^s}{\DD_{(\vh_\pm - \vv_\pm)}}\veta_\pm \dd{x} +\int_{\Om_t^\pm} \nabla^s \veta_\pm \vdot \nabla^s\qty{\DD_{\veta_\pm}(\vv_\pm-\vh_\pm) - 2\tr(\grad\vv_\pm\cp\grad\vh_\pm)} \dd{x} \\
				\le\, &Q\qty(\bar{L}_1) \qty(1+\norm{\veta_\pm}_{H^s(\Om_t^\pm)}^2).
			\end{split}
		\end{equation}
		It follows from a similar argument that
		\begin{equation}
			\dv{t} \norm{\vxi_\pm (t)}_{H^s(\Om_t^\pm)}^2 \le Q\qty(\bar{L}_1) \qty(1+\norm{\vxi_\pm (t)}_{H^s(\Om_t^\pm)}^2).
		\end{equation}
		Therefore, (\ref{est linear vom vj}) holds due to Gronwall's inequality and (\ref{def vxi veta}).
	\end{proof}
	
	To show the compatibility conditions:
	\begin{equation}\label{linear compatibility condition}
		\int_{\pd\Om} \wt{\vom}_- \vdot \wt{\vn} \dd{\wt{S}} \equiv 0 \equiv \int_{\pd\Om} \wt{\vj}_- \vdot \wt{\vn} \dd{\wt{S}},
	\end{equation}
	one observes that
	\begin{equation}
		\begin{split}
			&\hspace{-2em}\dv{t}\int_{\pd\Om} \veta_- \vdot \wt{\vn} \dd{\wt{S}} \\
			=\, &\int_{\pd\Om} \qty(\pd_t + \DD_{(\vv_- - \vh_-)}) \qty(\veta_- \vdot \wt{\vn}) + \qty(\veta_- \vdot \wt{\vn}) \Div_{\pd\Om} (\vv_- - \vh_-) \dd{\wt{S}} \\
			=\, &\int_{\pd\Om} \wt{\vn}\vdot\DD_{\veta_-} (\vv_- - \vh_-) - 2\wt{\vn} \vdot \tr(\grad\vv_- \cp \grad \vh_-) \dd{\wt{S}} \\
			&+\int_{\pd\Om} - \wt{\vn} \vdot \DD_{\veta_-^\top}(\vv_- - \vh_-) + (\veta_- \vdot \wt{\vn}) \Div_{\pd\Om}(\vv_- - \vh_-) \dd{\wt{S}} \\
			=:\, &I_1 + I_2.
		\end{split}
	\end{equation}
	Since $ \div \qty(\grad \phi \cp \grad \psi) \equiv 0 $ and $ \div(\vv_- - \vh_- )\equiv 0 $, 
	\begin{equation}
		\begin{split}
			I_1 =\, &\int_{\pd\Om} \wt{\vn} \vdot \DD_{\veta_-^\top}(\vv_- - \vh_-) + (\veta_- \vdot \wt{\vn})  \DD_{\wt{\vn}}(\vv_- - \vh_-) \vdot \wt{\vn} \dd{\wt{S}} \\
			=\, &\int_{\pd\Om}  \wt{\vn} \vdot \DD_{\veta_-^\top}(\vv_- - \vh_-) - (\veta_- \vdot \wt{\vn}) \Div_{\pd\Om}(\vv_- - \vh_-) \dd{\wt{S}} \\
			=\, &-I_2,
		\end{split}
	\end{equation}
	where the geometric relation \eqref{eqn div Gt div Rd} has been used.
	Thus, the similar arguments yield
	\begin{equation}
		\dv{t}\int_{\pd\Om} \vxi \vdot \wt{\vn} \dd{\wt{S}} = 0,
	\end{equation}
	which implies the following lemma:
	\begin{lem}\label{lem compatibility}
		Suppose that $ (\wt{\vom}(t), \wt{\vj}(t)) $ is the solution to the linear system  (\ref{eqn linear vom})-(\ref{eqn linear vj}) with initial data $ (\wt{\vom}_0, \wt{\vj}_0) $. If 
		\begin{equation}
			\int_{\pd\Om} \wt{\vom}_{0-} \vdot \wt{\vn} \dd{\wt{S}} = 0 = \int_{\pd\Om} \wt{\vj}_{0-} \vdot \wt{\vn} \dd{\wt{S}},
		\end{equation}
		then for all $ t $ such that the solution exists, there holds
		\begin{equation}
			\int_{\pd\Om} \wt{\vom}_-(t) \vdot \wt{\vn} \dd{\wt{S}} \equiv 0 \equiv \int_{\pd\Om} \wt{\vj}_-(t) \vdot \wt{\vn} \dd{\wt{S}}.
		\end{equation}
	\end{lem}

	\section{Nonlinear Problems}\label{sec nonlinear}
	\subsection{Initial settings}\label{sec nonlinear set-up}
	Take a reference hypersurface $ \Gs \in H^{\kk+1} $ and $ \delta_0 > 0 $ so that
	\begin{equation*}
		\Lambda_* \coloneqq \Lambda \qty(\Gs, \kk-\frac{1}{2}, \delta_0)
	\end{equation*}
	satisfies all the properties discussed in the preliminary. We will solve the nonlinear current-vortex sheet problems by an iteration scheme based on solving the linearized problems in the space:
	\begin{equation}
		\begin{split}
			&\ka \in C^0\qty([0, T]; \H{\kk-1}) \cap C^1\qty([0, T]; B_{\delta_1}\subset\H{\kk-\frac{5}{2}}) \cap C^2\qty([0, T]; \H{\kk-4}); \\
			&\vom_{\pm*}, \vj_{\pm*} \in  C^0\qty([0, T]; H^{\kk-1}(\Om_\Gs^\pm)) \cap C^1\qty([0, T]; H^{\kk-2}(\Om_\Gs^\pm)).
		\end{split}
	\end{equation}
	In order to construct the iteration map, we define the following function space:
	\begin{defi}
		For given positive constants $ T, M_0, M_1, M_2$, and $ M_3 $, define $ \mathfrak{X} $ to be the collection of $ \qty(\ka, \vom_*, \vj_*) $ satisfying:
		\begin{equation*}
			\abs{\ka(0) - \kappa_{*+}}_{\H{\kk-\frac{5}{2}}} \le \delta_1,
		\end{equation*}
		\begin{equation*}
			\abs{\qty(\pd_t \ka)(0)}_{\H{\kk-4}}, \norm{\vom_*(0)}_{H^{\kk-\frac{5}{2}}(\OGs)}, \norm{\vj_*(0)}_{H^{\kk-\frac{5}{2}}(\OGs)} \le M_0,
		\end{equation*}
		\begin{equation*}
			\sup_{t \in [0, T]} \qty(\abs{\ka}_{\H{\kk-1}}, \abs{\pd_t \ka}_{\H{\kk-\frac{5}{2}}}, \norm{\vom_*}_{H^{\kk-1}(\OGs)}, \norm{\vj_*}_{H^{\kk-1}(\OGs)}) \le M_1,
		\end{equation*}
		\begin{equation*}
			\sup_{t \in [0, T]} \qty(\norm{\pd_t\vom_*}_{H^{\kk-2}(\OGs)}, \norm{\pd_t\vj_*}_{H^{\kk-2}(\OGs)}) \le M_2,
		\end{equation*}
		\begin{equation*}
			\sup_{t \in [0, T]} \abs{\pd^2_{tt}\ka}_{\H{\kk-4}} \le a^2 M_3 \ (\text{here $ a $ is the constant in the definition of $ \ka $}).
		\end{equation*}
		In addition, the compatibility conditions
		\begin{equation}\label{eqn compati}
			\int_{\pd\Om} \wt{\vn} \vdot \vom_{*-} \dd{\wt{S}} = \int_{\pd\Om} \wt{\vn} \vdot \vj_{*-} \dd{\wt{S}} = 0
		\end{equation}
		hold for all $ t\in [0, T] $.
	\end{defi}
	
	For $ 0 < \epsilon \ll \delta_1 $ and $ A > 0 $, the collection of initial data
	\begin{equation*}
		\mathfrak{I}(\epsilon, A) \coloneqq\qty{\qty\big(\ini{\ka}, \ini{\pd_t\ka}, \ini{\vom_*}, \ini{\vj_*})}
	\end{equation*}
	is defined by:
	\begin{gather*}
		\abs{\ini{\ka}-\kappa_{*+}}_{\H{\kk-1}}<\epsilon; \\ \abs{\ini{\pd_t\ka}}_{\H{\kk-\frac{5}{2}}},\
		\norm{\ini{\vom_*}}_{H^{\kk-1}(\OGs)},\ \norm{\ini{\vj_*}}_{H^{\kk-1}(\OGs)} < A,
	\end{gather*}
	and
	\begin{equation*}
		\int_{\pd\Om} \wt{\vn} \vdot {\ini{\vom_*}}_- \dd{\wt{S}} = \int_{\pd\Om} \wt{\vn} \vdot {\ini{\vj_*}}_{-} \dd{\wt{S}} = 0.
	\end{equation*}
	Thus, $ \mathfrak{I}(\epsilon, A) \subset \H{\kk-1}\times\H{\kk-\frac{5}{2}}\times H^{\kk-1}(\OGs) \times H^{\kk-1}(\OGs) $.
	
	\subsection{Recovery of the fluid region, velocity and magnetic fields}\label{section recovery}
	For $ \qty\big(\ka, \vom_*, \vj_*)  \in \mathfrak{X}$, $ \ka(t) $ induces a family of hypersurfaces $ \Gt \in \Lambda_* $ if $ M_1 T $ is not too large. Thus $ \Phi_\Gt $ and $ \X_\Gt $ can be defined by $ \ka(t) $.
	
	For a vector field $ \vb{Y} : \OGt \to \R^3 $, define
	\begin{equation*}
		\qty(\Pb \vb{Y})_\pm \coloneqq \vb{Y}_\pm - \grad\phi_{\pm},
	\end{equation*}
	for which
	\begin{equation*}
		\begin{cases*}
			\lap \phi_\pm = \div \vb{Y}_\pm &in $ \Om_t^\pm $, \\
			\phi_\pm = 0 &on $ \Gt $, \\
			\DD_{\wt{\vn}} \phi_- = 0 &on $ \pd\Om $.
		\end{cases*}
	\end{equation*}
	Namely,  $ \Pb $ is the Leray projection. Set
	\begin{equation}\label{def bar vom vj}
		\bar{\vom} \coloneqq \Pb \qty(\vom_*(t) \circ \X_\Gt^{-1}), \quad \bar{\vj} \coloneqq \Pb\qty(\vj_*(t) \circ \X_\Gt^{-1}).
	\end{equation}
	Thus, $ \div\bar{\vom} = \div\bar{\vj} = 0 $ in $ \OGt $ and
	\begin{equation*}
		\int_{\pd\Om} \wt{\vn} \vdot \bar{\vom}_- \dd{S} = 0 = \int_{\pd\Om} \wt{\vn} \vdot \bar{\vj}_- \dd{S},
	\end{equation*}
	since $ \X_\Gt |_{\pd\Om} = \mathrm{id}|_{\pd\Om} $.
	
	Now, by solving the following div-curl problems:
	\begin{equation}\label{div-curl nonlinear v}
		\begin{cases*}
			\div \vv = 0 &in $ \OGt $, \\
			\curl \vv = \bar{\vom} &in $ \OGt $, \\
			\vv_\pm \vdot \vn_+ = \vn_+ \vdot \qty(\pd_t \gt \vnu)\circ\Phi_\Gt^{-1} &on $ \Gt $, \\
			\vv_- \vdot \wt{\vn} = 0 &on $ \pd\Om $;
		\end{cases*}
	\end{equation}
	and
	\begin{equation}\label{div-curl nonlinear h}
		\begin{cases*}
			\div \vh = 0 &in $ \OGt $, \\
			\curl \vh = \bar{\vj} &in $ \OGt $, \\
			\vh_\pm \vdot \vn_+ = 0 &on $ \Gt $, \\
			\vh_- \vdot \wt{\vn} = 0 &on $ \pd\Om $,
		\end{cases*}
	\end{equation}
	one can obtain the corresponding velocity and magnetic fields $ \vv_\pm, \vh_\pm : \Om_t^\pm \to \R^3 $. Furthermore, the following estimate holds thanks to Theorem \ref{thm div-curl}:
	\begin{equation}
		\sup_{t \in [0, T]}\qty(\norm{\vv_\pm}_{H^{\kk}(\Om_t^\pm)}, \norm{\vh_\pm}_{H^{\kk}(\Om_t^\pm)}) \le Q(M_1),
	\end{equation}
	where $ Q $ is a generic polynomial determined by $ \Lambda_* $.
	
	%	Thus, for a given initial data $ \qty\big{\ini{\ka}, \ini{\pd_t\ka}, \ini{\vom_*}, \ini{\vj_*}} \in \mathfrak{I}(\epsilon, A) $,
	%	 we have the induced initial velocity and magnetic fields $ \ini{\vv}, \ini{\vh} : \Om\setminus{\Gamma_{\mathrm{I}}} \to \R^3 $ by the above process.
	
	\subsection{Iteration map}\label{sec itetarion map}
	For $ \qty(\itn{\ka}, \itn{\vom_*}, \itn{\vj_*}) \in \mathfrak{X} $ and $ \qty\big{\ini{\ka}, \ini{\pd_t\ka}, \ini{\vom_*}, \ini{\vj_*}} \in \mathfrak{I}(\epsilon, A) $,
	define the $ (n+1)$-th step by solving the following initial value problems:
	\begin{equation}\label{eqn (n+1)ka}
		\begin{cases*}
			\pd^2_{tt}\itm{\ka} + \opC\qty(\itn{\ka}, \itn{\pd_t\ka}, \itn{\vv_*}, \itn{\vh_*})\itm{\ka} \\ \qquad = \opF\qty(\itn{\ka})\pd_t \itn{\vom_*} + \opG\qty(\itn{\ka}, \itn{\pd_t\ka}, \itn{\vom_*}, \itn{\vj_*}) \\
			\itm{\ka}(0) = \ini{\ka}, \quad \itm{\pd_t\ka}(0) = \ini{\pd_t\ka};
		\end{cases*}
	\end{equation}
	and
	\begin{equation}\label{eqn linear (n+1) vom vj}
		\begin{cases*}
			\pd_t\itm{\vom} + \DD_{\itn{\vv}}\itm{\vom} - \DD_{\itn{\vh}}\itm{\vj} = \DD_{\itm{\vom}}\itn{\vv} - \DD_{\itm{\vj}}\itn{\vh}, \\
			\pd_t\itm{\vj} + \DD_{\itn{\vv}}\itm{\vj} - \DD_{\itn{\vh}}\itm{\vom} \\ \qquad =  \DD_{\itm{\vj}}\itn{\vv} - \DD_{\itm{\vom}}\itn{\vh}
			- 2\tr(\grad\itn{\vv} \cp \grad \itn{\vh}), \\
			\itm{\vom}(0) = \Pb \qty(\ini{\vom_*} \circ \X_{\itn{\Gamma_0}}^{-1}), \quad \itm{\vj}(0) = \Pb\qty(\ini{\vj_*}\circ \X_{\itn{\Gamma_0}}^{-1}),
		\end{cases*}
	\end{equation}
	where $ \qty(\itn{\vv}, \itn{\vh}) $ is induced by $ \qty(\itn{\ka}, \itn{\vom_*}, \itn{\vj_*}) $ via solving \eqref{div-curl nonlinear v}-\eqref{div-curl nonlinear h}, the tangential vector fields $ \itn{\vv_*} $ and $ \itn{\vh_*} $ on $ \Gs $ are defined by
	\begin{equation}
		\begin{split}
			\itn{\vv_{*\pm}} &\coloneqq \qty(\DD\Phi_{\itn{\Gt}})^{-1}\qty[\itn{\vv_\pm}\circ\Phi_{\itn{\Gt}} - \qty(\pd_t\gamma_{\itn{\Gt}})\vnu], \\
			\itn{\vh_{*\pm}} &\coloneqq \qty(\DD\Phi_{\itn{\Gt}})^{-1} \qty(\itn{\vh_\pm} \circ \Phi_{\itn{\Gt}}),
		\end{split}
	\end{equation}
	and the current-vorticity equations are considered in the domain $ \Om \setminus \itn{\Gt} $.
	
	Denoting by
	\begin{equation}
		\itm{\vom_*}\coloneqq \itm{\vom}\circ\X_{\itn{\Gt}}, \qand \itm{\vj_*}\coloneqq \itm{\vj}\circ\X_{\itn{\Gt}},
	\end{equation}
	we will show that $ \qty(\itm{\ka}, \itm{\vom_*}, \itm{\vj_*}) \in \mathfrak{X} $.
	Indeed, in view of Lemma \ref{lem 3.4}, there hold
	\begin{equation*}
		\begin{split}
			&\hspace{-2em}\abs{\opF\qty(\itn{\ka})\itn{\pd_t\vom_*}}_{\H{\kk-\frac{5}{2}}} \\
			\le\, &Q\qty(\abs{\itn{\ka}}_{\H{\kk-1}})\norm{\itn{\pd_t\vom_*}}_{H^{\kk-\frac{5}{2}}(\OGs)} \\
			\le\, &Q(M_1) \cdot M_2,
		\end{split} 
	\end{equation*}
	and
	\begin{equation*}
		\begin{split}
			&\hspace{-2em}\abs{\opG\qty(\itn{\ka}, \itn{\pd_t\ka}, \itn{\vom_*}, \itn{\vj_*})}_{\H{\kk-\frac{5}{2}}} \\
			\le\, &a^2 Q\qty(\abs{\itn{\ka}}_{\H{\kk-1}}, \abs{\itn{\pd_t\ka}}_{\H{\kk-\frac{5}{2}}}, \norm{\qty(\itn{\vom_*}, \itn{\vj_*})}_{H^{\kk-1}(\OGs)}) \\
			\le\, &a^2 Q(M_1).
		\end{split}
	\end{equation*}
	Furthermore, by the definition of constants $ L_1, L_2 $ in \textsection~\ref{sec linear ka} and Lemma \ref{lem3.1}, one has
	\begin{equation*}
		\begin{split}
			L_1 \le\, &\sup_{t \in [0, T]} \qty(\abs{\itn{\ka}}_{\H{\kk-1}}, \abs{\itn{\pd_t\ka}}_{\H{\kk-\frac{5}{2}}}) \\
			&\quad +\sup_{t \in [0, T]} \qty(\abs{\itn{\vv_{\pm*}}}_{\H{\kk-\frac{1}{2}}}, \abs{\itn{\vh_{\pm*}}}_{\H{\kk-\frac{1}{2}}}) \\
			\le\, &Q(M_1),
		\end{split}
	\end{equation*}
	and
	\begin{equation*}
		L_2 = \sup_{t \in [0, T]} \qty(\abs{\itn{\pd_t\vv_{\pm*}}}_{\H{\kk-2}}, \abs{\itn{\pd_t\vh_{\pm*}}}_{\H{\kk-2}} ) \le Q\qty(M_1, M_2, a^2 M_3).
	\end{equation*}
	Thus, by taking $ l=k-2 $ in (\ref{est linear eqn ka}), it follows that
	\begin{equation*}
		\begin{split}
			&\sup_{t \in [0, T]}\qty(\abs{\itm{\ka}}_{\H{\kk-1}} + \abs{\itm{\pd_t\ka}}_{\H{\kk-\frac{5}{2}}}) \\
			&\le C_* \exp\qty{Q\qty(M_1, M_2, a^2M_3)T}   \times \qty[C_* + \epsilon + A + (M_0)^{12} + T\cdot \qty(a^2+M_2)Q(M_1)].
		\end{split}
	\end{equation*}
	If  $ T $ is taken small enough, and $ M_1 $ is much larger than $ M_0 $ and $ A $, then there holds
	\begin{equation}
		\sup_{t \in [0, T]}\qty(\abs{\itm{\ka}}_{\H{\kk-1}}+ \abs{\itm{\pd_t\ka}}_{\H{\kk-\frac{5}{2}}}) \le M_1.
	\end{equation}		
	Moreover, choosing $ M_3 $ large enough compared to $ M_1$ and $M_2 $, one gets from (\ref{eqn (n+1)ka}) that 
	\begin{equation}
		\sup_{t \in [0, T]} \abs{\itm{\pd^2_{tt}\ka}}_{\H{\kk-4}} \le a^2M_3.
	\end{equation}
	
	Similarly, by applying Proposition \ref{prop linear vom vj} to (\ref{eqn linear (n+1) vom vj}), one can deduce that
	\begin{equation}
		\begin{split}
			&\hspace{-2em}\sup_{t \in [0, T]}\qty(\norm{\itm{\vom_*}}_{H^{\kk-1}(\OGs)}, \norm{\itm{\vj_*}}_{H^{\kk-1}(\OGs)}) \\
			\le\, &Q\qty(\abs{\itn{\ka}}_{C^0_t\H{\kk-\frac{5}{2}}}) e^{Q(M_1)T}(1+2A) \\
			\le\, &M_1,
		\end{split}
	\end{equation}
	if $ T $ is small and $ M_1 \gg \abs{\kappa_{*+}}_{\H{\kk-\frac{5}{2}}} $. Next, by taking $ M_2 \gg M_1 $, it holds that
	\begin{equation*}
		\begin{split}
			&\hspace{-2em}\norm{\itm{\pd_t\vom_*}(t)}_{H^{\kk-2}(\OGs)} \\
			\le\, &C_* \qty(\norm{\itm{\pd_t\vom}}_{H^{\kk-2}(\Om\setminus\itn{\Gt})} + \norm{\itm{\vom}}_{H^{\kk-1}(\Om\setminus\itn{\Gt})}\abs{\itn{\pd_t\ka}}_{\H{\kk-\frac{5}{2}}} ) \\
			\le\,  &Q(M_1) \le M_2
		\end{split}
	\end{equation*}
	for all $ 0 \le t \le T $. Similarly,
	\begin{equation}
		\sup_{t \in [0, T]} \qty(\norm{\itm{\pd_t\vom_*}}_{H^{\kk-2}(\OGs)}, \norm{\itm{\pd_t\vj_*}}_{H^{\kk-2}(\OGs)}) \le M_2.
	\end{equation}
	
	The compatibility condition \eqref{eqn compati} follows from Lemma \ref{lem compatibility}. 
	
	With the following notation:
	\begin{equation}
		\begin{split}
			\mathfrak{T}\qty{\qty[\ini{\ka}, \ini{\pd_t\ka}, \ini{\vom_*}, \ini{\vj_*}], \qty[\itn{\ka}, \itn{\vom_*}, \itn{\vj_*}]}
			\coloneqq \qty[\itm{\ka}, \itm{\vom_*}, \itm{\vj_*}],
		\end{split}
	\end{equation}
	one can conclude from the previous arguments that:
	\begin{prop}
		Suppose that $ k \ge 2 $. For any $ 0 < \epsilon \ll \delta_0 $ and $ A > 0 $, there are positive constants $ M_0, M_1, M_2, M_3 $,  so that for small $ T > 0 $,
		\begin{equation*}
			\mathfrak{T}\qty\Big{\qty[\ini{\ka}, \ini{\pd_t\ka}, \ini{\vom_*}, \ini{\vj_*}], \qty[\ka, \vom_*, \vj_*]} \in \mathfrak{X},
		\end{equation*}
		holds for any $ \qty[\ini{\ka}, \ini{\pd_t\ka}, \ini{\vom_*}, \ini{\vj_*}] \in \mathfrak{I}(\epsilon, A) $ and $ \qty[\ka, \vom_*, \vj_*] \in \mathfrak{X} $.
	\end{prop}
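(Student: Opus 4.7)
The plan is to verify the five defining bounds of $\mathfrak{X}$ for the new iterate $(\itm{\ka}, \itm{\vom_*}, \itm{\vj_*})$ by choosing constants in the order $M_1$, then $M_2$, then $M_3$, then finally $T$, treating the two linear problems \eqref{eqn (n+1)ka} and \eqref{eqn linear (n+1) vom vj} in sequence and coupling them only through the coefficient bounds inherited from $(\itn{\ka}, \itn{\vom_*}, \itn{\vj_*}) \in \mathfrak{X}$. First I would apply the linear energy estimate \eqref{est linear eqn ka} with $l = k-2$ to \eqref{eqn (n+1)ka}. The constants in that estimate satisfy $L_1 \le Q(M_1)$ by Lemma \ref{lem3.1} applied to $\itn{\vv_*}$ and $\itn{\vh_*}$, and $L_2 \le Q(M_1, M_2, a^2 M_3)$ by differentiating \eqref{eqn pd beta v} in $t$; meanwhile, by Lemma \ref{lem 3.4}, the forcing $\opF(\itn\ka)\itn{\pd_t\vom_*} + \opG(\itn\ka, \itn{\pd_t\ka}, \itn{\vom_*}, \itn{\vj_*})$ is bounded in $\H{\kk-\frac{5}{2}}$ by $Q(M_1) M_2 + a^2 Q(M_1)$. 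Picking $M_1$ much larger than $\epsilon + A + M_0^{12}$ and then $T$ small enough so that the exponential factor stays close to $1$, the propagated norms of $\itm\ka$ and $\itm{\pd_t\ka}$ remain below $M_1$; reading off $\pd^2_{tt}\itm\ka$ directly from \eqref{eqn (n+1)ka} and choosing $M_3$ sufficiently large compared with $M_1$ and $M_2/a^2$ secures the last bound $|\pd^2_{tt}\itm\ka|_{\H{\kk-4}} \le a^2 M_3$.

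Second, for the current-vorticity system \eqref{eqn linear (n+1) vom vj}, I would reconstruct $\itn{\vv}, \itn{\vh}$ from $(\itn\ka, \itn{\vom_*}, \itn{\vj_*})$ by the div-curl problems \eqref{div-curl nonlinear v}--\eqref{div-curl nonlinear h} (bounded in $H^{\kk}(\Om\setminus\itn{\Gt})$ by $Q(M_1)$ thanks to Theorem \ref{thm div-curl}), then invoke Proposition \ref{prop linear vom vj} with $\bar L_1 \le Q(M_1)$ to obtain
\[
\sup_{[0,T]}\big(\norm{\itm\vom}_{H^{\kk-1}(\Om\setminus\itn\Gt)}^2 + \norm{\itm\vj}_{H^{\kk-1}(\Om\setminus\itn\Gt)}^2\big) \le e^{Q(M_1)T}(1 + 2 A^2).
\]
Pulling back by $\X_{\itn{\Gt}}$ via Lemma \ref{lem composition harm coordi} gives the analogous bound for $(\itm{\vom_*}, \itm{\vj_*})$, provided $M_1 \gg |\kappa_{*+}|_{\H{\kk-\frac{5}{2}}}$. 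The $H^{\kk-2}$ bound on $(\itm{\pd_t\vom_*}, \itm{\pd_t\vj_*})$ is then read off the equations directly—substituting transport and source terms for $\pd_t$ and absorbing the extra contribution from $\pd_t\X_{\itn\Gt}$, which is controlled by $|\itn{\pd_t\ka}|_{\H{\kk-\frac{5}{2}}} \le M_1$—and closes upon taking $M_2 \gg M_1$. The compatibility conditions \eqref{eqn compati} for $\itm{\vom_*}, \itm{\vj_*}$ then follow from Lemma \ref{lem compatibility}, since the fluxes through $\pd\Om$ are conserved by the linear system and vanish at $t=0$ by the definition of $\mathfrak{I}(\epsilon, A)$.

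The main obstacle is the nested constant dependence: the quantity $L_2$ entering \eqref{est linear eqn ka} itself involves $a^2 M_3$, while $M_3$ is in turn produced by the same equation, so one must choose $T$ small depending on $a$ to keep the prefactor $e^{Q(M_1, M_2, a^2 M_3) T}$ bounded. Since $a$ is a fixed constant determined solely by $\Gs$ and $\vnu$ (chosen via Proposition \ref{prop K}), and since the same choice $a \ge a_0$ guarantees via \eqref{est opcal b} that $\mrm{I} + \opB(\ka)$ remains invertible on $\H{s'}$ so that \eqref{eqn pdt2 ka premitive} can indeed be rewritten as \eqref{eqn (n+1)ka}, this circular dependence is harmless; the quantitative thresholds on $T$ depend only on $\Lambda_*, \epsilon, A, M_0$, which is precisely what is asserted.
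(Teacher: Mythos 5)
Your proposal is correct and follows essentially the same route as the paper: apply the linear energy estimate \eqref{est linear eqn ka} with $l = k-2$ to \eqref{eqn (n+1)ka} using the coefficient bounds $L_1 \le Q(M_1)$, $L_2 \le Q(M_1, M_2, a^2 M_3)$ and the forcing bounds from Lemma \ref{lem 3.4}, choose $M_1$ much larger than $\epsilon + A + M_0^{12}$, take $M_2 \gg M_1$ and $M_3 \gg M_1, M_2$, then $T$ small, and finish via the div-curl reconstruction, Proposition \ref{prop linear vom vj}, Lemma \ref{lem composition harm coordi}, and Lemma \ref{lem compatibility}. The observation about the circular dependence of $L_2$ on $a^2 M_3$ being harmless once $T$ is chosen last matches the paper's implicit resolution of the same issue.
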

	
	\subsection{Contraction of the iteration map}\label{sec contra ite}
	In this subsection, it is always assumed that $ k \ge 3 $. Suppose that there are two one-parameter families $ \qty(\itn{\ka}(\beta), \itn{\vom_*}(\beta), \itn{\vj_*}(\beta)) \subset \mathfrak{X} $ and \\ $ \qty\Big(\ini{\ka}(\beta), \ini{\pd_t\ka}(\beta), \ini{\vom_*}(\beta), \ini{\vj_*}(\beta)) \subset \mathfrak{I}(\epsilon, A)$ with parameter $ \beta $. Define 
	\begin{equation*}
		\begin{split}
			&\hspace{-2em}\qty(\itm{\ka}(\beta), \itm{\vom_*}(\beta), \itm{\vj_*}(\beta)) \\
			\coloneqq\, &\mathfrak{T}\qty{\qty\Big(\ini{\ka}(\beta), \ini{\pd_t\ka}(\beta), \ini{\vom_*}(\beta), \ini{\vj_*}(\beta)), \qty(\itn{\ka}(\beta), \itn{\vom_*}(\beta), \itn{\vj_*}(\beta))}.
		\end{split}
	\end{equation*}
	Then by applying $ \pdv{\beta} $ to (\ref{eqn (n+1)ka}) and (\ref{eqn linear (n+1) vom vj}) respectively, one gets
	\begin{equation}\label{eqn contra 1}
		\begin{cases*}
			\pd^2_{tt} \pd_\beta \itm{\ka} + \itn{\opC}\pd_\beta\itm{\ka}  = - \qty(\pd_\beta\itn{\opC})\itm{\ka} + \pd_\beta \qty(\itn{\opF}\itn{\pd_t\vom_*} + \itn{\opG}), \\
			\pd_\beta\itm{\ka}(0) = \pd_\beta\ini{\ka}(\beta), \quad \pd_t\qty(\pd_\beta\itm{\ka})(0) = \pd_\beta\ini{\pd_t\ka}(\beta),
		\end{cases*}
	\end{equation}
	and
	\begin{equation}\label{eqn beta n+1}
		\begin{cases*}
			\pd_t\Dbt\itm{\vom} + \DD_{\itn{\vv}}\Dbt\itm{\vom} - \DD_{\itn{\vh}}\Dbt\itm{\vj} = \DD_{\Dbt\itm{\vom}}\itn{\vv} - \DD_{\Dbt\itm{\vj}}\itn{\vh} + \va{\g}_1, \\
			\pd_t \Dbt\itm{\vj} + \DD_{\itn{\vv}}\Dbt\itm{\vj} - \DD_{\itn{\vh}}\Dbt\itm{\vom} = \DD_{\Dbt\itm{\vj}}\itn{\vv} - \DD_{\Dbt\itm{\vom}}\itn{\vh} + \va{\g}_2, \\
			\Dbt\itm{\vom}(0) = \Pb\qty{\qty[\pd_\beta\ini{\vom_*}] \circ \X_{\itn{\Gamma_0}(\beta)}^{-1}},  \quad \Dbt\itm{\vj}(0) = \Pb\qty{\qty[\pd_\beta\ini{\vj_*}]\circ\X_{\itn{\Gamma_0}(\beta)}^{-1}},
		\end{cases*}
	\end{equation}
	where
	\begin{equation}
		\Dbt \coloneqq \pdv{\beta} + \DD_{\vb*{\mu}} \qc \vb*{\mu} \coloneqq \h\qty[\qty(\pd_\beta\gamma_{\itn{\Gt}}\vnu)\circ\Phi_{\itn{\Gt}(\beta)}^{-1}],
	\end{equation}
	\begin{equation}
		\begin{split}
			\va{\g}_1 \coloneqq\, &\comm{\pd_t}{\Dbt}\itm{\vom} + \comm{\DD_{\itn{\vv}}}{\Dbt}\itm{\vom} - \comm{\DD_{\itn{\vh}}}{\Dbt}\itm{\vj} \\
			&+\itm{\vom} \vdot \Dbt\DD\itn{\vv} - \itm{\vj} \vdot \Dbt\DD\itn{\vh},
		\end{split}
	\end{equation}
	and
	\begin{equation}\label{eqn g2}
		\begin{split}
			\va{\g}_2 \coloneqq\, &-2\Dbt\tr(\grad\itn{\vv} \cp \grad\itn{\vh}) + \comm{\pd_t}{\Dbt}\itm{\vj} + \comm{\DD_{\itn{\vv}}}{\Dbt}\itm{\vj} \\
			&-\comm{\DD_{\itn{\vh}}}{\Dbt}\itm{\vom}+ \itm{\vj}\vdot \Dbt\DD\itn{\vh} - \itm{\vom}\vdot\Dbt\DD\itn{\vh}.
		\end{split}
	\end{equation}
	To estimate the Lipschitz constant for the iteration map $ \mathfrak{T} $, we consider the following energy functionals:
	\begin{equation}
		\begin{split}
			\itn{\E}(\beta)\coloneqq &\sup_{t \in [0, T]}\left(\abs{\pd_\beta\itn{\ka}}_{\H{\kk-\frac{5}{2}}} + \abs{\pd_\beta\itn{\pd_t\ka}}_{\H{\kk-4}} + \right. \\
			&\qquad\qquad \left.+ \norm{\pd_\beta\itn{\vom_*}}_{H^{\kk-\frac{5}{2}}(\OGs)} + \norm{\pd_\beta\itn{\vj_*}}_{H^{\kk-\frac{5}{2}}(\OGs)}   \right),
		\end{split}
	\end{equation}
	and
	\begin{equation}
		\begin{split}
			\ini{\E}(\beta)\coloneqq\, &\left(\abs{\pd_\beta\ini{\ka}}_{\H{\kk-\frac{5}{2}}} + \abs{\pd_\beta\ini{\pd_t\ka}}_{\H{\kk-4}} + \right. \\
			&\qquad \left.+ \norm{\pd_\beta\ini{\vom_*}}_{H^{\kk-\frac{5}{2}}(\OGs)} + \norm{\pd_\beta\ini{\vj_*}}_{H^{\kk-\frac{5}{2}}(\OGs)}   \right).
		\end{split}
	\end{equation}
	
	Thanks to Lemmas \ref{lem3.1}, \ref{lem 3.3}, and \ref{lem 3.4}, it holds that
	\begin{equation*}
		\begin{split}
			\abs{\qty(\pd_\beta\itn{\opC})\itm{\ka}}_{\H{\kk-4}}
			\le Q(M_1) \itn{\E}\abs{\itm{\ka}}_{\H{\kk-2}} \le Q(M_1) \itn{\E},
		\end{split}
	\end{equation*}
	\begin{equation*}
		\abs{\opF\qty(\itn{\ka})\pd_\beta\pd_t\itn{\vom_*}}_{\H{\kk-4}} \le Q(M_1) \norm{\pd_\beta\pd_t \itn{\vom_*}}_{H^{\kk-4}(\OGs)},
	\end{equation*}
	and
	\begin{equation*}
		\begin{split}
			&\hspace{-2em}\abs{\qty(\pd_\beta\itn{\opF})\pd_t\itn{\vom_*} + \pd_\beta\itn{\opG}}_{\H{\kk-4}} \\
			\le\, &C_* \abs{\pd_\beta\itn{\ka}}_{\H{\kk-\frac{5}{2}}}\norm{\pd_t\itn{\vom_*}}_{H^{\kk-\frac{7}{2}}(\OGs)} + a^2 Q(M_1) \itn{\E} \\
			\le\, &\qty(C_*M_2 + a^2 Q(M_1)) \itn{\E}.
		\end{split}
	\end{equation*}
	Taking $ l=k-3 $ in (\ref{est linear eqn ka}) leads to
	\begin{equation}\label{est beta ka(n+1)}
		\begin{split}
			&\hspace{-1em}\sup_{t \in [0, T]} \qty(\abs{\pd_\beta\itm{\ka}}_{\H{\kk-\frac{5}{2}}} + \abs{\pd_\beta\itm{\pd_t\ka}}_{\H{\kk-4}}) \\
			\le\, &Q(M_1) \exp{Q\qty(M_1, M_2, a^2M_3)T} \\
			& \times\qty(\ini{\E} + T \cdot \qty[C_*M_2 + \qty(1+a^2)Q(M_1)]\itn{\E} +T\cdot Q(M_1)\sup_{t \in [0, T]}\norm{\pd_\beta\pd_t\itn{\vom_*}}_{H^{\kk-4}(\OGs)}).
		\end{split}
	\end{equation}
	To estimate (\ref{eqn beta n+1}), one can derive that
	\begin{equation*}
		\begin{split}
			&\norm{\va{\g}_1}_{H^{\kk-\frac{5}{2}}(\Om\setminus\itn{\Gt})} \\
			&\quad \le Q(M_1) \qty(\abs{\pd_t\pd_\beta\itn{\ka}}_{\H{\kk-4}} + \norm{\qty(\Dbt \itn{\vv}, \Dbt\itn{\vh})}_{H^{\kk-\frac{3}{2}}(\Om\setminus\itn{\Gt})} + \abs{\pd_\beta \ka}_{\H{\kk-\frac{5}{2}}}), 
		\end{split}
	\end{equation*}
	which, together with \eqref{est pd beta v+*}, implies
	\begin{equation*}
		\norm{\va{\g}_1}_{H^{\kk-\frac{5}{2}}(\Om\setminus\itn{\Gt})} \le Q(M_1) \itn{\E}.
	\end{equation*}
	Similarly, one can obtain
	\begin{equation*}
		\norm{\va{\g}_2}_{H^{\kk-\frac{5}{2}}(\Om\setminus\itn{\Gt})} \le Q(M_1) \itn{\E}.
	\end{equation*}
	It follows from the same arguments as in Proposition \ref{prop linear vom vj} that
	\begin{equation}\label{est dbt vom vj (n+1)}
		\begin{split}
			&\hspace{-2em}\sup_{t \in [0, T]}\qty(\norm{\Dbt\itm{\vom}}_{H^{\kk-\frac{5}{2}}(\Om\setminus\itn{\Gt})} + \norm{\Dbt\itm{\vj}}_{H^{\kk-\frac{5}{2}}(\Om\setminus\itn{\Gt})}) \\
			\le\, &e^{Q(M_1)T} \qty{\ini{\E} + TQ(M_1)\itn{\E}},
		\end{split}
	\end{equation}
	and thus
	\begin{equation*}
		\norm{\pd_t\Dbt\itm{\vom}}_{H^{\kk-4}(\Om\setminus\itn{\Gt})} \le e^{Q(M_1)T} Q(M_1)\qty{\ini{\E} + TQ(M_1)\itn{\E}}.
	\end{equation*}
	Since
	\begin{equation*}
		\qty[\pd_t\pd_\beta\itm{\vom_*}]\circ\X_{\itn{\Gt}}^{-1} = \pd_t\Dbt\itm{\vom} + \DD_{\h\qty[\qty(\pd_t\itn{\gt}\vnu)\circ\Phi_{\itn{\Gt}}^{-1}]}\Dbt\itm{\vom},
	\end{equation*}
	one has
	\begin{equation}\label{est pd beta t vom (n+1)}
		\norm{\pd_\beta\pd_t\itm{\vom_*}}_{H^{\kk-4}(\OGs)} \le e^{Q(M_1)T} Q(M_1)\qty{\ini{\E} + TQ(M_1)\itn{\E}}.
	\end{equation}
	Set
	\begin{equation}
		\itn{\mathfrak{F}}\coloneqq\norm{\pd_\beta\pd_t\itn{\vom_*}}_{H^{\kk-4}(\OGs)}.
	\end{equation}
	Then (\ref{est beta ka(n+1)})-(\ref{est pd beta t vom (n+1)}) imply that
	\begin{equation}
		\begin{split}
			\itm{\E} \le\, &C_* \exp\qty{Q\qty(M_1, M_2, a^2M_3)T} \\
			&\quad\times \qty{\ini{\E} + T \cdot \qty[M_2\itn{\E} + \qty(1+a^2)Q(M_1)\itn{\E} + Q(M_1)\itn{\mathfrak{F}}]} \\
			&+ e^{Q(M_1)T} Q(M_1)\qty{\ini{\E} + TQ(M_1)\itn{\E}},
		\end{split}
	\end{equation}
	and
	\begin{equation}
		\itm{\mathfrak{F}} \le e^{Q(M_1)T} Q(M_1)\qty{\ini{\E} + TQ(M_1)\itn{\E}}.
	\end{equation}
	Thus, if $ T $ is small compared to $ M_1, M_2, M_3 $ and $ a $, then
	\begin{equation}
		\itm{\E} + \itm{\mathfrak{F}} \le \dfrac{1}{2}\qty(\itn{\E}+\itn{\mathfrak{F}}) + Q(M_1)\ini{\E}.
	\end{equation}
	
	An immediate consequence is that if the initial data is fixed, the iteration map $ \mathfrak{T} $ is a contraction in a space containing $ \mathfrak{X} $. Since $ \mathfrak{T} $  is a map from $ \mathfrak{X} $ to $ \mathfrak{X} $, it follows that $ \mathfrak{T} $ has a unique fixed point on $ \mathfrak{X} $. Namely, one obtains:
	\begin{prop}\label{fixed point}
		Assume that $ k \ge 3 $. For any $ 0 < \epsilon \ll \delta_0 $ and $ A > 0 $, there are positive constants $ M_0, M_1, M_2, M_3 $ so that if $ T $ is small enough, then there is a map $ \mathfrak{S} : \mathfrak{I}(\epsilon, A) \to  \mathfrak{X} $ such that
		\begin{equation}
			\mathfrak{T}\qty{\mathfrak{x}, \mathfrak{S(x)}} = \mathfrak{S(x)}
		\end{equation}
		for each $ \mathfrak{x} = \qty\Big(\ini{\ka}, \ini{\pd_t\ka}, \ini{\vom_*}, \ini{\vj_*}) \in \mathfrak{I}(\epsilon, A) $.
	\end{prop}
	
	\subsection{The original nonlinear MHD problem}\label{sec back to MHD}
	For any given initial data $ \Gamma_0 \in H^{\kk+1} $ and $ \vv_0, \vh_0 \in H^{\kk}(\Om\setminus\Gamma_0) $, one can construct $$ \qty\Big(\ka(0), \pd_t\ka(0), \vom_*(0), \vj_*(0)). $$ Indeed, for a reference hypersurface $ \Gs \in H^{\kk+1} $ close enough to $ \Gamma_0 $ (or $ \Gs = \Gamma_0 $) and a transversal field $ \vnu \in \H{\kk-1} $, $ \ka(0) $ can be given by $ \Gamma_0 $, and $ \pd_t\ka(0) $ is determined by $ \theta_0 = \vv_{0\pm}\vdot\vn_+ $. In addition, let
	\begin{equation}
		\vom_* \coloneqq \qty(\curl \vv_0) \circ \X_{\Gamma_0}^{-1}, \quad \vj_*\coloneqq\qty(\curl \vh_0) \circ \X_{\Gamma_0}^{-1}.
	\end{equation}
	Thus, $ \ka(0) \in \H{\kk-1}, \pd_t\ka(0) \in \H{\kk-\frac{5}{2}} $, and $ \vom_*(0), \vj_*(0)  \in H^{\kk-1}(\OGs)$.
	
	Let $ \qty{\ini{\ka}, \ini{\pd_t\ka}, \ini{\vom_*}, \ini{\vj_*}} \coloneqq \qty{\ka(0), \pd_t\ka(0), \vom_*(0), \vj_*(0)} $, and take the corresponding fixed point $ \qty{\ka(t), \vom_*(t), \vj_*(t)} \in \mathfrak{X} $ of the map $ \mathfrak{S} : \mathfrak{I}(\epsilon, A) \to \mathfrak{X} $ given in  Proposition~\ref{fixed point}. Thus, $ \qty(\Gt, \vv, \vh) $ can be obtained as discussed in \textsection~\ref{section recovery}. We will show that
	the induced quantity $ \qty(\Gt, \vv, \vh) $ is a solution to the (MHD)-(BC) problem with initial data $ \qty(\Gamma_0, \vv_0, \vh_0) $.
	
	Indeed, it is clear that $ \Gamma(0) = \Gamma_0 $, $ \vv(0) = \vv_0$, and $\vh(0) = \vh_0 $ by the definition and the uniqueness of div-curl systems.
	
	First, we claim that
	\begin{equation}
		\Pb \qty(\vom_*(t) \circ \X_\Gt^{-1}) = \vom_*(t) \circ \X_\Gt^{-1}, \quad \Pb \qty(\vj_*(t) \circ \X_\Gt^{-1}) = \vj_*(t) \circ \X_\Gt^{-1}.
	\end{equation}
	Indeed, taking the divergence of (\ref{eqn linear (n+1) vom vj}) and using the fact that $ \div \vv \equiv 0 \equiv \div\vh $ yield
	\begin{equation}
		\begin{cases*}
			\pd_t (\div\vom) + \DD_{\vv}(\div \vom) = \DD_{\vh} (\div\vj), \\
			\pd_t (\div\vj) + \DD_{\vv}(\div\vj) = \DD_{\vh} (\div\vom),
		\end{cases*}
	\end{equation}
	where
	\begin{equation*}
		\vom(t) \coloneqq \vom_*(t) \circ \X_\Gt^{-1}, \quad \vj(t)\coloneqq \vj_*(t) \circ \X_\Gt^{-1}.
	\end{equation*}
	Since $ \div \vom(0) = 0 = \div \vj(0) $, it follows from the arguments in \textsection~\ref{sec linear current vortex} that $ \div\vom \equiv 0 \equiv \div \vj $ for all $ t $, which proves the claim.
	
	Consequently,
	\begin{equation}\label{eqn curl vv vh}
		\curl\vv = \vom \qand \curl \vh = \vj.
	\end{equation}
	
	Next, as in (\ref{decomp p}), define the pressure functions via
	\begin{equation*}
		p^{\pm} = \rho_\pm \qty(p_{\vv, \vv}^{\pm} - p_{\vh, \vh}^{\pm} + p_\kappa^\pm) + \h_\pm \bar{\n}^{-1}(-g^+ + g^-),
	\end{equation*}
	with $ p^\pm_\kappa $ defined by (\ref{def p_kappa}) and
	\begin{equation*}
		g^{\pm} \coloneqq 2 \DD_{\vv_\pm^\top}\theta - \II_+\qty(\vv_\pm^\top, \vv_\pm^\top) + \II_+\qty(\vh_\pm^\top, \vh_\pm^\top) + \DD_{\vn_+}\qty(p_{\vv, \vv}^\pm - p_{\vh, \vh}^\pm).
	\end{equation*}
	
	Inspired by \cite{Sun-Wang-Zhang2018}, define
	\begin{equation}
		\vV_\pm \coloneqq \pd_t \vv_\pm + \DD_{\vv_\pm}\vv_\pm + \dfrac{1}{\rho_\pm}\grad p^\pm - \DD_{\vh_\pm}\vh_\pm,
	\end{equation}
	and
	\begin{equation}
		\vH_\pm \coloneqq \pd_t \vh_\pm + \DD_{\vv_\pm}\vh_\pm - \DD_{\vh_\pm}\vv_\pm.
	\end{equation}
	It suffices to show that $ \vV \equiv \vb{0} \equiv \vH $ for $ 0 < t < T $. Indeed, since $ \Om_t^\pm $ are both assumed to be simply-connected, one only needs to verify for $ \vb{Z} = \vV$ or $ \vH $:
	\begin{equation}
		\begin{cases*}
			\div \vb{Z}_\pm = 0 &in $ \Om_t^\pm $, \\
			\curl \vb{Z}_\pm = \vb{0} &in $ \Om_t^\pm $, \\
			\vb{Z}_\pm \vdot \vn_+ = 0 &on $ \Gt $, \\
			\vb{Z}_- \vdot \wt{\vn} = 0 &on $ \pd\Om $.
		\end{cases*}
	\end{equation}
	
	\subsubsection*{Verification of $ \vV $}
	Observe that
	\begin{equation*}
		\div \vv \equiv 0 \equiv \div\vh.
	\end{equation*}
	Then it follows from the definitions of $ p^\pm_{\vb{a}, \vb{b}} $ that,
	\begin{equation}
		\div \vV \equiv 0.
	\end{equation}
	Taking curl of $ \vV $ and using (\ref{eqn curl vv vh}) with (\ref{eqn pdt vom}) lead to
	\begin{equation}
		\curl \vV = \pd_t \vom + \DD_{\vv}\vom - \DD_{\vom}\vv - \DD_{\vh}\vj + \DD_{\vj}\vh = \vb{0}.
	\end{equation}
	In addition, it follows from (\ref{def p_ab^-}) that on $ \pd\Om $:
	\begin{equation}
		\vV_- \vdot \wt{\vn} =  - \wt{\II}(\vv_-, \vv_-) + \wt{\II}(\vh_-, \vh_-) + \DD_{\wt{\vn}}(p^-_{\vv, \vv} - p^-_{\vh, \vh}) = 0. 
	\end{equation}
	Thus, it remains to show that $ \vV_\pm \vdot \vn_+ \equiv 0 $. Note that for $ \theta\coloneqq \vv_\pm \vdot \vn_+ $ and $ \ev{g^\pm}\coloneqq\fint_\Gt g^\pm \dd{S_t} $, there hold
	\begin{equation}
		\begin{split}
			&\hspace{-1em}\vV_+ \vdot \vn_{+} \\
			=\, &\vn_+ \vdot \qty(\Dt_+ \vv_+ - \DD_{\vh_+}\vh_+) + \DD_{\vn_+}\qty(p_{\vv, \vv}^+ - p_{\vh, \vh}^+ + p_\kappa^+) - \qty(\dfrac{1}{\rho_+}\n_+)\bar{\n}^{-1}(g^+ - g^-) \\
			=\, &\Dt_+ \theta + \vn_+ \vdot (\DD\vv_+)\vdot \vv_+^\top + \II_+(\vh_+, \vh_+) + \DD_{\vn_+}\qty(p_{\vv, \vv}^+ - p_{\vh, \vh}^+) \\
			&+ \wt{\n}\kappa_+ - \qty(\dfrac{1}{\rho_+}\n_+)\bar{\n}^{-1}(g^+ - g^-) \\
			=\, &\Dt_+ \theta + \vn_+ \vdot \DD_{\vv_+^\top}\vv_+ + \II_+(\vh_+, \vh_+) + \DD_{\vn_+}\qty(p_{\vv, \vv}^+ - p_{\vh, \vh}^+) \\
			&+ \wt{\n}\kappa_+  + \qty(\dfrac{1}{\rho_-}\n_-)\bar{\n}^{-1}g^+ + \qty(\dfrac{1}{\rho_+}\n_+)\bar{\n}^{-1}g^- -g^+ + \ev{g^+} \\
			=\, &\Dt_+ \theta - \DD_{\vv_+^\top}\theta + \wt{\n}\kappa_+ + \qty(\dfrac{1}{\rho_-}\n_-)\bar{\n}^{-1}g^+ + \qty(\dfrac{1}{\rho_+}\n_+)\bar{\n}^{-1}g^- + \ev{g^+},
		\end{split} 
	\end{equation}
	and
	\begin{equation}
		\begin{split}
			&\hspace{-2em}\vV_- \vdot \vn_- \\
			=\, &-\Dt_- \theta + \vn_- \vdot (\DD\vv_-) \vdot \vv_- + \II_-(\vh_-, \vh_-) + \DD_{\vn_-}(p_{\vv, \vv}^- - p_{\vh, \vh}^-) \\
			&-\wt{\n}\kappa_+ - \qty(\dfrac{1}{\rho_-}\n_-) \bar{\n}^{-1}(g^+ - g^-) \\
			=\, &-\Dt_- \theta + \vn_- \vdot \DD_{\vv_-^\top}\vv_- + \II_-(\vh_-, \vh_-) + \DD_{\vn_-}(p_{\vv, \vv}^- - p_{\vh, \vh}^-) \\
			&-\wt{\n}\kappa_+ - \qty(\dfrac{1}{\rho_-}\n_-) \bar{\n}^{-1}g^+ - \qty(\dfrac{1}{\rho_+}\n_+)\bar{\n}^{-1}g^- + g^- - \ev{g^-} \\
			=\, &-\Dt_- \theta + \DD_{\vv_-^\top}\theta -\wt{\n}\kappa_+ - \qty(\dfrac{1}{\rho_-}\n_-)\bar{\n}^{-1}g^+ - \qty(\dfrac{1}{\rho_+}\n_+)\bar{\n}^{-1}g^- - \ev{g^-}.
		\end{split}
	\end{equation}
	Hence,
	\begin{equation}
		\begin{split}
			\vV_+ \vdot \vn_+ + \vV_- \vdot \vn_- &= \Dt_+ \theta - \Dt_- \theta - \DD_{\vv_+^\top}\theta + \DD_{\vv_-^\top}\theta + \ev{g^+ - g^-} \\
			&= \DD_{\vv_+ - \vv_-}\theta - \DD_{\vv_+^\top -\vv_-^\top}\theta + \ev{g^+ - g^-} \\
			&=0,
		\end{split}
	\end{equation}
	where the last equality follows from (\ref{int g^+ - g^-}) and the relation that $ \vv_+ \vdot \vn_+ = \vv_- \vdot \vn_+ $.
	Therefore, one can define
	\begin{equation}
		\Theta \coloneqq \vV_+ \vdot \vn_+ = \vV_- \vdot \vn_+.
	\end{equation}

	For $ \vW $ defined via (\ref{def vW}), the relation that $ \div \vv \equiv 0 $ implies $ q^\pm = p^\pm $, that is,
	\begin{equation}\label{eqn vW vV}
		\vW = \dfrac{\rho_+}{\rho_+ + \rho_-}\vV_+ + \dfrac{\rho_-}{\rho_+ + \rho_-} \vV_- \qq{on} \Gt.
	\end{equation}
	Thus,
	\begin{equation}
		\Theta = \vW \vdot \vn_+.
	\end{equation}
	Because $ \qty{\ka, \vom_*, \vj_*} \in \mathfrak{X} $  is a fixed point of $ \mathfrak{T} $, by (\ref{eqn pd2 tt ka}), it holds that
	\begin{equation}\label{last eqn vW}
		-\lap_\Gt \qty(\vW \vdot \vn_+) + \vW \vdot \lap_\Gt \vn_+ + a^2 \dfrac{\vW \vdot \vn_+}{\vn_+ \vdot (\vnu \circ \Phi_\Gt^{-1})} = 0.
	\end{equation}
	In addition, since $ \curl \vV = \vb{0}, \div \vV = 0 $, $ \vV_- \vdot \wt{\vn} = 0 $ and $ \Om_t^\pm $ are both simply-connected, there are two mean-zero functions $ r^\pm(t, x') : \Gt \to \R $ so that
	\begin{equation}
		\vV_\pm = \dfrac{1}{\rho_\pm} \grad \h_\pm r^\pm,
	\end{equation}
	which implies that
	\begin{equation}\label{eqn Theta r}
		\begin{split}
			\Theta &= \vV_+ \vdot \vn_+ = \qty(\dfrac{1}{\rho_+}\n_+) r^+ 
			\\&= -\vV_- \vdot \vn_- = -\qty( \dfrac{1}{\rho_-}\n_- ) r^-.
		\end{split}
	\end{equation}
	It follows from (\ref{last eqn vW}), (\ref{eqn vW vV}), and the identity (\ref{lap vn}) that
	\begin{equation}
		-\lap_\Gt \Theta + \qty(\dfrac{a^2}{\vn_+ \vdot (\vnu\circ \Phi_\Gt^{-1}) } - \abs{\II_+}^2)\Theta + \dfrac{1}{\rho_+ + \rho_-} \grad^\top \qty(r^+ + r^-) \vdot \grad^\top \kappa_+ = 0.
	\end{equation}
	If  $ a_0 $ is taken large enough, so that $ \dfrac{a^2}{\vn_+ \vdot (\vnu\circ \Phi_\Gt^{-1}) } - \abs{\II_+}^2 > a $ holds for all $ t \in [0, T] $ whenever $ a \ge a_0 $ (indeed, it holds for all $ \Gamma \in \Lambda_* $), then
	\begin{equation}
		\begin{split}
			\abs{\grad^\top \Theta}_{L^2(\Gt)}^2 + a\abs{\Theta}_{L^2(\Gt)}^2 &\le \frac{1}{\rho_+ + \rho_-} \abs{\Theta}_{L^2(\Gt)} \cdot \abs{\grad^\top \kappa_+ \vdot \grad^\top\qty(r^+ + r^-)}_{L^2(\Gt)} \\
			&\le C_* \abs{\Theta}_{L^2(\Gt)}\abs{r^+ + r^-}_{H^{\frac{3}{2}}(\Gt)}\abs{\kappa_+}_{H^{\kk-\frac{5}{2}}(\Gt)} \\
			&\le C_{*0} \qty(\abs{\Theta}_{L^2(\Gt)}^2 + \abs{r^+ + r^-}_{H^{\frac{3}{2}}(\Gt)}^2 ).
		\end{split}
	\end{equation}
	It can be deduced directly from \eqref{eqn Theta r} that
	\begin{equation*}
		r^\pm = \pm \qty(\dfrac{1}{\rho_\pm}\n_\pm)^{-1}\Theta,
	\end{equation*}
	which implies that
	\begin{equation}
		\begin{split}
			\abs{r^+ + r^-}_{H^{\frac{3}{2}}(\Gt)}^2 &\le C_* \abs{\Theta}_{H^{\frac{1}{2}}(\Gt)}^2 \\
			&\le \frac{1}{2C_{*0}}\abs{\grad^\top{\Theta}}_{L^2(\Gt)}^2 + C_* \abs{\Theta}_{L^2(\Gt)}^2 .
		\end{split}
	\end{equation}
	Thus, for a generic constant $ C_* $ determined by $ \Lambda_* $, it holds that
	\begin{equation}
		\abs{\grad^\top\Theta}_{L^2(\Gt)}^2 + \qty(2a-C_*) \abs{\Theta}_{L^2(\Gt)}^2 \le 0.
	\end{equation}
	If $ a_0 $ is taken large enough (depending only on $ \Lambda_* $), then $ \Theta = 0 $, which yields $ \vV (t) \equiv \vb{0} $.
	
	\subsubsection*{Verification of $ \vb{H} $}
	Similar arguments show that
	\begin{equation}
		\div \vH \equiv 0, \quad \curl \vH \equiv \vb{0}.
	\end{equation}
	
	As for the boundary terms, observe that $ \vh_\pm \vdot \vn_+ \equiv 0 $, which implies
	\begin{equation}
		\begin{split}
			\vH_\pm \vdot \vn_+ &=\vn_+ \vdot\Dt_\pm \vh_\pm - \vn_+ \vdot \DD_{\vh_\pm}\vv_\pm \\
			&=-\vh_\pm\vdot\Dt_\pm \vn_+ - \vn_+ \vdot \DD_{\vh_\pm}\vv_\pm \\
			&=\vn_+ \vdot \DD_{\vh_\pm}\vv_\pm - \vn_+ \vdot \DD_{\vh_\pm}\vv_\pm \\
			&=0.
		\end{split}
	\end{equation}
	In addition, one can derive from $ \vv_- \vdot \wt{\vn} = 0 = \vh_- \vdot \wt{\vn} $  that
	\begin{equation}
		\begin{split}
			\vH_- \vdot \wt{\vn} = \pd_t \vh_- \vdot \wt{\vn} + \comm{\vh_-}{\vv_-}\vdot \wt{\vn} = 0.
		\end{split}
	\end{equation}
	Therefore, $ \vH(t) \equiv \vb{0} $.
	
	The previous arguments ensure that $ \qty(\Gt, \vv, \vh) $ is a solution to the original (MHD) system, with (BC) following from the construction. The uniqueness and the continuous dependence on the initial data of the original problem follow from those of the div-curl ones. In conclusion, Theorem \ref{thm s.t.} holds.
	
	\section{Stabilization Effect of the Syrovatskij Condition}\label{sec syro}
	
	\subsection{The strict Syrovatskij condition}
	Since the free interface is a compact surface, $ \abs*{\vh_+ \cp \vh_-} > 0 $ on $ \Gamma $ implies that $ \vh_\pm $ form a global frame of $ \Gamma $. Therefore, for any tangential vector field $ \vb{a} $ on $ \Gamma $, there is a unique decomposition:
	\begin{equation}\label{decomp syro}
		\vb{a} = a^+ \vh_+ + a^- \vh_-.
	\end{equation}
	Furthermore, the following relation holds:
	\begin{lem}\label{lem syro equiv 1}
		If \eqref{Syro 3"} holds on a compact hypersurface $ \Gamma \subset \R^3$, then for any non-vanishing tangential vector field $ \vb{a} $ on $ \Gamma $, it holds that
		\begin{equation}\label{strong syro equiv}
			\frac{\rho_+}{\rho_+ + \rho_-}\abs{\vb{a} \vdot \vh_+}^2 + \frac{\rho_-}{\rho_+ + \rho_-}\abs{\vb{a} \vdot \vh_-}^2 - \frac{\rho_+ \rho_-}{(\rho_+ + \rho_-)^2}\abs{\vb{a} \vdot \llbracket\vv\rrbracket}^2 > 0 \qq{on} \Gamma.
		\end{equation}
	\end{lem}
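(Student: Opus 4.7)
The claim is pointwise in $p \in \Gamma$, so the plan is to fix $p$ and work entirely in the tangent plane $\mrm{T}_p \Gamma$. The boundary conditions \eqref{bdry mag} and \eqref{eqn bdry v} give $\vh_\pm(p), \llbracket\vv\rrbracket(p) \in \mrm{T}_p\Gamma$. Moreover, \eqref{Syro 3"} forces $\vh_+(p) \cp \vh_-(p) \ne \vb{0}$ (otherwise its left-hand side vanishes while the right-hand side is non-negative, contradicting the strict inequality), so $\{\vh_+, \vh_-\}$ is a basis of $\mrm{T}_p\Gamma$.

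Setting $\lambda_\pm \coloneqq \rho_\pm/(\rho_+ + \rho_-)$ and choosing an orthonormal frame $\{\vb{e}_1, \vb{e}_2\}$ of $\mrm{T}_p\Gamma$, I would rewrite the left-hand side of \eqref{strong syro equiv} as $Q(\vb{a}) = \vb{a}^{\!\top} P\, \vb{a}$, where $P$ is the symmetric $2\times 2$ matrix
\[
P \;\coloneqq\; \lambda_+\, \vh_+\vh_+^{\!\top} + \lambda_-\, \vh_-\vh_-^{\!\top} - \lambda_+\lambda_-\, \llbracket\vv\rrbracket\llbracket\vv\rrbracket^{\!\top}.
\]
Positive-definiteness of $P$ (which is what the lemma asserts) reduces to two checks: $\det P > 0$, and the existence of at least one tangent direction on which $Q$ is strictly positive (to rule out $P$ being negative definite).

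For the determinant, I would expand $\vh_\pm = (h_{\pm 1}, h_{\pm 2})$ and $\llbracket\vv\rrbracket = (w_1, w_2)$ in the chosen frame, compute $\det P = P_{11}P_{22} - P_{12}^{\,2}$, and use the identity $(x_1 y_2 - x_2 y_1)^2 = \abs{\vb{x}}^2\abs{\vb{y}}^2 - (\vb{x}\vdot\vb{y})^2 = \abs{\vb{x}\cp\vb{y}}^2$, which holds for any two vectors of $\mrm{T}_p\Gamma$ since their $3$-D cross product is normal. The three rank-one contributions $\det(\vh_+\vh_+^{\!\top})$ etc.\ are zero and cancel the corresponding squares inside $P_{12}^{\,2}$; the remaining six cross-terms regroup into
\[
\det P \;=\; \frac{\lambda_+\lambda_-}{\rho_+ + \rho_-}\Bigl\{(\rho_+ + \rho_-)\abs{\vh_+\cp\vh_-}^2 - \rho_+\abs{\vh_+\cp\llbracket\vv\rrbracket}^2 - \rho_-\abs{\vh_-\cp\llbracket\vv\rrbracket}^2\Bigr\},
\]
which is strictly positive by \eqref{Syro 3"}.

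For the second check, I would pick $\vb{a}_0 \in \mrm{T}_p\Gamma \setminus \{\vb{0}\}$ orthogonal to $\llbracket\vv\rrbracket$: take $\vb{a}_0 \coloneqq \vn_+ \cp \llbracket\vv\rrbracket$ if $\llbracket\vv\rrbracket \ne \vb{0}$, and any nonzero tangent vector otherwise. Then $Q(\vb{a}_0) = \lambda_+(\vb{a}_0\vdot\vh_+)^2 + \lambda_-(\vb{a}_0\vdot\vh_-)^2$ is strictly positive, since $\{\vh_+, \vh_-\}$ spans $\mrm{T}_p\Gamma$ and thus $\vb{a}_0$ cannot be orthogonal to both simultaneously. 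Combined with $\det P > 0$, both eigenvalues of $P$ are positive, which proves \eqref{strong syro equiv}. The main (though purely algebraic) hurdle is the bookkeeping behind the $\det P$ identity; everything else is a clean reduction that bypasses any need to derive \eqref{Syr 1"} separately from \eqref{Syro 3"}.
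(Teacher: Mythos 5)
Your proof is correct and takes a genuinely different route from the paper's. The paper's argument expands $\vb{a}$ and $\llbracket\vv\rrbracket$ in the global frame $\{\vh_+,\vh_-\}$, rewrites \eqref{Syro 3"} as the scalar bound $\frac{\rho_+}{\rho_++\rho_-}\abs{w^-}^2 + \frac{\rho_-}{\rho_++\rho_-}\abs{w^+}^2 < 1$ on the frame components $w^\pm$ of $\llbracket\vv\rrbracket$, and closes with a Cauchy--Schwarz estimate. You instead work coordinate-free: you view the left side of \eqref{strong syro equiv} as the quadratic form of a symmetric $2\times2$ matrix $P$ on $\mrm{T}_p\Gamma$ and test positive definiteness by $\det P>0$ plus positivity on one vector. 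Your determinant formula is correct --- it is the three-term case of the identity $\det\qty(\sum_i c_i\,\vb{v}_i\vb{v}_i^{\top}) = \sum_{i<j} c_i c_j\abs{\vb{v}_i\cp\vb{v}_j}^2$ for rank-one sums in two dimensions --- and it collapses $\det P$ to $\lambda_+\lambda_-(\rho_++\rho_-)^{-1}$ times the difference of the two sides of \eqref{Syro 3"}, hence strictly positive by hypothesis. The choice $\vb{a}_0 = \vn_+\cp\llbracket\vv\rrbracket$ for the second check is fine, and the span argument can even be skipped: $Q(\vb{a}_0)=0$ would force $P\vb{a}_0=\vb{0}$, contradicting $\det P>0$. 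Both proofs are pointwise algebra of comparable length; your route lets the cross-product form of \eqref{Syro 3"} surface directly as $\det P$ rather than passing through a coefficient bound, while the paper's route sets up the frame decomposition \eqref{decomp syro} in a form that is reused later in \textsection~\ref{sec syro}. One small note on your commentary: the paper's proof does not invoke \eqref{Syr 1"} either, so bypassing it is not actually a point of difference.
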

	\begin{proof}
		For simplicity, we shall use the notations:
		\begin{equation}
			g_{++}\coloneqq \vh_+ \vdot \vh_+ \qc g_{--}\coloneqq \vh_- \vdot \vh_- \qc g_{+-} \equiv g_{-+} \coloneqq \vh_- \vdot \vh_+,
		\end{equation}
		the decomposition \eqref{decomp syro}, and
		\begin{equation}
			\llbracket\vv\rrbracket \equiv w^+ \vh_+ + w^- \vh_- \qq{on} \Gamma.
		\end{equation}
		Thus, \eqref{Syro 3"} is equivalent to 
		\begin{equation}
			\abs{\vh_+ \cp \vh_-}^2 > \frac{\rho_+}{\rho_+ + \rho_-}\abs{w^-}^2\abs{\vh_+ \cp \vh_-}^2 + \frac{\rho_-}{\rho_+ + \rho_-}\abs{w^+}^2\abs{\vh_+ \cp \vh_-}^2,
		\end{equation}
		namely,
		\begin{equation}
			1 > \frac{\rho_+}{\rho_+ + \rho_-}\abs{w^-}^2 + \frac{\rho_-}{\rho_+ + \rho_-}\abs{w^+}^2.
		\end{equation}
		Hence, direct calculations yield
		\begin{equation}
			\begin{split}
				&\hspace{-1em}\frac{\rho_+}{\rho_+ + \rho_-}\abs{\vb{a} \vdot \vh_+}^2 + \frac{\rho_-}{\rho_+ + \rho_-}\abs{\vb{a} \vdot \vh_-}^2 \\
				&= \frac{\rho_+}{\rho_+ + \rho_-}\abs{a^+ g_{++} + a^- g_{+-}}^2 + \frac{\rho_-}{\rho_+ + \rho_-}\abs{a^- g_{--} + a^+ g_{+-}}^2 \\
				&> \qty(\frac{\rho_+}{\rho_+ + \rho_-}\abs{w^-}^2 + \frac{\rho_-}{\rho_+ + \rho_-}\abs{w^+}^2) \frac{\rho_+}{\rho_+ + \rho_-}\abs{a^+ g_{++} + a^- g_{+-}}^2 \\
				&\qquad + \qty(\frac{\rho_+}{\rho_+ + \rho_-}\abs{w^-}^2 + \frac{\rho_-}{\rho_+ + \rho_-}\abs{w^+}^2) \frac{\rho_-}{\rho_+ + \rho_-}\abs{a^- g_{--} + a^+ g_{+-}}^2 \\ 
				%			&= \frac{\rho_+\rho_-}{(\rho_+ + \rho_-)^2}\abs{w^+}^2 \qty(\abs{a^+}^2 g_{++}^2 + \abs{a^-}^2 g_{+-}^2 + 2a^+ a^- g_{++}g_{+-}) \\
				%			&\qquad + \frac{\rho_+ \rho_-}{(\rho_+ + \rho_-)^2} \abs{w^-}^2 \qty(\abs{a^-}^2 g_{--}^2 + \abs{a^+}^2 g_{+-}^2 + 2 a^+ a^- g_{--}g_{+-}) \\
				%			&\qquad + \qty(\frac{\rho_+}{\rho_+ + \rho_-})^2 \abs{w^-}^2 \qty(\abs{a^+}^2 g_{++}^2 + \abs{a^-}^2 g_{+-}^2 + 2a^+ a^- g_{++}g_{+-}) \\
				%			&\qquad + \qty(\frac{\rho_-}{\rho+ + \rho_-})^2 \abs{w^+}^2 \qty(\abs{a^-}^2 g_{--}^2 + \abs{a^+}^2 g_{+-}^2 + 2 a^+ a^- g_{--} g_{+-})
				&\ge \frac{\rho_+\rho_-}{(\rho_+ + \rho_-)^2} \abs{a^+ w^+ g_{++} + a^-w^-g_{--} + a^-w^+ g_{+-} + a^+ w^- g_{+-}}^2 \\
				&= \frac{\rho_+ \rho_-}{(\rho_+ + \rho_-)^2}\abs{\vb{a} \vdot \llbracket\vv\rrbracket}^2,
			\end{split}
		\end{equation}
		which is exactly \eqref{strong syro equiv}.
	\end{proof}
	Since $ \Gamma $ is assumed to be compact, the following corollary follows:
	\begin{cor}
		Suppose that \eqref{strong syro equiv} holds on $ \Gamma $. Then it holds that 
		\begin{equation}\label{syro inf}
			\begin{split}
				\Upsilon\qty(\vh_{\pm}, \llbracket\vv\rrbracket) &\coloneqq \inf_{\substack{\vb{a} \in \mathrm{T}\Gamma_t; \\ \abs{\vb{a}}=1}} \inf_{z \in \Gamma_t} \frac{\rho_+}{\rho_+ + \rho_-}\abs{\vb{a} \vdot \vh_+(z)}^2 + \frac{\rho_-}{\rho_+ + \rho_-}\abs{\vb{a} \vdot \vh_-(z)}^2 \\
				&\hspace{6em}  - \frac{\rho_+ \rho_-}{(\rho_+ + \rho_-)^2}\abs{\vb{a} \vdot \llbracket\vv\rrbracket(z)}^2 \\
				&=: \mathfrak{s_0}>0.
			\end{split}
		\end{equation}
		Equivalently, the following relation holds on $ \Gamma $:
		\begin{equation}\label{syro stab condi useful}
			\qty(\frac{\rho_+}{\rho_+ + \rho_-} (\vh_+ \otimes \vh_+) + \frac{\rho_-}{\rho_+ + \rho_-}  (\vh_- \otimes \vh_-) - \frac{\rho_+ \rho_-}{(\rho_+ + \rho_-)^2}(\llbracket\vv\rrbracket \otimes \llbracket\vv\rrbracket)) \ge \mathfrak{s}_0 \vb{I}.
		\end{equation}
	\end{cor}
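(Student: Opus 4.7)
The strategy is to obtain the uniform positive lower bound $\mathfrak{s}_0$ directly from the pointwise strict positivity \eqref{strong syro equiv} via a compactness argument on the unit tangent bundle of $\Gamma$, and then to recognize the two formulations as the same quadratic form.

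First I would set $F : UT\Gamma \to \R$ by
\begin{equation*}
F(z, \vb{a}) \coloneqq \frac{\rho_+}{\rho_+ + \rho_-}\abs{\vb{a} \vdot \vh_+(z)}^2 + \frac{\rho_-}{\rho_+ + \rho_-}\abs{\vb{a} \vdot \vh_-(z)}^2 - \frac{\rho_+ \rho_-}{(\rho_+ + \rho_-)^2}\abs{\vb{a} \vdot \llbracket\vv\rrbracket(z)}^2,
\end{equation*}
where $UT\Gamma = \{(z, \vb{a}) : z \in \Gamma,\ \vb{a} \in \mrm{T}_z\Gamma,\ \abs{\vb{a}} = 1\}$ is the unit tangent bundle. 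Since $\Gamma$ is a compact $C^1$ hypersurface (diffeomorphic to $\T^2$), $UT\Gamma$ is a compact topological space. The Sobolev regularity assumed on $\vh_\pm$ and $\vv_\pm$ embeds into $C^0$ (since $k \ge 3$ yields $\vh_\pm, \vv_\pm \in H^{\kk}(\Om_t^\pm) \hookrightarrow C^0(\overline{\Om_t^\pm})$), so their traces on $\Gamma$ are continuous and hence $F$ is continuous on $UT\Gamma$.

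By Lemma \ref{lem syro equiv 1} applied pointwise, $F(z, \vb{a}) > 0$ for every $(z, \vb{a}) \in UT\Gamma$. A continuous positive function on a non-empty compact set attains its infimum, which must therefore be a strictly positive number $\mathfrak{s}_0 > 0$. This proves \eqref{syro inf}.

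For the equivalence \eqref{syro stab condi useful}, I would just identify $F(z, \vb{a})$ with the quadratic form of the symmetric $(0,2)$-tensor
\begin{equation*}
M(z) \coloneqq \frac{\rho_+}{\rho_+ + \rho_-}\vh_+(z) \otimes \vh_+(z) + \frac{\rho_-}{\rho_+ + \rho_-}\vh_-(z) \otimes \vh_-(z) - \frac{\rho_+ \rho_-}{(\rho_+ + \rho_-)^2}\llbracket\vv\rrbracket(z) \otimes \llbracket\vv\rrbracket(z)
\end{equation*}
evaluated on the unit tangent vector $\vb{a}$. The statement $\inf_{|\vb{a}|=1} \vb{a}^\top M(z) \vb{a} \ge \mathfrak{s}_0$ for every $z$ is the Rayleigh characterization of $M(z) \ge \mathfrak{s}_0 \vb{I}$ on $\mrm{T}_z\Gamma$, finishing the corollary. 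There is no real obstacle here beyond verifying continuity of the traces on $\Gamma$, which is immediate from the Sobolev embedding in the regime $k \ge 3$.
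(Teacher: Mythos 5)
Your proposal is correct and matches the paper's reasoning: the paper itself introduces this corollary with the remark "Since $\Gamma$ is assumed to be compact, the following corollary follows," which is exactly the compactness-of-the-unit-tangent-bundle argument you spell out, together with the routine Rayleigh-quotient reformulation as a tensor inequality. No gaps.
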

	
	\subsection{Interfaces, coordinates and div-curl systems}\label{sec prelimi'}
	From now on, $ \Om $ is assumed to be $ \mathbb{T}^2 \times (-1, 1) $ and $ \Om_t^+ $ has a solid boundary $ \mathbb{T}^2 \times \{+1\} $. Hence, some statements in \textsection~\ref{sec harmonic coord} and \textsection~\ref{sec div-cul system} need slight changes in order to be compatible to the topology of $ \Om_t^\pm $. More precisely, the harmonic coordinate maps introduced in \textsection~\ref{sec harmonic coord} are now replaced by
	\begin{equation}
		\begin{cases*}
			\lap_y \X_\Gamma^\pm = 0 &for $ y \in \Om_*^\pm $, \\
			\X_\Gamma^\pm (z) = \Phi_\Gamma(z) &for $ z \in \Gamma_* $, \\
			\X_\Gamma^\pm (z) = z &for $ z \in \mathbb{T}^2 \times \{\pm1\}$.
		\end{cases*}
	\end{equation}
	Similarly, the definitions of harmonic extensions of a function $ f $ defined on $ \Gamma $ are modified to
	\begin{equation}\label{eqn harm ext'}
		\begin{cases*}
			\lap \h_\pm f = 0 \qfor x \in \Om_\Gamma^\pm, \\
			\h_\pm f = f \qfor x \in \Gamma, \\
			\DD_{\wt{\vn}_\pm} \h_\pm f = 0 \qfor x \in \mathbb{T}^2\times \{\pm1\}.
		\end{cases*}
	\end{equation}
	The Dirichlet-Neumann operators are also defined by \eqref{def DN op} for which $ \h_\pm $ are given by \eqref{eqn harm ext'}.
	
	Therefore, Lemmas \ref{lem composition harm coordi} - \ref{lem lap-n}, and those properties of the Dirichlet-Neumann operators introduced in \textsection~\ref{sec harmonic coord} still hold.
	
	As for the div-curl systems, due to the different topology, we introduce the following modification of Theorem \ref{thm div-curl}:
	\begin{thm}\label{thm div-curl'}
		Assume that $ \Gamma $ is an $ H^{\kk-\frac{1}{2}} (k \ge 3) $ surface diffeomorphic to $ \mathbb{T}^2 $, with
		\begin{equation}\label{surface away condi}
			\dist(\Gamma, \mathbb{T}^2 \times \{\pm1\}) \ge c_0 > 0
		\end{equation}
		for some positive constant $ c_0 $.
		Given $ \vb{f}, g \in H^{l-1}(\Om^+) $ and $ h \in H^{l-\frac{1}{2}}(\Gamma) $ with the compatibility condition
		\begin{equation*}
			\int_{\Om^+} g \dd{x} = \int_{\Gamma} h \dd{S},
		\end{equation*}
		and suppose further that $ \vb{f} $ satisfies
		\begin{equation}
			\div \vb{f} = 0 \text{ in } \Om^+ \qc  \int_{\mathbb{T}^2\times\{+1\}} \vb{f} \vdot \wt{\vn}_+ \dd{S} = 0.
		\end{equation}
		Then, for $  2 \le l \le \kk-1 $, the following system:
		\begin{equation}
			\begin{cases*}
				\curl \vbu = \vb{f} &in $ \Om^+ $, \\
				\div \vbu = g &in $ \Om^+ $, \\
				\vbu \vdot \vn_+ = h &on $ \Gamma $, \\
				\vbu \vdot \wt{\vn}_+ = 0 \qc \int_{\mathbb{T}^2\times\{+1\}} \vbu \dd{\wt{S}} = \va{\mathfrak{u}} &on $ \mathbb{T}^2 \times \{+1\} $
			\end{cases*}
		\end{equation}
		admits a unique solution $ \vbu \in H^{l}(\Om^+) $ satisfying the estimate:
		\begin{equation}
			\norm{\vbu}_{H^l(\Om^+)} \le C\qty(\abs{\Gamma}_{H^{\kk-\frac{1}{2}}}, c_0) \times \qty( \norm{\vf}_{H^{l-1}(\Om^+)} + \norm{g}_{H^{l-1}(\Om^+)} + \abs{h}_{H^{l-\frac{1}{2}}(\Gamma)} + \abs{\va{\mathfrak{u}}}).
		\end{equation}
	\end{thm}
	One may refer to \cite{Cheng-Shkoller2017} and \cite{Sun-Wang-Zhang2018} for a proof of Theorem \ref{thm div-curl'}.
	
	\subsection{Reformulation of the problem}
	We shall consider the free interface problem (MHD)-(BC') under the assumption that $ k \ge 3 $. Due to the difference between Theorems \ref{thm div-curl} and \ref{thm div-curl'}, the velocity and magnetic fields depend on one more boundary condition -- their integrals on the bottom or the top solid boundary. Therefore, when considering the variation, one needs to assume further that the integrals of $ \vv_\pm $ and $ \vh_\pm $ on $ \mathbb{T}^2 \times \{\pm1\} $ also depend on the parameter $ \beta $. More precisely, set
	\begin{equation}
		\va{\mathfrak{v}}_\pm \coloneqq \int_{\mathbb{T}^2 \times \{\pm1\}} \vv_\pm  \dd{\wt{S}} \qand \va{\mathfrak{h}}_\pm \coloneqq \int_{\mathbb{T}^2 \times \{\pm1\}} \vh_\pm  \dd{\wt{S}}.
	\end{equation}
	Then, for each fixed $ t $, $ \va{\mathfrak{v}}_\pm $ and $ \va{\mathfrak{h}}_\pm $ are constant tangential vectors on $ \mathbb{T}^2 \times\{\pm1\} $.
	With the same notations in \textsection~\ref{sec var v_*}, assume that $ \ka, \vom_{*\pm} $ and $ \va{\mathfrak{v}}_\pm $ are parameterized by $ \beta $. Thus, \eqref{eqn pd beta v} can be rewritten as
	\begin{equation}\label{eqn pd beta vv '}
		\pd_\beta \vv_{\pm*} = \opb_\pm(\ka)\pd^2_{t\beta}\ka + \opf_\pm(\ka)\pd_\beta\vom_{*\pm} + \opg_\pm(\ka, \pd_t \ka, \vom_{*\pm})\pd_\beta\ka + \vb{S}_\pm (\ka) \pd_\beta \va{\mathfrak{v}}_{\pm}.
	\end{equation}
	It follows from the same arguments that Lemma \ref{lem3.1} still holds with a subtle modification of indices, namely, $ s, s', \sigma, \sigma' \ge \frac{3}{2} $ rather than $ \ge \frac{1}{2} $. As for the new term $ \vb{S}_\pm (\ka)\pd_\beta \va{\mathfrak{v}}_\pm $, they are the pull-backs to $ \Gs $ of the solutions to the following boundary value problems:
	\begin{equation}\label{def opS}
		\begin{cases*}
			\div \vb{y}_\pm = 0 \qc \curl \vb{y}_\pm = \vb{0} &in $ \Om^\pm_t $, \\
			\vb{y}_\pm \vdot \vn_\pm = 0 &on $ \Gt $, \\
			\vb{y}_\pm \vdot \wt{\vn}_\pm = 0 &on $ \mathbb{T}^2 \times \{\pm1\} $, \\
			\int_{\mathbb{T}^2 \times \{\pm 1\}} \vb{y}_\pm \dd{\wt{S}} = \pd_\beta \va{\mathfrak{v}}_\pm &on $ \mathbb{T}^2 \times \{\pm 1\} $.
		\end{cases*}
	\end{equation}
	Therefore, since $ \pd_\beta \va{\mathfrak{v}}_\pm $ are constant on $ \mathbb{T}^2 \times \{\pm1\} $ for each fixed $ \beta $, the following estimates hold:
	\begin{equation}\label{est S}
		\abs{\vb{S}_\pm (\ka)}_{\LL\qty(\R^2; \H{s})} \le C_* \qfor \frac{3}{2} \le s \le \kk-\frac{3}{2},
	\end{equation}
	and
	\begin{equation}\label{est var S}
		\abs{\var \vb{S}_\pm(\ka)}_{\LL\qty(\H{\kk-\frac{5}{2}}; \LL\qty(\R^2; \H{s'}))} \le C_* \qfor \frac{3}{2} \le s' \le \kk-\frac{3}{2}.
	\end{equation}
	
	Due to the change of $ \Om^\pm_t $, we shall also modify the definition of $ p^\pm_{\vb{a}, \vb{b}} $ to:
	\begin{equation}\label{eqn p_ab '}
		\begin{cases*}
			-\lap p_{\vb{a}, \vb{b}}^\pm = \tr(\DD\vb{a}_\pm \vdot \DD\vb{b}_\pm) &in $ \Om_t^\pm $, \\
			p^\pm_{\vb{a}, \vb{b}} = 0 &on $ \Gt $, \\
			\DD_{\wt{\vn}_\pm} p^{\pm}_{\vb{a}, \vb{b}} = \wt{\II}_\pm(\vb{a}_\pm, \vb{b}_\pm) &on $ \mathbb{T}^2 \times \{\pm1\} $,
		\end{cases*}
	\end{equation}
	for which the solenoidal vector fields $ \vb{a}_\pm, \vb{b}_\pm $ satisfy $ \vb{a}_\pm \vdot \wt{\vn}_\pm = 0 = \vb{b}_\pm \vdot \wt{\vn}_\pm  $ on $ \mathbb{T}^2 \times \{\pm1\} $.
	
	With all the previous modifications on the definitions of the harmonic extensions, Lagrange multiplier pressures, and div-curl systems, it follows from the similar arguments that \eqref{eqn pd2 tt ka} can be rewritten as
	\begin{equation}
		\begin{split}
			&\pd^2_{tt}\ka + \opC_{\alpha} (\ka, \pd_t\ka, \vv_{*\pm}, \vh_{*\pm} ) \ka  - \opF(\ka)\pd_t\vom_* - \opG(\ka, \pd_t\ka, \vom_*, \vj_*, \va{\mathfrak{v}}, \va{\mathfrak{h}}) - \mathscr{S}(\ka)\pd_t \va{\mathfrak{v}} \\
			&\quad = \qty[\mathrm{I} + \opB(\ka)]^{-1}\qty{\qty[-\lap_\Gt \qty(\vW \vdot \vn_+) + \vW \vdot \lap_\Gt \vn_+ + a^2 \dfrac{\vW \vdot \vn_+}{\vn_+ \vdot (\vnu \circ \Phi_\Gt^{-1})}] \circ \Phi_\Gt}.
		\end{split}
	\end{equation}
	Since $ \va{\mathfrak{v}}_\pm $ and $ \va{\mathfrak{h}}_\pm $ are all constants, Lemma \ref{lem 3.4} holds with $ k \ge 3, \alpha = 0 $ and a slight change of \eqref{est var opF} and \eqref{est var opG} as:
	\begin{equation}\label{est var opF'}
		\abs{\var\opF(\ka)}_{\LL\qty[\H{\kk-\frac{5}{2}}; \LL\qty(H^{\kk-\frac{7}{2}}(\OGs); \H{\kk-\frac{7}{2}})]} \le C_*,
	\end{equation}
	and
	\begin{equation}\label{est var opG'}
		\begin{split}
			&\hspace{-2em}\abs{\var\opG}_{\LL\qty[\H{\kk-\frac{5}{2}}\times\H{\kk-\frac{7}{2}}\times H^{\kk-2}(\OGs) \times H^{\kk-2}(\OGs) \times \R^2 \times \R^2; \H{\kk-\frac{7}{2}}]} \\
			\le\, &a^2 Q\qty(\abs{\pd_t\ka}_{\H{\kk-\frac{5}{2}}}, \norm{\vom_*}_{H^{\kk-1}(\OGs)}, \norm{\vj_*}_{H^{\kk-1}(\OGs)}),
		\end{split}
	\end{equation}
	whose proof follows from the same arguments.
	Furthermore, the operator $ \mathscr{S}(\ka) $ satisfies
	\begin{equation}\label{est opS}
		\abs{\mathscr{S}(\ka)}_{\LL\qty(\R^2; \H{\kk-\frac{5}{2}})} \le Q\qty(\abs{\ka}_{\H{\kk-\frac{3}{2}}}),
	\end{equation}
	and
	\begin{equation}\label{est var opS}
		\abs{\var\mathscr{S}(\ka)}_{\LL\qty[\H{\kk-\frac{7}{2}}; \LL\qty(\R^2; \H{\kk-\frac{7}{2}})]} \le Q\qty(\abs{\ka}_{H^{\kk-\frac{3}{2}}}).
	\end{equation}
	
	Indeed, the leading order term of $ \mathscr{S}(\ka)\pd_t\va{\mathfrak{v}} $ is $ \grad^\top(\kappa_+ \circ \Phi_\Gt) \vdot \vb{S}(\ka)\pd_t \va{\mathfrak{v}} $, so the above estimates follow from the standard commutator and product ones.
	
	\subsection{Linear systems}\label{sec syro linear system}
	Similar to the arguments in \textsection~\ref{sec linear ka}, assume that $ \Gs \in H^{\kk+\frac{1}{2}} (k\ge 3) $ is a reference hypersurface, and $ \Lambda_* $ defined by (\ref{def lambda*}) satisfies all the properties discussed in the preliminary. Suppose further that there are a family of hypersurfaces $ \Gt \in \Lambda_* $ parameterized by $ t \in [0, T] $ and four tangential vector fields $ \vv_{\pm*}, \vh_{\pm*} : \Gs \to \mathrm{T}\Gs $ satisfying:
	\begin{equation}
		\ka \in C^0\qty{[0, T]; \H{\kk-\frac{3}{2}}} \cap C^1\qty{[0, T]; B_{\delta_1} \subset \H{\kk-\frac{5}{2}}}, \tag{H1'}
	\end{equation}
	and
	\begin{equation}
		\vv_{\pm*}, \vh_{\pm*} \in C^0\qty{[0, T]; \H{\kk-\frac{1}{2}}} \cap C^1\qty{[0, T]; \H{\kk-\frac{3}{2}}}. \tag{H2'}
	\end{equation}
	Moreover, assume that there are positive constants $ c_0, \mathfrak{s}_0 $ so that \eqref{syro stab condi useful} and \eqref{surface away condi} hold uniformly on $ [0, T] $.
	
	The positive constants $ \wt{L_1}$ and $ \wt{L_2} $ are defined respectively by:
	\begin{equation}
		\sup_{t\in[0, T]} \qty{\abs{\ka (t)}_{\H{\kk-\frac{3}{2}}},  \abs{\pd_t \ka(t)}_{\H{\kk-\frac{5}{2}}}, \abs{\qty(\vv_{\pm*}(t), \vh_{\pm*}(t))}_{\H{\kk-\frac{1}{2}}}} \le \wt{L_1},
	\end{equation}
	and
	\begin{equation}
		\sup_{t\in [0, T]} \abs{\qty(\pd_t\vv_{\pm*}(t), \pd_t\vh_{\pm*}(t))}_{\H{\kk-\frac{3}{2}}} \le \wt{L_2}.
	\end{equation}
	
	Consider the following linear initial value problem similar to \eqref{eqn linear 1}:
	\begin{equation}\label{eqn linear 1'}
		\begin{cases}
			\pd^2_{tt}\f + \opC_0(\ka, \pd_t\ka, \vv_*, \vh_*)\f = \g, \\
			\f(0) = \f_0, \quad \pd_t \f(0) = \f_1,
		\end{cases}
	\end{equation}
	where $ \f_0, \f_1, \g(t) : \Gs \to \R $ are three given functions, and $ \opC_0 $ is given by:
	\begin{equation*}
		\begin{split}
			\opC_0(\ka, \pd_t\ka, \vv_*, \vh_*) \coloneqq\, &2\DD_{\vbu_*}\pd_t + \DD_{\vbu_*}\DD_{\vbu_*} +  \dfrac{\rho_+\rho_-}{\qty(\rho_+ + \rho_-)^2}\opR(\ka, \vw_*) \\ &-\dfrac{\rho_+}{\rho_+ + \rho_-}\opR(\ka, \vh_{+*}) - \dfrac{\rho_-}{\rho_+ + \rho_-}\opR(\ka, \vh_{-*}),
		\end{split}
	\end{equation*}
	which is exactly (\ref{def opC}) with $ \alpha = 0 $.
	
	Thus, for $ 0 \le l \le k-2 $, the energy \eqref{eqn E_l} is replaced by
	\begin{equation}\label{eqn E_l'}
		\begin{split}
			\wt{E_l}(t, \f, \pd_t\f) \coloneqq \int_{\Gt}
			&\abs{\qty(-\wn^{\frac{1}{2}}\lap_\Gt \wn^{\frac{1}{2}})^{\frac{l}{2}} \wn^{\frac{1}{2}} \qty[\qty(\pd_t \f + \DD_{\vbu_*} \f) \circ \Phi_\Gt^{-1}] }^2 \\
			&- \dfrac{\rho_+\rho_-}{\qty(\rho_+ + \rho_-)^2} \abs{\qty(-\wn^{\frac{1}{2}}\lap_\Gt  \wn^{\frac{1}{2}})^{\frac{l}{2}}\wn^{\frac{1}{2}} \qty[\qty(\DD_{\vw_*} \f) \circ\Phi_\Gt^{-1}]}^2 \\
			&+ \dfrac{\rho_+}{\rho_+ + \rho_-} \abs{\qty(-\wn^{\frac{1}{2}}\lap_\Gt \wn^{\frac{1}{2}})^{\frac{l}{2}}\wn^{\frac{1}{2}} \qty[\qty(\DD_{\vh_{+*}} \f) \circ\Phi_\Gt^{-1}] }^2 \\
			&+ \dfrac{\rho_-}{\rho_+ + \rho_-} \abs{\qty(-\wn^{\frac{1}{2}}\lap_\Gt \wn^{\frac{1}{2}})^{\frac{l}{2}}\wn^{\frac{1}{2}}  \qty[\qty(\DD_{\vh_{-*}} \f) \circ\Phi_\Gt^{-1}]}^2 \dd{S_t}.
		\end{split}
	\end{equation}
	It follows from the same arguments as in the proof of Lemma \ref{lem est E_l} that there exists a generic polynomial $ Q $ determined by $ \Lambda_* $, such that the following estimate holds:
	\begin{equation}\label{est E_l'}
		\begin{split}
			&\hspace{-2em}\wt{E_l}(t, \f, \pd_t\f) - \wt{E_l}(0, \f_0, \f_1) \\
			\le&Q(\wt{L_1}, \wt{L_2})\int_0^t \qty(\abs{\f(s)}_{\H{\frac{3}{2}l + \frac{3}{2}}} + \abs{\pd_t\f(s)}_{\H{\frac{3}{2}l + \frac{1}{2}}} + \abs{\g(s)}_{\H{\frac{3}{2}l + \frac{1}{2}}}) \times \\
			&\hspace{8em} \times \qty(\abs{\f(s)}_{\H{\frac{3}{2}l + \frac{3}{2}}} + \abs{\pd_t\f(s)}_{\H{\frac{3}{2}l + \frac{1}{2}}}) \dd{s}.
		\end{split}
	\end{equation}
	Thanks to the uniform stability condition \eqref{syro stab condi useful}, one can derive an estimate similar  to \eqref{est linear eqn ka}, as long as $ T \le C $ for some constant $ C = C(\wt{L_1}, \wt{L_2}, \mathfrak{s}_0) $:
	\begin{equation}\label{est linear eqn ka'}
		\begin{split}
			&\hspace{-1em}\abs{\f(t)}_{\H{\frac{3}{2}l+\frac{3}{2}}}^2 + \abs{\pd_t\f(t)}_{\H{\frac{3}{2}l+\frac{1}{2}}}^2 \\
			\le\, &C_* e^{Q(\wt{L_1}, \wt{L_2}, \mathfrak{s}_0^{-1})t} \qty( \abs{\f_0}_{\H{\frac{3}{2}l+\frac{3}{2}}}^2 + \abs{\f_1}_{\H{\frac{3}{2}l+\frac{1}{2}}}^2 + \int_0^t \abs{\g(t')}^2_{\H{\frac{3}{2}l+\frac{1}{2}}} \dd{t'}),
		\end{split}
	\end{equation}
	for any integer $ 0 \le l \le k-2 $, $ 0 \le t \le T $, a generic polynomial $ Q $ and a positive constant $ C_* $ depending on $ \Lambda_* $.
	Thus, one has
	\begin{prop}
		For $ 0 \le l \le k-2 $, $ T \le C(\wt{L_1}, \wt{L_2}, \mathfrak{s}_0) $ and $ \g \in C^0 \qty([0, T]; H^{\frac{3}{2}l + \frac{1}{2}}(\Gs)) $, the linear problem \eqref{eqn linear 1'} is well-posed in $ C^0\qty([0, T]; \H{\frac{3}{2}l+\frac{3}{2}}) \cap C^1\qty([0, T]; \H{\frac{3}{2}l+\frac{1}{2}}) $, and the energy estimate \eqref{est linear eqn ka'} holds.
	\end{prop}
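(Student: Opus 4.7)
The plan is to follow the same blueprint that produced \eqref{est linear eqn ka} in Section~\ref{sec linear ka}, but with the surface-tension term $\alpha^2\opA(\ka)$ absent. Since $\opA$ is the only third-order operator in $\opC$, dropping it lowers the effective order of the energy by one half-derivative, which is already reflected in \eqref{eqn E_l'}: the missing row is precisely $\abs{(-\wnh\lap_\Gt\wnh)^{(l+1)/2}\wnh(\f\circ\Phi_\Gt^{-1})}^2$. The coercivity of the remaining quadratic form in $\wt{E_l}$ will come entirely from the Syrovatskij assumption \eqref{syro stab condi useful}; this is where the short-time lifespan $T=T(\wt{L_1},\wt{L_2},\mathfrak{s}_0)$ enters.

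First I would reproduce, line-by-line, the commutator identities \eqref{comm est dt wtn}--\eqref{comm formula Dt lapGt} and the decomposition \eqref{eqn lin est decomp}, then compute $\tfrac12\dv{t}\wt{E_l}$ by mimicking the cases $l=0$ and $l=1$ of Lemma~\ref{lem est E_l}, skipping only the $I_3$ contribution (which was the surface-tension term). Multiplying \eqref{eqn linear 1'} by $\det(\DD\Phi_\Gt)\cdot\{\wn[(\opR^l\pd_t\f)\circ\Phi_\Gt^{-1}]\}\circ\Phi_\Gt$ with $\opR$ playing the role $\opA$ did (now second order) and repeating the integration-by-parts pattern delivers the analog of \eqref{est I_0}--\eqref{est I_6}; collecting terms yields \eqref{est E_l'}. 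The only place one must pay attention is bookkeeping of indices: every $\H{\frac32 l+2}$ in Lemma~\ref{lem est E_l} is replaced by $\H{\frac32 l+\frac32}$, consistent with the lowered order.

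The critical step, and the one I expect to be the main obstacle, is extracting an upper bound on $\abs{\f}_{\H{\frac32 l+\frac32}}^2+\abs{\pd_t\f}_{\H{\frac32 l+\frac12}}^2$ from $\wt{E_l}$. The positive part of the integrand is $\abs{(-\wnh\lap_\Gt\wnh)^{l/2}\wnh[(\DD_{\vh_{\pm*}}\f)\circ\Phi_\Gt^{-1}]}^2$ weighted by $\rho_\pm/(\rho_++\rho_-)$, and the negative (destabilizing) part is the analogous term with $\DD_{\vw_*}$ weighted by $\rho_+\rho_-/(\rho_++\rho_-)^2$. By the pointwise matrix inequality \eqref{syro stab condi useful}, their difference dominates $\mathfrak{s}_0\,\abs{(-\wnh\lap_\Gt\wnh)^{l/2}\wnh(\grad^\top\baf)}^2$ up to commutators among $\DD_{\vh_{\pm*}}$, $\DD_{\vw_*}$, $\wn$, and $\lap_\Gt$, all of which are of lower order and therefore absorbable into $Q(\wt{L_1},\wt{L_2},\mathfrak{s}_0^{-1})$ times a manifestly lower-order norm. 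Combined with equivalence of the $\wn^{1/2}$- and $(-\lap_\Gt)^{1/4}$-based norms from Lemma~\ref{lem lap-n}, this gives
\begin{equation*}
\abs{\f}_{\H{\frac32 l+\frac32}}^2+\abs{\pd_t\f}_{\H{\frac32 l+\frac12}}^2 \le C_* \mathfrak{s}_0^{-1}\wt{E_l}+C_*\qty(\abs{\textstyle\int_\Gt\baf\,\dd{S_t}}^2+\abs{\textstyle\int_\Gt \Dt\baf\,\dd{S_t}}^2)+\text{(l.o.t.)}
\end{equation*}

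The two mean-value defects are handled exactly as in the proof of \eqref{est linear eqn ka}: differentiate in $t$, use $\int_\Gt\lap_\Gt\wn\baf\,\dd{S_t}\equiv 0$ (still true), $\DD_\vbu$ terms integrate against $\Div_\Gt\vbu$, and the remaining $\DD_\vb{X}\DD_\vb{X}$ contributions with $\vb{X}\in\{\vw,\vh_\pm\}$ integrate to zero up to terms estimated by $\abs{\baf}_1+\abs{\Dt\baf}_0+\abs{\bar{\g}}_0$. Combining with \eqref{est E_l'} and Gronwall delivers \eqref{est linear eqn ka'} on any interval with $T\le C(\wt{L_1},\wt{L_2},\mathfrak{s}_0)$. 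Uniqueness and continuous dependence follow directly from the estimate applied to differences; for existence, I would invoke a Galerkin approximation using eigenfunctions of $-\lap_{\Gs}$ (or, alternatively, the semigroup theory referenced in \cite{Shatah-Zeng2011}), for which the same energy identity passes to the limit since the key quadratic form is continuous on $\H{\frac32 l+\frac32}\times\H{\frac32 l+\frac12}$.
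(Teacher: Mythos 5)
Your overall strategy — repeat the proof of Lemma~\ref{lem est E_l} with the $I_3$ contribution deleted, and extract coercivity from the Syrovatskij condition \eqref{syro stab condi useful} in place of the missing surface-tension term — is exactly what the paper intends, and your sketch of the coercivity step is correct: the pointwise matrix inequality applied to the vector $(-\wnh\lap_\Gt\wnh)^{l/2}\wnh\grad^\top\baf$ gives a lower bound $\mathfrak{s}_0\abs{(-\wnh\lap_\Gt\wnh)^{l/2}\wnh\grad^\top\baf}^2$ up to commutators $[(-\wnh\lap_\Gt\wnh)^{l/2}\wnh,\,\vh_\pm]$ and $[\cdot,\,\vw]$, which are of strictly lower order and hence absorbable with a constant depending on $\mathfrak{s}_0^{-1}$.

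However, your proposed multiplier $\det(\DD\Phi_\Gt)\cdot\qty{\wn\qty[(\opR^l\pd_t\f)\circ\Phi_\Gt^{-1}]}\circ\Phi_\Gt$, with the claim that $\opR$ "plays the role $\opA$ did, now second order," would break the scheme. The multiplier must stay $\det(\DD\Phi_\Gt)\cdot\qty{\wn\qty[(\opA^l\pd_t\f)\circ\Phi_\Gt^{-1}]}\circ\Phi_\Gt$ with $\opA=-\lap_\Gt\wn$ exactly as in the $\alpha=1$ case: the energy $\wt{E_l}$ in \eqref{eqn E_l'} is built from the operator $(-\wnh\lap_\Gt\wnh)^{l/2}\wnh$, which is the symmetrized version of $\wn\opA^l$, so the pairing of $\pd_t^2\f$ against the multiplier produces precisely the first line of $\wt{E_l}$ at level $H^{\frac32 l+\frac12}$. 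If you multiply by $\wn\opR^l$ instead ($\opR$ of order~$2$, so $\wn\opR^l$ of order $2l+1$), that pairing would live at level $H^{l+\frac12}$, which is strictly weaker than $H^{\frac32 l+\frac12}$ for $l\ge1$, and the scheme cannot close. Independently, $\opR(\ka,\vJ_*)$ depends on a choice of tangential direction $\vJ_*$, so $\opR^l$ is not even an unambiguous operator, whereas $\opA^l$ is. The only thing that changes relative to the $\alpha=1$ computation is that, after integration, the $\opA\kappa$ pairing no longer appears (hence no second line in $\wt{E_l}$), while the $\opR$-terms of the equation paired against $\wn\opA^l\pd_t\f$ still produce the $\DD_{\vw_*}$ and $\DD_{\vh_{\pm*}}$ rows after the same integration-by-parts you describe.

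One further point you should flag explicitly. In Lemma~\ref{lem est E_l} the commutator bound \eqref{est I2 last} gives $\abs{\comm{\Dt}{\DD_\vbu}\baf}_{\frac32 l+\frac12}\lesssim\abs{\f}_{\H{\frac32 l+\frac74}}$, and in the $\alpha=1$ case this is innocuous because the energy controls $\abs{\f}_{\H{\frac32 l+2}}$. In the $\alpha=0$ case the energy only controls $\abs{\f}_{\H{\frac32 l+\frac32}}$, and $\frac74>\frac32$, so the same estimate does not close. This is precisely why the hypothesis (H2') in \textsection~\ref{sec syro linear system} upgrades the time-derivative regularity of $\vv_{\pm*}$ and $\vh_{\pm*}$ from $H^{\kk-2}$ to $H^{\kk-\frac32}$: with that extra half derivative on $\Dt\vbu-\DD_\vbu\vmu$, the product estimate in \eqref{est I2 last} can be rebalanced so that only $\abs{\f}_{\H{\frac32 l+\frac32}}$ appears, matching the RHS of \eqref{est E_l'}. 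Your remark that "every $\H{\frac32 l+2}$ is replaced by $\H{\frac32 l+\frac32}$" is the desired conclusion, but it is not automatic; it requires invoking (H2') at this step.
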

	
	It is also noted that the arguments in \textsection~\ref{sec linear current vortex} are still valid for the linear systems for the current and vorticity here.
	
	\subsection{Nonlinear problems}
	As in \textsection~\ref{sec nonlinear}, take a reference hypersurface $ \Gs \in H^{\kk+\frac{1}{2}} $ and $ \delta_0 > 0  $ so that
	\begin{equation*}
		\Lambda_* \coloneqq \Lambda \qty(\Gs, \kk-\frac{1}{2}, \delta_0)
	\end{equation*}
	satisfies all the properties discussed in the preliminary. Furthermore, assume that there is a constant $ c_0 > 0 $ so that \eqref{surface away condi} holds for $ \Gs $. We shall solve the nonlinear problem by iterations on the linearized problems in the spaces:
	\begin{equation*}
		\begin{split}
			&\ka \in C^0\qty([0, T]; \H{\kk-\frac{3}{2}}) \cap C^1\qty([0, T]; B_{\delta_1}\subset\H{\kk-\frac{5}{2}}) \cap C^2\qty([0, T]; \H{\kk-\frac{7}{2}}); \\
			&\vom_{\pm*}, \vj_{\pm*} \in  C^0\qty([0, T]; H^{\kk-1}(\Om_*^\pm)) \cap C^1\qty([0, T]; H^{\kk-2}(\Om_*^\pm)); \\
			&\va{\mathfrak{v}}_\pm, \va{\mathfrak{h}}_\pm \in C^1\qty([0, T]; \R^2).
		\end{split}
	\end{equation*}
	
	\subsubsection{Fluid region, velocity and magnetic fields}
	
	As discussed in \textsection~\ref{section recovery}, the bulk region, velocity and magnetic fields can be obtained by solving the following div-curl problems:
	\begin{equation}\label{div-curl nonlinear v'}
		\begin{cases*}
			\div \vv_\pm = 0 \qc \curl \vv_\pm = \bar{\vom}_\pm &in $ \Om^\pm_t $, \\
			\vv_\pm \vdot \vn_+ = \vn_+ \vdot (\pd_t\gt \vnu) \circ\Phi_\Gt^{-1} &on $ \Gt $, \\
			\vv_\pm \vdot \wt{\vn}_\pm = 0 &on $ \mathbb{T}^2 \times \{\pm1\} $, \\
			\int_{\mathbb{T}^2\times\{\pm1\}} \vv_\pm \dd{\wt{S}} = \va{\mathfrak{v}}_\pm &on $ \mathbb{T}^2 \times \{\pm1\} $;
		\end{cases*}
	\end{equation}
	and
	\begin{equation}\label{div-curl nonlinear h'}
		\begin{cases*}
			\div \vh_\pm = 0 \qc \curl \vh_\pm = \bar{\vj}_\pm &in $ \Om_t^\pm $, \\
			\vh_\pm \vdot \vn_\pm = 0 &on $ \Gt $, \\
			\vh_\pm \vdot \wt{\vn}_\pm = 0 &on $ \mathbb{T}^2 \times \{\pm1\} $, \\
			\int_{\mathbb{T}^2 \times \{\pm1\}} \vh_\pm \dd{\wt{S}} = \va{\mathfrak{h}}_\pm &on $ \mathbb{T}^2 \times \{\pm1\} $,
		\end{cases*}
	\end{equation}
	where $ \bar{\vom}_\pm $ and $ \bar{\vj}_\pm $ are given by \eqref{def bar vom vj}.
	
	\subsubsection{Iteration mapping}\label{sec syro ite map}
	
	In order to construct the iteration map, we consider the following function space:
	\begin{defi}
		For given constants $ T, M_0, M_1, M_2, M_3, c_0, \mathfrak{s}_0 >0 $, define $ \mathfrak{X} $ to be the collection of $ \qty(\ka, \vom_*, \vj_*, \va{\mathfrak{v}}_\pm, \va{\mathfrak{h}}_\pm) $ satisfying:
		\begin{equation*}
			\abs{\ka(0) - \kappa_{*+}}_{\H{\kk-\frac{5}{2}}} \le \delta_1,
		\end{equation*}
		\begin{equation*}
			\abs{\qty(\pd_t \ka)(0)}_{\H{\kk-\frac{7}{2}}}, \norm{\vom_*(0)}_{H^{\kk-\frac{5}{2}}(\OGs)}, \norm{\vj_*(0)}_{H^{\kk-\frac{5}{2}}(\OGs)}, \abs{\va{\mathfrak{v}}_\pm(0)}, \abs{\va{\mathfrak{h}}_\pm(0)} \le M_0,
		\end{equation*}
		\begin{equation*}
			\sup_{t \in [0, T]} \qty(\abs{\ka}_{\H{\kk-\frac{3}{2}}}, \abs{\pd_t \ka}_{\H{\kk-\frac{5}{2}}}, \norm{(\vom_*, \vj_*)}_{H^{\kk-1}(\OGs)}, \abs{\va{\mathfrak{v}}_\pm}, \abs{\va{\mathfrak{h}}_\pm} ) \le M_1,
		\end{equation*}
		\begin{equation*}
			\sup_{t \in [0, T]} \qty(\norm{\pd_t\vom_*}_{H^{\kk-2}(\OGs)}, \norm{\pd_t\vj_*}_{H^{\kk-2}(\OGs)}, \abs{\pd_t \va{\mathfrak{v}}_\pm}, \abs{\pd_t\va{\mathfrak{h}}_\pm}) \le M_2,
		\end{equation*}
		\begin{equation*}
			\sup_{t \in [0, T]} \abs{\pd^2_{tt}\ka}_{\H{\kk-\frac{7}{2}}} \le a^2 M_3 \ (\text{here $ a $ is the constant in the definition of $ \ka $}).
		\end{equation*}
		For $ \Upsilon(\vh_\pm, \llbracket\vv\rrbracket) $ defined by \eqref{syro inf},
		\begin{equation*}
			\Upsilon\qty(\vh_\pm, \llbracket\vv\rrbracket) \ge \mathfrak{s}_0
		\end{equation*}
		holds uniformly for $ 0 \le t \le T $.
		In addition, \eqref{surface away condi} and the compatibility conditions
		\begin{equation*}
			\int_{\mathbb{T}^2\times\{\pm1\}} \wt{\vn}_\pm \vdot \vom_{*\pm} \dd{\wt{S}} = \int_{\mathbb{T}^2\times\{\pm1\}} \wt{\vn}_\pm \vdot \vj_{*\pm} \dd{\wt{S}} = 0
		\end{equation*}
		hold for all $ t\in [0, T] $.
	\end{defi}
	
	As for the initial data, take $ 0 < \epsilon \ll \delta_1 $ and $ A > 0 $, and consider:
	\begin{equation*}
		\mathfrak{I}(\epsilon, A) \coloneqq\qty{\qty\big(\ini{\ka}, \ini{\pd_t\ka}, \ini{\vom_*}, \ini{\vj_*}), \ini{\va{\mathfrak{v}}_\pm}, \ini{\va{\mathfrak{h}}_\pm} },
	\end{equation*}
	where
	\begin{gather*}
		\abs{\ini{\ka}-\kappa_{*+}}_{\H{\kk-\frac{3}{2}}}<\epsilon; \\ \abs{\ini{\pd_t\ka}}_{\H{\kk-\frac{5}{2}}},\
		\norm{\ini{\vom_*}}_{H^{\kk-1}(\OGs)},\ \norm{\ini{\vj_*}}_{H^{\kk-1}(\OGs)},\ \abs{\ini{\va{\mathfrak{v}}_\pm}},\ \abs{\ini{\va{\mathfrak{h}}_\pm}} < A,
	\end{gather*}
	\begin{equation*}
		\Upsilon(\vh_\pm, \llbracket \vv\rrbracket) \ge 2\mathfrak{s}_0,
	\end{equation*}
	and
	\begin{equation*}
		\dist\qty(\Gamma_{\mathrm{I}}, \mathbb{T}^2\times\{\pm1\}) \ge 2c_0.
	\end{equation*}
	In addition, $ \ini{\vom_*} $ and $ \ini{\vj_*} $ satisfy the following compatibility conditions:
	\begin{equation*}
		\int_{\mathbb{T}^2\times\{\pm1\}} \wt{\vn}_\pm \vdot {\ini{\vom_{*}}}_\pm \dd{\wt{S}} = \int_{\mathbb{T}^2\times\{\pm1\}} \wt{\vn}_\pm \vdot {\ini{\vj_{*}}}_\pm \dd{\wt{S}} = 0.
	\end{equation*}
	Thus, $ \mathfrak{I}(\epsilon, A) \subset \H{\kk-\frac{3}{2}}\times\H{\kk-\frac{5}{2}}\times H^{\kk-1}(\OGs) \times H^{\kk-1}(\OGs) \times \R^4$.
	
	Then, as in \textsection~\ref{sec itetarion map}, one can define the iteration map:
	\begin{equation}\label{eqn (n+1)ka'}
		\begin{cases*}
			\pd^2_{tt}\itm{\ka} + \opC_0\qty(\itn{\ka}, \itn{\pd_t\ka}, \itn{\vv_{*\pm}}, \itn{\vh_{*\pm}})\itm{\ka} \\ \qquad = \opF\qty(\itn{\ka})\pd_t \itn{\vom_*} + \opG\qty(\itn{\ka}, \itn{\pd_t\ka}, \itn{\vom_{*\pm}}, \itn{\vj_{*\pm}}, \itn{\va{\mathfrak{v}}_\pm}, \itn{\va{\mathfrak{h}}_\pm}) \\
			\qquad\qquad + \mathscr{S}\qty(\itn{\ka})\pd_t \itn{\va{\mathfrak{v}}_\pm} \\
			\itm{\ka}(0) = \ini{\ka}, \quad \itm{\pd_t\ka}(0) = \ini{\pd_t\ka};
		\end{cases*}
	\end{equation}
	and
	\begin{equation}\label{eqn linear (n+1) vom vj'}
		\begin{cases*}
			\pd_t\itm{\vom_\pm} + \DD_{\itn{\vv_\pm}}\itm{\vom_\pm} - \DD_{\itn{\vh_\pm}}\itm{\vj_\pm} = \DD_{\itm{\vom_\pm}}\itn{\vv_\pm} - \DD_{\itm{\vj_\pm}}\itn{\vh_\pm}, \\
			\pd_t\itm{\vj_\pm} + \DD_{\itn{\vv_\pm}}\itm{\vj_\pm} - \DD_{\itn{\vh_\pm}}\itm{\vom_\pm} \\
			\qquad  =  \DD_{\itm{\vj_\pm}}\itn{\vv_\pm} - \DD_{\itm{\vom_\pm}}\itn{\vh_\pm}
			- 2\tr(\grad\itn{\vv_\pm} \cp \grad \itn{\vh_\pm}), \\
			\itm{\vom_\pm}(0) = \Pb \qty(\ini{\vom_{*\pm}} \circ (\X_{\itn{\Gamma_0}}^{\pm})^{-1}), \quad \itm{\vj_\pm}(0) = \Pb\qty(\ini{\vj_{*\pm}}\circ (\X_{\itn{\Gamma_0}}^{\pm})^{-1}),
		\end{cases*}
	\end{equation}
	where $ \qty(\itn{\vv_{\pm}}, \itn{\vh_{\pm}}) $ is induced by $ \qty(\itn{\ka}, \itn{\vom_{*\pm}}, \itn{\vj_{*\pm}}, \itn{\va{\mathfrak{v}}_\pm}, \itn{\va{\mathfrak{h}}_\pm}) $ via solving \eqref{div-curl nonlinear v'}-\eqref{div-curl nonlinear h'}, the tangential vector fields $ \itn{\vv_{*\pm}} $ and $ \itn{\vh_{*\pm}} $ on $ \Gs $ are defined by
	\begin{equation*}
		\begin{split}
			\itn{\vv_{*\pm}} &\coloneqq \qty(\DD\Phi_{\itn{\Gt}})^{-1}\qty[\itn{\vv_\pm}\circ\Phi_{\itn{\Gt}} - \qty(\pd_t\gamma_{\itn{\Gt}})\vnu], \\
			\itn{\vh_{*\pm}} &\coloneqq \qty(\DD\Phi_{\itn{\Gt}})^{-1} \qty(\itn{\vh_\pm} \circ \Phi_{\itn{\Gt}}),
		\end{split}
	\end{equation*}
	and the current-vorticity equations are considered in the domains $ \itn{\Om_t^\pm} $.
	
	Define
	\begin{equation}\label{eqn dvt va v}
		\itm{\va{\mathfrak{v}}_\pm}(t) \coloneqq \ini{\va{\mathfrak{v}}_\pm} + \int_0^t\int_{\mathbb{T}^2\times\{\pm1\}} - \DD_{\itn{\vv_\pm}}\itn{\vv_\pm} - \frac{1}{\rho_\pm}\grad\itn{p^\pm} + \DD_{\itn{\vh_\pm}}\itn{\vh_\pm} \dd{\wt{S}} \dd{t'},
	\end{equation}
	\begin{equation}\label{eqn dvt va h}
		\itm{\va{\mathfrak{h}}_\pm}(t) \coloneqq \ini{\va{\mathfrak{h}}_\pm} + \int_0^t \int_{\mathbb{T}^2\times\{\pm1\}} \DD_{\itn{\vh_\pm}}\itn{\vv_\pm} - \DD_{\itn{\vv_\pm}} \itn{\vh_\pm} \dd{\wt{S}} \dd{t'},
	\end{equation}
	and
	\begin{equation}
		\itm{\vom_*}\coloneqq \itm{\vom}\circ\X_{\itn{\Gt}} \qc \itm{\vj_*}\coloneqq \itm{\vj}\circ\X_{\itn{\Gt}},
	\end{equation}
	where $ \itn{p^\pm} $ are given by \eqref{decomp p} with $ \qty(\itn{\ka}, \itn{\vv_{\pm}}, \itn{\vh_{\pm}}) $ plugged in.

	In order to show that the iteration map is a mapping from $ \mathfrak{X} $ to $ \mathfrak{X} $, one may first check that
	\begin{equation}
		\dist\qty(\Gamma^{(n+1)}_t, \mathbb{T}^2\times\{\pm 1\}) \ge 2c_0 - CT \abs{\pd_t \itm{\ka}}_{C^0_t \H{\kk-\frac{5}{2}}} \ge c_0,
	\end{equation}
	and
	\begin{equation}
		\abs{\Upsilon(\itm{\vh}_\pm, \llbracket\itm{\vv}\rrbracket) - \Upsilon\qty({\ini{\vh}}_\pm, \llbracket\ini{\vv}\rrbracket)} \le T Q(M_1, M_2) \le \mathfrak{s}_0,
	\end{equation}
	if $ T $ is small compared to $ M_1 $ and $ M_2 $.
	
	Next, for $ \itm{\va{\mathfrak{v}}_\pm} $ and $ \itm{\va{\mathfrak{h}}_\pm} $, observe that
	\begin{equation}
		\abs{\itm{\va{\mathfrak{v}}}_\pm (t)} + \abs{\itm{\va{\mathfrak{h}}_\pm}(t)} \le A + T Q(M_1) \le M_1,
	\end{equation}
	and
	\begin{equation}
		\abs{\pd_t\va{\mathfrak{v}}_\pm(t)} + \abs{\pd_t\va{\mathfrak{h}}_\pm(t)} \le Q(M_1) \le M_2,
	\end{equation}
	provided that $ T $ is small and $ M_2 \gg M_1 $.
	
	Then, with the same notation as in \textsection~\ref{sec itetarion map}, one can define
	\begin{equation}
		\begin{split}
			&\mathfrak{T}\qty(\qty[\ini{\ka}, \ini{\pd_t\ka}, \ini{\vom_{*\pm}}, \ini{\vj_{*\pm}}, \ini{\va{\mathfrak{v}}_\pm}, \ini{\va{\mathfrak{h}}_\pm}], \qty[\itn{\ka}, \itn{\vom_{*\pm}}, \itn{\vj_{*\pm}}, \itn{\va{\mathfrak{v}}_\pm}, \itn{\va{\mathfrak{h}}_\pm}]) \\
			&\coloneqq \qty(\itm{\ka}, \itm{\vom_{*\pm}}, \itm{\vj_{*\pm}}, \itm{\va{\mathfrak{v}}_\pm}, \itm{\va{\mathfrak{h}}_\pm}).
		\end{split}
	\end{equation}
	It follows from the arguments in \textsection~\ref{sec itetarion map} and the linear estimates in \textsection~\ref{sec syro linear system} that the following proposition holds:
	\begin{prop}
		Suppose that $ k \ge 3 $. For any $ 0 < \epsilon \ll \delta_0 $ and $ A > 0 $, there are positive constants $ M_0, M_1, M_2, M_3, c_0, \mathfrak{s}_0 $, so that for small $ T > 0 $,
		\begin{equation*}
			\mathfrak{T}\qty\Big{\qty[\ini{\ka}, \ini{\pd_t\ka}, \ini{\vom_{*\pm}}, \ini{\vj_{*\pm}}, \ini{\va{\mathfrak{v}}_\pm}, \ini{\va{\mathfrak{h}}_\pm}], \qty[\ka, \vom_{*\pm}, \vj_{*\pm}, \va{\mathfrak{v}}_\pm, \va{\mathfrak{h}}_\pm]} \in \mathfrak{X},
		\end{equation*}
		holds for any $ \qty(\ini{\ka}, \ini{\pd_t\ka}, \ini{\vom_{*\pm}}, \ini{\vj_{*\pm}}, \ini{\va{\mathfrak{v}}_\pm}, \ini{\va{\mathfrak{h}}_\pm}) \in \mathfrak{I}(\epsilon, A) $ and \\ $ \qty(\ka, \vom_{*\pm}, \vj_{*\pm}, \va{\mathfrak{v}}_\pm, \va{\mathfrak{h}}_\pm) \in \mathfrak{X} $.
	\end{prop}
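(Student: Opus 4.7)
\medskip

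\noindent\textbf{Proof proposal.} The plan is to mirror the argument of \textsection~\ref{sec itetarion map}, feeding the linear estimate \eqref{est linear eqn ka'} (with $\alpha=0$ and the Syrovatskij stability $\Upsilon\ge\mathfrak{s}_0$ providing coercivity of $\opC_0$) into \eqref{eqn (n+1)ka'}, and the linear transport estimates of \textsection~\ref{sec linear current vortex} (still valid with the modified boundary conditions) into \eqref{eqn linear (n+1) vom vj'}. The four new ingredients compared to \textsection~\ref{sec itetarion map} are (a) the preservation of $\dist(\Gt^{(n+1)},\mathbb{T}^2\times\{\pm1\})\ge c_0$, (b) the preservation of $\Upsilon(\itm{\vh_\pm},\llbracket\itm{\vv}\rrbracket)\ge\mathfrak{s}_0$, (c) the bookkeeping of the boundary integrals $\va{\mathfrak{v}}_\pm,\va{\mathfrak{h}}_\pm$ evolving by \eqref{eqn dvt va v}--\eqref{eqn dvt va h}, and (d) the loss of one spatial derivative in the top-order bound (i.e.\ $\abs{\ka}_{\H{\kk-\frac{3}{2}}}$ instead of $\abs{\ka}_{\H{\kk-1}}$), consistent with the fact that the stabilizer is now second order rather than third order.

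First, I would verify the distance condition by the integral bound
\[
	\dist\qty(\Gamma_t^{(n+1)},\mathbb{T}^2\times\{\pm1\})\ge 2c_0-C_* T\sup_{[0,T]}\abs{\pd_t\itm{\ka}}_{\H{\kk-\frac{5}{2}}}\ge c_0,
\]
where $\Phi_{\Gamma_t^{(n+1)}}$ is controlled through $\K^{-1}$ (Proposition~\ref{prop K}) and the trace estimate for $\pd_t\gt$ via $\var{\K^{-1}}$. Then, for the principal $(M_1,M_2,M_3)$-bounds on $\itm{\ka}$ and $\pd_t\itm{\ka}$, taking $l=k-2$ in \eqref{est linear eqn ka'} and using Lemma~\ref{lem 3.4} together with \eqref{est opS}, I would estimate the right-hand side of \eqref{eqn (n+1)ka'} by $Q(M_1)+Q(M_1)M_2$, yielding
\[
	\sup_{[0,T]}\qty(\abs{\itm{\ka}}_{\H{\kk-\frac{3}{2}}}+\abs{\pd_t\itm{\ka}}_{\H{\kk-\frac{5}{2}}})\le C_* e^{Q(M_1,M_2,\mathfrak{s}_0^{-1})T}\qty\big(\epsilon+\abs{\kappa_{*+}}_{\H{\kk-\frac{3}{2}}}+A+TQ(M_1,M_2))\le M_1
\]
after fixing $M_1$ large and then $T$ small. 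The bound on $\pd^2_{tt}\itm{\ka}$ in $\H{\kk-\frac{7}{2}}$ follows by reading \eqref{eqn (n+1)ka'} and choosing $M_3$ large compared to $M_1,M_2$. The bounds on $\itm{\vom_*},\itm{\vj_*}$ and their time derivatives are obtained exactly as in \textsection~\ref{sec itetarion map} from Proposition~\ref{prop linear vom vj}, and the bounds $\abs{\itm{\va{\mathfrak{v}}_\pm}},\abs{\itm{\va{\mathfrak{h}}_\pm}}\le A+TQ(M_1)\le M_1$ and $\abs{\pd_t\itm{\va{\mathfrak{v}}_\pm}}+\abs{\pd_t\itm{\va{\mathfrak{h}}_\pm}}\le Q(M_1)\le M_2$ follow directly from \eqref{eqn dvt va v}--\eqref{eqn dvt va h} by trace and product estimates on the fixed boundary $\mathbb{T}^2\times\{\pm1\}$, where the elliptic control of $\itn{p^\pm}$ coming from \eqref{eqn p_ab '} and \eqref{def p_kappa} is uniform in $\mathfrak{X}$.

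The main obstacle is step (b), the preservation of the Syrovatskij condition. The quantity $\Upsilon(\itm{\vh_\pm},\llbracket\itm{\vv}\rrbracket)$ is defined on the moving hypersurface $\Gt^{(n)}$, which itself changes with $n$, so its variation in $t$ mixes the evolution of the interface with that of $\itm{\vh_\pm}$ and $\itm{\vv_\pm}$. I would proceed by pulling everything back to $\Gs$ via $\Phi_{\Gt^{(n)}}$ and observing that $\Upsilon$ is a continuous functional of $(\ka,\vh_{\pm*},\vv_{\pm*})\in\H{\kk-\frac{3}{2}}\times\H{\kk-\frac{1}{2}}\times\H{\kk-\frac{1}{2}}$ (via the Sobolev embedding $\H{\kk-\frac{1}{2}}\hookrightarrow C^0$ for $k\ge3$), so a Lipschitz-in-time estimate of the form
\[
	\abs{\Upsilon(\itm{\vh_\pm},\llbracket\itm{\vv}\rrbracket)(t)-\Upsilon({\ini{\vh_\pm}},\llbracket\ini{\vv}\rrbracket)}\le TQ(M_1,M_2)
\]
follows from the bounds on $\pd_t\vv_{\pm*},\pd_t\vh_{\pm*}$ and $\pd_t\ka$ through \eqref{eqn pd beta vv '} (now including the $\vb{S}_\pm$ term). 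Choosing $T$ small compared to $\mathfrak{s}_0/Q(M_1,M_2)$ then keeps $\Upsilon\ge 2\mathfrak{s}_0-\mathfrak{s}_0=\mathfrak{s}_0$. The subtlety is that the coercivity constant in \eqref{est linear eqn ka'} depends on $\mathfrak{s}_0^{-1}$, so the smallness of $T$ must be chosen \emph{after} fixing $\mathfrak{s}_0,M_1,M_2,M_3$ to ensure all closures are simultaneous; this is a standard but delicate ordering argument.

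Finally, the compatibility conditions $\int_{\mathbb{T}^2\times\{\pm1\}}\wt{\vn}_\pm\vdot\itm{\vom_{*\pm}}\dd\wt{S}=0$ and the analogous one for $\itm{\vj_{*\pm}}$ follow from Lemma~\ref{lem compatibility} applied on the solid boundary components (replacing $\pd\Om$ by $\mathbb{T}^2\times\{\pm1\}$; the derivation of the identity $I_1+I_2=0$ in \textsection~\ref{sec linear current vortex} uses only that $\vv_\pm-\vh_\pm$ is tangent and divergence-free, which still holds). Collecting all the inequalities and iterating in $n$ closes the map $\mathfrak{T}:\mathfrak{I}(\epsilon,A)\times\mathfrak{X}\to\mathfrak{X}$, proving the proposition.
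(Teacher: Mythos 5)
Your proposal is correct and follows essentially the same route as the paper: verify the distance condition and the persistence of the Syrovatskij lower bound by Lipschitz-in-time estimates, apply the modified energy estimate \eqref{est linear eqn ka'} with $l=k-2$ (using Lemma~\ref{lem 3.4} and \eqref{est opS} for the forcing), feed Proposition~\ref{prop linear vom vj} into the current-vorticity system, control the boundary integrals through \eqref{eqn dvt va v}--\eqref{eqn dvt va h}, and close the compatibility conditions via Lemma~\ref{lem compatibility}. You spell out the Syrovatskij continuity argument and the $\mathfrak{s}_0^{-1}$-dependent ordering of constants more explicitly than the paper does, but the substance is identical.
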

	
	For the contraction of the iteration mapping, as in \textsection~\ref{sec contra ite}, assume that \\ $ \qty(\itn{\ka}(\beta), \itn{\vom_{*\pm}}(\beta), \itn{\vj_{*\pm}}(\beta), \itn{\va{\mathfrak{v}}_\pm}(\beta), \itn{\va{\mathfrak{h}}_\pm}(\beta) ) \subset \mathfrak{X} $ and \\ $ \qty(\ini{\ka}(\beta), \ini{\pd_t\ka}(\beta), \ini{\vom_{*\pm}}(\beta), \ini{\vj_{*\pm}}(\beta), \ini{\va{\mathfrak{v}}_\pm}(\beta), \ini{\va{\mathfrak{h}}_\pm}(\beta)) \subset \mathfrak{I}(\epsilon, A) $ are two families of data depending on a parameter $ \beta $.
	
	Define $ \qty(\itm{\ka}(\beta), \itm{\vom_{*\pm}}(\beta), \itm{\vj_{*\pm}}(\beta), \itm{\va{\mathfrak{v}}_\pm}(\beta), \itm{\va{\mathfrak{h}}_\pm}(\beta) ) $ to be the output of the iteration map. Then, by applying $ \pdv*{\beta} $ to \eqref{eqn (n+1)ka'}-\eqref{eqn dvt va h}, one has the variational problems \eqref{eqn beta n+1}-\eqref{eqn g2} as well as:
	\begin{equation}
		\begin{cases*}
			\pd^2_{tt} \pd_\beta \itm{\ka} + \itn{\opC}\pd_\beta\itm{\ka} \\ 
			\qquad  = - \qty(\pd_\beta\itn{\opC})\itm{\ka} + \pd_\beta \qty(\itn{\opF}\itn{\pd_t\vom_*} + \itn{\opG} + \itn{\mathscr{S}}\pd_t\itn{\va{\mathfrak{v}}}), \\
			\pd_\beta\itm{\ka}(0) = \pd_\beta\ini{\ka}(\beta), \quad \pd_t\qty(\pd_\beta\itm{\ka})(0) = \pd_\beta\ini{\pd_t\ka}(\beta),
		\end{cases*}
	\end{equation}
	\begin{equation}\label{eqn var pdt va v}
		\begin{split}
			&\pd_t\pd_\beta \itm{\va{\mathfrak{v}}_\pm}\\ &\quad=\pd_\beta\ini{\va{\mathfrak{v}}_\pm}+\int_0^t \int_{\mathbb{T}^2\times\{\pm1\}} \Dbt \qty(- \DD_{\itn{\vv_\pm}}\itn{\vv_\pm} - \frac{1}{\rho_\pm}\grad\itn{p^\pm} + \DD_{\itn{\vh_\pm}}\itn{\vh_\pm}) \dd{\wt{S}} \dd{t'},
		\end{split}
	\end{equation}
	and
	\begin{equation}\label{eqn var pdt va h}
		\pd_t\pd_\beta\itm{\va{\mathfrak{h}}_\pm} = \pd_\beta\ini{\va{\mathfrak{h}}_\pm} +  \int_0^t \int_{\mathbb{T}^2\times\{\pm1\}} \Dbt\qty(\DD_{\itn{\vh_\pm}}\itn{\vv_\pm} - \DD_{\itn{\vv_\pm}} \itn{\vh_\pm}) \dd{\wt{S}} \dd{t'}.
	\end{equation}
	
	Consider the energy functionals:
	\begin{equation}
		\begin{split}
			\itn{\E}(\beta)\coloneqq &\sup_{t \in [0, T]}\left(\abs{\pd_\beta\itn{\ka}}_{\H{\kk-\frac{5}{2}}} + \abs{\pd_\beta\itn{\pd_t\ka}}_{\H{\kk-\frac{7}{2}}} + \right. \\
			&\qquad\qquad \left.+ \norm{\pd_\beta\itn{\vom_{*\pm}}}_{H^{\kk-2}(\Om_*^\pm)} + \norm{\pd_\beta\itn{\vj_{*\pm}}}_{H^{\kk-2}(\Om_*^\pm)} +  \right. \\
			&\qquad\qquad\qquad \left. + \norm{\pd_\beta\pd_t\itn{\vom_{*\pm}}}_{H^{\kk-4}(\Om_*^\pm)} + \abs{\pd_\beta\itn{\va{\mathfrak{v}}_\pm}} + \abs{\pd_\beta\itn{\va{\mathfrak{h}}_\pm}} \right),
		\end{split}
	\end{equation}
	and
	\begin{equation}
		\begin{split}
			\ini{\E}(\beta)\coloneqq &\sup_{t \in [0, T]}\left(\abs{\pd_\beta\ini{\ka}}_{\H{\kk-\frac{5}{2}}} + \abs{\pd_\beta\ini{\pd_t\ka}}_{\H{\kk-\frac{7}{2}}} + \right. \\
			&\qquad\qquad \left.+ \norm{\pd_\beta\ini{\vom_{*\pm}}}_{H^{\kk-2}(\Om_*^\pm)} + \norm{\pd_\beta\ini{\vj_{*\pm}}}_{H^{\kk-2}(\Om_*^\pm)} +  \right. \\
			&\qquad\qquad\qquad \left.  + \abs{\pd_\beta\ini{\va{\mathfrak{v}}_\pm}} + \abs{\pd_\beta\ini{\va{\mathfrak{h}}_\pm}} \right).
		\end{split}
	\end{equation}
	
	It follows from \eqref{eqn var pdt va v}-\eqref{eqn var pdt va h}, \eqref{est var opG'}-\eqref{est var opS}, and the arguments in \textsection~\ref{sec contra ite} that
	\begin{equation}
		\itm{\E} \le \frac{1}{2}\itn{\E} + Q(M_1)\ini{\E},
	\end{equation} 
	provided that $ T $ is sufficiently small. That is, the following proposition holds:
	\begin{prop}\label{prop fixed point'}
		Assume that $ k \ge 3 $. For any $ 0 < \epsilon \ll \delta_0 $ and $ A > 0 $, there are positive constants $ M_0, M_1, M_2, M_3, c_0, \mathfrak{s}_0 $, so that if $ T $ is small enough, then there is a map $ \mathfrak{S} : \mathfrak{I}(\epsilon, A) \to  \mathfrak{X} $ such that
		\begin{equation}
			\mathfrak{T}\qty{\mathfrak{x}, \mathfrak{S(x)}} = \mathfrak{S(x)},
		\end{equation}
		for each $ \mathfrak{x} = \qty(\ini{\ka}, \ini{\pd_t\ka}, \ini{\vom_{*\pm}}, \ini{\vj_{*\pm}}, \ini{\va{\mathfrak{v}}_\pm}, \ini{\va{\mathfrak{h}}_\pm}) \in \mathfrak{I}(\epsilon, A)$.
	\end{prop}
	
	\subsubsection{The original MHD problem}\label{sec Back to original syro}
	For the fixed point given in Proposition \ref{prop fixed point'}, one can obtain $ \Gt, \vv_\pm$, and $ \vh_\pm $ by solving the div-curl problems \eqref{div-curl nonlinear v'}-\eqref{div-curl nonlinear h'}.
	Observe that \eqref{eqn dvt va v} and \eqref{eqn dvt va h} yield
	\begin{equation}
		\int_{\mathbb{T}^2\times\{\pm1\}}  \pd_t \vv_\pm + \DD_{\vv_\pm}\vv_\pm + \dfrac{1}{\rho_\pm}\grad p^\pm - \DD_{\vh_\pm}\vh_\pm \dd{\wt{S}} = 0
	\end{equation}
	and
	\begin{equation}
		\int_{\mathbb{T}^2\times\{\pm1\}} \pd_t \vh_\pm + \DD_{\vv_\pm}\vh_\pm - \DD_{\vh_\pm}\vv_\pm \dd{\wt{S}} = 0,
	\end{equation}
	which, together with the arguments in \textsection~\ref{sec back to MHD}, shows that $ (\Gt, \vv_\pm, \vh_\pm) $ is the unique solution to the (MHD)-(BC') problem.
	
	In particular, Theorem \ref{thm alpha=0 case} follows.

	\subsection{Vanishing surface tension limit}\label{sec vani st}
	
	In this subsection, it is always assumed that $ \Om = \mathbb{T}^2 \times \{\pm1\} $, $ k \ge 3 $ and the initial data satisfy the assumptions of Theorem \ref{thm alpha=0 case}. To derive the uniform-in-$ \alpha $ estimates, we consider the following four parts of the energies:
	\begin{equation}
		\mathcal{E}_0 (t) \coloneqq \frac{1}{2}\int_{\Om_t^+} \rho_+\qty(\abs{\vv_+}^2 + \abs{\vh_+}^2) \dd{x} + \frac{1}{2} \int_{\Om_t^-} \rho_- \qty(\abs{\vv_-}^2 + \abs{\vh_-}^2) \dd{x} + \int_\Gt \alpha^2 \dd{S_t},
	\end{equation}
	\begin{equation}
		\mathcal{E}_1 (t) \coloneqq \abs{\Dtb \kappa_+}_{H^{\kk-\frac{5}{2}}(\Gt)}^2 + \alpha^2 \abs{\kappa_+}_{H^{\kk-1}(\Gt)}^2 + \abs{\kappa_+}_{H^{\kk-\frac{3}{2}}(\Gt)}^2,
	\end{equation}
	\begin{equation}
		\mathcal{E}_2 (t) \coloneqq \norm{\vom_\pm}_{H^{\kk-1}(\Om_t^\pm)}^2 + \norm{\vj_\pm}_{H^{\kk-1}(\Om_t^\pm)}^2,
	\end{equation}
	and
	\begin{equation}
		\mathcal{E}_3 (t) \coloneqq \abs{\va{\mathfrak{v}}_\pm}^2 + \abs{\va{\mathfrak{h}}_\pm}^2.
	\end{equation}
	
	It follows from the (MHD)-(BC') system that
	\begin{equation}\label{eqn dt calE0}
		\dv{t}\mathcal{E}_0 (t) \equiv 0.
	\end{equation}
	Indeed, 
	\begin{equation}
		\begin{split}
			\dv{t} \frac{1}{2} \int_{\Om_t^+} \rho_+ \qty(\abs{\vv_+}^2 + \abs{\vh_+}^2) \dd{x} &= \int_{\Om_t^+} \rho_+ \DD_{\vh_+}(\vv_+ + \vh_+) - \vv_+ \vdot \grad p^+ \dd{x} \\
			&= \int_{\Om_t^+} - \vv_+ \vdot \grad p^+ + \rho_+ \DD_{\vh_+}(\vh_+ \vdot \vv_+) \dd{x} \\
			&= \int_{\Om_t^+} - \div(p^+\vv_+) + \rho_+ \div(\vh_+ [\vh_+ \vdot \vv_+]) \dd{x} \\
			&= \int_{\Gt} - p^+ \theta \dd{S_t}.
		\end{split}
	\end{equation}
	Similarly, one can calculate that
	\begin{equation}
		\dv{t} \frac{1}{2} \int_{\Om_t^-} \rho_- \qty(\abs{\vv_-}^2 + \abs{\vh_-}^2) \dd{x} = \int_{\Gt} p_- \theta \dd{S_t}.
	\end{equation}
	It follows from \eqref{eqn kp div N} and \eqref{eqn Dt dS} that
	\begin{equation}
		\begin{split}
			\dv{t} \int_\Gt \dd{S_t} &= \int_\Gt \Div_\Gt \vv \dd{S_t} \\
			&= \int_\Gt \Div_\Gt (\theta\vn_+) + \Div_\Gt (\vv-\theta\vn_+) \dd{S_t} \\
			&= \int_\Gt \theta \Div_\Gt\vn_+ + \grad^\top \theta \vdot \vn_+ \dd{S_t} \\
			&= \int_\Gt \theta \kappa_+ \dd{S_t}.
		\end{split}
	\end{equation}
	Thus, \eqref{eqn dt calE0} follows from the above relations and the boundary condition that $ p^+ - p^- = \alpha^2 \kappa_+ $ on $ \Gt $.
	
	Furthermore, applying the arguments in the proof of Lemma \ref{lem est E_l} to \eqref{eqn Dt2 kappa+}, \eqref{est frak R_0'} and \eqref{syro stab condi useful} yields
	\begin{equation}
		\mathcal{E}_1 (t) \lesssim_{\Lambda_{*}, \mathfrak{s}_0} 1 + \mathcal{E}_1(0) + \int_0^t Q \qty(\alpha\abs{\kappa_+}_{H^{\kk-1}({\Gamma_{t'}})}, \abs{\kappa_+}_{H^{\kk-\frac{3}{2}}({\Gamma_{t'}})}, \norm{(\vv, \vh)}_{H^{\kk}(\Om\setminus\Gamma_{t'})}) \dd{t'}
	\end{equation}
	for some generic polynomial $ Q $ determined by $ \Lambda_* $, $c_0$, and $ \mathfrak{s}_0 $.
	Similarly, \eqref{eqn pdt vom}-\eqref{eqn pdt vj} and the arguments in \textsection~\ref{sec linear current vortex} lead to:
	\begin{equation}
		\abs{\dv{t}\mathcal{E}_2(t)} \le Q \qty(\abs{\kappa_+}_{H^{\kk-\frac{3}{2}}(\Gt)}, \norm{\vv}_{H^{\kk}(\Om\setminus\Gt)}, \norm{\vh}_{H^{\kk} (\Om\setminus\Gt)}).
	\end{equation}
	As for $ \mathcal{E}_3 $, it follows from \eqref{eqn dvt va v} and \eqref{eqn dvt va h} that
	\begin{equation}
		\abs{\dv{t}\mathcal{E}_3(t)} \le Q \qty(\abs{\kappa_+}_{H^{\kk-\frac{3}{2}}(\Gt)}, \norm{\vv}_{H^{\kk}(\Om\setminus\Gt)}, \norm{\vh}_{H^{\kk} (\Om\setminus\Gt)}).
	\end{equation}
	
	On the other hand, if $ T $ is small compared to $ \norm{(\vv_{0\pm}, \vh_{0\pm})}_{H^{\kk}(\Om_0^\pm)} $, one has
	\begin{equation*}
		\dist \qty(\Gt, \mathbb{T}^2 \times \{\pm 1\}) \ge c_0.
	\end{equation*}
	Then it follows from the estimates of div-curl systems and Lemma \ref{est ii} that
	\begin{equation}
		\begin{split}
			\norm{\vv_\pm}_{H^{\kk}(\Om_t^\pm)} \le &Q\qty(\abs{\kappa_+}_{H^{\kk-\frac{3}{2}}(\Gt)}, c_0) \times \\
			&\quad \times \qty(\norm{\vom_\pm}_{H^{\kk-1}(\Om_t^\pm)} + \abs{\vn_+ \vdot \lap_{\Gt} \vv_\pm}_{H^{\kk-\frac{5}{2}}(\Gt)} + \abs{\va{\mathfrak{v}}_\pm}),
		\end{split}
	\end{equation}
	and
	\begin{equation}
		\begin{split}
			\norm{\vh_\pm}_{H^{\kk}(\Om_t^\pm)} \le &Q\qty(\abs{\kappa_+}_{H^{\kk-\frac{3}{2}}(\Gt)}, c_0) \times \\
			&\quad \times \qty(\norm{\vj_\pm}_{H^{\kk-1}(\Om_t^\pm)} + \abs{\vn_+ \vdot \lap_{\Gt} \vh_\pm}_{H^{\kk-\frac{5}{2}}(\Gt)} + \abs{\va{\mathfrak{h}}_\pm}).
		\end{split}
	\end{equation}
	
	In addition, \eqref{eqn dt kappa} implies
	\begin{equation*}
		-\vn_+ \vdot \lap_\Gt \vv_\pm = \Dt_\pm \kappa_+ + 2 \ip{\II_+}{(\DD\vv_\pm)^\top} = \Dtb\kappa_+ + \DD_{(\vv_\pm - \vbu)}\kappa_+ + 2 \ip{\II_+}{(\DD\vv_\pm)^\top}.
	\end{equation*}
	Thus, one has
	\begin{equation}
		\begin{split}
			\abs{\vn_+ \vdot \lap_\Gt \vv_\pm}_{H^{\kk-\frac{5}{2}}(\Gt)} \le C_* \qty(\abs{\Dtb\kappa_+}_{H^{\kk-\frac{5}{2}}(\Gt)} + \abs{\kappa_+}_{H^{\kk-\frac{3}{2}}(\Gt)}\abs{(\vv_+, \vv_-)}_{H^{\kk-\frac{3}{2}}(\Gt)}).
		\end{split}
	\end{equation}
	It follows from the interpolation inequalities of Sobolev norms that
	\begin{equation}
		\begin{split}
			\norm{\vv_\pm}_{H^{\kk}(\Om_t^\pm)} \le Q \qty(\abs{\Dtb \kappa_+}_{H^{\kk-\frac{5}{2}}(\Gt)},  \abs{\kappa_+}_{H^{\kk-\frac{3}{2}}(\Gt)}, \norm{\vom_\pm}_{H^{\kk-1}(\Om_t^\pm)}, \abs{\va{\mathfrak{v}}_\pm}, \norm{\vv_\pm}_{L^2(\Om_t^\pm)}),
		\end{split}
	\end{equation}
	where $ Q $ is a generic polynomial determined by $ \Lambda_* $ and $ c_0 $.
	Similarly,
	\begin{equation}
		-\vn_+ \vdot \lap_\Gt \vh_\pm = \DD_{\vh_\pm}\kappa_+ + 2\ip{\II_+}{(\DD\vh_\pm)^\top}
	\end{equation}
	implies that
	\begin{equation}
		\norm{\vh_\pm}_{H^{\kk}(\Om_t^\pm)} \le Q \qty(\abs{\kappa_+}_{H^{\kk-\frac{3}{2}}(\Gt)}, \norm{\vj_\pm}_{H^{\kk-1}(\Om_t^\pm)}, \abs{\va{\mathfrak{h}}_\pm}, \norm{\vh_\pm}_{L^2(\Om_t^\pm)}).
	\end{equation}
	
	In conclusion, setting
	\begin{equation}
		\mathcal{E} (t) \coloneqq \mathcal{E}_0 + \mathcal{E}_1 + \mathcal{E}_2 + \mathcal{E}_3,
	\end{equation}
	then it is not hard to derive that
	\begin{equation}
		\mathcal{E}(t) \lesssim_{\Lambda_{*}, \mathfrak{s}_0} 1 + \mathcal{E}(0) + \int_0^t Q\qty(\mathcal{E}(t')) \dd{t'},
	\end{equation}
	where $ Q $ is a generic polynomial depending on $ \Lambda_*, \mathfrak{s}_0 $ and $ c_0 $ (in particular, independent of $ \alpha $ and $ t $).
	
	Thus, Theorem \ref{thm vanishing surf limt} follows from the above energy estimates.
	
	\appendix
	\section{Proof of Lemma \ref{lem3.1}}
	\begin{proof}
		(\ref{est B}), (\ref{est F}), and (\ref{est B version2})-(\ref{est G}) follow from (\ref{est pd beta v+*})-(\ref{eqn pd2 gt}) and Proposition \ref{prop K}. As for the variational estimates, let $ f, h : \Gs \to \R $, $ \vg: \OGs \to \R^3 $  be given quantities, and $ \vw_1, \vw_2, \vw_3 : \Om\setminus\Gt \to \R^3 $ the solutions to the following div-curl problems respectively:
		\begin{equation}\label{eqn w1}
			\begin{cases*}
				\div\vw_1 = \comm{\div}{\DD_{\vf}}\vv &in $ \Om\setminus\Gt $, \\
				\curl\vw_1 = \comm{\curl}{\DD_{\vf}}\vv + \comm{\DD_{\vf}}{\Pb}\qty[\vom_* \circ \qty(\X_\Gt)^{-1}], &in $ \Om\setminus\Gt $, \\
				\vn_+ \vdot \vw_{1\pm} = \vn_+ \vdot \vb{b}_{\pm}(\ka, \pd_t\ka, f)  &on $ \Gt $, \\
				\wt{\vn} \vdot \vw_{1-} = 0 &on $ \pd\Om $,
			\end{cases*}
		\end{equation}
		where
		\begin{equation}
			\begin{split}
				\vb{b}_{\pm} = &\, \DD\qty[\qty(\var{\K^{-1}}(\ka)[f] \vnu) \circ (\Phi_\Gt)^{-1}] \vdot \qty[ \vv_\pm|_\Gt - \qty(\var{\K^{-1}}(\ka)[\pd_t\ka] \vnu) \circ (\Phi_\Gt)^{-1}] \\
				&+ \qty[\var[2]{\K^{-1}}(\ka)\qty[\pd_t\ka, f] \vnu] \circ (\Phi_\Gt)^{-1},
			\end{split}
		\end{equation}
		\begin{equation}
			\vf_\pm \coloneqq \h_\pm \qty[\qty(\var{\K^{-1}}(\ka)[f]\vnu)\circ(\Phi_\Gt)^{-1} ];
		\end{equation}
		\begin{equation}
			\begin{cases*}
				\div\vw_2 = 0 &in $ \OGt $, \\
				\curl\vw_2 = \Pb\qty(\vg \circ (\X_\Gt)^{-1}) &in $ \OGt $, \\
				\vn_+ \vdot \vw_{2\pm} = 0 &on $ \Gt $, \\
				\wt{\vn} \vdot \vw_{2-} = 0 &on $ \pd\Om $;
			\end{cases*}
		\end{equation}
		and
		\begin{equation}
			\begin{cases*}
				\div\vw_3 = 0 &in $ \OGt $, \\
				\curl\vw_3 = 0 &in $ \OGt $, \\
				\vn_+ \vdot \vw_{3\pm} = \qty[\var{\K^{-1}}(\ka)[h]\vnu] \circ (\Phi_\Gt)^{-1} \vdot \vn_+ &on $ \Gt $, \\
				\wt{\vn} \vdot \vw_{3-} = 0 &on $ \pd\Om $.
			\end{cases*}
		\end{equation}
		Then
		\begin{equation}
			\Dbt \vv = \vw_1 + \vw_2 + \vw_3,
		\end{equation}
		with the substitution $ f = \pd_\beta\ka $, $ \vg = \pd_\beta\vom_* $ and $ h = \pd^2_{t\beta}\ka $.
		
		By the definitions, there hold
		\begin{equation}\label{expression B}
			\opb_\pm(\ka)h = (\DD\Phi_\Gt)^{-1}\qty{\vw_{3\pm}|_\Gt\circ\Phi_\Gt - \var{\K^{-1}}(\ka)\qty[h]\vnu },
		\end{equation}
		\begin{equation}\label{expression F}
			\opf_\pm(\ka) \vg = (\DD\Phi_\Gt)^{-1}(\vw_{2\pm}|_\Gt\circ\Phi_\Gt),
		\end{equation}
		and
		\begin{equation}\label{expression G}
			\begin{split}
				\opg_\pm(\ka, \pd_t\ka, \vom_{*\pm})f = &\, (\DD\Phi_\Gt)^{-1} \qty{\vw_{1\pm}|_\Gt \circ (\Phi_\Gt) - \var[2]{\K^{-1}}(\ka)[\pd_t\ka, f]\vnu} \\
				&- (\DD\Phi_\Gt)^{-1}\qty{\DD\qty(\var{\K^{-1}}(\ka)[f]\vnu) \vdot \vv_{\pm*}}.
			\end{split}
		\end{equation}
		Therefore, if $ \ka $ and $ \vom_* $ depend on a parameter $ \beta $, then applying $ \pdv*{\beta} $ to (\ref{expression B}) yields that
		\begin{equation}
			\begin{split}
				&\hspace{-2em}\var{\opb_\pm}(\ka)[\pd_\beta \ka] h \\
				=\, &-(\DD\Phi_\Gt)^{-1} \DD(\pd_\beta\gt \vnu) (\DD\Phi_\Gt)^{-1}\qty{\vw_{3\pm}|_\Gt\circ\Phi_\Gt - \var{\K^{-1}}(\ka)\qty[h]\vnu } \\
				&+(\DD\Phi_\Gt)^{-1}\qty{(\Dbt \vw_{3\pm})|_\Gt\circ\Phi_\Gt - \var[2]\K^{-1}(\ka)[\pd_\beta\ka, h]\vnu },
			\end{split} 
		\end{equation}
		where
		\begin{equation}
			\Dbt \coloneqq \pd_\beta + \DD_{\h(\pd_\beta\gt \vnu)\circ(\X_\Gt)^{-1}}.
		\end{equation}
		Thus, one can derive from the commutator and div-curl estimates that for $ \frac{1}{2} \le s \le \kk - \frac{3}{2} $,
		\begin{equation}
			\abs{\var{\opb_\pm}(\ka)[\pd_\beta \ka] h}_{\H{s}} \less \abs{h}_{\H{s-2}} \cdot \abs{\pd_\beta \ka}_{\H{\kk-\frac{5}{2}}},
		\end{equation}
		namely, (\ref{est var B}) holds.
		(\ref{est var F}) can be shown in a similar way.
		
		The proof of (\ref{est var G}) is a little more complicated. Indeed, denote by $$ Q = Q\qty(\abs{\pd_t\ka}_{\H{\kk-\frac{5}{2}}}, \norm{\vom_*}_{H^{\kk-1}(\OGs)}), $$ and note that
		\begin{equation}
			\begin{split}
				&\hspace{-2em}\pdv{\beta}\qty[\opg_\pm(\ka, \pd_t \ka, \vom_{*\pm})f] \\
				=\, &-(\DD\Phi_\Gt)^{-1} \vdot \DD(\pd_\beta\gt\vnu) \vdot (\DD\Phi_\Gt)^{-1} \vdot \vb{a}_\pm + (\DD\Phi_\Gt)^{-1} \vdot \pd_\beta\vb{a}_\pm,
			\end{split}
		\end{equation}
		where 
		\begin{equation}
			\vb{a}_\pm \coloneqq \vw_{1\pm}|_\Gt \circ (\Phi_\Gt) - \var[2]{\K^{-1}}(\ka)[\pd_t\ka, f]\vnu - \DD\qty(\var{\K^{-1}}(\ka)[f]\vnu) \vdot \vv_{\pm*}.
		\end{equation}
		It follows from (\ref{eqn w1}), (\ref{def u lambda *}), and Proposition \ref{prop K} that
		\begin{equation}
			\begin{split}
				\abs{\vb{a_\pm}}_{\H{\sigma-\frac{1}{2}}} \lesssim_{Q}\, &\norm{\vw_{1\pm}}_{H^{\kk-1}(\OGt)} + \abs{\pd_t\ka}_{\H{\kk-\frac{9}{2}}} \cdot \abs{f}_{\H{\sigma-\frac{5}{2}}} \\
				&+ \norm{\vv_{\pm*}}_{H^{\kk-1}(\OGs)} \cdot \abs{f}_{\H{\sigma-\frac{3}{2}}} \\
				\lesssim_{Q}\, &\abs{f}_{\H{\sigma-\frac{5}{2}}}.
			\end{split}
		\end{equation}
		Therefore,
		\begin{equation}
			\begin{split}
				\abs{(\DD\Phi_\Gt)^{-1} \vdot \DD(\pd_\beta\gt\vnu) \vdot (\DD\Phi_\Gt)^{-1} \vdot \vb{a}_\pm}_{\H{\sigma-\frac{1}{2}}}
				\lesssim_{Q} \abs{\pd_\beta\ka}_{\H{\sigma-\frac{1}{2}}} \abs{f}_{\H{\sigma-\frac{3}{2}}}.
			\end{split}
		\end{equation}
		Next, by observing that
		\begin{equation}
			\begin{split}
				\pd_\beta\vb{a}_\pm =\, &(\Dbt \vw_{1\pm})|_\Gt \circ \Phi_\Gt - \var[3]{\K^{-1}}(\ka)\qty[\pd_\beta\ka, \pd_t\ka, f]\vnu \\
				&-\var[2]{\K^{-1}}(\ka)[\pd^2_{t\beta}\ka, f]\vnu - \DD\qty{\var[2]{\K^{-1}}(\ka)[\pd_\beta\ka, f]\vnu}\vdot\vv_{\pm*} \\
				&- \DD\qty{\var{\K^{-1}}(\ka)[f]\vnu}\vdot\pd_\beta\vv_{\pm*},
			\end{split}
		\end{equation}
		one can deduce from  (\ref{est pd beta v+*}) that
		\begin{equation}
			\begin{split}
				&\hspace{-2em}\abs{\pd_\beta \vb{a}_\pm - (\Dbt \vw_{1\pm})|_\Gt \circ \Phi_\Gt }_{\H{\sigma-\frac{1}{2}}} \\
				\lesssim_{Q}\, &\abs{f}_{\H{\sigma-\frac{3}{2}}} \qty(\abs{\pd_\beta\ka}_{\H{\sigma-\frac{1}{2}}} + \abs{\pd^2_{t\beta}\ka}_{\H{\sigma-\frac{3}{2}}} + \norm{\pd_\beta \vom_{*\pm}}_{H^{\sigma}(\Om_*^\pm)}).
			\end{split}
		\end{equation}
		As for the estimate of $ \Dbt \vw_{1\pm} $, first note that
		\begin{equation}
			\begin{split}
				\abs{\Dbt \vw_{1\pm}}_{H^{\sigma-\frac{1}{2}}(\Gt)} \lesssim_{Q}\, &\norm{\Dbt\vw_{1\pm}}_{H^{\sigma}(\Om_t^\pm)} \\
				\lesssim_{Q}\, &\norm{\div(\Dbt\vw_{1\pm})}_{H^{\sigma-1}(\Om_t^\pm)} + \norm{\curl(\Dbt\vw_{1\pm})}_{H^{\sigma-1}(\Om_t^\pm)} \\
				&+ \abs{\vn_+ \vdot (\Dbt\vw_{1\pm})}_{H^{\sigma-\frac{1}{2}}(\Gt)}.
			\end{split}
		\end{equation}
		The commutator estimates yield that
		\begin{equation}
			\begin{split}
				&\hspace{-2em}\norm{\div(\Dbt\vw_{1\pm})}_{H^{\sigma-1}(\Om_t^\pm)} + \norm{\curl(\Dbt\vw_{1\pm})}_{H^{\sigma-1}(\Om_t^\pm)} \\
				\lesssim_{Q}\, &\abs{f}_{\H{\sigma-\frac{3}{2}}} \cdot \qty(\abs{\pd_\beta\ka}_{\H{\sigma-\frac{1}{2}}} + \norm{\pd_\beta\vom_{*\pm}}_{H^{\sigma}(\Om_*^\pm)}).
			\end{split}
		\end{equation}
		For the boundary estimate of $ \Dbt \vw_{1\pm} $, due to the relations that
		\begin{equation}
			\begin{split}
				&\hspace{-2em}\vn_+ \vdot (\Dbt\vw_{1\pm}) - \vn_+ \vdot \Dbt\vb{b}_\pm \\
				=\, &\vn_+ \vdot \DD\qty[(\pd_\beta\gt \vnu)\circ (\Phi_\Gt)^{-1}] \vdot (\vw_{1\pm}^\top - \vb{b}_\pm^\top),
			\end{split}
		\end{equation}
		and
		\begin{equation}
			\begin{split}
				&\hspace{-2em}\abs{\vn_+ \vdot \DD\qty[(\pd_\beta\gt \vnu)\circ (\Phi_\Gt)^{-1}] \vdot (\vw_{1\pm}^\top - \vb{b}_\pm^\top)}_{H^{\sigma-\frac{1}{2}}(\Gt)} \\
				\lesssim_{Q}\, &\abs{f}_{\H{\sigma-\frac{3}{2}}} \cdot \abs{\pd_\beta\ka}_{\H{\sigma-\frac{1}{2}}},
			\end{split}
		\end{equation}
		it is direct to compute that
		\begin{equation}
			\begin{split}
				&\hspace{-1em}(\Dbt\vb{b}_\pm)\circ\Phi_\Gt = \pd_\beta(\vb{b}_\pm \circ \Phi_\Gt) \\
				=\, &\DD\qty{\var[2]{\K^{-1}}(\ka)[\pd_\beta\ka, f]\vnu} \vdot (\DD\Phi_\Gt)^{-1} \vdot \qty{\vv_{\pm}|_\Gt\circ\Phi_\Gt - \var{\K^{-1}}(\ka)[\pd_t\ka]\vnu} \\
				&-\DD\qty{\var{\K^{-1}}(\ka)[f]\vnu} \vdot (\DD\Phi_\Gt)^{-1}  \vdot \DD\qty(\pd_\beta\gt \vnu) \vdot (\DD\Phi_\Gt)^{-1} \vdot \\
				&\qquad \vdot  \qty{\vv_{\pm}|_\Gt\circ\Phi_\Gt - \var{\K^{-1}}(\ka)[\pd_t\ka]\vnu} \\
				&+ \DD\qty{\var{\K^{-1}}(\ka)[f]\vnu} \vdot (\DD\Phi_\Gt)^{-1} \vdot \\
				&\qquad\vdot\qty{(\Dbt\vv_\pm) \circ\Phi_\Gt - \var[2]{\K^{-1}}(\ka)[\pd_\beta\ka, \pd_t\ka]\vnu - \var{\K^{-1}(\ka)[\pd^2_{t\beta}\ka]\vnu}} \\
				&-\var[2]{\K^{-1}}(\ka)[\pd^2_{t\beta}\ka, f]\vnu - \var[3]{\K^{-1}}(\ka)[\pd_\beta\ka, \pd_t\ka, f]\vnu.
			\end{split}
		\end{equation}
		Therefore, it follows from \eqref{est pd beta v+*} and Proposition \ref{prop K} that
		\begin{equation}
			\begin{split}
				\abs{\pd_\beta(\vb{b}_\pm \circ \Phi_\Gt)}_{\H{\sigma-\frac{1}{2}}} 
				\lesssim_{Q}\abs{f}_{\H{\sigma-\frac{3}{2}}} \cdot \qty(\abs{\pd_\beta\ka}_{\H{\sigma-\frac{1}{2}}}+ \abs{\pd^2_{t\beta}\ka}_{\H{\sigma-\frac{3}{2}}}).
			\end{split}
		\end{equation}
		In conclusion,
		\begin{equation}
			\begin{split}
				&\hspace{-2em}\abs{\pdv{\beta}\qty[\opg_\pm(\ka, \pd_t \ka, \vom_{*\pm})f]}_{\H{\sigma-\frac{1}{2}}} \\
				\lesssim_{Q}\, &\abs{f}_{\H{\sigma-\frac{3}{2}}} \cdot \qty(\abs{\pd_\beta\ka}_{\H{\sigma-\frac{1}{2}}} + \abs{\pd^2_{t\beta}\ka}_{\H{\sigma-\frac{3}{2}}} + \norm{\pd_\beta\vom_*}_{H^{\sigma}(\OGs)} ), 
			\end{split}
		\end{equation}
		which is exactly (\ref{est var G}).
	\end{proof}
	
	\section{Proof of Lemma \ref{lem 3.4}}
	\begin{proof}
		Note that
		\begin{equation}\label{eqn opF}
			\opF(\ka)\vb{g} = \qty[\mathrm{I}+\opB(\ka)]^{-1} \qty[\grad^\top(\kappa_+ \circ \Phi_\Gt) \vdot \opf(\ka)\vb{g}].
		\end{equation}
		It follows from \eqref{est F} that
		\begin{equation}\label{est op scrF}
			\begin{split}
				\abs{\opF(\ka)\vb{g}}_{\H{s}} &\less \abs{\grad^\top(\kappa_+ \circ \Phi_\Gt) \vdot \opf(\ka)\vb{g}}_{\H{s}} \\
				&\less \abs{\grad^\top (\kappa_+ \circ \Phi_\Gt)}_{\H{\kk-2}} \abs{\opf(\ka)\vb{g}}_{\H{s+\epsilon}} \\
				&\less \abs{\ka}_{\H{\kk-1}} \norm{\vb{g}}_{H^{s+\epsilon-\frac{1}{2}}(\OGs)},
			\end{split}
		\end{equation}
		for $ \frac{1}{2} \le s \le \kk-2 $, which is exactly \eqref{est opF}.
		For $ k \ge 3 $ and $ \frac{1}{2} \le \sigma \le \kk-\frac{5}{2} $, similar arguments lead to \eqref{est opF'}.
		
		To estimate $ \opG $, one observes that
		\begin{equation}\label{formula (I+B)^(-1)G}
			\begin{split}
				&\hspace{-2em}\qty[\mathrm{I} + \opB(\ka)]\opG(\ka, \pd_t\ka, \vom_*, \vj_*) \\
				=\,&-a^2 \dfrac{\vn_+ \circ \Phi_\Gt}{\vnu \vdot (\vn_+ \circ \Phi_\Gt)} \vdot \qty[\va{\mathfrak{b}} \circ \Phi_\Gt + \DD_{\vbu_*}\qty(\vbu\circ\Phi_\Gt + \pd_t\gt\vnu)] \\
				&+a^2\opC_\alpha \gt + \opB\opC_\alpha\ka - \grad^\top(\kappa_+\circ\Phi_\Gt) \vdot \qty[\opg\pd_t\ka] + \mathfrak{R}_1 \circ\Phi_\Gt,
			\end{split}
		\end{equation}
		with $ \mathfrak{R_1} $ given by (\ref{def frak[R]_1}). Thus, \eqref{est opG}  and \eqref{est opG'} follow from \eqref{def vW}, \eqref{est frak R_0}, \eqref{est frak R_0'}, \eqref{est opB}, \eqref{est opB'}, Lemma \ref{lem 3.3} and Theorem \ref{thm div-curl}.
		
		Next, we consider the variational estimates under the assumption that $ k \ge 3 $. Suppose that $ \xi(\ka) $ and $ \eta(\ka) $ are two functionals so that 
		\begin{equation}\label{relation xi eta}
			\xi = \qty(\mathrm{I} + \opB)\eta.
		\end{equation} 
		Then, when computing the variation formula, the following relation holds:
		\begin{equation}\label{eqn xi eta I+B}
			\pdv{\xi}{\beta} = \qty(\mathrm{I}+\opB)\pdv{\eta}{\beta} + \pdv{\beta}(\opB)\eta.
		\end{equation}
		Therefore, if $ \ka $ is parameterized by $ \beta $, then
		\begin{equation}\label{eqn var xi eta I+B}
			\var{\eta}(\ka)\qty[\pd_\beta\ka] = \qty[\mathrm{I}+\opB(\ka)]^{-1} \qty{\var{\xi}(\ka)\qty[\pd_\beta\ka] - \var{\opB}(\ka)\qty[\pd_\beta\ka]\eta(\ka)},
		\end{equation}
		where $ \var\opB $ has the following estimate (by Lemma \ref{lem3.1}):
		\begin{equation}\label{est var opB}
			\abs{\var\opB(\ka)}_{\LL\qty[\H{\kk-\frac{5}{2}}; \LL\qty(\H{s-2}; \H{s})]} \le C_*,
		\end{equation}
		for $ \frac{1}{2} \le s \le \kk-\frac{7}{2} $ and a constant $ C_* > 0$ depending on $ \Lambda_* $ and $ s $.
		
		Similarly, it follows form \eqref{eqn opF}-\eqref{eqn xi eta I+B} and (\ref{est var F}) that
		\begin{equation}
			\begin{split}
				&\hspace{-2em}\abs{\var\qty{(\mathrm{I} + \opB(\ka))\opF(\ka)}[\pd_\beta\ka]\vb{g}}_{\H{\kk-4}} \\
				\less\, &\abs{\grad^\top\pd_\beta(\kappa_+ \circ \Phi_\Gt) \vdot \opf(\ka)\vb{g}}_{\H{\kk-4}} + \abs{\grad^\top(\kappa_+ \circ \Phi_\Gt) \vdot \var\opf(\ka)[\pd_\beta\ka]\vb{g}}_{\H{\kk-4}} \\
				\less\, &\abs{\pd_\beta\ka}_{\H{\kk-\frac{5}{2}}} \norm{\vb{g}}_{H^{\kk-4}(\OGs)},
			\end{split}
		\end{equation}
		which, together with (\ref{eqn var xi eta I+B}), (\ref{est var opB}), (\ref{eqn opF}) and Lemma \ref{lem3.1}, yields (\ref{est var opF}).
		
		To derive the variational estimate of $ \opG $, we suppose that $ \ka, \vom_*$, and $ \vj_* $ depend on a parameter $ \beta $. With the same notations as in (\ref{def Dbt}), the variational estimate of $ \mathfrak{R}_1 \circ \Phi_\Gt $ can be given by:
		\begin{equation}\label{est var frak R_1}
			\abs{\pdv{\beta}(\mathfrak{R}_1 \circ\Phi_\Gt)}_{\H{\kk-4}} \less	\abs{\Dbt \mathfrak{R}_1}_{H^{\kk-4}(\Gt)}.
		\end{equation}
		For simplicity, from now on, we shall use notations $ \abs{\cdot}_s \equiv \abs{\cdot}_{H^s(\Gt)} $, $ \norm{\phi_\pm}_s \equiv \norm{\phi_\pm}_{H^s(\Om_t^\pm)} $, and
		\begin{equation*}
			\begin{split}
				\va{\mathfrak{b}} \coloneqq \dfrac{1}{\rho_+ + \rho_-} \qty(\grad q^+ + \grad q^-) + \dfrac{\rho_+ \rho_-}{\qty(\rho_+ + \rho_-)^2}\DD_{\vw}\vw  - \dfrac{\rho_+}{\rho_+ + \rho_-}\DD_{\vh_+}\vh_+ - \dfrac{\rho_-}{\rho_+ + \rho_-}\DD_{\vh_-}\vh_-.
			\end{split}
		\end{equation*}
		First note that
		\begin{equation}
			\abs{\Dbt \qty(\va{\mathfrak{b}} \vdot \lap_\Gt\vn_+)}_{\kk-4} \less \abs*{\Dbt\va{\mathfrak{b}}}_{\kk-\frac{7}{2}}\abs{\lap_\Gt\vn_+}_{\kk-\frac{7}{2}} + \abs{\Dbt\lap_\Gt \vn_+}_{\kk-\frac{7}{2}}\abs*{\va{\mathfrak{b}}}_{\kk-\frac{7}{2}},
		\end{equation}
		and one can derive from Lemma \ref{Dt comm est lemma} and (\ref{eqn dt n}) that 
		\begin{equation}
			\abs{\Dbt\lap_\Gt \vn_+}_{\kk-4} \less \abs{\pd_\beta\ka}_{\H{\kk-\frac{5}{2}}}.
		\end{equation}
		As for the term $ \Dbt\va{\mathfrak{b}} $, Lemma \ref{Dt comm est lemma}, \eqref{est pd beta v+*} and (\ref{def vW}) lead to
		\begin{equation}\label{est dbt va b}
			\begin{split}
				\abs{\Dbt\va{\mathfrak{b}}}_{\kk-4} \less\, &\abs{\Dbt \grad q}_{\kk-4} + \abs{\Dbt \DD_{\vh} \vh}_{\kk-4} + \abs{\Dbt \DD_\vw \vw}_{\kk-4} \\
				\lesssim_{Q_*}\, &\abs{\pd_\beta \ka}_{\H{\kk-3}} + \abs{\pd^2_{t\beta}\ka}_{\H{\kk-5}}  + \norm{\pd_\beta\vom_*}_{H^{\kk-\frac{7}{2}}(\OGs)} \\
				& + \norm{\pd_\beta\vj_*}_{H^{\kk-\frac{7}{2}}(\OGs)},
			\end{split}
		\end{equation}
		where $ Q_* $ is a  generic polynomial depending on $ \Lambda_* $ of the quantities $ \abs{\pd_t\ka}_{\H{\kk-\frac{5}{2}}}$,\\ $ \norm{\vom_*}_{H^{\kk-1}(\OGs)}$, and $\norm{\vj_*}_{H^{\kk-1}(\OGs)} $.
		Next, observe that
		\begin{equation}
			\begin{split}
				\abs{\Dbt\qty[\vn \cdot \DD\vbu \cdot (\DD_\Gt)^2\vbu]}_{\kk-4} \lesssim_{Q_*}\,  &\abs{\pd_\beta \ka}_{\H{\kk-\frac{5}{2}}} + \abs{\Dbt\vbu}_{\kk-2} \\
				\lesssim_{Q_*}\, &\abs{\pd_\beta\ka}_{\H{\kk-\frac{5}{2}}} + \abs{\pd^2_{t\beta}\ka}_{\H{\kk-4}} +\norm{\pd_\beta\vom_*}_{H^{\kk-\frac{5}{2}}(\OGs)},
			\end{split}
		\end{equation}
		and
		\begin{equation}
			\abs{\Dbt \II_+}_{\kk-4} \lesssim_{Q_*} \abs{\pd_\beta \ka}_{\H{\kk-4}}.
		\end{equation}
		Thus, the variational estimates of the first six terms of (\ref{def frak[R]_1}) follow easily.
		To deal with the terms involving $ \lap_\Gt \II - \opr $, one can deduce from \eqref{simons' identity}, (\ref{est pd beta v+*}) and Lemma \ref{Dt comm est lemma} that
		\begin{equation}
			\begin{split}
				&\hspace{-3em}\abs{\Dbt\qty{\lap_\Gt\qty[\II_+(\vw, \vw)] - \opr(\Gt, \vw)\kappa_+}}_{\kk-4} \\
				\lesssim_{Q_*}\, &\abs{\Dbt \vw}_{\kk-2} + \abs{\Dbt \kappa_+}_{\kk-3} + \abs{\pd_\beta \ka}_{\H{\kk-\frac{5}{2}}} \\
				\lesssim_{Q_*}\, &\abs{\pd_\beta\ka}_{\H{\kk-\frac{5}{2}}} + \abs{\pd^2_{t\beta}\ka}_{\H{\kk-4}} +\norm{\pd_\beta\vom_*}_{H^{\kk-\frac{5}{2}}(\OGs)}.
			\end{split}
		\end{equation}
		Similar arguments yield that
		\begin{equation}
			\begin{split}
				&\hspace{-3em}\abs{\Dbt\qty{\lap_\Gt\qty[\II_+(\vh_\pm, \vh_\pm)] - \opr(\Gt, \vh_\pm)\kappa_+}}_{\kk-4} \\
				\lesssim_{Q_*}\, &\abs{\pd_\beta\ka}_{\H{\kk-\frac{5}{2}}}  +\norm{\pd_\beta\vj_*}_{H^{\kk-\frac{5}{2}}(\OGs)}.
			\end{split}
		\end{equation}
		For the last term of (\ref{def frak[R]_1}), one can deduce from (\ref{def frak[r_0]}) and Lemma \ref{Dt comm est lemma} that
		\begin{equation}
			\begin{split}
				&\hspace{-2em}\abs{\Dbt \lap_\Gt \mathfrak{r}_0}_{\kk-4} \\
				\lesssim_{Q_*}\, &\abs{\pd_\beta\ka}_{\H{\kk-\frac{5}{2}}} + \abs{\Dbt \mathfrak{r}_0}_{\kk-2} \\
				\lesssim_{Q_*}\, &\abs{\pd_\beta\ka}_{\H{\kk-\frac{5}{2}}} + \abs{\Dbt (g^+ - g^-)}_{\kk-3} + \norm{\Dbt\qty( p_{\vv, \vv} - p_{\vh, \vh})}_{\kk-\frac{1}{2}}\\
				\lesssim_{Q_*}\, &\abs{\pd_\beta \ka}_{\H{\kk-\frac{5}{2}}} + \norm{\Dbt \vv}_{\kk-\frac{3}{2}} + \norm{\Dbt \vh}_{\kk-\frac{3}{2}} \\
				\lesssim_{Q_*}\, &\abs{\pd_\beta \ka}_{\H{\kk-\frac{5}{2}}} + \abs{\pd^2_{t\beta}\ka}_{\H{\kk-4}} + \norm{\pd_\beta\vom_*}_{H^{\kk-\frac{5}{2}}(\OGs)}  + \norm{\pd_\beta\vj_*}_{H^{\kk-\frac{5}{2}}(\OGs)}.
			\end{split}
		\end{equation}
		Thus, the variational estimate of the last term of (\ref{formula (I+B)^(-1)G}) has been obtained. 
		
		Next, for the variation of the first term on the right hand side of (\ref{formula (I+B)^(-1)G}), observe that 
		\begin{equation}\label{pd tt gt  exp}
			\begin{split}
				&\hspace{-2em}\pdv{\beta}(\dfrac{\vn_+ \circ \Phi_\Gt}{\vnu \vdot (\vn_+ \circ \Phi_\Gt)} \vdot \qty[\va{\mathfrak{b}} \circ \Phi_\Gt + \DD_{\vbu_*}\qty(\vbu\circ\Phi_\Gt + \pd_t\gt\vnu)]) \\
				=\, &\pdv{\beta}(\dfrac{\vn_+ \circ \Phi_\Gt}{\vnu \vdot (\vn_+ \circ \Phi_\Gt)}) \vdot \qty\Big{\va{\mathfrak{b}}\circ\Phi_\Gt - \DD_{\vbu_*}\qty[\qty(\vbu\circ\Phi_\Gt) + \qty(\pd_t\gt) \vnu]} \\
				& +\dfrac{\vn_+ \circ \Phi_\Gt}{\vnu \vdot (\vn_+ \circ \Phi_\Gt)} \vdot \pdv{\beta}\qty\Big(\va{\mathfrak{b}}\circ\Phi_\Gt - \DD_{\vbu_*}\qty[\qty(\vbu\circ\Phi_\Gt) + \qty(\pd_t\gt) \vnu]).
			\end{split}
		\end{equation}
		Due to the relations that
		\begin{equation}
			\begin{split}
				\abs{\pd_\beta (\vn_+ \circ \Phi_\Gt)}_{\kk-\frac{3}{2}} \less \abs{\DD(\pd_\beta\gt\vnu)}_{\H{\kk-\frac{3}{2}}} \less \abs{\pd_\beta\ka}_{\H{\kk-\frac{5}{2}}},
			\end{split}
		\end{equation}
		and
		\begin{equation}
			\begin{split}
				&\hspace{-1em}\abs{\pdv{\beta}\qty\Big(\DD_{\vbu_*}\qty[(\vbu\circ\Phi_\Gt)+(\pd_t\gt)\vnu])}_{\H{\kk-4}} \\
				&=\abs{\pdv{\beta}\qty\Big(\DD_{\vbu_*}\qty[\DD\Phi_\Gt \vdot \vbu_* + 2(\pd_t\gt)\vnu])}_{\H{\kk-4}} \\
				&\lesssim_{Q_*} \abs{\pd_\beta \vbu_*}_{\H{\kk-3}} + \abs{\pd^2_{t\beta}\gt}_{\H{\kk-3}} + \abs{\pd_\beta \ka}_{\H{\kk-4}} \\
				&\lesssim_{Q_*} \abs{\pd_\beta \ka}_{\H{\kk-4}} + \abs{\pd^2_{t\beta}\ka}_{\H{\kk-5}} + \norm{\pd_\beta\vom_*}_{H^{\kk-\frac{7}{2}}(\OGs)} +\norm{\pd_\beta\vj_*}_{H^{\kk-\frac{7}{2}}(\OGs)},
			\end{split}
		\end{equation}
		the estimate of (\ref{pd tt gt  exp}) can be deduced via (\ref{est dbt va b}).
		
		Since $ \vh $ can be recovered from $ (\ka, \vj_*) $ by solving the div-curl problems, the operator $ \mathscr{C}_\alpha $ can also be expressed as $ \opC_\alpha(\ka, \pd_t\ka, \vom_*, \vj_*) $. It follows form (\ref{def opC}), \eqref{est pd beta v+*}, (\ref{eqn pd beta v}), Lemma \ref{lem3.1} and Lemma \ref{lem 3.3} that for $ 2 \le s' \le \kk-1 $, $ \mathscr{V} \coloneqq \H{\kk-\frac{5}{2}} \times \H{\kk-4} \times H^{\kk-\frac{5}{2}}(\OGs) \times H^{\kk-\frac{5}{2}}(\OGs) $, it holds that
		\begin{equation}
			\begin{split}
				&\abs{\var{\opC_\alpha}(\ka, \pd_t\ka, \vom_*, \vj_*)}_{\LL\qty[ \mathscr{V}; \LL\qty(\H{s'}; \H{s'-3})]} \\
				&\quad \le Q_*\qty(\abs{\pd_t\ka}_{\H{\kk-\frac{5}{2}}}, \norm{\vom_*}_{H^{\kk-1}(\OGs)}, \norm{\vj_*}_{H^{\kk-1}(\OGs)}).
			\end{split}
		\end{equation}
		In particular, by letting $ s' \coloneqq \kk-1 $, one can deduce that
		\begin{equation}
			\begin{split}
				&\abs{\pdv{\beta}(\opC_\alpha \gt + \opB\opC_\alpha\ka) }_{\H{\kk-4}} \\
				&\quad\lesssim_{Q_*}\abs{\pd_\beta \ka}_{\H{\kk-\frac{5}{2}}} + \abs{\pd^2_{t\beta}\ka}_{\H{\kk-4}} + \norm{\pd_\beta\vom_*}_{H^{\kk-\frac{5}{2}}(\OGs)} \\
				&\qquad\quad + \norm{\pd_\beta\vj_*}_{H^{\kk-\frac{5}{2}}(\OGs)}.
			\end{split}
		\end{equation}
		Furthermore, one may derive from Lemma \ref{lem3.1} that
		\begin{equation}\label{est grad ka G pdt ka}
			\begin{split}
				&\abs{\pdv{\beta}\qty\Big{\grad^\top(\kappa_+\circ\Phi_\Gt) \vdot \qty[\opg\pd_t\ka]}}_{\H{\kk-4}} \\
				&\quad\lesssim_{Q_*}\abs{\pd_\beta \ka}_{\H{\kk-3}} + \abs{\pd^2_{t\beta}\ka}_{\H{\kk-5}} + \norm{\pd_\beta\vom_*}_{H^{\kk-\frac{7}{2}}(\OGs)}.
			\end{split}
		\end{equation}
		
		In conclusion, (\ref{est var opG}) follows from (\ref{relation xi eta})-(\ref{est var opB}), (\ref{formula (I+B)^(-1)G}) and \eqref{est var frak R_1}-(\ref{est grad ka G pdt ka}).
	\end{proof}
	
	\bibliographystyle{alpha}
	\bibliography{ref.bib}
	
\end{document}